\DeclareFontFamily{U}{min}{}
\DeclareFontShape{U}{min}{m}{n}{<-> udmj30}{}
\tikzset{curve/.style={settings={#1},to path={(\tikztostart)
    .. controls ($(\tikztostart)!\pv{pos}!(\tikztotarget)!\pv{height}!270:(\tikztotarget)$)
    and ($(\tikztostart)!1-\pv{pos}!(\tikztotarget)!\pv{height}!270:(\tikztotarget)$)
    .. (\tikztotarget)\tikztonodes}},
    settings/.code={\tikzset{quiver/.cd,#1}
        \def\pv##1{\pgfkeysvalueof{/tikz/quiver/##1}}},
    quiver/.cd,pos/.initial=0.35,height/.initial=0}
\tikzset{tail reversed/.code={\pgfsetarrowsstart{tikzcd to}}}
\tikzset{2tail/.code={\pgfsetarrowsstart{Implies[reversed]}}}
\tikzset{2tail reversed/.code={\pgfsetarrowsstart{Implies}}}
\tikzset{no body/.style={/tikz/dash pattern=on 0 off 1mm}}
\newtheoremstyle{ctheorem}{}{}{}{}{\color{black}\bfseries}{}{ }{}
\theoremstyle{ctheorem}
\theoremstyle{definition}
\newtheorem{thm}{Theorem}[subsection]
\theoremstyle{definition}
\newtheorem{mainthm}{Theorem}
\newtheorem{defn}[thm]{Definition}
\newtheorem{notation}[thm]{Notation}
\newtheorem{construction}[thm]{Construction}
\theoremstyle{definition}
\newtheorem{prop}[thm]{Proposition}
\theoremstyle{definition}
\newtheorem{lem}[thm]{Lemma}
\theoremstyle{definition}
\theoremstyle{definition}
\newtheorem{rem}[thm]{Remark}
\theoremstyle{definition}
\newtheorem{example}[thm]{Example}
\theoremstyle{definition}
\newtheorem{cor}[thm]{Corollary}
\theoremstyle{definition}
\newtheorem{war}[thm]{Warning}
\theoremstyle{definition}
\theoremstyle{definition}
\theoremstyle{definition}
\newtheorem{var}[thm]{Variant}
\theoremstyle{definition}
\newtheorem{rec}[thm]{Recollection}
\theoremstyle{definition}
\DeclarePairedDelimiter\floor{\lfloor}{\rfloor}
\author{Qingyuan Bai\thanks{
Department of Mathematical Sciences, University of Copenhagen.} \and Yuxuan Hu\thanks{Department of Mathematics, Northwestern University.}}
\begin{document}
\title{Toric Mirror Symmetry for Homotopy Theorists}
\date{}
\maketitle
\vspace{-20pt}
\begin{abstract}
    We construct functors sending torus-equivariant quasi-coherent sheaves on toric schemes over the sphere spectrum to constructible sheaves of spectra on real vector spaces. This provides a spectral lift of the toric homolgoical mirror symmetry theorem of Fang-Liu-Treumann-Zaslow \cite{FLTZ}.
    Along the way, we obtain symmetric monoidal structures and functoriality results concerning those functors, which are new even over a field $k$.
    We also explain how the `non-equivariant' version of the theorem would follow from this functoriality via the de-equivariantization technique.
    As a concrete application, we obtain an alternative proof of Beilinson's linear algebraic description of quasi-coherent sheaves on projective spaces with spectral coefficients.
\end{abstract}
\vspace{-5pt}
\begin{center}
   \includegraphics[width=0.6\textwidth]{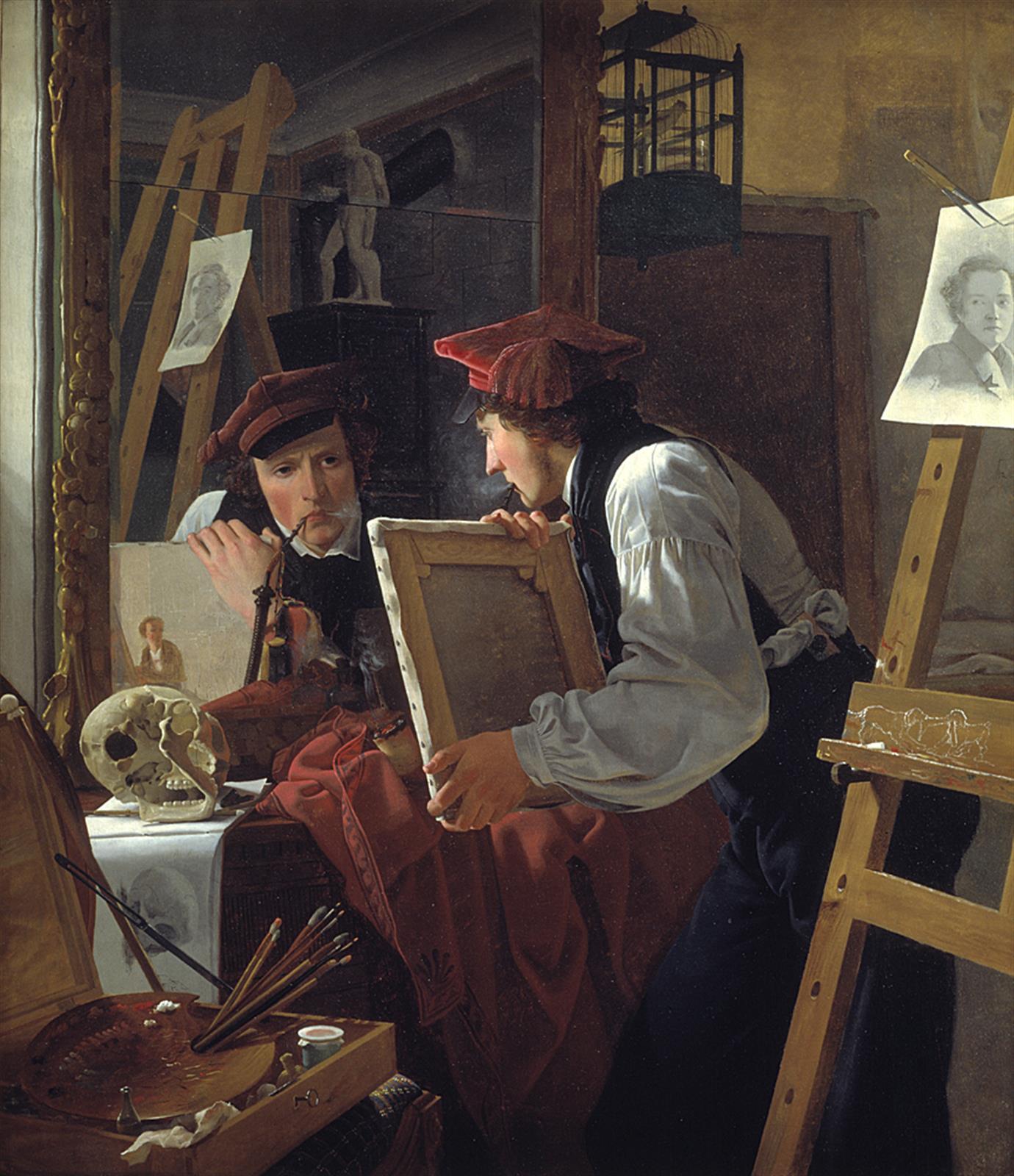}
    \\Bendz, Wilhelm. \textit{A Young Artist (Ditlev Blunck) Examining a Sketch in a Mirror.} \\1826, oil on canvas. Statens Museum for Kunst, København.
\end{center}


\newpage
\tableofcontents
\newpage
\section{Introduction}
In the classcial study of smooth projective toric varieties over $\CC$, there is a dictionary between ample line bundles and their moment polytopes as explained in \cite[Section 3.4]{fulton}.
It was observed by Robert Morelli that vector bundles also fit into this dictionary. He proved in \cite[Theorem 7]{Morelli1993TheKT} that there is an injective map from the torus-equivariant Grothendieck K-group of an $n$-dimensional smooth projective toric variety $X$ to the set of $\ZZ$-valued constructible functions on the real vector space $\RR^n$ spanned by the character lattice of the torus $T$:
$$K_0^T(X)\longrightarrow\Fun^\mathrm{cons}(\RR^n;\ZZ).$$
It becomes a map of commutative rings if one equips the set of constructible functions with pointwise addition and convolution product. This map generalizes the original dictionary: it takes the class of an ample line bundle to the characteristic function on the moment polytope.\par
In Morelli's theorem, each side admits a natural categorification. On the left hand side, one replaces $K_0^T(X)$ by $\mathrm{D}^b_T(X)$, the bounded derived category of torus-equivariant coherent sheaves on $X$. On the right hand side, one replaces the ring of constructible functions on $\RR^n$ by $\mathrm{D}^b_\mathrm{cc}(\RR^n;\Sc_\Sigma)$, the bounded derived category of sheaves of $\CC$-vector spaces on $\RR^n$ which are compactly supported and constructible (in the strong sense: the stalks have to be perfect) for a stratification $\Sc_\Sigma$. This stratification $\Sc_\Sigma$ comes from an affine hyperplane arrangement determined by toric fan $\Sigma$ for $X$ (see \cref{definition of FLTZ stratification}).
Fang-Liu-Treumann-Zaslow \cite[Theorem 1.1]{FLTZ} have constructed a fully faithful functor between dg-categories (named as \stress{coherent-constructible correspondence})
\[\kappa:\mathrm{D}^b_T(X)\longrightarrow \mathrm{D}^b_\mathrm{cc}(\RR^n;\Sc_\Sigma)\]
which recovers Morelli's theorem upon taking $K_0$.
Furthermore, they have provided a description of the image of $\kappa$ in terms of singular support.\par
Considering another perspective, one might anticipate eliminating the extra data of torus-equivariance.
In \cite{bondal2006}, Alexey Bondal had independently suggested that, given certain mild assumptions, there exists a fully faithful functor from the bounded derived category of coherent sheaves on $X$ to the bounded derived category of sheaves of $\CC$-vector spaces on the topological torus.
\[
\overline{\kappa}:\mathrm{D}^b(X)\longrightarrow \mathrm{D}^b(\RR^n/\ZZ^n),
\]
whose image is constructible for a specific stratification.
As it turns out (see \cite{treumann2010remarksnonequivariantcoherentconstructiblecorrespondence}) one can define
a functor $\overline\kappa$ in a way similar to $\kappa$, and describe the image of $\overline\kappa$ in terms of singular supports.
This line of work has been further pursued in
\cite{SS2016, zhou2017twistedpolytopesheavescoherentconstructible, Kuwagaki_2020}.

In this note, we provide an exposition of this narrative in the context of spectral algebraic geometry.
We carefully construct the relevant functors and explain how to derive formal consequences from the equivalences, leveraging available technologies in higher algebra.
\subsection{What is done in this note?}
This note begins with the realization that, on the `constructible' aspect of the discussion, there is a clear extension to the sphere spectrum. Instead of working with the bounded derived category of sheaves of $\CC$-vector spaces, one can consider the large category of sheaves of spectra on a real vector space,
$$\Shv(\RR^n;\Sp)$$
where the convolution product is naturally defined. This is made possible by recent advances in the yoga of six-operations.
On the `coherent' side, it is generally difficult to lift varieties to the sphere spectrum.
However, it is straightforward to construct lifts of toric varieties, as they are Zariski locally monoid schemes glued together along maps induced by monoid homomorphisms.
Indeed, given a toric fan $\Sigma$, one can define the flat toric scheme $X_\Sigma$ over the sphere spectrum, equipped with an action of the flat torus $\TT$.
The main purpose of this note is to provide the following construction.
\begin{mainthm}
\label{equivariantmainthm}
Let $N$ be a lattice and $\Sigma$ be a smooth projective fan in $N_\RR\defeq N\otimes \RR$ (see \cref{notation:fan}). Let $M$ and $M_\RR$ be the dual lattice and vector space.
There exists a fully-faithful, symmetric monoidal functor 
$$\kappa:\QCoh([X_\Sigma/\TT])\longrightarrow\Shv(M_\RR;\Sp),$$
where $X_\Sigma$ is the flat toric scheme associated to $\Sigma$ and $\TT=\Spet(\SS[M])$ is a flat torus, both defined over the sphere spectrum.
The category $\QCoh([X_\Sigma/\TT])$ has the standard tensor product while the category $\Shv(M_\RR;\Sp)$ has the convolution product as \cref{construction:convolution}.
One can explicitly describe the image of this functor:
\[
\im(\kappa)=\Shv_{\Lambda_\Sigma}(M_\RR;\Sp)\subseteq\Shv(M_\RR;\Sp).
\]
On the right-hand side is the subcategory of sheaves characterized by the following two conditions\footnote{It is possible to remove the assumption on constructibility once one has a good understanding of singular supports in greater generality, see \cref{dependence of singular support on stratification}.}: 
    
\begin{itemize}
\item It is constructible\footnote{Unless specified, we always mean constructible in the weak sense: there will be no constraints on the size of the stalk.} for the stratification $\Sc_\Sigma$ given by the affine hyperplane arrangement $H_\Sigma$, which is determined by $1$-cones in the fan $\Sigma$
\[H_\Sigma\defeq\{m+\sigma^\perp:m\in M,\sigma\in \Sigma(1)\}.\]
\item It has singular support contained in the conic Lagrangian $\Lambda_\Sigma$:
\[
\Lambda_\Sigma\defeq\bigsqcup_{m\in M;\sigma\in \Sigma} m+\sigma^\svee\times-\sigma\subseteq M_\RR\times N_\RR=T^*M_\RR.
\]
\end{itemize}
\end{mainthm}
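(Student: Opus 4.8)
The plan is to reduce the global assertion to a local, purely combinatorial statement, exploiting the toric cover of $X_\Sigma$ by the affine charts $U_\sigma\defeq\Spet(\SS[S_\sigma])$, $S_\sigma\defeq\sigma^\svee\cap M$, indexed by the poset of cones of $\Sigma$ under the face relation. Since $U_\tau\cap U_\sigma = U_{\tau\cap\sigma}$ this cover is closed under intersections and is $\TT$-stable, so $[X_\Sigma/\TT]=\operatorname{colim}_{\sigma}[U_\sigma/\TT]$, whence
\[
\QCoh([X_\Sigma/\TT])\;\simeq\;\lim_{\sigma\in\Sigma}\QCoh([U_\sigma/\TT])\;\simeq\;\lim_{\sigma\in\Sigma}\mathrm{Mod}^{M\text{-}\mathrm{gr}}_{\SS[S_\sigma]},
\]
the last identification by $\TT$-descent ($\TT$-equivariant $\SS[S_\sigma]$-modules are $\SS[M]$-comodules, i.e.\ $M$-graded $\SS[S_\sigma]$-modules). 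Because $\Sigma$ is smooth, each $U_\sigma$ is a product of an affine space and a torus, so $\mathrm{Mod}^{M\text{-}\mathrm{gr}}_{\SS[S_\sigma]}$ is a spectral ``Beilinson-quiver'' category: it is compactly generated by the twisting bundles $\mathcal O(m)$, $m\in M$, and the transition maps are pullback along the open immersions $U_\tau\hookrightarrow U_\sigma$ for $\tau\preceq\sigma$. The first task is to record this decomposition and upgrade it to a diagram in $\mathrm{CAlg}(\mathrm{Pr}^{\mathrm L})$, each transition functor being symmetric monoidal for $\otimes$.

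The second ingredient is the matching decomposition on the constructible side. Write $\Lambda_\sigma\defeq\bigsqcup_{m\in M,\ \tau\preceq\sigma}m+\tau^\svee\times-\tau$ for the sub-Lagrangian attached to a cone and its faces, so that $\Lambda_\Sigma=\bigcup_{\sigma}\Lambda_\sigma$. I want a Kashiwara--Schapira-type gluing
\[
\Shv_{\Lambda_\Sigma}(M_\RR;\Sp)\;\simeq\;\lim_{\sigma\in\Sigma}\Shv_{\Lambda_\sigma}(M_\RR;\Sp),
\]
with transition functors the evident microlocal restriction maps indexed again by $\tau\preceq\sigma$. This expresses that a sheaf which is $\Sc_\Sigma$-constructible with singular support in $\Lambda_\Sigma$ is determined by, and can be freely reassembled from, its behaviour near each codirection $-\sigma$; proving it needs the microlocal formalism over $\SS$ developed earlier --- stability of singular support under the relevant limits, and the microlocal cut-off lemma. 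This is the step I expect to be the main obstacle: it is where the detailed geometry of the FLTZ conic Lagrangian and of the arrangement $H_\Sigma$ must be used, and where the smoothness and projectivity of $\Sigma$ enter to keep the combinatorics tractable.

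Granting the two decompositions, it remains to prove the local equivalence: for a single smooth cone $\sigma$,
\[
\mathrm{Mod}^{M\text{-}\mathrm{gr}}_{\SS[S_\sigma]}\;\simeq\;\Shv_{\Lambda_\sigma}(M_\RR;\Sp),
\]
sending $\mathcal O(m)$ to a standard sheaf $\theta_\sigma(m)$: the $!$- (or $\ast$-) extension of the constant sheaf $\SS$ from a locally closed union of strata of $H_\Sigma$ obtained by translating the relative interior of $\sigma^\svee$ by $m$, whose singular support is the $m$-translate of $\sigma^\svee\times-\sigma$ together with its boundary pieces. I would prove this by exodromy: the part of $\Sc_\Sigma$ relevant to a single cone is a ``fan-like'' stratification of $M_\RR$ whose exit-path $\infty$-category is an ordinary poset, so $\Shv$ for it is $\mathrm{Fun}(-,\Sp)$ on that poset, and imposing ``singular support in $\Lambda_\sigma$'' amounts to inverting the morphisms pointing in the conormal directions of $\sigma$; the resulting localized poset is exactly $(M,S_\sigma)$, so both sides become $\Sp$-valued sheaves on the same combinatorial site, under which $\operatorname{Map}(\theta_\sigma(m),\theta_\sigma(m'))$ is the $(m'-m)$-graded piece of $\SS[S_\sigma]$, i.e.\ $\SS$ if $m'-m\in S_\sigma$ and $0$ otherwise --- matching the mapping spectra between the $\mathcal O(m)$. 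Assembling the local equivalences over the poset of $\Sigma$, compatibly with the transition functors of the previous two steps, yields an equivalence $\kappa\colon\QCoh([X_\Sigma/\TT])\xrightarrow{\ \sim\ }\Shv_{\Lambda_\Sigma}(M_\RR;\Sp)$; composing with the fully faithful inclusion $\Shv_{\Lambda_\Sigma}(M_\RR;\Sp)\hookrightarrow\Shv(M_\RR;\Sp)$ gives the asserted functor, fully faithful with image $\Shv_{\Lambda_\Sigma}$.

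It remains to address the symmetric monoidal claim. Under the models above, the convolution product of \cref{construction:convolution} --- Day convolution along $+\colon M_\RR\times M_\RR\to M_\RR$ --- should restrict on $\Shv_{\Lambda_\Sigma}$ to Day convolution along $+\colon M\times M\to M$ on $M$-graded modules, which glues to the ordinary tensor product on $\QCoh([X_\Sigma/\TT])$; concretely, $\theta_\sigma(0)$ is the monoidal unit and $\theta_\sigma(m)\star\theta_\sigma(m')\simeq\theta_\sigma(m+m')$. The cleanest implementation is to build $\kappa$ from the outset as the colimit-preserving symmetric monoidal functor determined by a symmetric monoidal functor on the small full subcategory spanned by the twisting bundles (equivalently, as a morphism of the corresponding diagrams of commutative algebra objects in $\mathrm{Pr}^{\mathrm L}$), so that symmetric monoidality comes for free and only full faithfulness and the computation of the image --- the steps above --- carry the content.
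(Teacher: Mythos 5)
Your first decomposition, $\QCoh([X_\Sigma/\TT])\simeq\lim_\sigma\QCoh([U_\sigma/\TT])\simeq\lim_\sigma(M\text{-graded }\SS[S_\sigma]\text{-modules})$, agrees with the paper (this is \cref{theorem of combinatorics compares to quasicoherent}, proved via the combinatorial categories $\Theta(\sigma)$ and a Moulinos-style comonadicity argument). The genuine gap is your second step: the proposed Kashiwara--Schapira-type descent $\Shv_{\Lambda_\Sigma}(M_\RR;\Sp)\simeq\lim_{\sigma}\Shv_{\Lambda_\sigma}(M_\RR;\Sp)$ with ``microlocal restriction'' transition functors is not proved, and the tools you invoke for it (the microlocal cut-off lemma, stability of singular support under the relevant limits) are exactly the pieces of microlocal sheaf theory that are \emph{not} available over $\Sp$ in the generality needed here --- the paper explicitly flags this lack and is structured to avoid it. Instead of gluing microlocal categories, the paper glues the \emph{whole} convolution category along the idempotent algebras $\omega_{\sigma^\svee}$: the local categories are $\Mod_{\omega_{\sigma^\svee}}\Shv(M_\RR;\Sp)$ (not sheaves with prescribed singular support), and the key nontrivial input is that for a smooth projective fan the meet of these idempotents is the unit, $\mathbb{1}\simeq\lim_{\sigma\in\Sigma^\op}\omega_{\sigma^\svee}$ (\cref{the idempotent algebras for a smooth projective fan glues to the unit}), proved by a stalkwise computation using a moment polytope. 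The singular-support description of the image is then established \emph{afterwards} and globally, via probing sheaves $\omega(D_x)$ attached to twisted polytopes and the non-characteristic deformation lemma (\cref{the theorem of Dx corepresents taking stalk on sheaf with singular support}, \cref{Microlocal characterization of the image}); nothing in the paper asserts, or needs, a cone-by-cone identification of the image with $\Shv_{\Lambda_\sigma}$.

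Two further points in your sketch would also need real work. First, your local claim that imposing the singular-support condition amounts to inverting the conormal-direction morphisms in the exodromy poset, yielding graded $\SS[S_\sigma]$-modules, is only asserted; the paper proves instead full faithfulness of $\Psi_\sigma$ onto an explicitly generated subcategory (\cref{comparison functor is fully faithful when the fan is smooth}), using exodromy only to identify compact generators, and never claims the affine-local image characterization. Second, your treatment of the monoidal structure and the image presumes that convolution restricts to $\Shv_{\Lambda_\Sigma}$ and that the convolution unit lies in it; neither is known a priori. The paper has to address precisely these issues: closure of the image under convolution comes from symmetric monoidality of $\kappa$, which in turn hinges on the idempotent-gluing result making the lax monoidal right adjoint $R$ of \cref{taking module category over the family of idempotent algebras in the sheaf category} an equivalence, and the a-priori-unknown convolution-stability of $\Shv_{\Lambda_\Sigma}$ is circumvented by the auxiliary category $\Cc$ of \cref{warning on not finishing the proof}. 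As written, your route would need to first build the missing $\SS$-linear microlocal formalism, which is a substantially harder task than the theorem itself.
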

\begin{proof}
The construction of $\kappa$ is a combination as in \cref{definition of the CCC functor} of \cref{theorem of combinatorics compares to quasicoherent} and \cref{construction of combinatorial to constructible comparison functor}. In \cref{comparison functor is fully faithful when the fan is smooth} we show that $\kappa$ is fully faithful. The characterization of the image of $\kappa$ is  \cref{Microlocal characterization of the image}.
\end{proof}
\begin{rem}
    This note treats the particular case of the coefficient ring being $\SS$, but we have used nothing about $\SS$ other than $\SS\in\CAlg(\Sp)$ is a connective commutative ring spectrum. One can similarly make relevant constructions over any connective commutative ring spectrum, and if one works with $\mathbb{C}$ this recovers a large category version of \cite{FLTZ}.
    On the other hand, such a lift to spectral coefficient is already hinted at implicitly in \cite{FLTZ} and explicitly in \cite{Vaintrob1}. Nevertheless, the construction of the symmetric monoidal structure on the functor seems new, even over the complex numbers.
    It should be noted that the compatibility of $\kappa$ with the convolution operation was formulated and used in \cite{FLTZ} in the context of dg-categories.
\end{rem}
We also provide compatibility of the functor $\kappa$ with the action of $\QCoh(B\TT)$ on both sides.
\begin{mainthm}
\label{mainthmB}
The functor $\kappa$ in \cref{equivariantmainthm} fits into a diagram of symmetric monoidal categories and symmetric monoidal functors:
\[\begin{tikzcd}
	{\QCoh([X_\Sigma/\TT)]} & {\Shv_{\Lambda_\Sigma}(M_\RR;\Sp)} \\
	{\QCoh(B\TT)} & {\Shv(M;\Sp)}
	\arrow["\kappa"', from=1-1, to=1-2]
	\arrow["{\pi^*}", from=2-1, to=1-1]
	\arrow["\simeq"', from=2-1, to=2-2]
	\arrow["{i_!}", from=2-2, to=1-2].
\end{tikzcd}\]
Both categories on the left have standard tensor product and both categories on the right have convolution product. The functor $\pi^*$ is $*$-pullback along the projection $\pi:[X_\Sigma/\TT]\rightarrow B\TT$. The functor $i_!$ is !-pushforward along the inclusion of the topological group $i:M\rightarrow M_\RR$. We used implicitly the  identification of symmetric monoidal categories
$$\QCoh(B\TT)\simeq\Fun(M;\Sp)\simeq\Shv(M;\Sp)$$
coming from \cref{theorem of quasicoherent sheaves of torus compare to graded spectra} and proof of \cref{equivariantization for large sheaf category}. Here the functor category has the Day convolution tensor product.
\end{mainthm}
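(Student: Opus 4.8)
The plan is to recognise the square as the statement that $\kappa$ respects the natural $\QCoh(B\TT)$-actions on its two sides, and to reduce its commutativity to unwinding the definition of $\kappa$. Two facts about $i_!$ must be checked first. Convolution on a topological abelian group $G$ is $(a_G)_!$ applied to the external product (\cref{construction:convolution}), where $a_G\colon G\times G\to G$ is addition; since $i\circ a_M=a_{M_\RR}\circ(i\times i)$, functoriality of $(-)_!$ together with the Künneth isomorphism $(i\times i)_!(A\boxtimes B)\simeq i_!A\boxtimes i_!B$ gives $i_!(A*B)\simeq(i_!A)*(i_!B)$ naturally, and $i_!$ sends the convolution unit $\delta_0^M$ to $\delta_0^{M_\RR}$; so $i_!\colon\Shv(M;\Sp)\to\Shv(M_\RR;\Sp)$ is symmetric monoidal. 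Moreover $i_!\circ(\simeq)$ lands in $\Shv_{\Lambda_\Sigma}(M_\RR;\Sp)$: an object of $\Fun(M;\Sp)$ is a direct sum $\bigoplus_{m\in M}F_m\otimes\delta_m^M$, whose image is $\bigoplus_m F_m\otimes\delta_m^{M_\RR}$, and each skyscraper $\delta_m^{M_\RR}$ at a lattice point is $\Sc_\Sigma$-constructible — the point $\{m\}$ is a $0$-dimensional stratum because the rays of the complete fan $\Sigma$ span $N_\RR$, so $\bigcap_{\sigma\in\Sigma(1)}(m+\sigma^\perp)=\{m\}$ — and has singular support $\{m\}\times N_\RR=\bigcup_{\sigma\in\Sigma}\bigl(\{m\}\times(-\sigma)\bigr)\subseteq\Lambda_\Sigma$, while $\Shv_{\Lambda_\Sigma}(M_\RR;\Sp)$ is closed under colimits (by \cref{equivariantmainthm}). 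Thus $i_!\circ(\simeq)$, like $\kappa\circ\pi^*$, is a symmetric monoidal, colimit-preserving functor into $\Shv_{\Lambda_\Sigma}(M_\RR;\Sp)$.

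Since $\kappa$ is symmetric monoidal, $\kappa\circ\pi^*$ automatically makes $\Shv_{\Lambda_\Sigma}(M_\RR;\Sp)$ a $\QCoh(B\TT)$-algebra, and the square asserts that this algebra structure is the one pulled back from convolution along $i_!\circ(\simeq)$ — equivalently, that $\kappa$ promotes to a $\QCoh(B\TT)$-linear functor. I would extract this from \cref{definition of the CCC functor} by checking that both constituents of $\kappa$, the equivalence of \cref{theorem of combinatorics compares to quasicoherent} and the comparison functor of \cref{construction of combinatorial to constructible comparison functor}, are linear over the combinatorial incarnation $\Fun(M;\Sp)$ of $\QCoh(B\TT)$ (which is how the identification $\QCoh(B\TT)\simeq\Fun(M;\Sp)$ of \cref{theorem of quasicoherent sheaves of torus compare to graded spectra} is built), the substantive point being that the induced $\Fun(M;\Sp)$-action on $\Shv_{\Lambda_\Sigma}(M_\RR;\Sp)$ is $i_!$-twisted convolution. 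Concretely this is the statement that the comparison functor carries the combinatorial object attached to a character $m\in M$ to the skyscraper $\delta_m^{M_\RR}$; equivalently, that under exodromy the inclusion of exit-path categories $M\hookrightarrow\mathrm{Exit}(\Sc_\Sigma)$ sending $m$ to the vertex $\{m\}$ induces $!$-pushforward along $i$. As a transparent consequence, $\kappa\circ\pi^*$ and $i_!\circ(\simeq)$ agree on generators: both send the $\otimes$-invertible object $\mathcal{O}_{B\TT}(m)$, which satisfies $\mathcal{O}_{B\TT}(m)\otimes\mathcal{O}_{B\TT}(m')\simeq\mathcal{O}_{B\TT}(m+m')$, to $\delta_m^{M_\RR}$ — the case $m=0$ being forced already by $\kappa$ preserving the unit — and since $\QCoh(B\TT)\simeq\Fun(M;\Sp)$ with the Day convolution is generated under colimits by these invertible objects, a coherent such identification determines the equivalence $\kappa\circ\pi^*\simeq i_!\circ(\simeq)$.

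The main obstacle is this coherence rather than the object-level dictionary: one must make the $\Fun(M;\Sp)$-linearity of each step in the construction of $\kappa$ precise, so that the square is a diagram in $\CAlg(\mathrm{Pr}^{\mathrm L})$ and not just a commuting square of underlying functors. Within that, the one genuinely new input is the identification of the $\Fun(M;\Sp)$-action on $\Shv_{\Lambda_\Sigma}(M_\RR;\Sp)$ with $i_!$-convolution — a compatibility between the six-functor formalism for sheaves of spectra and exodromy — and checking it intertwines the symmetric monoidal, not merely the underlying module, structures is the delicate step.
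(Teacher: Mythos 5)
Your overall skeleton is the same as the paper's: split $\kappa$ through the combinatorial model, check lattice-compatibility of each half (\cref{global combinatorial-coherent functor is compatible with torus action} and \cref{comb-constructible functor is compatible with lattice}), and pin the composite down by what it does to the characters $m\in M$; your preliminary checks on $i_!$ (monoidality via functoriality of $(-)_!$ and K\"unneth, image in $\Shv_{\Lambda_\Sigma}$) agree with \cref{equivariantization for large sheaf category}. The reduction via the universal property of Day convolution to identifying two symmetric monoidal functors out of the discrete category $M$ is also the right move. But there are two genuine gaps. First, you treat the identity $\kappa\,\pi^*\mathcal{O}_{B\TT}(m)\simeq\underline\SS_{\{m\}}$ as an unwinding of the construction (``the comparison functor carries the combinatorial object attached to $m$ to the skyscraper''). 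It is not: by construction the combinatorial object attached to $m$ is the family $(m+\sigma^\svee)_{\sigma\in\Sigma}$, which $\Psi_\Sigma$ sends to the limit $\lim_{\sigma\in\Sigma^\op}\omega_{m+\sigma^\svee}$ in $\Shv(M_\RR;\Sp)$, and collapsing that limit to the skyscraper is exactly the (translated) content of \cref{the idempotent algebras for a smooth projective fan glues to the unit} — the descent statement that the idempotent algebras $\omega_{\sigma^\svee}$ have meet the unit, proved by the moment-polytope contractibility argument and using smooth projectivity of $\Sigma$. Without this input your ``agreement on generators'' step has nothing to stand on, and your exodromy reformulation does not bypass it.

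Second, the coherence you flag as the ``main obstacle'' is left unresolved, and it is precisely where the paper does its work: rather than promoting $\kappa$ to a $\QCoh(B\TT)$-linear functor abstractly, the paper builds the square at the level of $1$-categories — the symmetric monoidal poset maps $p_\sigma:M\to\Theta(\sigma)$, natural in $\sigma\in\Sigma^\op$, together with the unstable models $\phi_M,\phi_\sigma$ on the coherent side (\cref{construction with torus is compatible with toric quotient}) — where there are no coherence issues, and then Kan extends along Day convolution and passes to the limit over $\Sigma^\op$; the adjointability arguments in the proof of \cref{theorem of combinatorics compares to quasicoherent} then transport this to the $\QCoh$ side. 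So your restriction-to-$M$ strategy is viable, but you would still have to exhibit the symmetric monoidal identification of the two restrictions (not just an object-level match of $\mathcal{O}(m)\mapsto\underline\SS_{\{m\}}$), and the only concrete mechanism available for that is the $1$-categorical construction above, which your proposal gestures at but does not carry out.
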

\begin{proof}
    This is a combination of \cref{global combinatorial-coherent functor is compatible with torus action} and \cref{comb-constructible functor is compatible with lattice}.
\end{proof}
From this we may deduce some formal consequences. 
First of all, we apply the technique of de-equivariantization and obtain the following:
\begin{mainthm}[\cref{nonequivariant version of the correspondence}]
\label{nonequivariant main theorem}
There is a fully-faithful symmetric monoidal functor
$$\overline{\kappa}:\QCoh(X_\Sigma)\longrightarrow\Shv(M_\RR/M;\Sp).$$
The category $\QCoh(X_\Sigma)$ has the standard tensor product while $\Shv(\MR/M;\Sp)$ has the convolution product. The image of $\overline\kappa$ is described by constructibility and singular support similar to above:
$$\im(\overline\kappa)=\Shv_{\overline{\Lambda}_\Sigma}(M_\RR/M;\Sp)$$
where the right-hand side is the subcategory of sheaves constructible for $\overline{\Sc}_\Sigma$ with singular support contained in $\overline{\Lambda}_\Sigma$, where $\overline{\Sc}_\Sigma$ ($\overline{\Lambda}_\Sigma$) is the image of $\Sc_\Sigma$ (${\Lambda}_\Sigma$) under projection map $\pi:M_\RR\rightarrow M_\RR/M$ (see \cref{equivariantization for microlocal sheaf category}).
\end{mainthm}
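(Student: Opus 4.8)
The plan is to obtain $\overline\kappa$ from $\kappa$ by \emph{de-equivariantization}: I would base change the commutative square of \cref{mainthmB} along the colimit-preserving, symmetric monoidal functor $\QCoh(B\TT)\to\Sp$ given by $*$-pullback along the point $\Spet(\SS)\to B\TT$. By \cref{mainthmB} the two vertical functors exhibit $\kappa$ as a morphism of $\QCoh(B\TT)$-modules in $\CAlg(\mathrm{Pr}^{L})$ --- the module structure on the source being $\pi^*$ followed by the tensor product, on the target $i_!$ followed by convolution --- so applying $(-)\otimes_{\QCoh(B\TT)}\Sp$ yields a symmetric monoidal functor $\overline\kappa$ between the base-changed categories. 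It then remains to identify source, target and image.

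For the source: the point $\Spet(\SS)\to B\TT$ pulls $[X_\Sigma/\TT]\to B\TT$ back to $X_\Sigma$ equipped with its translation action, since $X_\Sigma\to[X_\Sigma/\TT]$ is the tautological $\TT$-torsor; thus $X_\Sigma\simeq[X_\Sigma/\TT]\times_{B\TT}\Spet(\SS)$. Via the identification $\QCoh(B\TT)\simeq\Fun(M;\Sp)$ already used in \cref{mainthmB} (from \cref{theorem of quasicoherent sheaves of torus compare to graded spectra}), $\QCoh([X_\Sigma/\TT])$ is the category of $M$-graded quasi-coherent sheaves on $X_\Sigma$, our functor $\QCoh(B\TT)\to\Sp$ is $\{V_m\}\mapsto\bigoplus_m V_m$ (symmetric monoidal from the Day convolution product to the tensor product of spectra), and base changing along it forgets the grading; this gives a symmetric monoidal equivalence $\QCoh([X_\Sigma/\TT])\otimes_{\QCoh(B\TT)}\Sp\simeq\QCoh(X_\Sigma)$, which is the essentially formal part of the argument.

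For the target I would prove $\Shv(M_\RR;\Sp)\otimes_{\Shv(M;\Sp)}\Sp\simeq\Shv(M_\RR/M;\Sp)$ as symmetric monoidal categories for the convolution products, refining to $\Shv_{\Lambda_\Sigma}(M_\RR;\Sp)\otimes_{\Shv(M;\Sp)}\Sp\simeq\Shv_{\overline\Lambda_\Sigma}(M_\RR/M;\Sp)$. The first equivalence is descent along the $M$-torsor $\pi\colon M_\RR\to M_\RR/M$: since the $M$-action is free and properly discontinuous the stacky quotient is the honest torus $M_\RR/M$, and $\Shv(-;\Sp)$ turns the bar-construction colimit presenting it into the cosimplicial limit $\Shv(M_\RR;\Sp)^{hM}$, whose coface maps --- pullbacks along the projections $M^\bullet\times M_\RR\to M_\RR$ --- admit left adjoints satisfying Beck--Chevalley, so passing to left adjoints identifies this limit with the simplicial colimit computing $\Shv(M_\RR;\Sp)\otimes_{\Shv(M;\Sp)}\Sp$. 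The microlocal refinement uses that $\pi$ is a local homeomorphism: $T^*\pi$ identifies $T^*M_\RR$ with $\pi^*T^*(M_\RR/M)$ and carries $\Lambda_\Sigma$ to $\pi^*\overline\Lambda_\Sigma$, while $\Sc_\Sigma=\pi^{-1}\overline\Sc_\Sigma$, so both constructibility and the singular-support bound are tested after pulling back along $\pi$ and hence descend; this is the content of the microlocal equivariantization statement (\cref{equivariantization for microlocal sheaf category}).

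To finish, I would note that the essential image $\Shv_{\Lambda_\Sigma}(M_\RR;\Sp)$ of $\kappa$ (\cref{equivariantmainthm}) is a $\QCoh(B\TT)$-submodule --- translation by the discrete group $M$ preserves $\Sc_\Sigma$ and $\Lambda_\Sigma$, and $\Shv_{\Lambda_\Sigma}$ is closed under colimits --- so $\kappa$ is an equivalence of $\QCoh(B\TT)$-modules onto it; base changing along the symmetric monoidal functor $(-)\otimes_{\QCoh(B\TT)}\Sp$ and inserting the two identifications above turns it into a symmetric monoidal equivalence from $\QCoh(X_\Sigma)$ onto $\Shv_{\overline\Lambda_\Sigma}(M_\RR/M;\Sp)$, which we take as $\overline\kappa$; being an equivalence onto a full subcategory of $\Shv(M_\RR/M;\Sp)$, it is fully faithful with the stated image, and symmetric monoidal by construction. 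The hard part will be the target identification: one must be careful that the \emph{colimit} defining $(-)\otimes_{\Shv(M;\Sp)}\Sp$, rather than a homotopy-fixed-point limit, is what the quotient $M_\RR/M$ computes even though $M\cong\ZZ^n$ is infinite --- this is exactly the adjointability of the descent (co)simplicial diagram --- and that constructibility and singular support are local for the covering $\pi$, so that the $\Lambda_\Sigma$-condition upstairs matches precisely the $\overline\Lambda_\Sigma$-condition downstairs.
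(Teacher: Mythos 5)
Your proposal follows essentially the same route as the paper: base change the compatibility square of \cref{mainthmB} along the augmentation $\QCoh(B\TT)\simeq\Fun(M,\Sp)\to\Sp$, identify the source with $\QCoh(X_\Sigma)$ (the paper's \cref{de-equivariantization for quasicoherent}), identify the target via descent along the $M$-cover $M_\RR\to M_\RR/M$ together with the locality of constructibility and singular support (the paper's \cref{equivariantization for large sheaf category} and \cref{equivariantization for microlocal sheaf category}), and conclude exactly as in \cref{nonequivariant version of the correspondence}. The one place you undersell the work is the source identification: calling $\QCoh([X_\Sigma/\TT])\otimes_{\QCoh(B\TT)}\Sp\simeq\QCoh(X_\Sigma)$ "essentially formal" by the forget-the-grading heuristic is not a proof, since the assertion that $\QCoh$ takes the pullback square to a pushout in $\CAlg(\Prl)$ requires an input such as perfectness of $[X_\sigma/\TT]$ and $B\TT$ (the paper's route, via SAG), or an argument through the affine morphism $X_\Sigma\to[X_\Sigma/\TT]$, together with commuting the Zariski limit over $\Sigma^{\op}$ past the relative tensor product (the paper uses dualizability of $\Sp$ over $\QCoh(B\TT)$ for this); with that ingredient supplied, your argument matches the paper's.
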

As an application, one obtains Beilinson's theorem on the projective line (and projective spaces) by combining \cref{nonequivariant main theorem} and the exodromy equivalence.
\begin{mainthm}[\cref{beilinson}]
There is an equivalence of categories:
\[\QCoh(\mathbb{P}^1_\SS)\simeq \Fun(\bullet\rightrightarrows\bullet;\Sp).\]
\end{mainthm}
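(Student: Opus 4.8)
The plan is to specialize \cref{nonequivariant main theorem} to the fan of $\mathbb{P}^1$ and then identify the resulting category of constructible sheaves of spectra by means of the exodromy equivalence; essentially all the content is an explicit computation on a stratified circle.

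First I would set $N=\ZZ$ and take for $\Sigma$ the complete fan in $N_\RR=\RR$ whose maximal cones are the two half-lines $\RR_{\geq 0}$ and $\RR_{\leq 0}$. This fan is smooth and projective, and its associated flat toric scheme is $X_\Sigma=\mathbb{P}^1_\SS$, so \cref{nonequivariant main theorem} provides a symmetric monoidal equivalence
\[
\overline{\kappa}\colon\QCoh(\mathbb{P}^1_\SS)\;\xrightarrow{\ \sim\ }\;\Shv_{\overline{\Lambda}_\Sigma}(M_\RR/M;\Sp).
\]
Here $M=\ZZ$, $M_\RR=\RR$, and $M_\RR/M=\RR/\ZZ=S^1$. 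Each ray of $\Sigma$ spans all of $N_\RR$, so its annihilator in $M_\RR$ is $\{0\}$; hence $H_\Sigma=\ZZ\subseteq\RR$ and $\overline{\Sc}_\Sigma$ is the stratification of $S^1$ into the single point $\bar 0$ and the complementary open arc $U=S^1\setminus\{\bar 0\}$. Likewise, in $\Lambda_\Sigma$ the cone $\{0\}$ contributes the zero section $\RR\times\{0\}$, while the two rays contribute $\ZZ\times\RR_{\leq 0}$ and $\ZZ\times\RR_{\geq 0}$; so $\Lambda_\Sigma=(\RR\times\{0\})\cup(\ZZ\times\RR)\subseteq T^*\RR$, and $\overline{\Lambda}_\Sigma\subseteq T^*S^1$ is the zero section together with the full cotangent fiber over $\bar 0$.

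Next I would observe that for this particular stratification the singular-support condition is automatic: any $\overline{\Sc}_\Sigma$-constructible sheaf on $S^1$ has singular support contained in the union of the (closures of the) conormals of its two strata, which is precisely the zero section of $T^*S^1$ together with the cotangent fiber over $\bar 0$ --- that is, $\overline{\Lambda}_\Sigma$. Therefore $\Shv_{\overline{\Lambda}_\Sigma}(S^1;\Sp)$ equals the full category $\Shv_{\overline{\Sc}_\Sigma}(S^1;\Sp)$ of all $\overline{\Sc}_\Sigma$-constructible sheaves of spectra, and the exodromy equivalence for this conically stratified circle gives
\[
\Shv_{\overline{\Sc}_\Sigma}(S^1;\Sp)\;\simeq\;\Fun\bigl(\mathrm{Exit}(S^1,\overline{\Sc}_\Sigma);\Sp\bigr).
\]

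Finally I would compute the exit-path $\infty$-category directly. It has two objects $\bar 0$ and $U$, with $\mathrm{Map}(\bar 0,\bar 0)\simeq\ast$, with $\mathrm{Map}(U,U)\simeq\ast$ because $U$ is a contractible arc, with $\mathrm{Map}(U,\bar 0)=\emptyset$ because an exit path cannot re-enter a deeper stratum, and with $\mathrm{Map}(\bar 0,U)$ the discrete two-element set recording the two directions in which one may exit $\bar 0$ into $U$. Hence $\mathrm{Exit}(S^1,\overline{\Sc}_\Sigma)$ is equivalent to the ordinary $1$-category $\bullet\rightrightarrows\bullet$, and composing the three displayed equivalences gives $\QCoh(\mathbb{P}^1_\SS)\simeq\Fun(\bullet\rightrightarrows\bullet;\Sp)$. (The same argument treats $\mathbb{P}^n$: one replaces $\bullet\rightrightarrows\bullet$ by the exit-path category of the FLTZ stratification of the torus $(S^1)^n$, which reproduces Beilinson's quiver description.) The one delicate point is this last computation --- in particular, checking that the exit-path $\infty$-category is here $1$-truncated and that $\mathrm{Map}(\bar 0,U)$ carries no higher homotopy --- but this is the standard analysis of a circle stratified by one point, and everything else is bookkeeping.
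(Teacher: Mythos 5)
Your proposal is correct and is essentially the paper's own proof: specialize the non-equivariant theorem to the fan of $\mathbb{P}^1$, observe that on the circle stratified at one point the singular-support condition is vacuous (sheaves are locally constant away from $\bar 0$, so the singular support lies in the zero section there, while at $\bar 0$ the fiber of $\overline{\Lambda}_\Sigma$ is all of $N_\RR$ and imposes no constraint), and then apply exodromy, identifying the exit-path category of the stratified $S^1$ with the quiver $\bullet\rightrightarrows\bullet$. The one caveat is your closing parenthetical: the same argument does \emph{not} treat $\mathbb{P}^n$ for $n\geq 2$, since there the singular-support condition is a genuine constraint on $\overline{\Sc}_\Sigma$-constructible sheaves on the torus, and the paper explicitly notes that further arguments beyond exodromy (generation by the $\Oc(i)$ via the probing-sheaf argument) are needed to recover Beilinson's quiver with relations.
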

The de-equivariantization in \cref{nonequivariant main theorem} can be thought of as performing base change along the symmetric monoidal functor 
\[\colim:\Fun(\ZZ^n,\Sp)\longrightarrow\Sp.\]
More generally, one can obtain a relative version of toric constructions as in \cref{relative toric construction} by base changing along other colimit-preserving symmetric monoidal functors out of $\Fun(\ZZ^n;\Sp)$: in particular this recovers the result of the second named author and Pyongwon Suh \cite{hu2023coherentconstructiblecorrespondencetoricfibrations} relating quasi-coherent sheaves on a toric fibration to a category of sheaves on the topological torus with twisted coefficient category, see \cref{toric fibration}. 

\par
We also offer a conceptual framework for the `log-perfectoid mirror symmetry' as introduced by Dmitry Vaintrob, ensuring that (a large category version of) \cite[Theorem 2]{Vaintrob2} holds over $\SS$.
See \cref{Remark on idempotent algebra and log perfectoid quasicoherent of vaintrob} for the connection to his work on log quasi-coherent sheaves. This may serve as a motivation for Sasha Efimov's computation with continuous K-theory of $\Shv(\RR^1;\Sp)$ in \cite{efimov2024ktheorylocalizinginvariantslarge}. See \cite{baiburklund} for an expository account of these materials, where the first named author made some computation of Picard groupoid out of this with Robert Burklund.
\subsection{Inspirations and technicalities}
Needless to say, there have been numerous papers on this story and we could only mention an incomplete list of references in this introduction. We will now list some of them that inspired our project. We then provide some justifications for our (unfortunately, long) writing here.
Finally, we briefly mention some of the technical details, which should be interesting to the devoted readers.
\begin{rem}[Proof ideas from the literature]

Most of the ideas in this paper have appeared in one way or another in the literature.
The main proof method is to adapt the constructions of \cite{FLTZ} in the context of large categories, $\SS$ coefficient, and with symmetric monoidal structures.
The method of localization along idempotent algebras has been used in \cite{Kuwagaki_2020} disguised as the Tamarkin projector.
The proof we present for the characterization of the image in terms of singular supports is taken from \cite{zhou2017twistedpolytopesheavescoherentconstructible}.
Finally, the idea of applying de-equivariantization in this story was spelled out in \cite{shende2021toricmirrorsymmetryrevisited}.
\end{rem}
\begin{rem}[Dropping assumptions on smoothness and projectivity]
The restriction on the smoothness and projectivity of the fan is removed in \cite{Kuwagaki_2020}.
But we do not pursue the generality as in there.
\end{rem}
\begin{rem}[Necessity of higher algebra]
It is clear that in this story of coherent-constructible correspondence, higher categorical techniques are needed in constructing the functors and characterizing images.
Here we give a presentation without directly using model categories or dg-categories, of all the functors and categories. For comparison, it would be difficult to articulate the convolution product on the category of sheaves on a real vector space as a symmetric monoidal structure in terms of derived category of sheaves. This kind of difficulties would only add up when one works with spectral coefficient.
It appears to us that applying the language of higher algebra is the most convenient way to spell out the details.
\end{rem}
\begin{rem}[Large categories]
In this note, we systematically work with large (presentable stable) categories.
This approach simplifies certain constructions involving 'generators', which tend to be more intricate in small categories.
Another significant reason for maintaining this level of generality stems from our interest in $\Shv(\RR^n;\Sp)$: since it is not compactly generated, there is no obvious reason to hope for an algebro-geometric mirror object $Y$ such that
$$\QCoh(Y)\overset\simeq\longrightarrow\Shv(\RR^n;\Sp).$$
The sheaf category is however dualizable in the sense of \cite{efimov2024ktheorylocalizinginvariantslarge} with a presentably symmetric monoidal structure of convolution. Inspired by utility of such (dualizable but not compactly generated) categories in analytic geometry, one would hope to get a better understanding of them.
For example, Dmitry Vaintrob \cite{Vaintrob2} constructs an almost mathematics object $Y$ as a mirror for $\Shv(\RR^n;\Sp)$.
In other words, his `log-perfectoid' construction provides such $Y$ with $\QCoh(Y)\simeq\Shv(\RR^n;\Sp)$.
This should be thought of as algebraization of the sheaf category.
\end{rem}
\begin{rem}[Mirror symmetry over the sphere spectrum] 
It is widely expected that one can define a version of Fukaya categories over the sphere spectrum (see, for example, \cite{abouzaid2024foundationfloerhomotopytheory,lurie2018associativealgebrasbrokenlines,porcelli2024bordismflowmodulesexact,Jin_2024} for various perspectives of works towards a definition).
The $\ZZ$-linear equivalence between Fukaya categories and (microlocal) sheaf categories supplied by \cite{ganatra2023microlocalmorsetheorywrapped} should carry over to this new setting.
In particular, modeling the $\SS$-linear Fukaya category of $T^*\MR$ (with stop given by $\Lambda_\Sigma$) by $\Shv_{\Lambda_\Sigma}(\MR;\Sp)$, our result may be interpreted as an instance of $\SS$-linear mirror symmetry.
\end{rem}
\begin{rem}[Higher structures from mirror symmetry]
Another reason for us to implement mirror symmetry over $\SS$ is the hope that it would motivate constructions in category theory and homotopy theory.
A wonderful example of such an advance is provided in \cite{LurieRotation} where Jacob Lurie made the observation that Waldhausen's $S$-construction is corepresented by cosimplicial objects $\mathrm{Quiv}^\bullet$ and this family of objects has certain coparacyclic structure.
In fact, symplectic geometry provides a motivation for such an observation (see \cite[Section 1.2]{tanaka2019cyclicstructuresbrokencycles} for more on this): each $\mathrm{Quiv}^n$ could be seen (after 2-periodization) as an $\mathbb{S}$-linear topological Fukaya category on the $2$-dimensional disc with $n+1$ stoppings on the boundary, and the (para)cyclic symmetry comes from rotations of the disc. The actual construction of $\mathrm{Quiv}^\bullet$ however, runs on the `mirror' side, i.e., as the category of matrix factorizations in spectral algebraic geometry.
It is possible to relate the content of this note to the above story in the following way: it was explained in \cite{nadler2016wrappedmicrolocalsheavespairs,ganatra2023microlocalmorsetheorywrapped} that topological Fukaya categories could be modeled locally, by (the microlocalization of) the category of sheaves with prescribed singular supports.
We hope that the description of such categories in terms of algebraic geometry might help with constructions of higher structures, such as those suggested in \cite{tanaka2019cyclicstructuresbrokencycles,Dyckerhoff_2021}.
Regrettably, our grasp of symplectic geometry limits our ability to offer further insights.
\end{rem}
Now we highlight some technicalities in the paper that we find interesting.
\begin{rem}[Strategy for the construction of $\kappa$]
The idea behind the construction of the functor $\kappa$ comes in two parts.
\par
First, we construct $\kappa$ for \stress{affine} toric varieties indexed by $\sigma\in\Sigma$.
This is implemented by the following correspondence:
$$\QCoh([X_\sigma/\TT])\overset{\simeq}\longleftarrow\Psh{\Theta(\sigma)}{\Sp}\longrightarrow\Shv(M_\RR;\Sp)$$
where the functor on the right is lax symmetric monoidal and fully faithful.
The middle category is the presheaf category on a symmetric monoidal 1-category (of combinatorial nature).
With the universal properties of Day convolution monoidal structures,
it suffices to construct symmetric monoidal functors out of $\Theta(\sigma)$ - which is still a laborious work: see the following remarks for a quick idea of how to write down these functors.
With the functors at hand, one can follow the arguments from \cite{Moulinos} to prove that the functor on the left is an equivalence. 

The second step involves \stress{gluing}: for the inclusion of cones $\sigma\subseteq \tau$, there is a symmetric monoidal restriction functor 
$$\QCoh([X_\tau/\TT])\longrightarrow\QCoh([X_\sigma/\TT]).$$
One can think of this as a diagram indexed by $\sigma\in\Sigma^\op$ and Zariski descent implies that the limit of this diagram is the category  $\QCoh([X_\Sigma/\TT])$.
The construction in the first step is compatible with the restriction functor, allowing one to take limits on the sheaf category side to obtain the functor $\kappa$.
\end{rem}
\begin{rem}[Constructing functors into $\QCoh$] A prototypical example of the functor we will construct that maps into $\QCoh([X_\sigma/\TT])$ is the symmetric monoidal functor
$$\Fun(\ZZ_\leq;\Sp)\longrightarrow\QCoh([\AA^1/\GG_m])$$
which classifies the universal line bundle $\Oc(1)$ and the universal section $\cdot x:\Oc\rightarrow\Oc(1)$ (see \cite{Moulinos}). Note that this says in particular that the line bundle $\Oc(1)$ is a strict Picard element as in \cite{carmeli2022strictpicardspectrumcommutative}. See \SAG{Warning}{5.4.3.3} for more on this notion of strictness. Our method of construction passes through an unstable (set-valued, actually) model of such data, which provides an alternative construction of the functor in the proof of \cite[Theorem 4.1]{Moulinos}. We will also construct a slightly generalized version of this with target being $\QCoh([\AA^n/\GG_m^n])$.
\end{rem}
\begin{rem}[Constructing functors into $\Shv$] A prototypical example of the functor we will construct that maps into $\Shv(M_\RR;\Sp)$ (equipped with convolution) is a lax symmetric monoidal functor
$$\Fun(\ZZ_\leq;\Sp)\longrightarrow\Shv(\RR^1;\Sp)$$
which sends $n\in \ZZ$ to the dualizing sheaf on the open half line $\omega_{(-\infty,n)}$.
This is achieved by making a more general construction: given a commutative monoid $M$ in $\LCH$, we construct a lax symmetric monoidal structure on the relative homology functor taking a pair $(X,f:X\rightarrow M)$ to $f_!f^!\omega_M$.
With this functor at hand, the problem is reduced to $1$-categorical manipulations.
This general construction is very much inspired by \cite[Chapter 3]{GL}, and we believe that it has other interesting uses.
\end{rem}
\begin{rem}[Gluing in $\Shv$]
To make the gluing procedure precise, we will prove a sheaf-theoretic counterpart of Zariski descent in $\Shv(\RR^n;\Sp)$.
This is implemented with idempotent algebras as in \HA{Definition}{4.8.2.8}.
For example, the dualizing sheaf $\omega_{(-\infty,0)}$ is an idempotent algebra in $\Shv(\RR^1;\Sp)$ for the convolution product, and this phenomenon generalizes to other cones. Given a smooth projective fan $\Sigma$, we will produce a collection of idempotent algebras in $\Shv(\RR^n;\Sp)$ and show that their meet is the unit $\mathbb{1}_{\Shv(\RR^n;\Sp)}$ as an idempotent algebra.
\end{rem}
\begin{rem}[Singular supports for polyhedral sheaves] To characterize the image of $\kappa$, we make use of the recent advances \cite{clausenJansen2023reductiveborelserre,haine2024exodromyconicality} of the exodromy equivalence with large category of constructible sheaves. Following \cite{FLTZ}, we supply a definition of singular support for sheaves constructible for affine hyperplane arrangements - via the Fourier-Sato transform (compare the general definition laid out in \cite{Jin_2024}).
This definition of singular support provides a convenient setup for us to apply the non-characteristic deformation lemma \cite{robalo2016lemmamicrolocalsheaftheory} and obtain the characterization of image of $\kappa$. The proof presented here supplements some details missing in \cite{zhou2017twistedpolytopesheavescoherentconstructible}.
\end{rem}
\subsection{Acknowledgments}
The inception of this project dates back to 2022, when YH traveled to Copenhagen and shared a roof with QB, for \emph{Masterclass: Cluster Algebra and Representation Theory} hosted by the Copenhagen Centre for Geometry and Topology. This document would not exist without the encouragement of Shachar Carmeli.
We thank Robert Burklund, Peter Haine, Maxime Ramzi, and Jan Steinebrunner for their time and patience in answering our questions.
Many people have taken their time to listen to the progress and outcome of this writing, including Dustin Clausen, Sasha Efimov, Lars Hesselholt, Gus Schrader, and Hiro Lee Tanaka, and we are grateful for their interest and comments.
QB was supported by the Danish National Research Foundation through the Copenhagen Centre for Geometry and Topology (DNRF151). YH was supported by the NSF grant DMS 2302624.
\subsection{Conventions}
\begin{notation}[Category theory]
    We don't touch on set-theoretic issues in this note. We write $\Cat$ for the $(\infty,1)$-category of quasi-categories, functors, natural isomorphisms and so on. We refer to objects in $\Cat$ as 'categories' to avoid putting $\infty$ in front of everything. This however makes us write `stable category' instead of more established name `stable $\infty$-category'.
    We identify a $1$-category with its nerve in $\Cat$ and stress that it is $1$-category when we have one. We write $\Spc$ for the category of spaces (or homotopy types, or anima) and $\Sp$ for the stable category of spectra. We write $\Map$ for mapping spaces in a category and $\map$ for mapping spectra in a stable category.
\end{notation}
\begin{notation}[Simplicial stuff]
    By $\Delta$ we mean (a skeleton of) the (1-)category of non-empty ordered finite sets and order preserving maps between them. A (co)simplicial diagram in $\Cc$ is a functor from $(\Delta)\Delta^\op$ to $\Cc$. We only draw face maps when visualizing a (co)simplicial diagram. We write $d^i$ for the structure (face) maps in a cosimplicial diagrams.
\end{notation}
\begin{notation}[Symmetric monoidal categories] We write $(\Cc,\otimes)$ for a symmetric monoidal category and often refer to $\Cc$ as a symmetric monoidal category, omitting the monoidal structure. We write $\Cc^\otimes$ for the underlying operad of $(\Cc,\otimes)$. We write $\CAlg(\Cc,\otimes):=\Alg_{\mathbb{E}_\infty}(\Cc^\otimes)$ for the category of $\EE_\infty$-algebras in $\Cc$. And when there is no danger of confusion, we will omit the monoidal structure and write $\CAlg(C)$. For example, $\CAlg(\mathrm{Sp})$ would refer to the category of $\mathbb{E}_\infty$-ring spectra.  In the special case of Set or Spc equipped with Cartesian symmetric monoidal structure, we also write CMon for the category of commutative monoids and CGrp for the category of commutative groups. All the presheaf categories are assumed to carry Day convolution structure when considered as a symmetric monoidal category.
\end{notation}
\begin{notation}[(Lax) symmetric monoidal functors]
    For two symmetric monoidal categories $\Cc$ and $\Dc$, we write $\Fun^\otimes(\Cc,\Dc)$ for the category of symmetric monoidal functors from $\Cc$ to $\Dc$. We write $\Fun^{\lax\otimes}(\Cc,\Dc)$ for the category of symmetric monoidal functors from $\Cc$ to $\Dc$.
    We write $\SMCat$ for the category of symmetric monoidal categories and (strongly) symmetric monoidal functors between them.
\end{notation}
\begin{notation}[Algebraic geometry]
    We approach spectral algebraic geometry through the functor of points. We write $\Stk$ for the full subcategory of fpqc sheaves inside $\Fun(\CAlg^\mathrm{cn},\Spc)$ (what's better, the objects we are dealing with in this note are all geometric stacks in the sense of \SAG{Definition}{9.3.0.1}), and we write $\Spet(-)$ for the Yoneda functor $\CAlg^{\mathrm{cn},\op} \rightarrow\Fun(\CAlg^\mathrm{cn},\Spc)$ which factors through $\Stk$ (In SAG, $\Spet$ was used for another construction, but Lurie has provided a comparison with this Yoneda point of view in \SAG{Proposition}{1.6.4.2}).
\end{notation}
\begin{notation}[Topological spaces]
    We write $\LCH$ for the (1-)category of locally compact Hausdorff space and continuous maps between them. However, all topological spaces of interest in this note are finite dimensional manifolds. We often write $j_U:U\rightarrow X$ for the inclusion of an open subset and $i_Z:Z\rightarrow X$ for the inclusion of a closed subset. We say a map $f:X\rightarrow Y$ is \'etale if it is a local homeomorphism.
\end{notation}
\begin{notation}[Sheaf theory]
\label{convention on sheaf theory}
It will be very convenient for us to extract a `six-functor formalism' out of \cite{volpe2023sixoperation} on the category of locally compact Hausdorff topological spaces. We write $\Shv(X;\Sp)$ for the category of sheaves of spectra on a locally compact Hausdorff topological space $X$, and we write $f^*\dashv f_*$, $f_!\dashv f^!$ and $\otimes\dashv\Hom$ for the six functors that come with it.
For an open $U\subseteq X$, we write $\underline\SS_U\in\Shv(X;\Sp)$ for the sheafification of the $\SS$-linearized presheaf represented by $U$. In other words, if we write $j_U:U\rightarrow X$ for the inclusion map and $\underline\SS\in\Shv(U;\Sp)$ for the constant sheaf valued at $\SS$, $\underline\SS_U$ is equivalently 
    $$\underline\SS_U\defeq j_{U!}\underline\SS\in\Shv(X;\Sp)$$
    and we abusively call it the representable sheaf on $U$. Note that $\underline\SS_X$ is just constant sheaf valued at $\SS$ on $X$. Similarly for a closed subset $Z\subseteq X$ we write 
    $$\underline\SS_Z\defeq i_{Z*}\underline\SS\in\Shv(X;\Sp).$$
    
    We reserve the symbol $\omega$ for \stress{dualizing sheaves}. Let $p:X\rightarrow*$ be the canonical map to the final object. The dualizing sheaf of X is defined to be
    $$\omega_X\defeq p^!\underline\SS\in\Shv(X;\Sp).$$
    Similarly, when we work with an open subset $U$ or closed subset $Z$ in $X$, we write
    \[\omega_U\defeq j_{U!}j_U^!\omega_X\in\Shv(X;\Sp)\]
    and
    \[\omega_Z\defeq i_{Z!}i_Z^!\omega_X\in\Shv(X;\Sp).\]
    
\end{notation}
\section{Combinatorial model}
In \cite[Section 3]{FLTZ} the authors defined a poset $\Gamma(\Sigma,M)$ that interpolates between the category of quasi-coherent sheaves and the category of constructible sheaves. In this section, we recall their definition and present functoriality of the definition. Refer to \cref{notation:fan} for explanations of lattices, cones, fans, and associated topics if you are unfamiliar with them.

\begin{defn}[Poset of cones]
    Given a pair of lattice and fan $(N,\Sigma)$, we \stress{consider $\Sigma$ as a poset} as follows: the objects of $\Sigma$ are cones $\sigma\in\Sigma$ and morphisms between two cones are inclusions.
\end{defn}
\begin{defn}
    Let $\Closed(M_\RR)$ be the poset of closed polyhedral subsets in $\MR$ (the subsets that can be written as a Minkowski sum of a polytope and a polyhedral cone) with the morphisms being inclusions. This is a symmetric monoidal 1-category if we take \stress{Minkowski sum} $+$.
\end{defn}
\begin{defn}[The $\Theta$ category]
    Fix a cone $\sigma\subset N_\mathbb{R}$, the (1-)category $\Theta(\sigma)$ is defined as the full subcategory of posets of closed subsets in $M_\RR$:$$\Theta(\sigma)\subseteq\Closed(M_\mathbb{R}).$$ It is spanned by objects of the form $m+\sigma^{\svee}$ for $m\in M$. 
\end{defn}
Observe that the association $\sigma\mapsto\Theta(\sigma)$ is functorial in $\sigma$ that it assembles into a functor $$\Theta(-):\Sigma^{\op}\rightarrow\Cat.$$
Given an inclusion $i:\sigma\rightarrow\tau\in\sigma$ of cones, the induced functor is 
\begin{align*}
\Theta(i)\colon \Theta(\tau) & \rightarrow\Theta(\sigma) \\
m+\tau^\svee & \mapsto m+\sigma^{\svee}.
\end{align*}
\begin{rem}[Symmetric monoidal structures on $\Theta(-)$] We make the following observations: 
\label{alternative way to see symmetric monoidal structure on Theta}
\begin{enumerate}
    \item Each $\Theta(\sigma)$ has the structure of a symmetric monoidal (1-)category. This could be obtained by observing that as a full subcategory, $\Theta(\sigma)$ inherits a (non-unital) symmetric monoidal structure from the symmetric monoidal category $(\Closed(M_\RR),+)$. To make it unital, it suffices to note that $\sigma^\svee\in\Theta(\sigma)$ acts as a tensor unit. 
    \item We might also observe that $\sigma^\svee$ is an idempotent algebra in $(\Closed(M_\RR),+)$ and define $\Theta(\sigma)$ to be a full subcategory of $\Mod_{\sigma^\svee}(\Closed(M_\RR))$, and it directly follows that $\Theta(\sigma)$ inherits a symmetric monoidal structure.
    \item For each inclusion $i:\sigma\rightarrow\tau$, $\Theta(i)$ has the structure of a symmetric monoidal functor which can be observed directly since we are working with posets: there is no coherence issue. \emph{In conlusion}, $\Theta(-)$ lifts to a functor $\Sigma^\op\rightarrow\SMCat$.
    \item For later use, consider the discrete category of $M$ with symmetric monoidal structure given by addition. There are symmetric monoidal functors \[p_\sigma:M\longrightarrow\Theta(\sigma):m\mapsto m+\sigma^\svee\] and they assemble into a natural transformation between diagrams in $\SMCat$ indexed by $\sigma\in \Sigma^\op$ where the source is thought of as a constant diagram.
\end{enumerate}

\end{rem}

\begin{rem}[Comparison with other models]
    Our definition of $\Theta(-)$ works cone by cone, while in \cite[Section 5]{Vaintrob1}\cite[Section 3]{FLTZ} global categories were proposed. Later we will see that one wants to compute 
    $$\lim_{\Sigma^\op}\Fun(\Theta(-)^\op,\Sp).$$
    An explicit presentation of the limit of such a diagram of presheaf categories (with arrows given by left Kan extensions of functors) remains unclear to us. However, `(co)sheaves for Morelli topology' as in \cite[Section 6]{Vaintrob1} seems like a combinatorial presentation of the limit.
\end{rem}
\section{Toric geometry}
Classically, toric geometry builds on the linearization functor $\mathbb{Z}[-]:\CMon(\Set)\rightarrow\CAlg(\mathrm{Ab})$. For example, $\mathbb{Z}[\mathbb{N}]=\mathbb{Z}[X]$ is the one-variable polynomial ring. Toric schemes are constructed from the $\Spec(-)$ of these monoid algebras by gluing along maps coming from $\CMon(\Set)$. In this section we present some basic materials on \stress{flat} toric geometry over $\SS$
\footnote{While it's possible to make sense of, say, a non-flat $\mathbb{P}^1$ as in \SAG{Construction}{19.2.6.1}, we don't know of a general construction of non-flat toric schemes.}
. Although we will not prove it, the toric schemes we define here will be flat over the base ring $\SS$. The adjective `flat' is also reminiscent of the fact that after base changing to $\ZZ$,
it recovers the classical construction of toric schemes, which are flat over $\ZZ$. The idea of looking at flat toric scheme over $\SS$ is certainly well-known, going back to \cite{LurieEllipticSurvey} and \SAG{Remark}{5.4.1.9}. Most of the discussions here would be rather formal: we are mainly interested in the category of quasi-coherent sheaves and related categorical nonsense.\par
In the first part, we fix notations for the toric construction and explain how the action diagram presents the quotient stack by the torus action. In the second part, we recall the functoriality of quasi-coherent sheaves and provide an unstable model for quasi-coherent sheaves on the quotient stack. This is used in the third part, where we construct combinatorial-coherent comparison functor. Finally we follow the approach of \cite{Moulinos} to show that this functor is an equivalence.
\subsection{Recollections on toric geometry}
\begin{notation} \label{notation:fan}We recall the following notations useful in the combinatorics of toric varieties.
\begin{itemize}
    \item A \stress{lattice} is a finitely generated free abelian group $N\in \mathrm{Ab}=\CGrp(\Set)$.
    \item The \stress{dual lattice} $M$ of $N$ is $M\defeq\Hom_\mathrm{Ab}(N, \ZZ)\in\mathrm{Ab}$.
    \item A \stress{cone} $\sigma \subset N_{\RR} \defeq N \otimes_{\ZZ} \RR$ for us is  a  rational polyhedral cone in $N_\RR$.
    \item The \stress{dual cone} of $\sigma\subset N_\RR$ is $\sigma^\svee\defeq\{m \in M_\RR : \langle m, n\rangle \geq 0, \forall n \in \sigma\}\subseteq M_\RR.$
    \item A \stress{fan} $\Sigma$ in $N$ is a collection of strongly convex cones in $N$ closed under taking faces,
    such that every pair of cones either are disjoint or meet along a common face.
    \item A fan $\Sigma$ is \stress{smooth} \cite[Section 2.1]{fulton} if each of the cone $\sigma$ is spanned by part of a basis of $N$.
    \item A fan $\Sigma$ is \stress{projective} \cite[Section 3.4]{fulton} if it admits an integral moment polytope $P\subset M_\RR$: a polytope such that its faces are in bijection with cones in $\Sigma$ and the cone $\sigma$ corresponding to a face $F$ is precisely the dual cone of the angle spanned by $P$ along $F$.
\end{itemize}
\end{notation}
\begin{construction}[Flat toric scheme]
    Given a pair $(N, \Sigma)$ of lattice and fan.
    The assignment \[
    \sigma \mapsto S_\sigma \defeq \sigma^\svee \cap M \in \CMon(\Set)
    \]
    gives rise to a functor $\Sigma^\op \rightarrow \CMon(\Set) = \CAlg(\Set)$.
    On the other hand, the symmetric monoidal functors $$\Set \into \Spc \xrightarrow{\splus} \Sp$$
    induce a functor $\SS[-]: \CAlg(\Set) \rightarrow \CAlg(\Sp)$.
    Consider the image of $\sigma$ under this composite functor
    \[
    \Oc_\sigma \defeq \SS[\sigma^\svee \cap M]\in\CAlg(Sp)
    \]
    which should be thought of as the ring of functions on the affine toric scheme $X_\sigma$ associated to the cone $\sigma$.
    Postcomposing with $\Spet$, we get a functor $\Sigma\longrightarrow\Stk$:
    $$\sigma\mapsto\Spet(\Oc_\sigma).$$
    The \stress{flat toric scheme $X_\Sigma$ associated to $(N,\Sigma)$} is defined to be the colimit of this diagram
    \[
    X_\Sigma \defeq \colim_{\Sigma} \Spet \Oc_\sigma\in \Stk.
    \]
    computed in the category of  stacks.
\end{construction}

\begin{rem}[An alternative version of `toric geometry']
    Motivated by the fact that $\mathbb{N}^{\times k}$ is the free object on $k$ points in CMon(Set) (and similarly $\mathbb{Z}^{\times k}$ is the free object on $k$ points in CGrp(Set)), one might want to reimagine a toric geometry over the sphere spectrum building upon monoid algebra of free objects in CMon(Spc) (or CGrp(Spc)). We don't carry out the construction here, but only point out the following subtleties:
    \begin{enumerate}
        \item The natural numbers $\mathbb{N}$ (resp. $\mathbb{Z}$) is the free $\mathbb{E}_1$-monoid (resp. $\mathbb{E}_1$-group) on a point.
        However, when viewed as an $\mathbb{E}_\infty$-monoid, $\mathbb{N}$ is far from being a free object: a map in $\CMon(\Spc)$ from $\mathbb{N}$ instead picks out a `strictly commutative element' in the target.
        \item The flat affine line $\text{Sp\'et}(\mathbb{S}[\mathbb{N}])$ doesn't support the addition map, see \cite[Section 3.5]{LurieElliptic1}.
    \end{enumerate}
\end{rem}
\begin{example}[Flat tori over $\SS$]
    If one picks the fan to consist only of the origin, the associated flat toric scheme (named $\TT$) is the \stress{torus associated to $M$}:
    $$\TT\defeq\Spet(\SS[M]).$$
    Note that $\TT$ has the structure of a group object (and we will call it a group scheme) given that $M$ is a cogroup object in $\CMon(\Spc)$.
\end{example}
Recall that a toric variety over a field $k$ contains a torus as an open-dense subset and the torus action extends continuously to the whole variety. Now we explain the torus action in the setting of flat toric geometry.

\begin{construction}\label{construction: torus action}
    Recall that given a category $\Cc$ with all limits, and considering $\Cc$ as a Cartesian symmetric monoidal category,
    every object $X\in \Cc$ acquires a canonical commutative coalgebra structure,
    informally specified by regarding the diagonal as the comultiplication map
    \[
    \Delta: X \rightarrow X\times X.
    \]
    In particular, every map $f:Y \rightarrow X$ exhibits $Y$ as a comodule over $X$,
    with the coaction map informally specified by
    \[
    \mu: Y \xrightarrow{\Delta} Y \times Y \xrightarrow{(f, \id)} X \times Y.
    \]
    In fact this map is induced by the lift of $f:Y\rightarrow X$ to a map of coalgebras. Specializing to the situtation $\Cc = \CMon(\Set)$
    \footnote{Note that $\CMon(\Set)$ is preadditive.},
    we see that every submonoid $S_\sigma$ of $M$ is canonically coacted on by $M$. Moreover, these coactions are compatible with inclusions among $S_\sigma$. Therefore, $\Oc_\sigma = \SS[S_\sigma]$ acquires a canonical $\SS[M]$-comodule structure.
    Further passing to $\Spet$,
    this gives a compatible family of actions of the group scheme $\TT=\Spet \SS[M]$ on $\Spet \Oc_\sigma$,
    each encoded by a simplicial diagram
\[\begin{tikzcd}
	\cdots & {\Spet \Oc_\sigma \times \TT \times\TT} & {\Spet \Oc_\sigma \times \TT} & {\Spet \Oc_\sigma}
	\arrow[shift right=2, from=1-2, to=1-3]
	\arrow[shift left=2, from=1-2, to=1-3]
	\arrow[from=1-2, to=1-3]
	\arrow[shift left, from=1-3, to=1-4]
	\arrow[shift right, from=1-3, to=1-4]
	\arrow[shift left=3, from=1-1, to=1-2]
	\arrow[shift right=3, from=1-1, to=1-2]
	\arrow[shift right, from=1-1, to=1-2]
	\arrow[shift left, from=1-1, to=1-2].
\end{tikzcd}\]
Taking colimits along $\sigma$, we obtain the diagram
\[\begin{tikzcd}
	\cdots & {\left(\colim_\sigma\Spet \Oc_\sigma \right) \times \TT \times\TT} & {\left(\colim_\sigma\Spet \Oc_\sigma \right) \times \TT} & {\colim_\sigma\Spet \Oc_\sigma}
	\arrow[shift right=2, from=1-2, to=1-3]
	\arrow[shift left=2, from=1-2, to=1-3]
	\arrow[from=1-2, to=1-3]
	\arrow[shift left, from=1-3, to=1-4]
	\arrow[shift right, from=1-3, to=1-4]
	\arrow[shift left=3, from=1-1, to=1-2]
	\arrow[shift right=3, from=1-1, to=1-2]
	\arrow[shift right, from=1-1, to=1-2]
	\arrow[shift left, from=1-1, to=1-2],
\end{tikzcd}\]
because colimits are universal in $\Stk$.
\footnote{In particular, taking colimits commutes with taking finite products.}
We therefore obtain an action of $\TT$ on
\[
    X_\Sigma = \colim_{\sigma\in\Sigma} \Spet \Oc_\sigma,
\]
to which we refer as \stress{the torus action} of $\TT$ on $X_\Sigma$,
and the corresponding simplicial diagram
$(X_\Sigma//\TT)_\bullet$
the \stress{action diagram} of $\TT$ on $X_\Sigma$. Formally, one might think of each $\Spet\Oc_\sigma$ as an object in $\Mod_{\Spet(\SS[M])}\Stk$ and take colimits along $\Sigma$ in the module category. Given that the forgetful functor 
\[\Mod_{\Spet(\SS[M])}(\Stk)\longrightarrow\Stk\]
commutes with colimits, one sees that $X_\Sigma$ acquires an action of $\TT$, and its action diagram can be identified with the above action diagram.
\end{construction}
\begin{defn}
The quotient stack $[X_\Sigma/\TT]$ is the geometric realization of the action diagram of $\TT$ on $X_\Sigma$:
\[
[X_\Sigma/\TT] \defeq \colim_{\Delta^\op}
\left(
\begin{tikzcd}[cramped]
	\cdots & {X_\Sigma\times \TT \times\TT} & {X_\Sigma \times \TT} & {X_\Sigma}
	\arrow[shift right=2, from=1-2, to=1-3]
	\arrow[shift left=2, from=1-2, to=1-3]
	\arrow[from=1-2, to=1-3]
	\arrow[shift left, from=1-3, to=1-4]
	\arrow[shift right, from=1-3, to=1-4]
	\arrow[shift left=3, from=1-1, to=1-2]
	\arrow[shift right=3, from=1-1, to=1-2]
	\arrow[shift right, from=1-1, to=1-2]
	\arrow[shift left, from=1-1, to=1-2]
\end{tikzcd}
\right)\in\Stk.
\]
\end{defn}
\begin{rem}
The \v{C}ech nerve of the projection $X_\Sigma \rightarrow [X_\Sigma/\TT]$
is canonically identified with the action diagram of $\TT$ on $X_\Sigma$.
This is a direct consequence of \cref{Grouplike action gives groupoid object} and the fact that every groupoid object in an $\oo$-topos is effective \HTT{Theorem}{6.1.0.6}.
\end{rem}

\begin{rem}
    Alternatively, one might take the quotient affine locally on each $X_\sigma$ by defining
    $$[X_\sigma/\TT]\defeq\colim_{\Delta^\op}
\left(
\begin{tikzcd}[cramped]
	\cdots & {X_\sigma\times \TT \times\TT} & {X_\sigma \times \TT} & {X_\sigma}
	\arrow[shift right=2, from=1-2, to=1-3]
	\arrow[shift left=2, from=1-2, to=1-3]
	\arrow[from=1-2, to=1-3]
	\arrow[shift left, from=1-3, to=1-4]
	\arrow[shift right, from=1-3, to=1-4]
	\arrow[shift left=3, from=1-1, to=1-2]
	\arrow[shift right=3, from=1-1, to=1-2]
	\arrow[shift right, from=1-1, to=1-2]
	\arrow[shift left, from=1-1, to=1-2]
\end{tikzcd}
\right)\in\Stk$$
    via the action diagram. Then one can perform gluing
    $$[X_\Sigma/\TT]=\colim_{\sigma\in\Sigma}[X_\sigma/\TT]$$
    and obtain the same stack, since colimit commutes with colimit.
\end{rem}

\subsection{Quasi-coherent sheaves}
\label{section on quasi-coherent sheaves}
The functor  $$\QCoh:\Stk^\op\rightarrow\Cat$$
given in \SAG{Definition}{6.2.2.1} is lax symmetric monoidal in view of \cite[\SAGsubsec{6.2.6}]{SAG} and \HA{Theorem}{2.4.3.18}. To each stack it assigns a symmetric monoidal category:
\[X\mapsto\QCoh(X)\in\SMCat\]
such that to affines $\Spet(R)$ it assigns $\QCoh(\Spet(R))\simeq\Mod_R(\Sp)$.
In fact this functor preserves limits as in \SAG{Proposition}{6.2.3.1}, hence one gets a presentation of quasi-coherent sheaves on quotient stack as
$$\QCoh([X_\Sigma/\TT])\simeq\lim_{\Sigma^\op}\QCoh([X_\sigma/\TT])$$
while in turn each piece is presented by
$$\QCoh([X_\sigma/\TT])\simeq\lim_{\Delta}\left(
\begin{tikzcd}
	\cdots & {\QCoh(X_\sigma\times \TT \times\TT)} & {\QCoh(X_\sigma \times \TT)} & {\QCoh(X_\sigma)}
	\arrow[shift left=2, from=1-3, to=1-2]
	\arrow[shift right=2, from=1-3, to=1-2]
	\arrow[shift right, from=1-4, to=1-3]
	\arrow[shift left, from=1-4, to=1-3]
	\arrow[shift right=3, from=1-2, to=1-1]
	\arrow[shift left=3, from=1-2, to=1-1]
	\arrow[shift left, from=1-2, to=1-1]
	\arrow[shift right, from=1-2, to=1-1]
	\arrow[from=1-3, to=1-2]
\end{tikzcd}\right).
$$
Note that this is actually a limit of symmetric monoidal categories \cite[\SAGsubsec{6.2.6}]{SAG}. At first glance, it might seem difficult to write down objects explicitly in this category. Motivated by \SAG{Construction}{5.4.2.1}, we proceed by making the following unstable construction.
\begin{construction}[\stress{Unstable} analogue] Fix a cone $\sigma$ in a lattice $N$, recall that \cref{construction: torus action} provides an coaction of $M$ on $S_\sigma=\sigma^\svee\cap M$. The coaction is presented by the following cosimplicial diagram in $\CMon(\Spc)$:
\[\begin{tikzcd}
	\cdots & {S_\sigma\times M\times M} & {S_\sigma\times M} & {S_\sigma}.
	\arrow[shift left=2, from=1-3, to=1-2]
	\arrow[shift right=2, from=1-3, to=1-2]
	\arrow[shift right, from=1-4, to=1-3]
	\arrow[shift left, from=1-4, to=1-3]
	\arrow[shift right=3, from=1-2, to=1-1]
	\arrow[shift left=3, from=1-2, to=1-1]
	\arrow[shift left, from=1-2, to=1-1]
	\arrow[shift right, from=1-2, to=1-1]
	\arrow[from=1-3, to=1-2]
\end{tikzcd}\]
Passing to module categories (with the functors being extension-of-scalar), one obtains
\[\begin{tikzcd}
	\cdots & {\Mod_{S_\sigma\times M\times M}(\Spc)} & {\Mod_{S_\sigma\times M}(\Spc)} & {\Mod_{S_\sigma}(\Spc)}.
	\arrow[shift left=2, from=1-3, to=1-2]
	\arrow[shift right=2, from=1-3, to=1-2]
	\arrow[shift right, from=1-4, to=1-3]
	\arrow[shift left, from=1-4, to=1-3]
	\arrow[shift right=3, from=1-2, to=1-1]
	\arrow[shift left=3, from=1-2, to=1-1]
	\arrow[shift left, from=1-2, to=1-1]
	\arrow[shift right, from=1-2, to=1-1]
	\arrow[from=1-3, to=1-2]
\end{tikzcd}\]
This is a cosimplicial diagram of symmetric monoidal categories, and we write $\Mod_{S_\sigma}(\Spc)^M$ for the limit.
\end{construction}
\begin{rem}
Replacing $\Spc$ by $\Set$ in the above, we get a new category which we name as $\Mod_{S_\sigma}(\Set)^M$. Recall from \cite[Theorem 2.2]{hovey2002morita} that in classical algebraic geometry, the 1-category of comodules over a Hopf algebroid $(A,\Gamma)$ is the same as the 1-category of quasi-coherent sheaves on the stack $X_{(A,\Gamma)}$ it presents.
In a similar way, one can think of the pair $(S_\sigma,S_\sigma \times M)$ as a Hopf algebroid in the symmetric monoidal category $\CMon(\Set)$.\footnote{Recall that, traditionally, a Hopf algebroid is defined in the symmetric monoidal category $\Mod_R$.}
The limit definition of $\Mod_{S_\sigma}(\Set)^M$ mimics the definition of the category quasi-coherent sheaves on a stack.
Indeed, we can think of $\Mod_{S_\sigma}(\Set)^M$ as the category of comodules over the Hopf algebroid $(S_\sigma,S_\sigma\times M)$.
The object in such comodule category can be constructed with a finite amount of data while the category $\Mod_{S_\sigma}(\Set)^M$ relates to the category of quasi-coherent sheaves, as we explain below.
\end{rem}
\begin{rem}[1-categorical analogue and degeneracy]
Consider the limit presentation of $\Mod_{S_\sigma}(\Set)^M$. As the categories involved are all 1-categories, the limit is canonically identified with the limit of the diagram restricted to $\Delta_{\leq2}$ (see \cite[\href{https://arxiv.org/pdf/2207.09256v4.pdf\#appendix.A}{Proposition A.1}]{Hesselholt-Pstragowski}.
Note also that one can produce objects and morphisms in the limit with a finite amount of data (actually very little is needed).
More precisely, consider a cosimplicial diagram of $1$-categories $\Cc_\bullet$, the limit is still a $1$-category whose objects are pairs $(x,f)$ where $x$ is an object in $\Cc_0$, $f:d^1x\rightarrow d^0x$ is an isomorphism in $\Cc_1$ such that $d^0f\circ d^2f=d^1f$ in $\Cc_2$. A map from $(x,f)$ to $(y,g)$ is a map $\varphi:x\rightarrow y$ in $\Cc_0$ that commutes with the structure maps $f$ and $g$.
\end{rem}
\begin{example}[Explicit construction of the tautological object in $\Mod_{S_\sigma}(\Set)^M$]
\label{M as an object in the unstable quasicoherent category}
    Here we provide a concrete example of how to down objects in the category $\Mod_{S_\sigma}(\Set)^M$. We supply a particular lift\footnote{There are obviously others.} of $M$  to an object in this category, where $M$ is an $S_\sigma$-module via the canonical inclusion. To provide the lift is to provide the structure map (note that the relative tensor products are induced by different maps $S_\sigma\rightarrow S_\sigma\times M$ where the left one is $(\id,\mathrm{inclusion})$ and the right one is $(\id,0)$)
    $$f:M\times_{S_\sigma} S_\sigma \times M\longrightarrow M\times_{S_\sigma}S_\sigma\times M\in \Mod_{S_\sigma\times M}(\Set)$$
    which is an isomorphism and we define $f$ such that
    $$f(m,s,n)\defeq (m,s,n+m)\in M\times_{S_\sigma}S_\sigma\times M.$$
    It is a tedious exercise to check that $f$ satisfies the cocycle conditions as above and we leave it to the reader.
    Upon linearization, this object corresponds to the $*$-pushforward
    of $\Oc_{[\TT/\TT]}$ along the open immersion $[\TT/\TT]\hookrightarrow[X_\Sigma / \TT]$.
\end{example}
\begin{war}
    Given a symmetric monoidal functor $F:\Cc\rightarrow\Dc$ and $A\in\CAlg(\Cc)$, it induces a functor $F_A:\Mod_A(\Cc)\rightarrow\Mod_{F(A)}(\Dc)$. If $\Cc$ and $\Dc$ have geometric realizations and  tensor products in $\Cc$ and $\Dc$ commutes with geometric realizations, then both $\Mod_A(\Cc)$ and $\Mod_{F(A)}(\Dc)$ have symmetric monoidal structures given by relative tensor products. However, the functor $F_A$ lifts to a symmetric monoidal functor only when $F$ commutes with geometric realizations. The lift is functorial in the sense of \HA{Theorem}{4.8.5.16} (see below). The example to keep in mind is the following:
    \[\Set\rightarrow\Spc\rightarrow\Sp\]
     is a sequence of symmetric monoidal functors; The latter  preserves geometric realization while the former doesn't. For instance, the relative tensor product $X \times_\ZZ Y$ is in general not the same computed in $\Spc$ as done in $\Set$. When $X$ and $Y$ are both a point, in $\Set$ the outcome is still a point while in $\Spc$ one gets $B\ZZ$.
\end{war}

\begin{rem}[An antidote to the warning] As explained by above warning, for a given monoid $S\in\CMon(\Set)$, we don't have a symmetric monoidal structure on the inclusion functor $\Mod_S(\Set)\rightarrow\Mod_S(\Spc)$. One can, however, define a symmetric monoidal category sitting in both of them: take $\Mod_S(\Spc)^\free\subset\Mod_S(\Spc)$ to be the full subcategory spanned by coproducts of $S$. This
category inherits a symmetric monoidal structure and can be identified, symmetric monoidally, with the full subcategory spanned by coproducts of $S$ in $\Mod_S(\Set)$. To be very rigorous with the construction that follows, one should construct symmetric monoidal functor directly into $\Mod_S(\Spc)$, but we will construct functors into $\Mod_S(\Set)$ and observe that they lift to $\Mod_S(\Spc)$. We will also use the following fact obtained from \HA{Theorem}{4.8.5.16}.
\end{rem}

\begin{prop}
    Let $\Cc$ and $\Dc$ be symmetric monoidal categories  admitting all geometric realizations. Let $F:\Cc\rightarrow\Dc$ be a symmetric monoidal functor. Assume that:
    \begin{enumerate}
        \item Tensor products in $\Cc$ and $\Dc$ commute with geometric realizations.
        \item The functor $F$ commutes with geometric realizations.
    \end{enumerate}
    Then there is a diagram
\[\begin{tikzcd}
	{\CAlg(\Cc)} && {\SMCat}
	\arrow[""{name=0, anchor=center, inner sep=0}, "{\Mod_{(-)}(\Cc)}", curve={height=-12pt}, from=1-1, to=1-3]
	\arrow[""{name=1, anchor=center, inner sep=0}, "{\Mod_{F(-)}(\Dc)}"', curve={height=12pt}, from=1-1, to=1-3]
	\arrow[shorten <=3pt, shorten >=3pt, Rightarrow, from=0, to=1]
\end{tikzcd}.\]
    When evaluated at $A\rightarrow B\in\CAlg(\Cc)$, the diagram reads
\[\begin{tikzcd}
	{\Mod_A(\Cc)} & {\Mod_B(\Cc)} \\
	{\Mod_{F(A)}(\Dc)} & {\Mod_{F(B)}(\Dc)}
	\arrow[from=1-1, to=1-2]
	\arrow[from=2-1, to=2-2]
	\arrow[from=1-1, to=2-1]
	\arrow[from=1-2, to=2-2]
\end{tikzcd}.\]
\end{prop}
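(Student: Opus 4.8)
The plan is to deduce the statement from \HA{Theorem}{4.8.5.16} together with the fact that that construction is functorial in the symmetric monoidal category variable. Under hypothesis (1), that theorem already supplies, for each of $\Cc$ and $\Dc$ separately, a functor $\Mod_{(-)}(\Cc)\colon\CAlg(\Cc)\to\SMCat$ carrying $A$ to $\Mod_A(\Cc)$ with its relative tensor product $\otimes_A$ and carrying a map $A\to B$ to extension of scalars $-\otimes_A B$; precomposing the analogue for $\Dc$ with $\CAlg(F)\colon\CAlg(\Cc)\to\CAlg(\Dc)$ gives the functor $A\mapsto\Mod_{F(A)}(\Dc)$. What remains is to produce a natural transformation between these two functors $\CAlg(\Cc)\to\SMCat$, and the place where hypothesis (2) is used is in making its components symmetric monoidal rather than merely lax.

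For each $A$, base change along the symmetric monoidal $F$ gives $F_A\colon\Mod_A(\Cc)\to\Mod_{F(A)}(\Dc)$, $M\mapsto F(M)$ with its evident $F(A)$-module structure; it sends the unit $A$ to the unit $F(A)$, and it is always lax symmetric monoidal via the canonical comparison map $\lvert F\circ(-)\rvert\to F\lvert-\rvert$ applied to the two-sided bar construction $M\otimes A^{\otimes\bullet}\otimes N$ computing $M\otimes_A N$. Hypothesis (2) says precisely that this comparison map is an equivalence, so $F_A$ is strong symmetric monoidal, with coherent identifications $F(M\otimes_A N)\simeq F(M)\otimes_{F(A)}F(N)$. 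The same bar-construction identity, now with $N=B$ an $A$-algebra, makes the square in the statement commute as a square of symmetric monoidal functors.

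The step I expect to cost actual work is assembling the $F_A$ into a genuine morphism in $\Fun(\CAlg(\Cc),\SMCat)$ — supplying all higher coherences, not just the components and the one square. I would do this at the level of total categories rather than by hand: the symmetric monoidal $F$ induces a map $\Mod(\Cc)^\otimes\to\Mod(\Dc)^\otimes$ over $\CAlg(F)$, given on objects by $(A,M)\mapsto(F(A),F(M))$, which sends coCartesian edges to coCartesian edges (the $\infty$-categorical form of the commuting square) and, by hypothesis (2), is a symmetric monoidal functor for the relative-tensor-product structures on the total categories. Pulling $\Mod(\Dc)^\otimes$ back along $\CAlg(F)$ and straightening this map of coCartesian fibrations over $\CAlg(\Cc)$ yields the required natural transformation. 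Concretely this is \HA{Theorem}{4.8.5.16} run in the presence of a morphism of symmetric monoidal $\infty$-categories; everything beyond that bookkeeping is formal.
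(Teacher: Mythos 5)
Your proposal is correct and follows essentially the same route as the paper: both arguments reduce everything to \HA{Theorem}{4.8.5.16} with $\mathcal{K}=\{\Delta^\op\}$, use hypotheses (1) and (2) exactly to ensure that $F$ defines a morphism in $\CAlg(\mathrm{Mon}^{\mathcal{K}}_{\mathrm{Assoc}}(\Cat))$ so that the theorem's construction can be evaluated on it, and locate the strong (rather than lax) monoidality of the fiberwise base-change functors $\Mod_A(\Cc)\to\Mod_{F(A)}(\Dc)$ in the compatibility of $F$ with the bar construction computing relative tensor products. The only difference is in how the final coherent natural transformation is extracted: you pull back and straighten a map of monoidal coCartesian fibrations over $\CAlg(\Cc)$, whereas the paper applies $\CAlg(-)$ to the natural transformation supplied by that theorem, evaluates at $F$, and converts the resulting commutative square into the desired $2$-cell via the base-change adjunction $(-)\otimes_{\Cc}\Dc\dashv\mathrm{fgt}$ between $\CAlg(\Mod_{\Cc}(\Cat(\mathcal{K})))$ and $\CAlg(\Mod_{\Dc}(\Cat(\mathcal{K})))$ — a repackaging of the same coherence input rather than a genuinely different argument.
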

\begin{proof}
    See \cref{Taking module category}.
\end{proof}
The linearization functor $\SS[-]:\Spc\rightarrow\Sp$ is symmetric monoidal and preserves geometric realizations. So it induces, functorially, symmetric monoidal functors on module categories. This implies that there is a natural transformation from the cosimplicial diagram that presents $\Mod_{S_\sigma}(\Spc)^M$ to the cosimplicial diagram that presents $\QCoh([X_\sigma/\TT])$. We write
$$\Oc[-]:\Mod_{S_\sigma}(\Spc)^M\rightarrow\QCoh([X_\sigma/\TT])$$
for the symmetric monoidal functor one obtains after taking limit along $\Delta$. Note that both sides of above are indexed over $\sigma\in \Sigma^\op$, and for the same reason, $\Oc[-]$ assembles into a natural transformation of diagrams. In the next subsection we will use this natural transformation to produce a comparison functor from combinatorial models. 
    

\subsection{Combinatorial v.s. quasi-coherent}
The goal of this subsection is to provide the following construction.
\begin{prop}\label{theorem of combinatorics compares to quasicoherent}
    There exists a symmetric monoidal equivalence of categories
    $$\Phi_\sigma:\Fun(\Theta(\sigma)^\op,\Sp)\overset{\simeq}{\longrightarrow}\QCoh([X_\sigma/\TT])$$
    where the left-hand side has the Day convolution tensor product and the right-hand side has the standard tensor product of quasi-coherent sheaves. Moreover, these equivalences are functorial in $\sigma\in \Sigma^\op$ in that they assemble into a natural transformation of diagrams in $\SMCat$ indexed by $\Sigma^\op$. Hence taking limit produces
    $$\lim_{\Sigma^\op}\Fun(\Theta(\sigma)^\op,\Sp)\overset{\simeq}{\longrightarrow}\lim_{\Sigma^\op}\QCoh([X_\sigma/\TT])\simeq\QCoh([X_\Sigma/\TT]).$$
\end{prop}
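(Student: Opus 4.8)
The plan is to obtain $\Phi_\sigma$ from the universal property of Day convolution, and then to identify both sides with a single category of modules over $\SS[S_\sigma]$ in graded spectra. \emph{Construction of $\Phi_\sigma$.} Since $\Fun(\Theta(\sigma)^\op,\Sp)$ with Day convolution is the free presentably symmetric monoidal stable category on the symmetric monoidal $1$-category $\Theta(\sigma)$, a colimit-preserving symmetric monoidal functor out of it is the same datum as a symmetric monoidal functor $\Theta(\sigma)\to\QCoh([X_\sigma/\TT])$. To produce one without fighting coherences I would route through the unstable model: as $\Theta(\sigma)$ and $\Mod_{S_\sigma}(\Set)^M$ are both $1$-categories, one constructs by hand a symmetric monoidal functor
\[
\psi_\sigma\colon\Theta(\sigma)\longrightarrow\Mod_{S_\sigma}(\Set)^M
\]
sending $m+\sigma^\svee$ to the translate $m+S_\sigma\subseteq M$ --- a free rank-one $S_\sigma$-module carrying the $M$-comodule structure obtained from the tautological cocycle of \cref{M as an object in the unstable quasicoherent category} by twisting it by $m$ --- sending a morphism $m+\sigma^\svee\hookrightarrow m'+\sigma^\svee$ (equivalently $m-m'\in S_\sigma$) to the inclusion $m+S_\sigma\hookrightarrow m'+S_\sigma$, with structure isomorphisms coming from $(m+S_\sigma)\otimes_{S_\sigma}(m'+S_\sigma)\cong(m+m')+S_\sigma$. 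Since $\psi_\sigma$ takes values among free modules it factors through the full subcategory common to $\Mod_{S_\sigma}(\Set)^M$ and $\Mod_{S_\sigma}(\Spc)^M$ (as in the remark following the Warning above), hence composes with the symmetric monoidal functor $\Oc[-]$; extending the composite $\Theta(\sigma)\to\QCoh([X_\sigma/\TT])$ along Day convolution gives a cocontinuous symmetric monoidal
\[
\Phi_\sigma\colon\Fun(\Theta(\sigma)^\op,\Sp)\longrightarrow\QCoh([X_\sigma/\TT]),\qquad\Phi_\sigma\big(y(m+\sigma^\svee)\big)\simeq\Oc[\psi_\sigma(m+\sigma^\svee)].
\]
Because $\Theta(-)$, the cosimplicial diagram presenting $\Mod_{S_{(-)}}(\Spc)^M$, and $\Oc[-]$ are all natural in $\sigma\in\Sigma^\op$, these $\Phi_\sigma$ assemble into a natural transformation of $\Sigma^\op$-indexed diagrams in $\SMCat$.

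\emph{The equivalence for a fixed $\sigma$.} Compare both sides with $\QCoh(B\TT)\simeq\Fun(M,\Sp)$ (\cref{theorem of quasicoherent sheaves of torus compare to graded spectra}), the $M$-graded spectra, which is the free presentably symmetric monoidal stable category on $M$ viewed as a discrete additive symmetric monoidal category. On the coherent side, $\pi\colon[X_\sigma/\TT]\to B\TT$ is affine with $\pi_*\Oc_{[X_\sigma/\TT]}\simeq\SS[S_\sigma]$, so $\pi^*\dashv\pi_*$ is monadic and identifies $\QCoh([X_\sigma/\TT])$ symmetric monoidally with $\Mod_{\SS[S_\sigma]}(\Fun(M,\Sp))$, carrying $\pi^*$ to the free-module functor. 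On the combinatorial side, $\Theta(\sigma)^\op$ is equivalent to the translation preorder $\widetilde\Theta(\sigma)$ on $M$ (objects $M$; a unique arrow $m\to m'$ whenever $m'-m\in S_\sigma$); restriction along the bijective-on-objects symmetric monoidal inclusion $M\hookrightarrow\widetilde\Theta(\sigma)$ is conservative and colimit-preserving, hence monadic, and its monad is computed --- the relevant left Kan extensions being $S_\sigma$-indexed coproducts --- to be $(-)\otimes\SS[S_\sigma]$, so $\Fun(\Theta(\sigma)^\op,\Sp)\simeq\Mod_{\SS[S_\sigma]}(\Fun(M,\Sp))$ symmetric monoidally, carrying $(p_\sigma)_!$ (Day convolution left Kan extension along $p_\sigma\colon M\to\Theta(\sigma)$) to the free-module functor. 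For $\sigma=\RR_{\ge0}$ this is exactly the Rees-module description of $\QCoh([\AA^1/\GG_m])$ in \cite{Moulinos}; for smooth $\sigma$ one may alternatively split off the $\sigma^\perp$-directions and reduce to that case.

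\emph{Matching $\Phi_\sigma$ with the identity, and gluing.} Under these identifications $\Phi_\sigma\circ(p_\sigma)_!$ and $\pi^*$ are cocontinuous symmetric monoidal functors $\Fun(M,\Sp)\to\QCoh([X_\sigma/\TT])$, hence determined by their restrictions to $M$; on $m\in M$ both give the weight-$m$ line bundle $\pi^*(\SS(m))$ (with $\SS(m)\in\Fun(M,\Sp)$ the generator in degree $m$), since $\Oc[\psi_\sigma(m+\sigma^\svee)]$ is precisely the linearization $\SS[m+S_\sigma]$ of the $m$-twisted free rank-one module. Thus $\Phi_\sigma\circ(p_\sigma)_!\simeq\pi^*$, so $\Phi_\sigma$ is $\Fun(M,\Sp)$-linear and, through the identifications, is a cocontinuous, $\Fun(M,\Sp)$-linear, symmetric monoidal endofunctor of $\Mod_{\SS[S_\sigma]}(\Fun(M,\Sp))$; any such functor has the form $(-)\otimes_{\SS[S_\sigma]}P$ for $P$ its value on the unit, and symmetric monoidality forces $P\simeq\SS[S_\sigma]$, so $\Phi_\sigma$ is an equivalence. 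Finally, applying $\lim_{\Sigma^\op}$ to the natural transformation $\Phi$ gives a symmetric monoidal equivalence $\lim_{\Sigma^\op}\Fun(\Theta(\sigma)^\op,\Sp)\xrightarrow{\ \simeq\ }\lim_{\Sigma^\op}\QCoh([X_\sigma/\TT])$, and the target is $\QCoh([X_\Sigma/\TT])$ because $\QCoh$ carries $X_\Sigma=\colim_\Sigma X_\sigma$ to a limit, as recalled above. The real labor here is the explicit construction of $\psi_\sigma$ --- the twisted module objects with their cocycle data, the structure isomorphisms and coherence diagrams making it symmetric monoidal, and strict naturality in $\sigma$ --- which is laborious but purely $1$-categorical; once the generators $y(m+\sigma^\svee)\mapsto\pi^*(\SS(m))$ are pinned down, the equivalence is formal, being monadicity on each side together with the rigidity of cocontinuous linear symmetric monoidal endofunctors of a module category.
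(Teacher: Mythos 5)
Your construction of $\Phi_\sigma$ is the same as the paper's (the unstable functor into $\Mod_{S_\sigma}(\Set)^M$, linearization, Day-extension, and $1$-categorical naturality in $\sigma$), and your overall strategy — compare both sides over $\Fun(M,\Sp)\simeq\QCoh(B\TT)$ via monadicity — is in the same spirit as the paper's comparison of monadic adjunctions. The problem is the last step. The "rigidity" principle you invoke is false as stated: a cocontinuous, $\Fun(M,\Sp)$-linear, symmetric monoidal endofunctor $F$ of $\Mod_{\SS[S_\sigma]}(\Fun(M,\Sp))$ corresponds to a map of graded commutative algebras $\theta:\SS[S_\sigma]\to\SS[S_\sigma]$, and $F\simeq(-)\otimes_{\SS[S_\sigma],\theta}\SS[S_\sigma]$ is base change along $\theta$. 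Symmetric monoidality and the identification $F\circ(\text{free})\simeq\text{free}$ only give $F(\mathbb 1)\simeq\mathbb 1$, i.e.\ $P\simeq\SS[S_\sigma]$ as an object (even as an algebra along a possibly non-invertible map); they do not force $\theta$ to be an equivalence. Already over $\ZZ$ the graded algebra endomorphism $t\mapsto 2t$ of $\ZZ[t]$ gives a symmetric monoidal, cocontinuous, graded-linear endofunctor of graded $\ZZ[t]$-modules that carries each free module of weight $m$ to the free module of weight $m$, compatibly with the free functor, yet is not an equivalence. So from the data you have exhibited one cannot conclude that $\Phi_\sigma$ is an equivalence; one must identify the induced comparison of monads (equivalently, check that $\Phi_\sigma$ is fully faithful on the Yoneda generators, i.e.\ compute $\map_{\QCoh([X_\sigma/\TT])}(\Oc[\SS[m+S_\sigma]],\Oc[\SS[m'+S_\sigma]])$ and see that the linearized inclusions are equivalences onto them). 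That computation is exactly the content you have skipped.

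Relatedly, the two inputs you cite as if they were formal — that $\pi_*\Oc_{[X_\sigma/\TT]}\simeq\SS[S_\sigma]$ as a graded $\EE_\infty$-algebra compatibly with $\Phi_M$, and the symmetric monoidal identifications of both sides with $\Mod_{\SS[S_\sigma]}(\Fun(M,\Sp))$ — are precisely where the nontrivial work sits. In the paper these are not assumed: the equivalence $\Phi_M:\Fun(M,\Sp)\simeq\QCoh(B\TT)$ is proved by a comonadicity and Beck--Chevalley verification along the cosimplicial presentation (\cref{theorem of quasicoherent sheaves of torus compare to graded spectra}), and the crucial fact that the square relating $(p_\sigma)_!,\pi_\sigma^*$ and $\Phi_\sigma,\Phi_M$ is right adjointable, i.e.\ $\Phi_M p_\sigma^*\simeq\pi_{\sigma*}\Phi_\sigma$, is proved in \cref{One can take adjoint and end with comparison between monadic adjunction} by descending along the action diagram; this is what pins down the monad (equivalently, your $\theta$) as the identity, after which \HA{Proposition}{4.7.3.16} closes the argument. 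To repair your proof you would either carry out the analogous base-change/global-sections computation (unwinding to the explicit check that $\bigoplus_M\SS\to\SS[M]$-type comparison maps are equivalences, as in the paper's proof), or verify directly that $\Phi_\sigma$ induces equivalences on mapping spectra between the generators $y(m+\sigma^\svee)$; without one of these the claimed equivalence does not follow.
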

\begin{rem}[Compatibility with the torus]
\label{global combinatorial-coherent functor is compatible with torus action}
We will establish along the way an equivalence
$$\Phi_M:\Fun(M,\Sp)\simeq\QCoh(B\TT)$$
and will also provide compatibility of $\Phi_M$ with above equivalence $\Phi_\sigma$ (see \cref{construction with torus is compatible with toric quotient}), i.e., the following diagram commutes
\[\begin{tikzcd}
	{\lim_{\Sigma^\op}\Fun(\Theta(\sigma)^\op,\Sp)} & &{\QCoh([X_\Sigma/\TT])} \\
	{\Fun(M,\Sp)} & &{\QCoh(B\TT)}
	\arrow["{\lim_{\Sigma^\op}\Phi_\sigma}", from=1-1, to=1-3]
	\arrow["{\Phi_M}", from=2-1, to=2-3]
	\arrow["\lim_{\Sigma^\op}(p_\sigma)_!",from=2-1, to=1-1]
	\arrow["{\lim_{\Sigma^\op}\pi_\sigma^*}", from=2-3, to=1-3]
\end{tikzcd}.\]
\end{rem}
\begin{rem}[The geometry of filtrations]
    Take the pair $N=\ZZ$ and $\Sigma=\{0,\RR_{\geq0}\}$. The theorem above reads 
    $$\Fun(\mathbb{Z}_\leq,\Sp)\simeq\QCoh([\AA^1/\GG_m]),$$
    which is \cite[Theorem 1.1]{Moulinos}. The proof presented in this subsection actually follows closely the approach in \cite{Moulinos}.
\end{rem}
We begin by constructing the functor $\Phi_\sigma$, then explain its naturality along $\sigma\in \Sigma^\op$. 
\begin{construction}(Construction of the functor in the unstable case)
\label{construction of unstable model of the comb-qcoh comparision map}
Fix a cone $\sigma$ in a lattice $N$, we define a functor
$$\phi_\sigma:\Theta(\sigma)\rightarrow\Mod_{S_\sigma}(\Set)^M$$
as follows: for $V\in\Theta(\sigma)$, recall that $V$ is an integral translation of $\sigma^\svee$.
We define $\phi_\sigma(V)$ to be
\[(V\cap M,f|_{V\cap M\times_{S_\sigma}S_\sigma\times M})\in \Mod_{S_\sigma}(\Set)^M\]
where $V\cap M$ is a subobject of  $M\in\Mod_{S_\sigma}(\Set)$ and it inherits the structure map $f$ from \cref{M as an object in the unstable quasicoherent category} since $f$ preserves $V\cap M\times_{S_\sigma}S_\sigma\times M$.\par
Then we move on to  morphisms. Given an inclusion $i:V\subset W\in \Theta(\sigma)$, we define $\phi_\sigma(i)$ to be the inclusion map 
\[\phi(i):V\cap M\rightarrow W\cap M\in \Mod_{S_\sigma}(\Set).\] 
It remains to check that $\phi(i)$ is compatible with the structure maps of $\phi(V)$ and $\phi(W)$. This follows directly from that both structure maps are inherited from $(M,f)$. So we know that $\phi(i)$ lifts to a map in $\Mod_{S_\sigma}(\Set)^M$. The symmetric monoidal structure on the functor can be supplied and checked directly as it is a functor between $1$-categories.
The construction lands in $\Mod_{S_\sigma}(\Spc)^\free$ in each degree and hence lifts to a symmetric monoidal functor to $\Mod_{S_\sigma}(\Spc)^M$. To conclude, we have obtained a symmetric monoidal functor
$$\phi_\sigma:\Theta(\sigma)\longrightarrow\Mod_{S_\sigma}(\Spc)^M.$$
\end{construction}
\begin{rem}(Naturality along $\sigma\in\Sigma^\op$)
\label{naturality along sigma in Sigma}
The functors $\phi_\sigma$ as above assembles into a natural transformation between diagrams in $\SMCat$ indexed by $\Sigma^\op$:
$$\Theta(-)\rightarrow\Mod_{S_{-}}(\Spc)^M.$$
Since we are working within $1$-categories, the naturality could be inspected directly.
\end{rem}
\begin{defn}
\label{definition of the combinatorial-quasicoherent comparison functor}
    We define $\Phi_\sigma$ to be the left Kan extension of $\Oc[\phi_\sigma]$ along the stable Yoneda embedding:
    $$\Phi_\sigma\defeq \Lan_h(\Oc[\phi_\sigma]):\Fun(\Theta(\sigma)^\op,\Sp)\longrightarrow\QCoh([X_\sigma/\TT]),$$
    where we have used the linearization functor
    $$\Oc[-]:\Mod_{S_\sigma}(\Spc)^M\rightarrow\QCoh([X_\sigma/\TT])$$
    from the last paragraph of \cref{section on quasi-coherent sheaves}. Note that it is symmetric monoidal with the Day convolution product on the domain. From the discussion in \cref{naturality along sigma in Sigma} and functoriality of the Day convolution (see \cref{reminders on Day convolution}) we learn that $\Phi_\sigma$ is a natural transformation 
\[\begin{tikzcd}
	&& {} \\
	{\Sigma^\op} &&& {\SMCat}
	\arrow[""{name=0, anchor=center, inner sep=0}, "{\Fun(\Theta(\sigma)^\op,\Sp)}", curve={height=-12pt}, from=2-1, to=2-4]
	\arrow[""{name=1, anchor=center, inner sep=0}, "{\QCoh([X_\sigma/\TT])}"', curve={height=12pt}, from=2-1, to=2-4]
	\arrow["{\Phi_\sigma}"', shorten <=3pt, shorten >=3pt, Rightarrow, from=0, to=1]
\end{tikzcd}\]
    
    between diagrams in $\SMCat$ indexed by $\Sigma$.
\end{defn}
\begin{example}[Equivariant line bundles on the affine line]
Take the pair $N=\ZZ$ and $\Sigma=\{0,\RR_{\geq0}\}$. The construction above produces a family of line bundles from the  symmetric monoidal functor
$$\Phi_{\RR_{\geq0}}:\Fun(\ZZ_\leq^\op;\Sp)\longrightarrow\QCoh([\AA^1/\GG_m]).$$
After base changing to $\ZZ$, it recovers the universal line bundles $\phi(n)=\Oc(n)$, universal sections $\cdot x:\Oc(n)\rightarrow\Oc(n+1)$, and isomorphisms $\Oc(m)\otimes\Oc(n)\rightarrow\Oc(mn)$. In general, it is possible to globalize this construction and construct torus-equivariant line bundles on toric schemes.
\end{example}

Now we move on to proving the main theorem of this section: that each $\Phi_\sigma$ is an equivalence of categories. First we do some preparations.
\begin{var}[Compare with \cref{construction of unstable model of the comb-qcoh comparision map}]
\label{construction of comparison from Fun(M) to qcoh of BT}
    We can define a symmetric monoidal functor $$\phi_M:M\rightarrow\Mod_{*}(\Set)^M$$ (where the point $*$ is the initial monoid) as follows. On objects, $m\in M$ is taken to the pair $(*,f_m)$. Here $*\in\Set$ is the underlying object and $f_m:*\times M\rightarrow *\times M$ is the isomorphism given by translation by $m$. Again one checks this satisfies the cocycle condition as in \cref{M as an object in the unstable quasicoherent category} so $(*,f_m)$ defines an object in $\Mod_*(\Set)^M$. This assignment lifts to a symmetric monoidal functor by direct inspection. Hence we get a symmetric monoidal functor 
    $$\Phi_M\defeq \Lan_h\Oc[\phi_M]:\Fun(M,\Sp)\rightarrow\QCoh(B\TT).$$
\end{var}
\begin{rem}
\label{construction with torus is compatible with toric quotient}
By the very explicit construction, the equivalence $\Phi_M$ above 
is compatible with \cref{definition of the combinatorial-quasicoherent comparison functor}: there is a symmetric monoidal functor $p_\sigma:M\rightarrow\Theta(\sigma)$ that sends $m$ to $m+\sigma^{\svee}$ (see \cref{alternative way to see symmetric monoidal structure on Theta}) making the diagram
\[\begin{tikzcd}
	{\Fun(\Theta(\sigma)^\op,\Sp)} & {\QCoh([X_\sigma/\TT])} \\
	{\Fun(M,\Sp)} & {\QCoh(B\TT)}
	\arrow["{\Phi_\sigma}", from=1-1, to=1-2]
	\arrow["{\Phi_M}", from=2-1, to=2-2]
	\arrow["(p_\sigma)_!",from=2-1, to=1-1]
	\arrow["{\pi_\sigma^*}", from=2-2, to=1-2]
\end{tikzcd}\]
commute, where $(p_\sigma)_!$ stands for left Kan extension of presheaves along $p_\sigma$ and $\pi_{\sigma}^*$ stands for $*$-pullback of quasi-coherent sheaves along $\pi_\sigma:[X_\sigma/\TT]\rightarrow B\TT$. The commutativity of the square comes from 1-categorical inspection before linearization. Moreover, the maps above are natural in $\sigma\in\Sigma^\op$ that one can interpret it as a square of natural transformations of diagrams in $\SMCat$ indexed by $\sigma\in\Sigma^\op$.
    
\end{rem}
We will follow the approach taken in \cite[\href{https://arxiv.org/pdf/1907.13562.pdf\#equation.4.1}{Theorem 4.1}]{Moulinos} to prove the following:
\begin{thm} 
\label{theorem of quasicoherent sheaves of torus compare to graded spectra}
There is an equivalence of symmetric monoidal categories
$$\Phi_M:\Fun(M,\Sp)\simeq\QCoh(B\TT),$$
where the left-hand side comes with the Day convolution tensor product and the right-hand side comes with the standard tensor product of quasi-coherent sheaves.    
\end{thm}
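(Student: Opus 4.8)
### Proof proposal

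The plan is to identify both sides with the presheaf category on the discrete group $M$ via a pair of base-change/universal-property arguments, closely following \cite[Theorem 4.1]{Moulinos}. The functor $\Phi_M = \Lan_h \Oc[\phi_M]$ is, by construction, the unique colimit-preserving symmetric monoidal functor $\Fun(M,\Sp) \to \QCoh(B\TT)$ extending $m \mapsto \Oc[\phi_M(m)]$; since $\Fun(M,\Sp)$ is the free presentably symmetric monoidal category on the symmetric monoidal $1$-category $M$ (Day convolution enjoys this universal property), giving such a functor is the same as giving a symmetric monoidal functor $M \to \QCoh(B\TT)$, i.e.\ a strictly commutative invertible object — here the class of the tautological line bundle coming from the weight-$1$ representation of $\TT$. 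So the content is entirely that this particular colimit-preserving symmetric monoidal functor is an equivalence.

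First I would unwind the target. We have $B\TT = [\ast/\TT]$ with $\TT = \Spet(\SS[M])$, and $\QCoh(B\TT)$ is the limit over $\Delta$ of the cosimplicial diagram $\Mod_{\SS[M]^{\otimes \bullet}}(\Sp)$, i.e.\ the category of $\SS[M]$-comodules in $\Sp$. Equivalently, $\QCoh(B\TT) \simeq \lim_{\Delta} \Mod_{\SS[\mathrm{cofib} \text{-data}]}$; but more usefully, since $M$ is a discrete abelian group, $\SS[M] = \bigoplus_{m \in M} \SS$ as an $\EE_\infty$-ring is the "group algebra", and comodules over it should be $M$-graded spectra. The cleanest route: the classifying stack $B\TT$ for the diagonalizable group $\TT$ has $\QCoh(B\TT) \simeq \Fun(M, \Sp)$ — this is the spectral analogue of the classical fact that representations of a diagonalizable group are its character-graded vector spaces. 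I would prove this by exhibiting the cosimplicial diagram computing $\QCoh(B\TT)$ as the one obtained by applying $\Mod_{(-)}(\Sp)$ to the bar resolution of $\SS[M]$ and observing that, because $M$ is discrete (so $\SS[M]$ is a "split" / étale-like algebra over $\SS$ in the relevant sense), descent degenerates: a comodule is the same as a compatible $M$-indexed family, i.e.\ an object of $\Fun(M,\Sp)$. Concretely, the weight decomposition functor $\QCoh(B\TT) \to \Fun(M,\Sp)$ sends a quasi-coherent sheaf to its graded pieces, and one checks this is inverse to $\Phi_M$ by comparing what each does on the generator $\Oc(1)$ and using that both are colimit-preserving.

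The key steps, in order: (1) record that $\Phi_M$ is colimit-preserving and symmetric monoidal, hence determined by $\Phi_M(\SS_{\{0\}}) = \Oc_{B\TT}$ and $\Phi_M(\SS_{\{1\}}) = \Oc(1)$, the weight-one line bundle, with its invertibility and "strict" commutativity witnessing that $\Phi_M$ is well-defined; (2) show $\{\Oc(m)\}_{m \in M}$ is a family of compact generators of $\QCoh(B\TT)$ that are "orthonormal" in the graded sense, i.e.\ $\map_{\QCoh(B\TT)}(\Oc(m), \Oc(n)) \simeq \SS$ if $m = n$ and $0$ otherwise — this is where the explicit cosimplicial (comodule) description of $\QCoh(B\TT)$ is used, computing $\map(\Oc, \Oc(n)) = \Gamma(B\TT, \Oc(n))$ as the totalization of the cobar complex of the weight-$n$ part of $\SS[M]^{\otimes \bullet}$, which collapses because $M$ is a discrete group; (3) conclude formally: a colimit-preserving functor between presentable stable categories that sends a set of compact generators to a set of compact generators and is fully faithful on those generators (which follows from step (2) plus the corresponding computation on the source side, where $\map_{\Fun(M,\Sp)}(\SS_{\{m\}}, \SS_{\{n\}})$ is manifestly $\SS$ or $0$) is an equivalence; (4) upgrade to symmetric monoidal — automatic, since a symmetric monoidal functor that is an equivalence of underlying categories is a symmetric monoidal equivalence.

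The main obstacle is step (2): carefully justifying that the descent spectral sequence / totalization computing $\Gamma(B\TT, \Oc(n))$ degenerates. Classically this is the statement that higher group cohomology of a diagonalizable group with coefficients in a character is concentrated in degree zero, but over $\SS$ one must argue at the level of the actual cosimplicial object, not just its homotopy groups; the clean way is to exhibit an explicit extra codegeneracy (a contracting homotopy) on the relevant summand of the cobar complex of $\SS[M]$, coming from the group structure on $M$ (translation by $-n$), which forces the totalization to be $\SS$ in the diagonal weight and $0$ otherwise. Everything else — the universal property of Day convolution, the reduction to generators, the symmetric monoidal upgrade — is formal and is exactly the template of \cite{Moulinos}.
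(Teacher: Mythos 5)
Your overall architecture differs from the paper's: the paper proves this by interpreting $\Phi_M$ as an augmentation of the cosimplicial diagram computing $\QCoh(B\TT)$ and running the comonadic Barr--Beck--Lurie comparison (conservativity of $d^0\colon\Fun(M,\Sp)\to\Sp$, preservation of split totalizations, and the Beck--Chevalley checks, whose key computation is that $\bigoplus_{m\in M}\SS\to\SS[M]$, given by $\cdot m$ on each summand, is an equivalence), whereas you propose a generators-and-mapping-spectra argument. That route is legitimate in principle, but as written it has a genuine gap at its load-bearing step: for your step (3) you need the objects $\Oc(m)$ to be \emph{compact} in $\QCoh(B\TT)$ and to \emph{generate} it, and neither claim follows from the computation you actually sketch, namely the collapse of the cobar complex computing $\map(\Oc,\Oc(n))$. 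Compactness of $\Oc$ concerns $\map(\Oc,X)$ for arbitrary $X$ and arbitrary filtered colimits, i.e.\ that $\Gamma(B\TT,-)=\lim_{\Delta}(X^0\otimes\SS[M]^{\otimes\bullet})$ commutes with colimits; this requires a splitting of the descent cosimplicial object \emph{naturally in} $X$, which is essentially the weight-decomposition (comonadic descent degeneration) statement the theorem asserts, so invoking it here is circular unless you prove it independently. Generation has the same problem: the natural arguments (e.g.\ conservativity of $q^*$ for $q\colon *\to B\TT$ plus the projection formula $q_*q^*X\simeq X\otimes\bigoplus_m\Oc(m)$) again need compactness of $\Oc$ to pull the infinite sum out of $\map(\Oc,-)$. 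This is not a point one can wave through formally: compact generation of $\QCoh$ of a classifying stack genuinely fails in examples (e.g.\ $B\mathbb{G}_a$ in positive characteristic), so it must be earned for $B\TT$ over $\SS$.

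By contrast, the part you flag as the main obstacle --- the collapse of the cobar complex for $\Gamma(B\TT,\Oc(n))$ via an extra codegeneracy built from the group structure of $M$ --- is indeed correct and can be made rigorous (the extra codegeneracy ``delete the last coordinate if it equals $0$, else map to $0$'' works summandwise on $\SS[M]^{\otimes\bullet}=\bigoplus_{M^{\times\bullet}}\SS$, and the coherence can be arranged since everything is linearized from 1-categorical data), but it only computes mapping spectra between the line bundles. So the real content is mislocated: to complete your proof you would either have to establish compactness of $\Oc$ and generation by the $\Oc(m)$ by an independent argument --- which in practice amounts to redoing the descent-degeneration/adjointability analysis --- or switch to the paper's comonadicity argument, which needs no a priori compactness or generation statements about $\QCoh(B\TT)$ at all.
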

\begin{proof}
    We interpret $\Phi_M$ as an augmentation of the cosimplicial diagram presenting $\QCoh(B\TT)$:
\[\begin{tikzcd}
	\cdots & {\QCoh(*\times \TT \times\TT)} & {\QCoh(* \times \TT)} & {\QCoh(*)} & {\Fun(M,\Sp)}
	\arrow[shift left=2, from=1-3, to=1-2]
	\arrow[shift right=2, from=1-3, to=1-2]
	\arrow[shift right, from=1-4, to=1-3]
	\arrow[shift left, from=1-4, to=1-3]
	\arrow[shift right=3, from=1-2, to=1-1]
	\arrow[shift left=3, from=1-2, to=1-1]
	\arrow[shift left, from=1-2, to=1-1]
	\arrow[shift right, from=1-2, to=1-1]
	\arrow[from=1-3, to=1-2]
	\arrow[from=1-5, to=1-4]
\end{tikzcd}.\]
    Then the theorem follows from a direct application of \HA{Corollary}{4.7.5.3} in its comonadic form (as used in the proof of \SAG{Theorem}{5.6.6.1}). So we want to check the following: 
    \begin{enumerate}
        \item The functor $d^0:\Fun(M,Sp)\rightarrow \QCoh(*)=\Sp$ is comonadic.
        \item The Beck-Chevalley condition holds: for each $\alpha:[m]\rightarrow[n]$ in $\Delta_+$, the diagram
\[\begin{tikzcd}
	{\Cc^m} & {\Cc^{m+1}} \\
	{\Cc^{n}} & {\Cc^{n+1}}
	\arrow["\alpha"', from=1-1, to=2-1]
	\arrow["{d^0}", from=1-1, to=1-2]
	\arrow["{\alpha+1}", from=1-2, to=2-2]
	\arrow["{d^0}", from=2-1, to=2-2]
\end{tikzcd}\]
        is right adjointable (for horizontal maps).
    \end{enumerate}
    We first show $d^0:\Fun(M,\Sp)\rightarrow\Sp$ is comonadic.
    By construction, $d^0$ takes an $M$-family of spectra $\{X_m\}$ to the coproduct $\oplus X_m$.
    The crucial observation is that each $X_m$ is a retract of $\oplus X_m$.
    If $\oplus X_m\simeq0$, then each of $X_m$ is a retract of $0$,
    and hence we know that the family $\{X_m\}$ is $0$. Therefore, $d^0$ is conservative.
    It remains to show that $d^0$ preserves limits of cosimplicial diagrams in $\Fun(M,\Sp)$ with split images in $\Sp$.

    A cosimplicial diagram $X^\bullet$ in $\Fun(M,\Sp)$ is just an $M$-family of cosimplicial diagrams $\{X_m^\bullet\}$ in $\Sp$. Under this identification,  $d^0(X^\bullet) = \oplus X_m^\bullet$. Denote by $X^{-\infty} = \{X_m^{-\infty}\}$ a limit of $X^\bullet$.
    Note that each $X_m^\bullet$ is a retract of the split cosimplicial object $\oplus X_m^\bullet$,
    which itself must be split by \HA{Corollary}{4.7.2.13}.
    Therefore each of the augmented cosimplicial object $X_m^{-\infty} \rightarrow X^\bullet_m$ is split.
    It follows that the coproduct \[
    d^0(X^{-\infty}) \simeq \oplus_m X^{-\infty}_m \rightarrow \oplus_m X^\bullet_m \simeq d^0(X^\bullet_m)
    \] is also split, thus a limit diagram, as desired.

    Now we move on to checking the adjointability.
    For $\alpha:[m]\rightarrow[n]$, if $m\neq -1$, one can look at the corresponding groupoid object in $\Stk$:
\[\begin{tikzcd}
	{*\times\TT^{\times n+1}} & {*\times\TT^{\times m+1}} & {*\times\TT} \\
	{*\times\TT^{\times n}} & {*\times\TT^{\times m}} & {*}
	\arrow["\alpha", from=2-1, to=2-2]
	\arrow["{d^0}", from=1-2, to=2-2]
	\arrow["{\alpha+1}", from=1-1, to=1-2]
	\arrow["{d^0}", from=1-1, to=2-1]
	\arrow["{d^0}", from=1-3, to=2-3]
	\arrow["{\{0\}}", from=2-2, to=2-3]
	\arrow["{\{0,1\}}", from=1-2, to=1-3]
\end{tikzcd}.\]
    By the Segal condition \HTT{Proposition}{6.1.2.6}, both the right square and the outer rectangle are pullback squares, so the left square is also a pullback in $\Stk$. So we conclude that after applying $\QCoh(-)$ the left square is right adjointable by \SAG{Lemma}{D.3.5.6}. For $\alpha:[-1]\rightarrow[n]$, we first check that
\[\begin{tikzcd}
	{\Fun(M,\Sp)} & \Sp \\
	\Sp & {\QCoh(\TT)}
	\arrow["{d^0}", from=1-1, to=1-2]
	\arrow["{d^0}", from=2-1, to=2-2]
	\arrow["\alpha=d^0"', from=1-1, to=2-1]
	\arrow["{\alpha+1=d^1}"', from=1-2, to=2-2]
\end{tikzcd}\]
is right adjointable. We will use the following notational convention. Put $p:M\rightarrow*$ to be the projection of set $M$ to a point, and we write $p_!\dashv p^*$ for the adjunction between left Kan extension and restriction of presheaves. Put $\pi:\TT\rightarrow*$ to be the projection of stack $\TT$ to a point, and we write $\pi^*\dashv\pi_*$ for the adjunction between $*$-pullback and $*$-pushforward of quasi-coherent sheaves.
Under this notation, the diagram above reads:
\[\begin{tikzcd}
	{\Fun(M,\Sp)} & \Sp \\
	\Sp & {\QCoh(\TT)}
	\arrow["{p_!}", from=1-1, to=1-2]
	\arrow["{\pi^*}", from=2-1, to=2-2]
	\arrow["{p_!}"', from=1-1, to=2-1]
	\arrow["{\pi^*}"', from=1-2, to=2-2]
\end{tikzcd},\]
and the $2$-cell filling the square comes from the construction in \cref{construction of comparison from Fun(M) to qcoh of BT}. Be careful that the $2$-cell is not the trivial one (and the trivial one won't be right adjointable). We need to show that
$$p_!p^*\rightarrow\pi_*\pi^*p_!p^*\rightarrow\pi_*\pi^*p_!p^*\rightarrow \pi_*\pi^*$$
is an equivalence of functors, where the maps involed are given, in turn, by the unit for $\pi^*\dashv\pi_*$, the homotopy of the $2$-cell filling the diagram $\pi^*p_!\simeq\pi^*p_!$ and the counit for $p_!\dashv p^*$. Note that since both $p_!p^*$ and $\pi_*\pi^*$ are colimit-preserving, it suffices to check on $\SS\in\Sp$. Unwinding the definition, the map reads
\[\begin{tikzcd}
	{p_!p^*\SS} & {\pi_*\pi^*p_!p^*\SS} & {\pi_*\pi^*p_!p^*\SS} & {\pi_*\pi^*\SS} \\
	{\bigoplus_M\SS} & {\bigoplus_M\SS[M]} & {\bigoplus_M\SS[M]} & {\SS[M]}
	\arrow[from=1-1, to=1-2]
	\arrow["{=}"{marking, allow upside down}, draw=none, from=1-1, to=2-1]
	\arrow[from=1-2, to=1-3]
	\arrow["{=}"{marking, allow upside down}, draw=none, from=1-2, to=2-2]
	\arrow[from=1-3, to=1-4]
	\arrow["{=}"{marking, allow upside down}, draw=none, from=1-3, to=2-3]
	\arrow["{=}"{marking, allow upside down}, draw=none, from=1-4, to=2-4]
	\arrow["{\oplus\epsilon}", from=2-1, to=2-2]
	\arrow["{\oplus m\cdot}", from=2-2, to=2-3]
	\arrow["{\mathrm{sum}}", from=2-3, to=2-4]
\end{tikzcd}.\]
The first map is the coproduct of unit maps $\SS\rightarrow\SS[M]$ for the algebra $\SS[M]$. The second map is the coproduct of the maps $\cdot m:\SS[M]\rightarrow\SS[M]$ on each direct summand $m\in M$. The third map is induced by identity map $\id:\SS[M]\rightarrow\SS[M]$ on each summand, i.e., forming summation. The composition, which is $\cdot m: \SS\rightarrow\SS[M]$ on each summand, is readily an equivalence of spectra.\par 
Recall that we are showing diagrams involving $[-1]$ are right adjointable, and we have only checked one of them. Now for a general map $\alpha:[-1]\rightarrow[n]$, observe that there is a commutative diagram (note the different orientation of the diagram)
\[\begin{tikzcd}
	{[-1]} & {[0]} & {[n]} \\
	{[0]} & {[1]} & {[n+1]}
	\arrow["\alpha'"', from=1-1, to=1-2]
	\arrow["\beta"', from=1-2, to=1-3]
	\arrow["{d^0}", from=1-1, to=2-1]
	\arrow["{\alpha'+1=d^1}", from=2-1, to=2-2]
	\arrow["{\beta+1}", from=2-2, to=2-3]
	\arrow["{d^0}", from=1-2, to=2-2]
	\arrow["{d^0}"', from=1-3, to=2-3]
\end{tikzcd}\]
in $\Delta_+$ where $\beta\circ\alpha'=\alpha$. This is taken to a diagram of categories where both of the squares are right adjointable (now along the vertical edges). We hence conclude that the outer rectangle is also right adjointable, as desired.
\end{proof}
We now state a technical claim about adjointability of diagrams that we will use in proving \cref{theorem of combinatorics compares to quasicoherent}. The proof will be offered later.
\begin{lem} 
\label{One can take adjoint and end with comparison between monadic adjunction}
 The diagram in \cref{construction with torus is compatible with toric quotient} is right adjointable for taking right adjoints of $(p_\sigma)_!$ and $\pi_\sigma^*$. In other words, the diagram
\[\begin{tikzcd}
	{\Fun(\Theta(\sigma)^\op,\Sp)} & {\QCoh([X_\sigma/\TT])} \\
	{\Fun(M,\Sp)} & {\QCoh(B\TT)}
	\arrow["{\Phi_\sigma}", from=1-1, to=1-2]
	\arrow["{\Phi_M}", from=2-1, to=2-2]
	\arrow["{(p_\sigma)^*}"', from=1-1, to=2-1]
	\arrow["{\pi_{\sigma*}}"', from=1-2, to=2-2]
\end{tikzcd}\]
commutes, with the homotopy specified by
$$\Phi_Mp_\sigma^*\rightarrow\pi_{\sigma*}\pi_\sigma^*\Phi_Mp_\sigma^*\rightarrow\pi_{\sigma*}\Phi_\sigma p_{\sigma!}p_\sigma^*\rightarrow\pi_{\sigma*}\Phi_\sigma$$
where the maps involved are given in turn by the unit for $\pi_\sigma^*\dashv\pi_{\sigma*}$, the homotopy of the $2$-cell in the diagram $\pi_\sigma^*\Phi_M\simeq\Phi_\sigma p_{\sigma!}$ and the counit for $p_{\sigma!}\dashv p_\sigma^*$. 
\end{lem}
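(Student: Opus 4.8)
The plan is to reduce the statement to a computation on representable presheaves, using that $\Phi_M$ is \emph{already} known to be an equivalence by \cref{theorem of quasicoherent sheaves of torus compare to graded spectra} (we cannot short‑circuit matters by inverting $\Phi_\sigma$, since this lemma feeds into the proof that $\Phi_\sigma$ is an equivalence). First I would record that both functors $\Phi_M p_\sigma^{*}$ and $\pi_{\sigma*}\Phi_\sigma$ from $\Fun(\Theta(\sigma)^\op,\Sp)$ to $\QCoh(B\TT)$ preserve colimits: the restriction functor $p_\sigma^{*}$ is itself a left adjoint (with right adjoint $p_{\sigma*}$), hence preserves all colimits; $\Phi_M$ is an equivalence; $\Phi_\sigma$ is colimit‑preserving by construction, being a left Kan extension along the Yoneda embedding; and $\pi_{\sigma*}$ preserves colimits because $\pi_\sigma\colon[X_\sigma/\TT]\to B\TT$ is an affine morphism — its base change along the cover $\ast\to B\TT$ is the affine scheme $X_\sigma=\Spet\Oc_\sigma$ — so that $\pi_\sigma$ identifies $\QCoh([X_\sigma/\TT])$ with $\Mod_{\Oc_\sigma}(\QCoh(B\TT))$ and $\pi_{\sigma*}$ with the corresponding forgetful functor. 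Since $\Fun(\Theta(\sigma)^\op,\Sp)$ is generated under colimits by the representables $h_V$, $V\in\Theta(\sigma)$, and the full subcategory on which a natural transformation of colimit‑preserving functors is an equivalence is closed under colimits, it suffices to check the transformation displayed in the statement after evaluation at each $h_V$.

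Next I would evaluate at $h_V$ with $V=m+\sigma^\svee$, so $V\cap M=m+S_\sigma$. Because $\Theta(\sigma)$ is a poset and $M$ is discrete, $p_\sigma^{*}h_V$ is the $M$‑indexed family equal to $\SS$ on $m+S_\sigma$ and $0$ elsewhere, that is $\bigoplus_{m'\in m+S_\sigma}h_{m'}$; applying the colimit‑preserving $\Phi_M$ and using $\QCoh(B\TT)\simeq\Fun(M,\Sp)$ yields the graded spectrum with value $\SS$ in each degree lying in $m+S_\sigma$. On the other side, $\Phi_\sigma(h_V)\simeq\Oc[\phi_\sigma(V)]$ is, under $\QCoh([X_\sigma/\TT])\simeq\Mod_{\Oc_\sigma}(\Fun(M,\Sp))$, the free rank‑one $M$‑graded $\Oc_\sigma$‑module $\SS[m+S_\sigma]$ with generator in degree $m$, and $\pi_{\sigma*}$ forgets the module structure, returning exactly the same graded spectrum. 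So the two functors agree on representables.

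The only place any genuine work is needed is to confirm that the transformation of the statement \emph{realizes} this agreement on $h_V$, rather than some a priori non‑invertible self‑map of $\bigoplus_{m'\in m+S_\sigma}\SS$. I would check this summand by summand by unwinding the three constituents: the unit of $\pi_\sigma^{*}\dashv\pi_{\sigma*}$ includes the degree‑$m'$ copy of $\SS$ into $\pi_{\sigma*}\pi_\sigma^{*}$ of it, namely $\bigoplus_{n\in m'+S_\sigma}\SS$, via the unit section $\SS\to\Oc_\sigma$; the structural equivalence $\pi_\sigma^{*}\Phi_M\simeq\Phi_\sigma p_{\sigma!}$ of \cref{construction with torus is compatible with toric quotient} reindexes this inside $\pi_{\sigma*}\Phi_\sigma p_{\sigma!}p_\sigma^{*}h_V$; and the counit of $p_{\sigma!}\dashv p_\sigma^{*}$ then sends it onto the degree‑$m'$ summand of $\pi_{\sigma*}\Phi_\sigma(h_V)$. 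The composite is, on each summand, multiplication by the character $m'$ on $\Oc_\sigma$, hence an equivalence. This is precisely the $\Oc_\sigma$‑relative analogue of the computation $\bigoplus_M\SS\to\bigoplus_M\SS[M]\to\bigoplus_M\SS[M]\to\SS[M]$ already carried out in the proof of \cref{theorem of quasicoherent sheaves of torus compare to graded spectra}, and the same bookkeeping goes through verbatim with $\SS$ replaced by $\Oc_\sigma$ throughout; carrying out this bookkeeping carefully (in particular tracking the identifications of relative tensor products along the different maps $S_\sigma\to S_\sigma\times M$) is the one real, if routine, obstacle.
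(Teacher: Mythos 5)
Your argument is correct, but it runs along a genuinely different track from the paper's. The paper proves the lemma by climbing down the simplicial presentation: it views the square as an augmentation of the levelwise squares $\QCoh(\TT^{\times n})\rightarrow\QCoh(X_\sigma\times\TT^{\times n})$, uses the pullback squares of stacks together with flat base change (\SAG{Lemma}{D.3.5.6}) to see each level is right adjointable, invokes \HA{Corollary}{4.7.4.18} to pass to the limit, and is thereby reduced to the single augmented square over $\QCoh(*)$, which it disposes of by the same explicit unwinding as in the proof of \cref{theorem of quasicoherent sheaves of torus compare to graded spectra}. You instead attack the mate $\Phi_M p_\sigma^*\rightarrow\pi_{\sigma*}\Phi_\sigma$ head-on: both composites preserve colimits (restriction $p_\sigma^*$ is a left adjoint, $\Phi_\sigma$ and $\Phi_M$ are left Kan extensions, and $\pi_{\sigma*}$ preserves colimits because $\pi_\sigma$ is affine), the source is generated under colimits by the representables $h_V$, and on each $h_V$ the composite is checked to be an equivalence by the same degree-by-degree computation, carried out over $B\TT$ (graded spectra and graded $\Oc_\sigma$-modules) rather than over a point. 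Your route is more elementary — it avoids the $\Fun^{\RAd}(\Delta^1,\Cat)$ machinery entirely — at the cost of needing two inputs the paper's lemma proof gets for free from its descent argument: that $\pi_{\sigma*}$ is colimit-preserving and, for the explicit identification, that $\pi_\sigma$ being affine identifies $\QCoh([X_\sigma/\TT])$ with $\Mod_{\Oc_\sigma}(\QCoh(B\TT))$ with $\pi_{\sigma*}$ the forgetful functor (both are standard and are in fact also asserted in the paper's proof of \cref{theorem of combinatorics compares to quasicoherent}, so there is no circularity; likewise your use of $\Phi_M$ being an equivalence is legitimate since \cref{theorem of quasicoherent sheaves of torus compare to graded spectra} is proved beforehand). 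The paper's approach, by contrast, packages the coherence bookkeeping systematically and reduces the hands-on check to the simplest level, over $\QCoh(*)$. The one place where your write-up is only a sketch — verifying that the unit/2-cell/counit composite really is, summand by summand, the inclusion of the degree-$m'$ piece of $\SS[m+S_\sigma]$ — is exactly the unwinding the paper also leaves semi-explicit, so it is an acceptable level of detail, though it is where all the remaining care lives.
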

We are now ready to prove the main theorem of the section.
\begin{proof}[Proof of \cref{theorem of combinatorics compares to quasicoherent}]
    Naturality of the functors has been  explained in \cref{construction with torus is compatible with toric quotient}.  What's left to check is that for each $\sigma$, $\Phi_\sigma$ is an equivalence of categories. Given \cref{One can take adjoint and end with comparison between monadic adjunction} we are in the situation of comparing monadic adjunctions \HA{Proposition}{4.7.3.16}: each of the category is monadic over another category. We claim that the assumptions in \HA{Proposition}{4.7.3.16} are readily true in our case: (1) is true as our diagrams are obtained by taking right adjoints of  right adjointable diagrams; (2) and (3) follow from that both $p_\sigma^*$ and $\pi_{\sigma*}$ are colimit-preserving functors; (4) is true because $\pi$ is affine so $*$-pushforward along $\pi$ is conservative. Note that the functor $p^*$ (which is restriction of presheaves) is also conservative since $p$ is an essentially surjective functor; and (5) requires essentially to check if the diagram is left adjointable, which again follows from the fact that the diagram itself comes from taking right adjoints of a right adjointable diagram, see \HTT{Remark}{7.3.1.3}.
\end{proof}
\begin{proof}[Proof of \cref{One can take adjoint and end with comparison between monadic adjunction}]
    We look at the map between augmented action diagrams which presents the map $[X_\sigma/\TT]\rightarrow B\TT$
\[\begin{tikzcd}
	\cdots & {X_\sigma\times \TT \times\TT} & {X_\sigma \times \TT} & {X_\sigma} & {[X_\sigma/\TT]} \\
	\cdots & { \TT \times\TT} & { \TT} & {*} & B\TT
	\arrow[shift right=2, from=1-2, to=1-3]
	\arrow[shift left=2, from=1-2, to=1-3]
	\arrow[from=1-2, to=1-3]
	\arrow[shift left, from=1-3, to=1-4]
	\arrow[shift right, from=1-3, to=1-4]
	\arrow[shift left=3, from=1-1, to=1-2]
	\arrow[shift right=3, from=1-1, to=1-2]
	\arrow[shift right, from=1-1, to=1-2]
	\arrow[shift left, from=1-1, to=1-2]
	\arrow[shift left, from=2-1, to=2-2]
	\arrow[shift right, from=2-1, to=2-2]
	\arrow[shift left=3, from=2-1, to=2-2]
	\arrow[shift right=3, from=2-1, to=2-2]
	\arrow[from=2-2, to=2-3]
	\arrow[shift left=2, from=2-2, to=2-3]
	\arrow[shift right=2, from=2-2, to=2-3]
	\arrow[shift left, from=2-3, to=2-4]
	\arrow[shift right, from=2-3, to=2-4]
	\arrow[from=2-4, to=2-5]
	\arrow[from=1-4, to=1-5]
	\arrow[from=1-2, to=2-2]
	\arrow[from=1-3, to=2-3]
	\arrow[from=1-4, to=2-4]
	\arrow[from=1-5, to=2-5]
\end{tikzcd}.\]
For each $\alpha:[m]\rightarrow[n]\in\Delta$, we have the diagram
\[\begin{tikzcd}
	{X_\sigma\times\TT^{\times n}} & {X_\sigma\times\TT^{\times m}} & {[X_\sigma/\TT]} \\
	{\TT^{\times n}} & {\TT^{\times m}} & B\TT
	\arrow["\alpha", from=1-1, to=1-2]
	\arrow["\alpha", from=2-1, to=2-2]
	\arrow[from=1-2, to=1-3]   
	\arrow[from=1-1, to=2-1]
	\arrow[from=1-2, to=2-2]
	\arrow[from=2-2, to=2-3]
	\arrow[from=1-3, to=2-3]
\end{tikzcd},\]
where both the outer rectangle and the inner right square are pullbacks, so the left square is also a pullback (see \cref{Grouplike action gives groupoid object}). Hence from \SAG{Lemma}{D.3.5.6}, we learn that after taking $\QCoh$, the left square becomes
\[\begin{tikzcd}
	{\QCoh(X_\sigma\times\TT^{\times n})} & {\QCoh(X_\sigma\times\TT^{\times m})} \\
	{\QCoh(\TT^{\times n})} & {\QCoh(\TT^{\times m})}
	\arrow["\alpha"', from=1-2, to=1-1]
	\arrow["\alpha"', from=2-2, to=2-1]
	\arrow[from=2-1, to=1-1]
	\arrow[from=2-2, to=1-2]
\end{tikzcd}\]
which is right adjointable (for vertical maps). By \HA{Corollary}{4.7.4.18} this implies that $\QCoh(-)$ of the action diagram, viewed as $[n]\mapsto[\QCoh(\TT^{\times n})\rightarrow\QCoh(X_\sigma\times\TT^{\times n})]$, lifts to a simplicial object in $\Fun^\RAd(\Delta^1,\Cat)$, and $\QCoh(-)$ of the augmented action diagram is a limit diagram in $\Fun^\RAd(\Delta^1,\Cat)$. Now one can similarly view the diagram 
\[\begin{tikzcd}
	{\Fun(\Theta(\sigma)^\op,\Sp)} & {\QCoh([X_\sigma/\TT])} \\
	{\Fun(M,\Sp)} & {\QCoh(B\TT)}
	\arrow["{\Phi_\sigma}", from=1-1, to=1-2]
	\arrow["{\Phi_M}", from=2-1, to=2-2]
	\arrow["(p_\sigma)_!",from=2-1, to=1-1]
	\arrow["{\pi_\sigma^*}", from=2-2, to=1-2]
\end{tikzcd}\]
as an augmentation to the simplicial object $[n]\mapsto[\QCoh(\TT^{\times n})\rightarrow\QCoh(X_\sigma\times\TT^{\times n})]$ in $\Fun(\Delta^1,\Cat)$. Determining its right adjointability reduces to asking if this augmentation lifts to $\Fun^\RAd(\Delta^1,\Cat)$. The only thing left to check is right adjointability of the diagram (for taking right adjoints of the vertical arrows)
\[\begin{tikzcd}
	{\Fun(\Theta(\sigma)^\op,\Sp)} & {\QCoh(X_\sigma)} \\
	{\Fun(M,\Sp)} & {\QCoh(*)}
	\arrow["{\Phi_\sigma}", from=1-1, to=1-2]
	\arrow["{\Phi_M}", from=2-1, to=2-2]
	\arrow["{p_{\sigma!}}",from=2-1, to=1-1]
	\arrow["{\pi^*}", from=2-2, to=1-2]
\end{tikzcd}.\]
This is readily true once one unwinds the definition as in the proof of \cref{theorem of quasicoherent sheaves of torus compare to graded spectra}.


\end{proof}

\section{Constructible sheaves}
In the seminal book of \cite{HA}, Jacob Lurie sets up a general theory of constructible sheaves of spaces on a stratified topological space. In particular, this theory has been worked out with no finiteness assumptions on the sheaves involved. For a compactly generated presentable coefficient category $\Cc$, the theory of constructible sheaves on a stratified topological space valued in $\Cc$ could be worked out in a similar way (for example, as in \cite{lurie2018associativealgebrasbrokenlines}). We will follow this convention and setup various functors involved in the coherent-constructible correspondence. This approach makes several constructions easier. First of all, the yoga of six-functor provides a neat way to write down convolution products defined on the category of sheaves of spectra on a (locally compact Hausdorff) topological group and related functors. Secondly  recent advances in exodromy \cite{haine2024exodromyconicality,clausenJansen2023reductiveborelserre} make it gracefully simple to work with large categories of constructible sheaves.\par
The main goal of this section is to write down a symmetric monoidal functor from the combinatorial model to the category of sheaves on a real vector space. To do so, we first recall some generalities on convolution products for sheaves on real vector spaces. Then we move onto a digression on the lax symmetric monoidal structure on the relative homology functor. This is used in the next part to provide a combinatorial-constructible comparison functor along with its lax symmetric monoidal structure. After that we take a turn to recall some generalities on constructible sheaves and pin down a stratification following \cite{FLTZ}. As a consequence, we show that the comparison functor is fully faithful for a smooth fan and its image consists of sheaves constructible for the stratification we introduced. Finally we take a detour to prove a technical fact about descent along idempotent algebras in $\Shv(\MR;\Sp)$. Putting everything together, we conclude that for a smooth projective fan, the combinatorial-constructible comparison functor we constructed is fully faithful and symmetric monoidal. We leave the characterization of the image to the next section.
\subsection{Convolution product for sheaves on real vector spaces}
\begin{rem}[Hypercompleteness] One needs not to worry about hypercompleteness in our situation, as we will only deal with sheaves on finite dimensional manifolds. In particular, equivalence of sheaves can be detected stalk-wise.
\end{rem}
Take a finite dimensional real vector space $V\simeq\RR^{\oplus n}$.
It acquires the structure of a commutative algebra in $(\LCH,\times)$ via the addition of vectors
$$+:V\times V\rightarrow V.$$
This equips $\Shv(V;\Sp)$ with a binary operation $$*:\Shv(V;\Sp)\times\Shv(V;\Sp)\rightarrow\Shv(V;\Sp)$$ defined by
$$\Fc*\Gc\defeq+_!(\pr_1^*\Fc\otimes\pr_2^*\Gc).$$ This operation could be made coherently into a symmetric monoidal structure as follows.
\begin{construction}[Convolution product]\label{construction:convolution} Recall that the `six-functor formalism' on $\LCH$ is a lax symmetric monoidal functor
$$\Dc:\Corr(\LCH,\mathrm{all})\longrightarrow\Cat$$
and we have another symmetric monoidal functor (`Reg' for right leg)
$$\mathrm{Reg}:\LCH\rightarrow\Corr(\LCH,\mathrm{all})$$
which on objects acts as $X\mapsto X$ and on morphisms acts as
$$[X\overset{f}{\rightarrow} Y]\mapsto
\Biggr[\begin{tikzcd}
	& X \\
	X && Y
	\arrow["{\id_X}"', from=1-2, to=2-1]
	\arrow["f", from=1-2, to=2-3]
\end{tikzcd}\Biggr].
$$
We define the composition $$D_!(-)\defeq\Dc\circ\mathrm{Reg}:\LCH\rightarrow\Cat$$ which is again a lax symmetric monoidal functor.
This implies that for every commutative algebra $A\in\CAlg(\LCH)$, the category $D_!(A)=\Shv(A;\Sp)$ acquires a symmetric monoidal structure through the functoriality of $D_!$.
We name the resulting monoidal product \stress{convolution} and write it as $*$.
\end{construction}
\begin{prop}\label{proposition with direct computation of convolution}
    Let $V$ be a real vector space and $*$ be the convolution product operation on sheaves.
    \begin{enumerate}
        \item The convolution product $*$ is cocontinuous in each variable.
        \item Let $X,Y\subseteq V$ be polyhedral open subsets of a real vector space. We can compute very explicitly
        $$\underline\SS_X*\underline\SS_Y\simeq \underline{\SS}_{X+Y}[-\dim(V)]$$
        where
        $$X+Y\defeq\{x+y:x\in X,y\in Y\}$$ is the \stress{ Minkowski sum} of the subsets.
    \end{enumerate}
\end{prop}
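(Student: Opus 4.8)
The plan is to establish the two parts in order, using the $D_!$ formalism to reduce claim (1) to formal nonsense and then doing an explicit stalk computation for claim (2). For part (1): the convolution $*$ is obtained by applying the lax symmetric monoidal functor $D_!$ to the commutative algebra structure on $V$ in $(\LCH,\times)$. Unwinding \cref{construction:convolution}, the bifunctor $\Fc*\Gc = +_!(\pr_1^*\Fc\otimes\pr_2^*\Gc)$ is built from $\pr_i^*$, the external tensor $\otimes$ on $\Shv(V\times V;\Sp)$, and $+_!$. Each of $\pr_i^*$ (a $*$-pullback, hence a left adjoint), the tensor product on a presentably symmetric monoidal stable category $\Shv(V\times V;\Sp)$ (cocontinuous in each variable), and $+_!$ (a $!$-pushforward, which is a left adjoint in the six-functor formalism of \cite{volpe2023sixoperation}) preserves colimits; composing them gives cocontinuity of $*$ in each variable separately. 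I would simply cite these facts about the six operations rather than re-deriving them.

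For part (2): first reduce the external tensor computation. Since $X,Y$ are open, $\underline\SS_X = j_{X!}\underline\SS$ and likewise for $Y$, so $\pr_1^*\underline\SS_X\otimes\pr_2^*\underline\SS_Y \simeq \underline\SS_{X\times Y}$ as a sheaf on $V\times V$ (the external product of representables on opens is the representable on the product open; this follows from base change and the projection formula, or directly from $j_{X\times Y} = (j_X\times j_Y)$). So I must identify $+_!\,\underline\SS_{X\times Y}$. Writing $s\colon X\times Y\to V$ for the restriction of the addition map $+$, we have $+_!\,\underline\SS_{X\times Y}\simeq s_!\,\underline\SS_{X\times Y}$ where now I regard $s$ as a map of locally compact Hausdorff spaces to $V$. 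The stalk at a point $v\in V$ is computed, by proper base change, as the compactly supported cohomology $\Gamma_c(s^{-1}(v);\underline\SS) = \Gamma_c(\{(x,y)\in X\times Y : x+y=v\};\underline\SS)$. The fiber $s^{-1}(v)$ is the intersection of the open set $X\times Y$ with the affine subspace $\{x+y=v\}\cong V$; under the parametrization $x\mapsto (x,v-x)$ this fiber is the open subset $X\cap(v-Y)\subseteq V$, which is nonempty precisely when $v\in X+Y$, and is moreover convex when $X,Y$ are convex — in the polyhedral case one argues it is convex, or at least that its compactly supported cohomology is that of $\RR^n$ shifted, because an open convex nonempty subset of $\RR^n$ is homeomorphic to $\RR^n$ and $\Gamma_c(\RR^n;\underline\SS)\simeq \SS[-n]$. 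Hence the stalk is $\SS[-\dim V]$ when $v\in X+Y$ and $0$ otherwise.

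It remains to upgrade this stalkwise identification to an equivalence of sheaves $\underline\SS_X * \underline\SS_Y \simeq \underline\SS_{X+Y}[-\dim V]$. For this I would construct an actual morphism and check it is a stalkwise equivalence (legitimate by the hypercompleteness remark: all spaces here are finite-dimensional manifolds). The Minkowski sum $X+Y$ is open, and there is a natural counit-type map: the unit $\underline\SS_{V}\to \underline\SS_{V}$ and adjunctions $(s_!, s^!)$ produce a comparison $s_!\,\underline\SS_{X\times Y}\to \underline\SS_{X+Y}[-\dim V]$ — concretely, $s$ factors through the open $X+Y$, and over $X+Y$ the map $s$ is a fiber bundle (or at least a "fibration up to the convex-fiber computation above") with contractible fibers of dimension $\dim V$, so $s_!$ along it computes a shift; I would phrase this via the base-change/projection-formula identity $s_!\,\underline\SS_{X\times Y}\simeq j_{(X+Y)!}\,(s')_!\,\underline\SS$ where $s'\colon X\times Y\to X+Y$ is the corestriction, and then identify $(s')_!\,\underline\SS \simeq \underline\SS[-\dim V]$ by the stalk computation. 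The main obstacle is precisely this last identification $(s')_!\,\underline\SS\simeq\underline\SS_{X+Y}[-\dim V]$ as sheaves (not just stalkwise): one needs the fibers of $s'$ to be uniformly $\RR^n$-like and the identification to be natural in the base point; the polyhedrality hypothesis is there to guarantee the fibers $X\cap(v-Y)$ are convex (hence each $\simeq \RR^{\dim V}$ with compactly supported cohomology canonically $\SS[-\dim V]$), and a Leray/constancy argument — or a direct check that the relevant comparison map is an equivalence on stalks and then invoking hypercompleteness once more — closes the gap. I expect the bookkeeping with the shift and the orientation/sign issues in $\Gamma_c(\RR^n;\SS)\simeq\SS[-n]$ to be the fiddly part, but nothing conceptually deep.
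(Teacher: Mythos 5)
Your proof is correct, and part (1) is verbatim the paper's argument (each of $\pr_i^*$, $\otimes$, $+_!$ preserves colimits). For part (2) the skeleton also agrees — both reduce via proper base change to computing $s_!\underline\SS_{X\times Y}$ for $s$ the restriction of addition to $X\times Y$, and both exploit that the fibers $X\cap(v-Y)$ are open convex, hence copies of $\RR^{\dim V}$ — but the finishing mechanism differs. The paper asserts that, by polyhedrality, $s$ is a smooth $\RR^n$-bundle over its image $X+Y$, so the $!$-pushforward is locally constant, and then uses contractibility (convexity) of $X+Y$ to conclude it is constant with value computed from $p_!\underline\SS\simeq\underline\SS[-n]$ for a trivial projection. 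You instead construct a global comparison map $s_!\underline\SS_{X\times Y}\to\underline\SS_{X+Y}[-\dim V]$ (via the Poincaré-duality identification $s'^!\underline\SS\simeq\underline\SS[\dim V]$ for the submersion $s'\colon X\times Y\to X+Y$ and the counit of $(s'_!,s'^!)$) and verify it is a stalkwise equivalence, invoking hypercompleteness. Your route buys you freedom from having to prove local triviality of $s$ over $X+Y$ (which the paper leaves as "one can prove"), at the cost of checking that the counit restricts on each fiber to the trace map $\Gamma_c(\RR^n;\omega)\to\SS$, which is standard base-change bookkeeping; the paper's route avoids constructing any map but leans on the bundle structure and on contractibility of $X+Y$. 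Both uses of convexity are the same in substance, and the shift and orientation worries you flag are harmless since only an equivalence (not a preferred one) is claimed.
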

\begin{proof}
Point 1 follows from the fact that $*$-pullback, $\otimes$ of sheaves and $!$-pushforward all preserve colimits. For point 2, we apply proper base change and learn that 
        $$\underline\SS_X*\underline\SS_Y\simeq +_{|_{X\times Y}!}\underline{\SS}_{X\times Y}$$
        where $+$ is restricted to a map $X\times Y\rightarrow\RR\times\RR\rightarrow \RR$.
        By the fact that $X$ and $Y$ are polyhedral open subsets, one can prove this map $+$ is a smooth $\RR^n$ bundle over its image $X+Y\subseteq\RR^n$.
        It follows that the !-pushforward of $\underline{\SS}_{X\times Y}$ along
        the addition map is locally constant.
        The computation reduces to the fact that for the projection $p:Z\times\RR^n\rightarrow Z$, one has \[p_!\underline\SS=\underline\SS[-n].\]
        Keep in mind that $X + Y$ is contractible.\qedhere
\end{proof}
\begin{rem}
As a side remark, polyhedral opens form a basis for the topology. In principle, one can compute the convolution of any two sheaves using the above facts.
\end{rem}
\subsection{Digression: multiplicative structures on Betti homology}
As we have seen earlier, the addition operation on the finite dimensional real vector space $M_\RR$ makes it into a commutative monoid in the 1-category $\LCH$. Thus the slice category $\LCH_{/M_\RR}$ acquires a symmetric monoidal structure which can be informally defined as follows: $$(X,f)\otimes(Y,g)\defeq(X\times Y,f+g)$$ (see \HA{Theorem}{2.2.2.4} for the general construction). We denote by $(\LCH_{/M_\RR},\otimes)$ this symmetric monoidal category. The structure of a commutative monoid on $M_\RR$  also has been used to provide a convolution product on the category of sheaves on $M_\RR$, and these two categories are indeed related. The goal of this digression is to explain the following construction.
\begin{prop}[Taking homology is symmetric monoidal]\label{construction:symmonshriek}
    There is a lax symmetric monoidal functor 
    $$\Gamma_{M_\RR}:(\LCH_{/M_\RR},\otimes)\longrightarrow(\Shv(M_\RR;\Sp),*)$$
    which on objects acts by
    $$(X,f)\longmapsto f_!f^!\omega_{M_\RR},$$
    where $\omega_{M_\RR}$ is the dualizing sheaf on $M_\RR$.
\end{prop}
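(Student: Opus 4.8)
The plan is to realize $\Gamma_{M_\RR}$ as a composite of two lax symmetric monoidal functors, so that one only needs to produce the ingredients and invoke general machinery about six-functor formalisms packaged as maps of $\infty$-operads. First, I would observe that the assignment $(X,f)\mapsto f_!f^!\omega_{M_\RR}$ factors as
\[
(\LCH_{/M_\RR},\otimes)\xrightarrow{\ \Gamma^{\mathrm{rel}}\ }(\Shv(M_\RR;\Sp),\otimes)\xrightarrow{\ \text{forget}\ }(\Shv(M_\RR;\Sp),*),
\]
but that does not quite work because the intermediate step is valued in the pointwise tensor product, not convolution; the genuinely correct decomposition is to think of $f_!f^!\omega_{M_\RR}$ as the value at $(X,f)$ of a functor built directly from the lax symmetric monoidal structure of the formalism $\Dc:\Corr(\LCH,\mathrm{all})\to\Cat$ restricted along an operadic section. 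Concretely, I would use that $M_\RR\in\CAlg(\LCH,\times)$, hence $M_\RR\in\CAlg(\Corr(\LCH,\mathrm{all}))$ via $\mathrm{Reg}$ composed with the diagonal/left-leg inclusion, and that the slice $(\LCH_{/M_\RR},\otimes)$ maps to $\Corr(\LCH,\mathrm{all})_{/M_\RR}$ — where now a morphism $(X,f)$ is sent to the correspondence $X\xleftarrow{\ \id\ }X\xrightarrow{\ f\ }M_\RR$ read in the \emph{other} direction, i.e. using $f$ as the left (wrong-way) leg so that $\Dc$ produces $f_!$ and then $f^!\omega$ via the right leg. Applying the lax symmetric monoidal $\Dc$ and then evaluating the resulting lax symmetric monoidal functor $(\LCH_{/M_\RR},\otimes)\to(\Shv(M_\RR;\Sp),*)$ on the "absolute" object $\omega_{M_\RR}$ gives $\Gamma_{M_\RR}$.

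More precisely, the key steps in order: (1) record that for $A\in\CAlg(\LCH,\times)$ the category $\Shv(A;\Sp)$ carries the convolution symmetric monoidal structure of \cref{construction:convolution}, and that $\CAlg(\Shv(A;\Sp),*)$ receives $\omega_A$ as a commutative algebra (the dualizing sheaf is a convolution unit-compatible algebra — indeed on a vector space $\omega_{M_\RR}\simeq\underline\SS_{M_\RR}[\dim M_\RR]$ is an invertible object, hence trivially an $\EE_\infty$-algebra, and one checks via \cref{proposition with direct computation of convolution} that it acts as the monoidal unit up to shift, so module structures over it are free); (2) build a lax symmetric monoidal functor $(\LCH_{/M_\RR},\otimes)\to (\Shv(M_\RR;\Sp),*)$, $(X,f)\mapsto f_!f^!\underline\SS_{M_\RR}$ "relative $\SS$-chains", using that $(-)_!$ is a functor out of a wide subcategory of correspondences and that $f_!(g_!\mathcal F\otimes h_!\mathcal G)\simeq (f\circ(g\times h))_!(\mathcal F\boxtimes\mathcal G)$ combined with the lax structure map $+_!\mathrm{pr}_1^*(-)\otimes\mathrm{pr}_2^*(-)$ defining $*$ — this is where the coherence of \HA{Theorem}{2.2.2.4} for the slice and the operadic six-functor package do the work; (3) twist by $\omega_{M_\RR}$: since $\omega_{M_\RR}$ is an $\EE_\infty$-algebra (in fact $\otimes$-invertible) in $(\Shv(M_\RR;\Sp),*)$, tensoring a lax symmetric monoidal functor with it against the unit yields another lax symmetric monoidal functor, and $f_!f^!\underline\SS_{M_\RR}*\omega_{M_\RR}\simeq f_!f^!(\underline\SS_{M_\RR}*\omega_{M_\RR})\simeq f_!f^!\omega_{M_\RR}$ by the projection formula and the identification $f^!\underline\SS_{M_\RR}*(-)\simeq f^!(\underline\SS_{M_\RR}*(-))$ for the pulled-back module; (4) check the functor's value on objects is the claimed $(X,f)\mapsto f_!f^!\omega_{M_\RR}$ and that it sends the unit $(\ast,0\colon\ast\hookrightarrow M_\RR)$ to $0_!0^!\omega_{M_\RR}\simeq\omega_{\{0\}}$, the skyscraper, which is the convolution unit — pinning down the unitality of the lax structure.

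The main obstacle, I expect, is step (2): assembling the lax symmetric monoidal structure honestly rather than by hand. One has to produce, coherently in all arities, the comparison maps $f_!f^!\omega\ast g_!g^!\omega\to (f+g)_!(f+g)^!\omega$ and verify associativity/symmetry, and the cleanest route is exactly the one flagged in the remark preceding the proposition — adapt \cite[Chapter 3]{GL}, where relative homology over a monoid object in a six-functor formalism is shown to be lax monoidal by exhibiting it as a section of $\Dc$ over an appropriate operad map into $\Corr$. Getting the variances straight (which leg of the correspondence is "wrong-way", so that $\mathrm{Reg}$ contributes $f^!$ on the right while the slice structure contributes $f_!$ on the left) is the delicate bookkeeping, but once the functor $\Corr(\LCH,\mathrm{all})^{\otimes}\to\Cat^{\otimes}$ and the operadic section are in place, everything else is formal. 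I would also double-check that no hypercompleteness subtleties enter — they do not, by the remark that on finite-dimensional manifolds equivalences are detected on stalks — and that all the pushforwards involved are along separated maps of manifolds so proper base change and the projection formula apply verbatim.
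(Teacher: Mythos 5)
Your overall architecture — run everything through the lax symmetric monoidal $\Dc$ on correspondences, organize the coherence via an operadic/Grothendieck-construction argument in the spirit of \cite[Chapter 3]{GL} — is indeed how the paper proceeds: it unstraightens $D_!$ over $\LCH_{/M_\RR}$ symmetric monoidally, obtains the $f^!$-part as a lax symmetric monoidal \emph{right adjoint} (\HA{Corollary}{7.3.2.7}) of the unstraightened comparison functor, and feeds in $\omega_{M_\RR}$ as a commutative algebra for convolution, the algebra structure coming from the lax symmetric monoidal structure on $\pi^!$ (right adjoint of the symmetric monoidal $\pi_!$ for the monoid map $\pi:M_\RR\rightarrow *$). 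However, your concrete reduction in steps (1)--(3) does not work. First, $\omega_{M_\RR}$ is \emph{not} convolution-invertible and does not act as the unit up to shift: by the computation of \cref{proposition with direct computation of convolution} one has $\omega_{M_\RR}*\omega_{M_\RR}\simeq\omega_{M_\RR}$ (it is an idempotent algebra, cf.\ \cref{idempotent algebra in sheaf category indexed by the fan}), and more generally $\Fc*\omega_{M_\RR}$ has every stalk equivalent to $\pi_!\Fc[n]$, which is far from $\Fc$; so "invertible, hence trivially $\EE_\infty$" is not a valid justification (the conclusion that $\omega_{M_\RR}\in\CAlg(\Shv(M_\RR;\Sp),*)$ is true, but needs the $\pi^!$ argument).

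Second, and more seriously, the twisting step (3) rests on false identities. One has $\underline\SS_{M_\RR}*\omega_{M_\RR}\simeq\underline\SS_{M_\RR}$, not $\omega_{M_\RR}$, and there is no projection formula of the shape $(f_!f^!\Fc)*\Gc\simeq f_!f^!(\Fc*\Gc)$. A one-line counterexample: take $f:\{0\}\hookrightarrow\RR$. Then $f_!f^!\underline\SS_{\RR}\simeq\underline\SS_{\{0\}}[-1]$, so $(f_!f^!\underline\SS_{\RR})*\omega_{\RR}\simeq\omega_{\RR}[-1]\simeq\underline\SS_{\RR}$, whereas $f_!f^!\omega_{\RR}\simeq\underline\SS_{\{0\}}$; these disagree, so your chain of equivalences in (3) fails at both steps. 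Relatedly, step (2) cannot be carried out with $\underline\SS_{M_\RR}$ in place of $\omega_{M_\RR}$: the candidate multiplication map $f_!f^!A * g_!g^!A\rightarrow (f+g)_!(f+g)^!A$ is produced from $f^!A\boxtimes g^!A\rightarrow(f\times g)^!(A\boxtimes A)$ together with the identification $+^!A\simeq A\boxtimes A$, and this last identification holds for $A=\omega_{M_\RR}$ (because $p^!\omega_Y\simeq\omega_X$ for \emph{any} map $p$) but fails for $A=\underline\SS_{M_\RR}$, where $+^!\underline\SS_{M_\RR}\simeq\underline\SS_{M_\RR\times M_\RR}[n]$ introduces a shift in every arity (already the unit comparison would have to be a map $\mathbb{1}\rightarrow\mathbb{1}[-n]$). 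So the dualizing sheaf must be built in from the start — this is exactly what the paper's functors $s=r\circ(\id\times\underline{\omega_{M_\RR}})$ and $t=p_2\circ\unstr(D_!(h))$ accomplish — and cannot be inserted afterwards by convolving with $\omega_{M_\RR}$.
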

\begin{rem}[A similar construction in the literature]
Let us immediately point out that, a very similar construction has been carried out (in the $\ell$-adic context) by Gaitsgory-Lurie in \cite[\href{https://www.math.ias.edu/~lurie/papers/tamagawa-abridged.pdf\#chapter.3}{Chapter 3}]{GL}. An elaboration (in the Betti context) of the ideas in that paper would produce a more general construction that easily provides the functor as above (for example, one could allow the base groups $G$ to vary). We have, however, decided to give an ad-hoc construction of the functor that we need in this note to simplify our exposition (also because the situation we are dealing with here is extremely simple). We will return to this construction elsewhere.    
\end{rem}
The construction is technical in contrast to the simple application we have in mind. The reader is advised to skip the rest of this section and come back later. Before we go into the construction, here is a rough plan. 
\begin{rem}[Preview of strategy]

We will define a symmetric monoidal category $\Shv_!$ which comes with a symmetric monoidal functor $$p:\Shv_!\rightarrow\LCH_{/M_\RR}.$$ We will then produce a lax symmetric monoidal functor as a section of $p$: $$s:\LCH_{/M_\RR}\rightarrow\Shv_!,$$ and another symmetric monoidal functor $$t:\Shv_!\rightarrow\Shv(M_\RR;\Sp),$$ so that the composition $$t\circ s:\LCH_{/M_\RR}\rightarrow\Shv(M_\RR;\Sp)$$ is what we want. 

\end{rem} 
\begin{rem}[A rough description of the players] We give an informal description of the categories and functors appearing in the previous remark. One can describe the category $\Shv_!$ as follows. An object in $\Shv_!$ is a pair $(X,f,\Fc)$ where $(X,f)$ is an object of $\LCH_{/M_\RR}$ and $\Fc\in\Shv(X;\Sp)$. A map $(h,\phi)$ from $(X,f,\Fc)$ to $(Y,g,\Gc)$ consists of a map $h:(X,f)\rightarrow (Y,g)$ in $\LCH_{/M_\RR}$ and a map $\phi:h_!\Fc\rightarrow\Gc$ in $\Shv(Y;\Sp)$. The symmetric monoidal structure is a mixture of tensor product in $\LCH_{/M_\RR}$ and exterior product of sheaves: $(X,f,\Fc)\otimes(Y,g,\Gc)=(X\times Y,f+g,\Fc\boxtimes\Gc)$. With these we can also roughly describe the functors. The functor $$p:\Shv_!\rightarrow\LCH_{/M_\RR}$$ is the forgetful functor taking $(X,f,\Fc)$ to $(X,f)$. The functor $$s:\LCH_{/M_\RR}\rightarrow\Shv_!$$ takes $(X,f)$ to $(X,f,f^!\omega_{M_\RR})\in\Shv_!$. The functor $$t:\Shv_!\rightarrow\Shv(M_\RR;\Sp)$$ takes $(X,f,\Fc)$ to $f_!\Fc\in\Shv(M_\RR;\Sp)$. This casual description suggests that $t\circ s$ supplies the construction we need. Note that we are not even mentioning what these functor does to maps or higher coherences, nor multiplicative structure. This is what makes the construction technical.
\end{rem}
We start by constructing $\Shv_!$.
\begin{notation}
The forgetful functor $\mathrm{forgetful}:\LCH_{/M_\RR}\longrightarrow\LCH$ is symmetric monoidal and we have a composition of functors
    $$\LCH_{/M_\RR}\overset{\mathrm{forgetful}}{\longrightarrow}\LCH\overset{D_!}{\longrightarrow}\Cat$$
    where the latter functor comes from \cref{construction:convolution}. We abuse notation and again write the composition as
    $$D_!:\LCH_{/M_\RR}\longrightarrow\Cat$$
    when there is no danger of confusion. Note that this composition is also a lax symmetric monoidal functor.
\end{notation}
The category $\Shv_!$ is just the unstraightening (i.e. Grothendieck construction) of the functor $D_!:\LCH_{/M_\RR}\rightarrow\Cat$, and the symmetric monoidal structure actually comes with unstraightening - using a symmetric monoidal version of the Grothendieck construction that we recall as follows.
\begin{thm}[Symmetric monoidal Grothendieck construction]
    (See \cite[A.2.1]{hinich2015rectification}\cite[Proposition 3.3.4.11]{GL} \cite[Theorem 2.1]{ramzi2022monoidal} for a history of the theorem.)   Let $(\Cc,\otimes)$ be a symmetric monoidal category. There is an equivalence of categories
    $$\cocart_{\Cc}^{\EE_\infty}\simeq\Fun^{\mathrm{lax}\otimes}(\Cc,\Cat)$$
    which is compatible with the straightening-unstraightening equivalence
    $$\cocart_{\Cc}\simeq\Fun(\Cc,\Cat).$$
\end{thm}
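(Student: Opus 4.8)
The plan is to bootstrap from the ordinary straightening--unstraightening equivalence by transporting a symmetric monoidal structure across it and then passing to commutative algebra objects. One starts from the unstraightening equivalence $\mathrm{Un}_\Cc\colon\Fun(\Cc,\Cat)\xrightarrow{\simeq}\cocart_\Cc$ of \HTT{Theorem}{3.2.0.1}, which is natural in $\Cc$ for restriction along functors and carries products to fiberwise products. Equip the source with the Day convolution symmetric monoidal structure built from the given $\otimes$ on $\Cc$ and the Cartesian symmetric monoidal structure on $\Cat$. Using the naturality and product-compatibility of unstraightening, transport this structure along $\mathrm{Un}_\Cc$ to obtain a symmetric monoidal structure on $\cocart_\Cc$; unwinding the Day convolution formula, the tensor product of $p\colon\Dc\to\Cc$ and $q\colon\Ec\to\Cc$ is the coCartesian fibration obtained from the external product $\Dc\times\Ec\to\Cc\times\Cc$ by left Kan extension of the classifying functor along $\otimes\colon\Cc\times\Cc\to\Cc$. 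Since the underlying equivalence is by construction the straightening--unstraightening one, the last clause of the theorem holds automatically.

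Next one applies $\CAlg(-)$ to this symmetric monoidal equivalence. On the presheaf side, commutative algebra objects for the Day convolution product are precisely lax symmetric monoidal functors, so $\CAlg(\Fun(\Cc,\Cat)^{\mathrm{Day}})\simeq\Fun^{\mathrm{lax}\otimes}(\Cc,\Cat)$; transporting gives $\CAlg(\cocart_\Cc)\simeq\Fun^{\mathrm{lax}\otimes}(\Cc,\Cat)$. It then remains to recognize $\CAlg(\cocart_\Cc)$ as $\cocart_\Cc^{\EE_\infty}$ in its hands-on description, namely the category of coCartesian fibrations $\Dc\to\Cc$ equipped with a symmetric monoidal structure on $\Dc$ for which $\Dc\to\Cc$ is symmetric monoidal and $\otimes_\Dc$ preserves coCartesian edges (equivalently, $\Dc^\otimes\to\Cc^\otimes$ is again a coCartesian fibration). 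This is a matter of unwinding: the multiplication $\Dc\otimes_{\mathrm{Day}}\Dc\to\Dc$ over $\Cc$, via the Kan-extension description of $\otimes_{\mathrm{Day}}$, supplies for $d\in\Dc_a$ and $e\in\Dc_b$ a functorial object $d\otimes e\in\Dc_{a\otimes b}$ together with unit and coherence data, i.e.\ exactly a symmetric monoidal lift of $\Dc\to\Cc$; and the fact that the classifying data take values in $\Cat$ rather than $\Spc$ is what forces $\otimes_\Dc$ to send coCartesian edges to coCartesian edges.

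A more conceptual route, which the cited references follow in various guises, avoids Day convolution and unstraightens directly over $\mathrm{Fin}_*$: a symmetric monoidal category is a coCartesian fibration over $\mathrm{Fin}_*$ satisfying the Segal condition, a lax symmetric monoidal functor $\Cc\to\Cat$ is a morphism of $\infty$-operads $\Cc^\otimes\to\Cat^\times$, and ``degreewise unstraightening over $\mathrm{Fin}_*$'' should turn the latter into a coCartesian fibration $\Dc^\otimes\to\Cc^\otimes$ whose composite to $\mathrm{Fin}_*$ again satisfies Segal precisely because $\Cc^\otimes\to\Cat$ preserves inert morphisms. Whichever route one chooses, the only genuine difficulty --- and the step I expect to be the \textbf{main obstacle} --- is making ``unstraightening'' into a \emph{coherent}, symmetric-monoidally natural construction rather than an equivalence chosen one base at a time, and thereby producing an actual comparison \emph{functor} $\cocart_\Cc^{\EE_\infty}\to\Fun^{\mathrm{lax}\otimes}(\Cc,\Cat)$ instead of a mere matching of objects and morphisms. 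This higher-coherence bookkeeping is exactly why the result is not a one-line corollary of \HTT{Theorem}{3.2.0.1} and required the dedicated treatments of Hinich, Gaitsgory--Lurie, and Ramzi; in an exposition such as this one it is legitimate simply to invoke those.
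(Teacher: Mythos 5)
The paper does not actually prove this statement—it is imported wholesale, with citations to Hinich, Gaitsgory--Lurie, and Ramzi—so there is no in-paper argument to compare against beyond that citation. Your sketch (Day convolution on $\Fun(\Cc,\Cat)$, transport along unstraightening, pass to $\CAlg(-)$ to get $\Fun^{\lax\otimes}(\Cc,\Cat)$, then match with the hands-on description of $\cocart_\Cc^{\EE_\infty}$ inside $\cocart_{\Cc^\otimes}$, with the coherent symmetric monoidal enhancement of unstraightening correctly flagged as the real content and delegated to the literature) follows essentially the same route as the cited works, in particular Ramzi's, and your conclusion that one may legitimately invoke them is exactly what the paper does.
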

Let's immediately recall the definition of the objects appearing in the theorem. 

\begin{enumerate}
    \item For a category $\Cc$, the category $\cocart_\Cc$ is the category of \stress{coCartesian fibrations} over $\Cc$ with coCartesian edge preserving functors over $\Cc$ as morphisms. 
    \item If $(\Cc,\otimes)$ is a symmetric monoidal category with $\Cc^\otimes\rightarrow\einfty$\footnote{Note that $\einfty$ is just a fancy name for $\mathrm{Fin}_*$.} being the underlying operad, the category $\cocart_\Cc^{\einfty}$ is the category of \stress{$\einfty$-monoidal coCartesian fibrations} over $\Cc$ of \cite[Definition 1.11]{ramzi2022monoidal}. It is defined to be the full subcategory of $\cocart_{\Cc^\otimes}$ spanned by those coCartesian fibrations $\Dc^\otimes\rightarrow\Cc^\otimes$ such that the underlying $\Dc\rightarrow\Cc$ is a coCartesian fibration and that the $\einfty$-monoidal operations preserve coCartesian edge.
\end{enumerate}

\begin{defn}
    Applying the symmetric monoidal Grothendieck construction to the lax symmetric monoidal functor $D_!:\LCH_{/M_\RR}\rightarrow\Cat$ produces an $\einfty$-monoidal coCartesian fibration
    $$p^\otimes:\Shv_!^\otimes\longrightarrow\LCH_{/M_\RR}^\otimes.$$
    We write 
    $$p:\Shv_!\longrightarrow\LCH_{/M_\RR}$$
    for the underlying map making $\Shv_!$ a coCartesian fibration over $\LCH_{/M_\RR}$.
\end{defn}
In view of \HA{Remark}{2.1.2.14} and \cref{lemma:cocartesian}, the structure map $p^\otimes$ is a map of $\einfty$-monoidal category. In other words, it presents $p$ as a symmetric monoidal functor. This functor $p$ won't appear in the final construction, but we will introduce other players that revolve around $\Shv_!$ and $p$. We start with introducing the following diagram
\[\begin{tikzcd}
	{\LCH_{/M_\RR}} & {\LCH_{/M_\RR}} & \Cat
	\arrow[""{name=0, anchor=center, inner sep=0}, "\mathrm{id}", shift left, curve={height=-6pt}, from=1-1, to=1-2]
	\arrow["{D_!}", from=1-2, to=1-3]
	\arrow[""{name=1, anchor=center, inner sep=0}, "{\underline{M_\RR}}"', shift right, curve={height=6pt}, from=1-1, to=1-2]
	\arrow["h", shorten <=2pt, shorten >=2pt, Rightarrow, from=0, to=1]
\end{tikzcd},\]
where $\underline{M_\RR}$ is the constant functor at $(M_\RR,\id)\in\LCH_{/M_\RR}$ and $h$ is the natural transformation to the constant functor on the terminal object. Note that $h$ is actually a natural transformation between lax symmetric monoidal functors.
Now we apply the Grothendieck construction to $D_!(h):D_!\circ\id\rightarrow D_!\circ\underline{M_\RR}$ and get the following diagram
\[\begin{tikzcd}
	{\Shv_!} &&& {\LCH_{/M_\RR}\times\Shv(M_\RR;\Sp)} \\
	&& {\LCH_{/M_\RR}}
	\arrow["p"', from=1-1, to=2-3]
	\arrow["q",from=1-4, to=2-3]
	\arrow["\unstr({D_!(h)})", from=1-1, to=1-4]
\end{tikzcd}\]
underlying the diagram of operads supplied by
the symmetric monoidal Grothendieck construction
\[\begin{tikzcd}
	{\Shv_!^\otimes} &&& {(\LCH_{/M_\RR}\times\Shv(M_\RR;\Sp))^\otimes} \\
	&& {\LCH_{/M_\RR}^\otimes} \\
	&& \einfty
	\arrow["{p^\otimes}"', from=1-1, to=2-3]
	\arrow["{q^\otimes}", from=1-4, to=2-3]
	\arrow["{\unstr({D_!(h)})^\otimes}", from=1-1, to=1-4]
	\arrow["{\pi_1^\otimes}", from=2-3, to=3-3]
	\arrow["{\pi_2^\otimes}"', curve={height=12pt}, from=1-1, to=3-3]
	\arrow["{\pi_3^\otimes}", curve={height=-12pt}, from=1-4, to=3-3]
\end{tikzcd}.\]
In the diagram, $\pi_i^\otimes$ are the structure maps of the operads. Our first goal is to produce the right adjoint $r$ of $\unstr({D_!(h)})$ along with the lax symmetric monoidal structure on it.
\begin{prop}
    The functor $\unstr({D_!(h)}):\Shv_!\rightarrow\LCH_{/M_\RR}\times\Shv(M_\RR;\Sp)$ admits a right adjoint $r$. Moreover, $r$ admits a lax symmetric monoidal structure.
\end{prop}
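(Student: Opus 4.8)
The plan is to obtain $r$ from the fibrational structure around $\Shv_!$ and then transport the monoidal structure by a formal argument.

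First I would record the relevant picture. By construction $p:\Shv_!\rightarrow\LCH_{/M_\RR}$ is the coCartesian fibration classified by $D_!$, while $q:\LCH_{/M_\RR}\times\Shv(M_\RR;\Sp)\rightarrow\LCH_{/M_\RR}$ is the coCartesian fibration classified by the constant functor $D_!\circ\underline{M_\RR}$. Being the unstraightening of a natural transformation, $\unstr(D_!(h))$ is automatically a functor over $\LCH_{/M_\RR}$ carrying $p$-coCartesian edges to $q$-coCartesian edges, and over an object $(X,f)$ it restricts on fibres, after unwinding the definitions of $D_!$ and $\mathrm{Reg}$, to the pushforward $f_!:\Shv(X;\Sp)\rightarrow\Shv(M_\RR;\Sp)$; the six-functor formalism of \cref{convention on sheaf theory} equips each such $f_!$ with a right adjoint $f^!$. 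Note also that $\Shv_!$ is simultaneously a Cartesian fibration over $\LCH_{/M_\RR}$, with transition functors the $!$-pullbacks $g^!$.

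Next I would produce $r$ via a relative adjoint functor theorem: a map of coCartesian fibrations over a fixed base which preserves coCartesian edges admits a right adjoint relative to that base precisely when it does so fibrewise (see e.g.\ \HTT{Proposition}{7.3.2.6}). Both hypotheses hold in our situation, so $\unstr(D_!(h))$ acquires a relative right adjoint $r$ over $\LCH_{/M_\RR}$ — in particular an honest right adjoint — which on objects sends $((X,f),\Gc)$ to $(X,f,f^!\Gc)$. One can also see $r$ by hand: since $!$-pullback is contravariantly functorial, $g^!\circ (g')^!\simeq (g'\circ g)^!$, so the collection $\{\,f^!\,\}$ assembles into a natural transformation from the constant functor at $\Shv(M_\RR;\Sp)$ into the functor $\LCH_{/M_\RR}^{\op}\rightarrow\Cat$ sending $(X,f)\mapsto\Shv(X;\Sp)$ and $g\mapsto g^!$; its Cartesian unstraightening is $r$, which therefore preserves Cartesian edges.

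Finally I would upgrade $r$ to a lax symmetric monoidal functor. The symmetric monoidal Grothendieck construction recalled above carries the morphism $D_!(h)$ of $\Fun^{\mathrm{lax}\otimes}(\LCH_{/M_\RR},\Cat)$ — namely the image of the transformation $h$ of lax symmetric monoidal functors under postcomposition with the lax symmetric monoidal functor $D_!$ — to the symmetric monoidal functor $\unstr(D_!(h))^\otimes$ displayed in the diagram of operads above, so $\unstr(D_!(h))$ is genuinely symmetric monoidal. Since the right adjoint of a symmetric monoidal functor carries a canonical lax symmetric monoidal structure (doctrinal adjunction; see \HA{Corollary}{7.3.2.7}), $r$ inherits the desired lax symmetric monoidal structure, and the adjunction refines to an adjunction of lax symmetric monoidal functors, which is the form in which it will be used later. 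The step I expect to be the main obstacle is the middle one: checking the hypotheses of the relative adjoint functor theorem carefully — that $\unstr(D_!(h))$ really preserves coCartesian edges (clear from unstraightening, but worth stating) and that its fibrewise restrictions are exactly the covariant pushforwards $f_!$, so that the six-functor formalism supplies the fibrewise right adjoints. After that the remaining steps are purely formal manipulations with the monoidal Grothendieck construction.
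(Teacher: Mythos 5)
Your argument is correct and takes essentially the same route as the paper: fibrewise right adjoints $f^!$ plus preservation of coCartesian edges give the (relative) right adjoint via \HA{Proposition}{7.3.2.6} (your citation should read HA rather than HTT), and the lax symmetric monoidal structure on $r$ then comes from strong symmetric monoidality of $\unstr(D_!(h))$ via \HA{Corollary}{7.3.2.7}. The one point you treat as automatic is the claim that the morphism produced by the symmetric monoidal Grothendieck construction is genuinely symmetric monoidal, i.e.\ preserves coCartesian lifts over $\einfty$ and not just over $\LCH_{/M_\RR}^\otimes$; this is exactly what the paper isolates and proves as \cref{lemma:cocartesian}.
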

\begin{proof}
To begin with, we want to show that $\unstr({D_!(h)})$ has a right adjoint functor $r$. We know the following facts about $\unstr({D_!(h)})$: that the restriction of $\unstr({D_!(h)})$ to each fiber over $\LCH_{/M_\RR}$ has a right adjoint and that $\unstr({D_!(h)})$ preserves coCartesian edges since it is unstraightened from a natural transformation. With these one can apply \HA{Proposition}{7.3.2.6} and learn that it has a right adjoint (even relative to $\LCH_{/M_\RR}$). By  construction, $r$ restricts to fiberwise right adjoints.\par
Now we explain the lax symmetric monoidal structure on $r$. 
From \cref{lemma:cocartesian} we learn that $\unstr({D_!(h)}^\otimes)$ is a map of $\einfty$-monoidal categories, i.e. $\unstr({D_!(h)})$ is a symmetric monoidal functor. Now one can invoke \HA{Corollary}{7.3.2.7} and learn that $r$ has a structure of lax symmetirc monoidal functor.
\end{proof}
We have achieved our first goal.
Our next player is the functor  $$\id\times\underline{\omega_\MR}:\LCH_{/M_\RR}\rightarrow\LCH_{/M_\RR}\times\Shv(M_\RR;\Sp).$$
As the name suggests, it is induced by $\id:\LCH_{/M_\RR}\rightarrow\LCH_{/M_\RR}$ and the constant functor
$\underline{\omega_{M_\RR}}:\LCH_{/M_\RR}\rightarrow\Shv(M_\RR;\Sp)$.
Recall that we have the \stress{dualizing sheaf}
$\omega_{M_\RR}$ defined by
$$\omega_{M_\RR}:=\pi^!\mathbb{1}_{\Shv(*;\Sp)}\in\Shv(M_\RR;\Sp),$$
where $\pi:M_\RR\rightarrow*$ is the map from $M_\RR$ to the terminal object $*$.
Let's make an observation on $\omega_{M_\RR}$.
\begin{prop}
    The dualizing sheaf $\omega_{M_\RR}$ acquires the structure of a commutative algebra for the convolution product.
\end{prop}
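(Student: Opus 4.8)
The plan is to realize $\omega_{M_\RR}$ as the value of a lax symmetric monoidal functor on the tensor unit. By definition $\omega_{M_\RR}=\pi^!\mathbb{1}_{\Shv(*;\Sp)}$, where $\pi\colon M_\RR\to *$ is the projection and $\mathbb{1}_{\Shv(*;\Sp)}=\SS$ is the tensor unit of $\Sp$; so it suffices to equip $\pi^!$ with a lax symmetric monoidal structure relative to the smash product on $\Shv(*;\Sp)=\Sp$ and the convolution product $*$ on $\Shv(M_\RR;\Sp)$. Any lax symmetric monoidal functor sends the tensor unit (canonically a commutative algebra) to a commutative algebra, so this produces the desired structure on $\omega_{M_\RR}$, with unit map $\mathbb{1}_{\Shv(M_\RR;\Sp)}\to\pi^!\SS$ and multiplication $\pi^!\SS * \pi^!\SS\to\pi^!(\SS\otimes\SS)\simeq\pi^!\SS$ coming from the lax structure maps.

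First I would note that $\pi\colon M_\RR\to *$ is a morphism in $\CAlg(\LCH,\times)$: the source carries the commutative monoid structure given by vector addition (as in the discussion preceding \cref{construction:convolution}), the target is the terminal object, and the unique map to it is automatically a monoid homomorphism. Feeding this into the lax symmetric monoidal functor $D_!=\Dc\circ\mathrm{Reg}$ of \cref{construction:convolution} --- which takes a commutative algebra of $(\LCH,\times)$ to the associated convolution-monoidal sheaf category and an algebra map to a symmetric monoidal functor --- gives a symmetric monoidal functor $D_!(\pi)\colon(\Shv(M_\RR;\Sp),*)\to(\Sp,\otimes)$. Unwinding the definitions, $\mathrm{Reg}(\pi)$ is the correspondence $M_\RR\xleftarrow{\id}M_\RR\xrightarrow{\pi}*$, and $\Dc$ of it is $\pi_!\circ\id^*=\pi_!$, so $D_!(\pi)\simeq\pi_!$ as a symmetric monoidal functor.

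Then I would pass to the right adjoint: in the six-functor formalism $\pi_!$ has right adjoint $\pi^!$, and the right adjoint of a symmetric monoidal functor is canonically lax symmetric monoidal by \HA{Corollary}{7.3.2.7} (the same input used above for $r$). Hence $\pi^!$ is lax symmetric monoidal and $\omega_{M_\RR}=\pi^!\SS$ is a commutative algebra for $*$, completing the proof. I do not expect any real obstacle; the only points needing (routine) verification are the identification $D_!(\pi)\simeq\pi_!$ and that the resulting adjunction is the $\pi_!\dashv\pi^!$ of \cref{convention on sheaf theory}, both immediate from the constructions. Alternatively, once $\Gamma_{M_\RR}$ of \cref{construction:symmonshriek} is in hand one can simply observe $\omega_{M_\RR}=\Gamma_{M_\RR}(M_\RR,\id_{M_\RR})$ and that $(M_\RR,\id_{M_\RR})$ is a commutative algebra of $(\LCH_{/M_\RR},\otimes)$ --- it corresponds to $\id_{M_\RR}$ under $\CAlg(\LCH_{/M_\RR})\simeq\CAlg(\LCH)_{/M_\RR}$ --- but the argument above is self-contained and does not rely on that construction being complete.
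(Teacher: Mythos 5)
Your argument is correct and is essentially the paper's own proof: both observe that $\pi\colon M_\RR\to *$ is a map of commutative monoids in $\LCH$, so the construction of the convolution product makes $\pi_!$ symmetric monoidal, and then apply \HA{Corollary}{7.3.2.7} to endow the right adjoint $\pi^!$ with a lax symmetric monoidal structure, which sends the unit $\SS$ to the commutative algebra $\omega_{M_\RR}$. The explicit unwinding of $\mathrm{Reg}(\pi)$ and the alternative via $\Gamma_{M_\RR}$ are fine but add nothing beyond the paper's route.
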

\begin{proof}
This follows from the fact that $\pi^!:\Shv(*;\Sp)\rightarrow\Shv(M_\RR;\Sp)$ has the structure of a lax symmetric monoidal functor where both side has the convolution symmetric monoidal structure.
In addition, the convolution product on $\Shv(*;\Sp)$ is the same as the point-wise tensor product on $\Shv(*;\Sp)\simeq\Sp$ that is usually used.
The lax symmetric monoidal structure on $\pi^!$ is given by the (strong) symmetric monoidal structure on its left adjoint $\pi_!$.
To be more precise: the map $\pi$ is actually a map of commutative monoids in $\LCH$.
Hence by construction of the convolution tensor product, $\pi$ induces a symmetric monoidal functor 
\[\pi_!:\Shv(M_\RR;\Sp)\longrightarrow\Shv(*;\Sp).\]
We again take advantage of \HA{Corollary}{7.3.2.7} and get a lax symmetric monoidal structure on its right adjoint 
$$\pi^!:\Shv(*;\Sp)\longrightarrow\Shv(M_\RR;\Sp).$$
In particular it takes $\mathbb{1}_{\Shv(*;\Sp)}$ to a commutative algebra, as desired.
\end{proof}

The commutative algebra structure on $\omega_{M_\RR}$ furnishes the constant functor 
$$\underline{\omega_{M_\RR}}:\LCH_{M_\RR}\longrightarrow\Shv(M_\RR;\Sp)$$
with a lax symmetric monoidal structure. From this discussion, one learns that
\begin{prop}
    The functor $\id\times\underline{\omega_\RR}$ admits a lax symmetric monoidal structure.
\end{prop}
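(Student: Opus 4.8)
The plan is to realize $\id\times\underline{\omega_{M_\RR}}$ as a pair of lax symmetric monoidal functors, one into each factor of the target, and then to invoke the universal property of the Cartesian product in $\SMCat$. First I would record the two factors: the identity functor $\id\colon\LCH_{/M_\RR}\to\LCH_{/M_\RR}$ is tautologically symmetric monoidal, hence in particular lax symmetric monoidal; and the constant functor $\underline{\omega_{M_\RR}}\colon\LCH_{/M_\RR}\to\Shv(M_\RR;\Sp)$ is lax symmetric monoidal because it factors as the unique symmetric monoidal functor $\LCH_{/M_\RR}\to *$ to the trivial symmetric monoidal category followed by the lax symmetric monoidal functor $*\to\Shv(M_\RR;\Sp)$ that selects the commutative algebra $\omega_{M_\RR}\in\CAlg(\Shv(M_\RR;\Sp),*)$ produced in the preceding proposition, and composites of lax symmetric monoidal functors are lax symmetric monoidal.

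Next I would combine the two. The point is that the symmetric monoidal structure that $\LCH_{/M_\RR}\times\Shv(M_\RR;\Sp)$ carries in the diagrams above --- the one obtained by applying the symmetric monoidal Grothendieck construction to the constant functor $D_!\circ\underline{M_\RR}$ (constant at $\Shv(M_\RR;\Sp)$ with its convolution structure), with structure map $q^\otimes$ the first projection --- is precisely the Cartesian product of $(\LCH_{/M_\RR},\otimes)$ and $(\Shv(M_\RR;\Sp),*)$ in $\SMCat$; on underlying operads it is the fiber product $\LCH_{/M_\RR}^\otimes\times_{\einfty}\Shv(M_\RR;\Sp)^\otimes$ over $\einfty$. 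Consequently mapping $\LCH_{/M_\RR}$ laxly symmetric monoidally into this product is the same datum as a pair of lax symmetric monoidal functors into the two factors, i.e.
\[
\Fun^{\lax\otimes}\bigl(\LCH_{/M_\RR},\LCH_{/M_\RR}\times\Shv(M_\RR;\Sp)\bigr)\;\simeq\;\Fun^{\lax\otimes}\bigl(\LCH_{/M_\RR},\LCH_{/M_\RR}\bigr)\times\Fun^{\lax\otimes}\bigl(\LCH_{/M_\RR},\Shv(M_\RR;\Sp)\bigr),
\]
and the pair $(\id,\underline{\omega_{M_\RR}})$ from the first paragraph determines the desired lax symmetric monoidal structure on $\id\times\underline{\omega_{M_\RR}}$.

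There is no genuine obstacle here; the only point requiring attention is the identification in the previous paragraph of the target's symmetric monoidal structure with the Cartesian product, which is exactly what the symmetric monoidal Grothendieck construction delivers when applied to a constant functor. For later use I would also record that postcomposing with the lax symmetric monoidal right adjoint $r$ constructed above yields the section $s\defeq r\circ(\id\times\underline{\omega_{M_\RR}})\colon\LCH_{/M_\RR}\to\Shv_!$, sending $(X,f)$ to $(X,f,f^!\omega_{M_\RR})$, which is then lax symmetric monoidal as a composite of such; this is the functor $s$ promised in the preview of the strategy, and it is a section of $p$ since $q\circ(\id\times\underline{\omega_{M_\RR}})=\id$ and $p\circ r=q$ fiberwise.
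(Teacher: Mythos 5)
Your proposal is correct and matches the paper's argument, which simply observes that $\id\times\underline{\omega_{M_\RR}}$ is a product of two lax symmetric monoidal functors (the identity, and the constant functor at the commutative algebra $\omega_{M_\RR}$) and hence inherits a lax symmetric monoidal structure. The extra care you take in identifying the target's monoidal structure (from the Grothendieck construction of the constant functor) with the Cartesian product in $\SMCat$ is a detail the paper leaves implicit, but it is the same route.
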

\begin{proof}
    By previous discussion, it is a product of two lax symmetric monoidal functors, hence has a lax symmetric monoidal structure.
\end{proof}
We arrive at the following diagram
\[\begin{tikzcd}
	{\Shv_!} &&& {\LCH_{/M_\RR}\times\Shv(M_\RR;\Sp)} & {\Shv(M_\RR;\Sp)} \\
	&& {\LCH_{/M_\RR}}
	\arrow["p"', from=1-1, to=2-3]
	\arrow["q"', from=1-4, to=2-3]
	\arrow["{\unstr(D_!(h))}"', color={rgb,255:red,204;green,51;blue,51}, from=1-1, to=1-4]
	\arrow["{p_2}", color={rgb,255:red,204;green,51;blue,51}, from=1-4, to=1-5]
	\arrow["{\id\times\underline{\omega_{M_\RR}}}"', color={rgb,255:red,204;green,51;blue,51}, curve={height=12pt}, from=2-3, to=1-4]
	\arrow["r"', color={rgb,255:red,204;green,51;blue,51}, curve={height=18pt}, from=1-4, to=1-1]
\end{tikzcd}\]
where we are going to make use of the red-colored functors, which are lax symmetric monoidal. We conclude the construction by a composition of these four functors: according to the plan, we have constructed the following lax symmetric monoidal functors
$$s=r\circ(\id\times\underline{\omega_{M_\RR}}):\LCH_{/M_\RR}\rightarrow\Shv_!$$
and
$$t=p_2\circ\unstr(D_!(h)):\Shv_!\rightarrow\Shv(M_\RR;\Sp)$$
so that the composition
$$t\circ s:\LCH_{/M_\RR}\rightarrow\Shv(M_\RR;\Sp)$$
is what we aimed for.
\begin{defn}[Sheaf of relative homology]
\label{defintion of a functor taking a space ot its sheaf of homology}
We define the lax symmetric monoidal functor 
$$\Gamma_{M_\RR} = t\circ s:\LCH_{/M_\RR}\rightarrow\Shv(M_\RR;\Sp)$$
 as the output of the construction. And we call $\Gamma_{M_\RR}(X,f)$ the \stress{sheaf of homology of $X$ relative to $M_\RR$}. The naming follows from the fact that after further !-pushfoward to a point, the sheaf $\Gamma_{M_\RR}(X,f)$ is taken to the homology of $X$:
 \[\pi_!\Gamma_\MR(X,f)\simeq C_c^*(X,\omega_X).\]
 Note, however, the name might be misleading since the stalk of the sheaf $\Gamma_{\MR}(X,f)$ needs not to be the homology of the fiber.
\end{defn}
\begin{var}
For later purposes, we also by abuse of notation write the restriction of the functor $\Gamma_\MR$ to the full subcategory of polyhedral subsets as 
$$\Gamma_{M_\RR}:\Closed(M_\RR)\rightarrow\Shv(M_\RR;\Sp).$$
Moreover, the category $\Closed(M_\RR)$ carries a symmetric monoidal structure given by Minkowski sum that makes the inclusion functor
$$\Closed(\MR)\longrightarrow\LCH_{/\MR}$$
lax symmetric monoidal. We hence conclude that the functor
$$\Gamma_{M_\RR}:\Closed(M_\RR)\rightarrow\Shv(M_\RR;\Sp)$$
is also lax symmetric monoidal.
\end{var}
We end the section by recording the following elaboration of the argument in \HA{Proposition}{2.1.2.12}. See also \cite[\href{https://kerodon.net/tag/01UL}{01UL}]{kerodon}.
\begin{lem}\label{lemma:cocartesian}
    We have the following facts concerning coCartesian fibrations:
    \begin{enumerate}
        \item Consider the following commuting diagram of categories:
\[\begin{tikzcd}
	\Cc && \Dc \\
	& \Ec
	\arrow["{q\circ p}", from=1-1, to=2-2]
	\arrow["p", from=1-1, to=1-3]
	\arrow["q"', from=1-3, to=2-2]
\end{tikzcd}.\]
        If both $q$ and $p$ are coCartesian fibraitons, then so is $q\circ p$. Moreover, given an edge $f\in \Ec$ and a $q\circ p$-coCartesian lift $f'\in\Cc$ of $f$, there exists an edge $f''\in\Dc$ which is a $q$-coCartesian lift of $f$ and $p(f')$ is equivalent to $f''$. Consequently, $p$ preserves coCartesian lifts from $\Ec$.
        \item Consider the following commuting diagram of categories:
    \[\begin{tikzcd}
	\Cc && \Dc \\
	& \Ec \\
	& \Oc
	\arrow["{q\circ p}", from=1-1, to=2-2]
	\arrow["p", from=1-1, to=1-3]
	\arrow["q"', from=1-3, to=2-2]
	\arrow["{\pi_1}"{description}, from=2-2, to=3-2]
	\arrow["{\pi_2}"', from=1-1, to=3-2]
	\arrow["{\pi_3}", from=1-3, to=3-2]
    \end{tikzcd}.\]
    Assume that $q$, $q\circ p$ and $\pi_1$ are coCartesian fibrations. Assume further that $p$ preserves coCartesian lifts from $\Ec$. Then $p$ preserves coCartesian lifts from $\Oc$.
    \end{enumerate}
\end{lem}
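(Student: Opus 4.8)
The plan is to derive both parts from the standard calculus of coCartesian fibrations, elaborating the bookkeeping behind \HA{Proposition}{2.1.2.12}. The three ingredients I would use are: the mapping-space criterion (for a functor $F$, a morphism $e\colon x\to y$ is $F$-coCartesian iff for every object $z$ the square with rows $\Map(y,z)\to\Map(x,z)$ and $\Map(Fy,Fz)\to\Map(Fx,Fz)$ and vertical maps induced by $F$ is a homotopy pullback), the existence of coCartesian lifts along a coCartesian fibration, and the uniqueness of coCartesian lifts.

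\textbf{Part (1).} Given $f\colon u\to v$ in $\Ec$ and $x\in\Cc$ over $u$, I would first build a candidate lift in two stages: a $q$-coCartesian lift $\bar f\colon p(x)\to w$ of $f$ in $\Dc$, then a $p$-coCartesian lift $f''\colon x\to z$ of $\bar f$ in $\Cc$; both exist since $p$ and $q$ are coCartesian fibrations, and $(q\circ p)(f'')=f$. The key step is to verify $f''$ is $(q\circ p)$-coCartesian, which I would do by pasting homotopy pullback squares: stack the square witnessing $p$-coCartesianness of $f''$ (vertical legs $\Map_{\Cc}\to\Map_{\Dc}$) on top of the square witnessing $q$-coCartesianness of $\bar f$ evaluated at the $p$-image of an arbitrary test object (vertical legs $\Map_{\Dc}\to\Map_{\Ec}$); the composite square (vertical legs $\Map_{\Cc}\to\Map_{\Ec}$) is again a homotopy pullback, which is exactly the criterion. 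This shows $q\circ p$ is a coCartesian fibration, and the same pasting, together with uniqueness of coCartesian lifts, gives the characterization that a morphism of $\Cc$ is $(q\circ p)$-coCartesian iff it is $p$-coCartesian with $q$-coCartesian image. The ``moreover'' and ``consequently'' clauses then follow: any $(q\circ p)$-coCartesian lift $f'$ of $f$ with the prescribed source is equivalent to the $f''$ just constructed, hence $p(f')\simeq\bar f$, which is $q$-coCartesian.

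\textbf{Part (2).} The plan is to apply part (1) twice. Applied to the composite $\Cc\xrightarrow{q\circ p}\Ec\xrightarrow{\pi_1}\Oc$ (whose two factors $q\circ p$ and $\pi_1$ are coCartesian fibrations by hypothesis), it shows $\pi_2=\pi_1\circ q\circ p$ is a coCartesian fibration and that $g'\in\Cc$ is $\pi_2$-coCartesian iff $g'$ is $(q\circ p)$-coCartesian with $(q\circ p)(g')$ being $\pi_1$-coCartesian. Applied to $\Dc\xrightarrow{q}\Ec\xrightarrow{\pi_1}\Oc$, it gives the analogous statement: $d\in\Dc$ is $\pi_3$-coCartesian iff $d$ is $q$-coCartesian with $q(d)$ being $\pi_1$-coCartesian. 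Then I would chase the definitions: given a $\pi_2$-coCartesian lift $g'\in\Cc$ of some $g\in\Oc$, the first characterization makes $g'$ a $(q\circ p)$-coCartesian morphism with $(q\circ p)(g')$ being $\pi_1$-coCartesian; the standing hypothesis that $p$ preserves coCartesian lifts from $\Ec$ then makes $p(g')$ a $q$-coCartesian morphism, and $q(p(g'))=(q\circ p)(g')$ is $\pi_1$-coCartesian, so by the second characterization $p(g')$ is $\pi_3$-coCartesian. Since $\pi_3(p(g'))=\pi_2(g')=g$, this exhibits $p(g')$ as a $\pi_3$-coCartesian lift of $g$, i.e.\ $p$ preserves coCartesian lifts from $\Oc$.

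\textbf{Main difficulty.} I do not anticipate a genuine obstacle; the lemma is essentially organizational. The one subtlety worth flagging is that in part (2) the functor $p$ is \emph{not} assumed to be a coCartesian fibration, so one cannot re-run the argument of part (1) to re-derive ``$p$ preserves coCartesian lifts from $\Ec$'': it must be used as an input and merely transported from $\Ec$ up to $\Oc$. Keeping straight which base one is taking coCartesian lifts relative to, and the variance in the mapping-space criterion, is the only real care the proof demands.
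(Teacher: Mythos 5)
Your proof is correct and follows essentially the same route as the paper's: construct the two-stage lift (a $q$-coCartesian lift followed by a $p$-coCartesian lift of it), use uniqueness and equivalence-invariance of coCartesian lifts to identify any given $(q\circ p)$-coCartesian lift with this one, and in part (2) reduce via part (1) and then transport the hypothesis on $p$ up to $\Oc$. The only difference is cosmetic: where the paper cites HTT Propositions 2.4.2.3 and 2.4.1.3 for the composition facts, you reprove them by pasting pullback squares of mapping spaces, and your part (2) makes explicit the final step (that a $q$-coCartesian edge over a $\pi_1$-coCartesian edge is $\pi_3$-coCartesian) which the paper leaves implicit.
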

\begin{proof}
    \begin{enumerate}
        \item That a composition of coCartesian fibration is coCartesian fibration is proved in \HTT{Proposition}{2.4.2.3}. For the second part, given $f\in\Ec$ and a $q\circ p$ coCartesian lift $f'\in\Cc$ of $f$, one can choose $f''\in\Dc$ to be a $q$-coCartesian lift of $f$.  Let $\Bar{f'}\in\Cc$ be a $p$-coCartesian lift of $f''$, then $\Bar{f'}$ would also be a $q\circ p$-coCartesian lift of $f$ using \HTT{Proposition}{2.4.1.3}. We conclude that $\Bar{f'}$ is equivalent to $f'$ and hence $p(f')$ is equivalent to $p(\Bar{f'})=f''$. The last claim about $p$ preserving coCartesian lifts from $\Ec$ then follows.
    \item Let $f'\in \Cc$ be a $\pi_2$-coCartesian lift of $f\in\Oc$. By the previous item, we might assume $f'$ is a $q\circ p$-coCartesian lift of $q\circ p(f')$. Then by assumption on $p$, the image $p(f')\in\Dc$ is a $q$-coCartesian lift of $q\circ p(f')$, hence is a $\pi_3$-coCartesian lift of $f\in\Oc$ as desired.
    \end{enumerate}
\end{proof}
\subsection{Combinatorial v.s. constructible}
Now we take advantage of the functor $\Gamma_\MR$ from the previous section to write down the combinatorial-constructible comparison functor. First, we give a quick idea of the construction.\par Fix toric data $(N,\Sigma)$ and pick a cone $\sigma\in\Sigma$. Recall that we have defined the combinatorial category $\Theta(\sigma)$ to be a full subcategory of $\Closed(M_\RR)$. The category $\Closed(M_\RR)$ has a symmetric monoidal structure given by Minkowski sum and one can think of the symmetric monoidal structure on $\Theta(\sigma)$ as inherited from the inclusion (to be very precise, $\Theta(\sigma)$ includes into the full subcategory  $\Mod_{\sigma^{\svee}}\Closed(M_\RR)$ over the idempotent algebra $\sigma^{\svee}\in\Closed(\MR)$ and this inclusion is symmetric monoidal). Post-composing this inclusion with $\Gamma_\MR$ that we have defined earlier, we get a combinatorial-to-constructible comparison functor. The goal of this section is to construct this functor and present its functoriality along $\Sigma$.\par
 We start with constructing a family of idempotent algebras in $\Shv(\MR;\Sp)$. Here is a technical observation of the interaction of $\Gamma_\MR$ with  polytopes which is conceptually helpful, albeit not necessarily needed.
\begin{lem}
\label{comparison between homology sheaf of open and closed polygon}
    For a closed  polyhedral subset (of top dimension) $\overline U\subseteq \MR$ and its interior $U$, the map of sheaves
    $$\Gamma_\MR(U)\rightarrow\Gamma_\MR(\overline{U})$$
    induced from $U\rightarrow\overline{U}$ is an equivalence. Note that left hand side is a more familiar object: the extension-by-zero of a shift of constant sheaf on an open subset.
\end{lem}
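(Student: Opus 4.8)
The plan is to reduce, by formal manipulations with the six functors, to the statement that the dualizing complex of $\overline U$ has vanishing stalks along its topological boundary, and then to verify that by hand using the polyhedral structure of $\overline U$. Write $n\defeq\dim_\RR\MR$, so that $\omega_\MR\simeq\underline{\SS}[n]$ since $\MR$ is an oriented $n$-manifold, and let $j_U\colon U\hookrightarrow\MR$, $i\colon\overline U\hookrightarrow\MR$, $h\colon U\hookrightarrow\overline U$ be the inclusions, with $j_U = i\circ h$. Unwinding the construction of $\Gamma_\MR$ (\cref{defintion of a functor taking a space ot its sheaf of homology}), one has $\Gamma_\MR(\overline U) = i_!i^!\omega_\MR$ and, using $h^! = h^*$ for the open immersion $h$, $\Gamma_\MR(U) = j_{U!}j_U^!\omega_\MR = i_!\bigl(h_!h^!(i^!\omega_\MR)\bigr)$, the map of the lemma being $i_!$ applied to the counit $h_!h^!(i^!\omega_\MR)\to i^!\omega_\MR$; in particular $\Gamma_\MR(U)\simeq\underline{\SS}_U[n]$ is the extension by zero of a shift of a constant sheaf, as the statement indicates. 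Set $\partial U\defeq\overline U\setminus U$, which is closed in $\MR$ since $U=\mathrm{int}(\overline U)$ is open. By the open–closed recollement triangle on $\Shv(\overline U;\Sp)$ together with full-faithfulness of $i_! = i_*$, the cofiber of the map of the lemma is the $!$-pushforward to $\MR$ of $(i^!\omega_\MR)|_{\partial U}$; being supported on $\partial U$, this cofiber automatically vanishes away from $\partial U$, and at $x\in\partial U$ the source $\Gamma_\MR(U)$ has vanishing stalk (extension by zero from $U$). So it remains only to show that $\Gamma_\MR(\overline U) = i_!i^!\omega_\MR$ has vanishing stalk at every $x\in\partial U$.

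For the stalk, recall $i^!\omega_\MR\simeq i^*\mathrm{fib}(\omega_\MR\to\ell_*\ell^*\omega_\MR)$, with $\ell\colon\MR\setminus\overline U\hookrightarrow\MR$ the complementary open immersion; hence for $x\in\partial U\subseteq\overline U$ the stalk of $\Gamma_\MR(\overline U)$ at $x$ is $\mathrm{fib}\bigl((\omega_\MR)_x\to(\ell_*\ell^*\omega_\MR)_x\bigr)$, in which $(\ell_*\ell^*\omega_\MR)_x\simeq\colim_{\varepsilon\to 0}C^*(B_\varepsilon(x)\setminus\overline U;\SS)[n]$ (the $\SS$-cochains of the punctured neighbourhood) and the map is restriction of cochains along $B_\varepsilon(x)\setminus\overline U\hookrightarrow B_\varepsilon(x)$. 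As $B_\varepsilon(x)$ is contractible, this fiber vanishes as soon as $B_\varepsilon(x)\setminus\overline U$ is contractible for $\varepsilon$ small, so that is all that remains to check.

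This is where the hypotheses enter. Being an object of $\Closed(\MR)$, the set $\overline U$ is a Minkowski sum of a polytope and a polyhedral cone, hence a convex polyhedron; by Minkowski–Weyl write $\overline U = \bigcap_{i\in I}\{\,v:\langle a_i,v\rangle\le b_i\,\}$ with all $a_i\neq 0$, and the hypothesis of top dimension means $U = \mathrm{int}(\overline U)\neq\varnothing$, so $\overline U = \overline{\mathrm{int}(\overline U)}$. Fix $x\in\partial U$ and let $I_x = \{\,i\in I:\langle a_i,x\rangle = b_i\,\}\neq\varnothing$ be the active constraints; choosing $\varepsilon$ small enough that the inactive constraints stay strict on $B_\varepsilon(x)$ gives $B_\varepsilon(x)\cap\overline U = B_\varepsilon(x)\cap(x+T)$ with $T\defeq\{\,v:\langle a_i,v\rangle\le 0\text{ for all }i\in I_x\,\}$ a proper polyhedral cone which is full-dimensional (since $B_\varepsilon(x)\cap(x+T)$ has nonempty interior, $\overline U$ being the closure of its nonempty interior). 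Full-dimensionality of $T$ forces $\mathrm{cone}\{a_i:i\in I_x\}$ to be pointed, so there is $c$ with $\langle a_i,c\rangle>0$ for all $i\in I_x$; rescale so $|c|<\varepsilon$. Then $B_\varepsilon(x)\setminus\overline U = B_\varepsilon(x)\setminus(x+T)$ is star-shaped about $x+c$: given $p$ in this set and $i\in I_x$ with $\langle a_i,p-x\rangle>0$, the segment $t\mapsto(1-t)p+t(x+c)$ lies in $B_\varepsilon(x)$ by convexity and has $\langle a_i,(1-t)(p-x)+tc\rangle>0$ for all $t\in[0,1]$, hence avoids $x+T$. Thus $B_\varepsilon(x)\setminus\overline U$ is contractible, which completes the argument.

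I expect the only genuine content to be this last paragraph — the elementary but not entirely obvious fact that deleting a top-dimensional convex polyhedron from a small ball centred at a boundary point leaves a star-shaped, hence contractible, set. Everything preceding it is bookkeeping with the six functors and the recollement triangle.
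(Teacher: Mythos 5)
Your proof is correct and follows essentially the same route as the paper's: reduce to a stalkwise check, observe the only issue is the vanishing of $\Gamma_\MR(\overline U)_x$ at boundary points, identify that stalk (via recollement/proper base change) with the fiber of the restriction $C^*(B_\varepsilon(x);\SS)\to C^*(B_\varepsilon(x)\setminus\overline U;\SS)$ up to shift, and conclude from contractibility of $B_\varepsilon(x)\setminus\overline U$. The only difference is that the paper merely asserts this last topological fact ("since $\overline U$ is polyhedral"), whereas you supply the star-shapedness argument via the local cone of active constraints — a welcome filling-in of the detail.
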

\begin{proof}
    This can be proved by comparing the recollement sequences for $U$ and $\overline{U}$. Here we give a direct proof. In this case, one can check equivalence on stalks. By proper base change, it is easy to check for $x\notin\partial\overline U$ the map is an equivalence on stalk at $x$. It remains to check that at $x\in\partial\overline U$ the stalk of the right-hand side vanishes (again by proper base change it vanishes on the left-hand side). To compute the stalk, one can pick a family of open balls $D_i$ of shrinking radii centered at $x$ and compute 
    $$\Gamma_\MR(\overline{U})_x\simeq\colim\Gamma_\MR(\overline{U})(D_i).$$
    To compute the right-hand side, use the identification $\omega_\MR\simeq\underline\SS[n]$ and apply proper base change to get
        $$\Gamma_\MR(\overline{U})(D_i)\simeq (i_{\overline{U}!}i_{\overline{U}}^!\underline{\SS}[n])(D_i)\simeq \fib[(\SSconst(D_i)\rightarrow\SSconst(D_i\setminus \overline{U})][n].
        $$
        Since $\overline{U}$ is polyhedral, for a sufficiently small ball $D_i\rightarrow D_i\setminus \overline{U}$ is a homotopy equivalence and hence the stalk vanishes, as desired. 
\end{proof}
\begin{prop}[Dualizing sheaf of a cone is an idempotent algebra]
\label{idempotent algebra in sheaf category indexed by the fan}
    For each $\sigma\in\Sigma$, the object $\sigma^\svee\in\Closed(\MR)$ has the structure of an idempotent algebra. Thus, we might think of $\sigma^\svee$ as a diagram of idempotent algebras in $\Closed(\MR)$ indexed by $\sigma\in\Sigma^\op$. Moreover, the image of each $\sigma^\svee$ under $\Gamma_\MR$ is also an idempotent algebra. Therefore, we get $\Gamma_\MR(\sigma^\svee)=\omega_{\sigma^\svee}$ as a diagram of idempotent algebras in $\Shv(\MR;\Sp)$ indexed by $\Sigma^\op$.
\end{prop}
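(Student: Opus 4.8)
The plan is to establish the three assertions in turn: that $\sigma^\svee$ is an idempotent algebra in $(\Closed(\MR),+)$ functorially in $\sigma\in\Sigma^\op$; that the lax symmetric monoidal functor $\Gamma_\MR$ carries each $\sigma^\svee$ to an idempotent algebra; and that these assemble into a diagram of idempotent algebras $\Gamma_\MR(\sigma^\svee)=\omega_{\sigma^\svee}$ in $\Shv(\MR;\Sp)$. For the first assertion everything lives in a poset, so there are no coherences to worry about. The unit of $(\Closed(\MR),+)$ is $\{0\}$, and $0\in\sigma^\svee$ supplies the required map $\{0\}\to\sigma^\svee$; from $\langle m_1+m_2,n\rangle=\langle m_1,n\rangle+\langle m_2,n\rangle\ge 0$ one gets $\sigma^\svee+\sigma^\svee\subseteq\sigma^\svee$, hence $\sigma^\svee+\sigma^\svee=\sigma^\svee$, and in a poset this equality of subsets \emph{is} the statement that the multiplication map is an isomorphism. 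So $\sigma^\svee$ is an idempotent object, and by \HA{Proposition}{4.8.2.9} it carries an essentially unique idempotent algebra structure. For $\sigma\subseteq\tau$ the inclusion of dual cones $\tau^\svee\subseteq\sigma^\svee$ is a map between idempotent objects, automatically compatible with the unit maps since maps in a poset are unique; thus $\sigma\mapsto\sigma^\svee$ is a diagram $\Sigma^\op\to\Closed(\MR)_{\{0\}/}$ landing in idempotent objects, which by the fully faithfulness half of \HA{Proposition}{4.8.2.9} lifts essentially uniquely to a diagram $\Sigma^\op\to\CAlg^{\mathrm{idem}}(\Closed(\MR))$.

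Postcomposing this with the lax symmetric monoidal functor $\Gamma_\MR$ of \cref{construction:symmonshriek} (in its variant on $\Closed(\MR)$) yields a diagram $\Sigma^\op\to\CAlg(\Shv(\MR;\Sp))$ whose value at $\sigma$ is $\Gamma_\MR(\sigma^\svee)=\omega_{\sigma^\svee}$, upon unwinding the definitions of $\Gamma_\MR$ and of $\omega_{(-)}$ (cf.\ \cref{convention on sheaf theory}). That this is an algebra is formal; what requires an argument is that its multiplication $\mu\colon\omega_{\sigma^\svee}*\omega_{\sigma^\svee}\to\omega_{\sigma^\svee}$ is an equivalence, and \emph{this is the step I expect to be the main obstacle}, since lax symmetric monoidal functors do not preserve idempotency in general. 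I would handle it as follows. Being a cone in a fan, $\sigma$ is strongly convex (\cref{notation:fan}), so $\sigma^\svee$ is full-dimensional; set $U\defeq\operatorname{int}(\sigma^\svee)$, a nonempty open convex cone and therefore a polyhedral open satisfying $U+U=U$ (the inclusion $\supseteq$ uses $z=\tfrac12 z+\tfrac12 z$, the inclusion $\subseteq$ uses additive closure of $U$). Then \cref{comparison between homology sheaf of open and closed polygon} identifies $\omega_{\sigma^\svee}\simeq\Gamma_\MR(U)\simeq\underline\SS_U[\dim\MR]$, and \cref{proposition with direct computation of convolution} computes $\omega_{\sigma^\svee}*\omega_{\sigma^\svee}\simeq\underline\SS_{U+U}[\dim\MR]=\underline\SS_U[\dim\MR]\simeq\omega_{\sigma^\svee}$. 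To upgrade this abstract equivalence to the statement that $\mu$ itself is an equivalence I would use the section $s$ of $\mu$ determined by the unit $\mathbb{1}\to\omega_{\sigma^\svee}$, so that $\mu_x s_x=\mathrm{id}$ on each stalk; for $x\in U$ the source and target of $\mu_x$ are both $\simeq\SS[\dim\MR]$ (the source by the computation above), and a split epimorphism of spectra with equivalent, perfect source and target is an equivalence (compare homotopy groups, which are finitely generated, hence Hopfian), while for $x\notin U$ both stalks vanish. Thus $\mu$ is an equivalence stalkwise, hence an equivalence on our finite-dimensional manifold.

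Finally, since $\CAlg^{\mathrm{idem}}(\Shv(\MR;\Sp))\subseteq\CAlg(\Shv(\MR;\Sp))$ is a full subcategory (again \HA{Proposition}{4.8.2.9}), the diagram $\sigma\mapsto\omega_{\sigma^\svee}$ from the previous step, which lands objectwise in idempotent algebras, refines to a diagram valued in $\CAlg^{\mathrm{idem}}(\Shv(\MR;\Sp))$ indexed by $\Sigma^\op$, which is exactly the assertion. To summarize, the only genuinely non-formal input is the verification in the middle paragraph that the multiplication map of $\omega_{\sigma^\svee}$ — and not merely its source and target — is an equivalence; the rest is poset bookkeeping together with the already-recorded computations of convolutions of representable sheaves on polyhedral opens. (One could alternatively phrase the middle step as: $\Gamma_\MR$ restricted to full-dimensional closed polyhedra is strong, not just lax, symmetric monoidal, so it preserves idempotent algebras; but justifying that its lax structure maps are equivalences runs through the same computation.)
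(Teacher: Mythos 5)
Your proof is correct, and its overall skeleton matches the paper's: the posetal verification that $\sigma^\svee$ is idempotent in $(\Closed(\MR),+)$ (with functoriality in $\sigma$ being automatic in a poset), the reduction to the open cone $\sigma^{\svee,\circ}$ via \cref{comparison between homology sheaf of open and closed polygon}, and the convolution computation of \cref{proposition with direct computation of convolution} are exactly the ingredients the paper uses. Where you genuinely diverge is at the step you yourself flag as the main obstacle. The paper handles it by unwinding the multiplication map of $\Gamma_\MR(\sigma^{\svee,\circ})$: one identifies it with the pushforward along addition of the exterior product, so that the fibration-with-contractible-fibers argument in the proof of \cref{proposition with direct computation of convolution} shows that this \emph{specific} map is an equivalence ``up to a shift.'' You instead never unwind the lax structure of $\Gamma_\MR$ (which is the technically annoying part, given its construction through Grothendieck constructions): you use the unit of the algebra to split the multiplication stalkwise, observe that both stalks are $\SS[\dim\MR]$ on the interior and vanish elsewhere, and conclude via the Hopfian property of the finitely generated groups $\pi_*(\SS)$ together with stalkwise detection of equivalences on a finite-dimensional manifold. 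Both arguments close the gap the statement requires; the paper's is shorter once one accepts its terse ``unpack the definition of multiplication maps,'' while yours trades that bookkeeping for a clean homotopy-theoretic retraction trick whose only extra input is Serre's finite generation of the stable stems. Your parenthetical alternative (strong monoidality of $\Gamma_\MR$ on top-dimensional polyhedra) is in fact what the paper records later when it restricts $\Gamma_\MR$ to $\Closed^*(\MR)$, so either packaging is consistent with the rest of the text.
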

\begin{proof}
    The first observation is direct, using that $\sigma^\svee+\sigma^\svee=\sigma^\svee$ for any cone $\sigma$. For the second assertion, one needs to compute that the multiplication map of the algebra $\Gamma_\MR(\sigma^\svee)$ is an isomorphism
    $$\Gamma_\MR(\sigma^\svee)*\Gamma_\MR(\sigma^\svee)\overset{\simeq}{\longrightarrow}\Gamma_\MR(\sigma^\svee).$$
    By the previous lemma, it is equivalent to showing that $\Gamma_\MR(\sigma^{\svee,\circ})$ is an idempotent algebra. Now that we are working with a polyhedral open subset, we can unpack the definition of multiplication maps and this reduces to the same computation as in \cref{proposition with direct computation of convolution} up to a shift.
\end{proof}
\begin{cor}
    \label{taking module category over the family of idempotent algebras in the sheaf category}
    There is a diagram in $\SMCat$ indexed by $\Sigma^\op$ given by 
    \[\sigma\mapsto\Mod_{\omega_{\sigma^\svee}}\Shv(\MR;\Sp)\in\SMCat.\]
    Furthermore, there is a symmetric monoidal left adjoint functor
    \[L:\Shv(\MR;\Sp)\longrightarrow\lim_{\Sigma^\op}\Mod_{\omega_{\sigma^\svee}}\Shv(\MR;\Sp)\]
    given by tensoring with $\omega_{\sigma^\svee}$ in each component. In particular, this functor has a lax symmetric monoidal right adjoint
    \[R:\lim_{\Sigma^\op}\Mod_{\omega_{\sigma^\svee}}\Shv(\MR;\Sp)\longrightarrow\Shv(\MR;\Sp).\]
    Note that since each $\omega_{\sigma^\svee}$ is an idempotent algebra, the forgetful functor
    \[\Mod_{\omega_{\sigma^\svee}}\Shv(\MR;\Sp)\longrightarrow\Shv(\MR;\Sp)\]
    is a fully faithful functor. Hence $R$ is also fully faithful. One can describe the functor $R$ explicitly as follows: given an object in the limit, one applies forgetful functor  to $\Shv(\MR;\Sp)$ pointwise to get a diagram in $\Shv(\MR;\Sp)$ and then take the limit. See \cref{the idempotent algebras for a smooth projective fan glues to the unit} for more on this functor $R$ and that it is always an equivalence for a smooth projective fan, hence in particular symmetric monoidal.
\end{cor}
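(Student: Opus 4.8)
The plan is to obtain each ingredient by applying $\Mod_{(-)}$ to the diagram of idempotent algebras already at hand, and then to reduce full faithfulness of $R$ to a componentwise computation of the counit $LR\to\id$ that unwinds into the combinatorics of the fan.

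\emph{The diagram.} \cref{idempotent algebra in sheaf category indexed by the fan} provides a diagram $\Sigma^\op\to\CAlg(\Shv(\MR;\Sp))$, $\sigma\mapsto\omega_{\sigma^\svee}$, of idempotent algebras, an inclusion $\sigma\subseteq\tau$ inducing the algebra map $\omega_{\tau^\svee}\to\omega_{\sigma^\svee}$ obtained by applying $\Gamma_\MR$ to $\tau^\svee\hookrightarrow\sigma^\svee$ in $\Closed(\MR)$. Postcomposing with $A\mapsto\Mod_A\Shv(\MR;\Sp)$, a functor $\CAlg(\Shv(\MR;\Sp))\to\SMCat$ valued in presentable symmetric monoidal categories and cocontinuous symmetric monoidal functors (\HA{Theorem}{4.8.5.16}; compare \cref{Taking module category}), gives the asserted diagram. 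As each $\omega_{\sigma^\svee}$ is idempotent, the transition functor attached to $\sigma\subseteq\tau$ is base change $X\mapsto\omega_{\sigma^\svee}*X$, and each forgetful functor $\Mod_{\omega_{\sigma^\svee}}\Shv(\MR;\Sp)\to\Shv(\MR;\Sp)$ is the inclusion of a reflective subcategory (the $\omega_{\sigma^\svee}$-local sheaves), hence fully faithful.

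\emph{The adjoint pair.} The functor $A\mapsto\Mod_A\Shv(\MR;\Sp)$ carries a natural transformation out of the constant functor $\Shv(\MR;\Sp)$ given by the base-change functors. Restricting along $\sigma\mapsto\omega_{\sigma^\svee}$ and taking $\lim_{\Sigma^\op}$ --- and noting that $\Sigma$ has an initial object, the zero cone, so that $\Sigma^\op$ is weakly contractible and the limit of the constant diagram is again $\Shv(\MR;\Sp)$ --- produces the symmetric monoidal functor $L$, whose $\sigma$-component is $\omega_{\sigma^\svee}*(-)$. Each component of $L$ is cocontinuous and colimits in a limit of presentable categories along cocontinuous functors are computed componentwise, so $L$ is cocontinuous between presentable categories and has a right adjoint $R$ (\HTT{Corollary}{5.5.2.9}), which is lax symmetric monoidal by \HA{Corollary}{7.3.2.7}. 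Assembling the fibrewise adjunctions $\Map(\omega_{\sigma^\svee}*Y,X_\sigma)\simeq\Map(Y,X_\sigma)$ over $\Sigma^\op$ identifies $R$ with the description in the statement: forget each $X_\sigma$ to $\Shv(\MR;\Sp)$ --- the units $\mathbb{1}\to\omega_{\sigma^\svee}$ assemble $\sigma\mapsto X_\sigma$ into a $\Sigma^\op$-diagram there --- and take the limit.

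\emph{Full faithfulness of $R$.} It remains to see that the counit $LR\to\id$ is an equivalence, which we check on each cone $\rho$. Since $\Sigma$ is finite and $\omega_{\rho^\svee}*(-)$ is exact, it commutes with the limit defining $R$, so the $\rho$-component of $LR((X_\sigma)_\sigma)$ is $\lim_{\sigma\in\Sigma^\op}(\omega_{\rho^\svee}*X_\sigma)$ (writing $X_\sigma$ also for its image in $\Shv(\MR;\Sp)$). Two inputs finish this: first, $\omega_{\rho^\svee}*\omega_{\sigma^\svee}\simeq\omega_{(\rho\cap\sigma)^\svee}$ --- this follows exactly as in \cref{idempotent algebra in sheaf category indexed by the fan} from the identity $\rho^\svee+\sigma^\svee=(\rho\cap\sigma)^\svee$ for cones of a fan, together with \cref{comparison between homology sheaf of open and closed polygon} and \cref{proposition with direct computation of convolution} --- so, since $X_\sigma$ is $\omega_{\sigma^\svee}$-local, $\omega_{\rho^\svee}*X_\sigma\simeq\omega_{(\rho\cap\sigma)^\svee}*X_\sigma\simeq X_{\rho\cap\sigma}$, the last equivalence being the descent datum of $(X_\sigma)$ along the face $\rho\cap\sigma\subseteq\sigma$; second, $\tau\mapsto\rho\cap\tau$ is right adjoint, hence cofinal, to the inclusion into $\Sigma$ of the poset $\mathrm{Faces}(\rho)$ of faces of $\rho$, whence $\lim_{\sigma\in\Sigma^\op}X_{\rho\cap\sigma}\simeq\lim_{\nu\in\mathrm{Faces}(\rho)^\op}X_\nu\simeq X_\rho$, the last step because $\rho$ is terminal in $\mathrm{Faces}(\rho)$. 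I expect the only delicate point to be organizational: verifying that the equivalences $\omega_{\rho^\svee}*X_\sigma\simeq X_{\rho\cap\sigma}$ are natural in $\sigma$, so that they identify the diagram $\sigma\mapsto\omega_{\rho^\svee}*X_\sigma$ --- not just its objectwise values --- with the restriction of $\nu\mapsto X_\nu$ along $\tau\mapsto\rho\cap\tau$, which is what the cofinality step consumes.
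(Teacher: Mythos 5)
Your proposal is correct, and the formal part (the diagram via functoriality of $\Mod_{(-)}$, the construction of $L$ as a limit of the componentwise localizations using that $\Sigma^\op$ is weakly contractible, the adjoint functor theorem plus \HA{Corollary}{7.3.2.7} for the lax symmetric monoidal $R$, and the pointwise‑forget‑then‑limit description of $R$) is the same as what the paper does, resting on \cref{idempotent algebra in sheaf category indexed by the fan} and \cref{Taking module category}. Where you genuinely diverge is the full faithfulness of $R$: the paper disposes of it with the one‑line ``each forgetful functor is fully faithful, hence $R$ is,'' with the honest backing really living in the idempotent‑descent machinery of \cref{subsection on idempotent descent} (identification of the $\Sigma^\op$-limit with a \v Cech-type limit and, ultimately, with modules over the meet idempotent), whereas you verify directly that the counit $LR\to\id$ is an equivalence componentwise, using $\rho^\svee+\sigma^\svee=(\rho\cap\sigma)^\svee$ (the separation lemma, exactly as in the remark following \cref{the idempotent algebras for a smooth projective fan glues to the unit}, together with \cref{comparison between homology sheaf of open and closed polygon} and \cref{proposition with direct computation of convolution}) and the cofinality of $\sigma\mapsto\rho\cap\sigma$ onto the faces of $\rho$. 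This extra work is not redundant: componentwise full faithfulness alone does not formally imply full faithfulness of $R$ (for a discrete two-object diagram of idempotents $A,B$ with $A\otimes B\neq 0$ the analogous $R$ fails to be fully faithful), so your argument makes visible exactly which feature of the fan poset — closure under intersections together with the dual‑cone identity — is being used, and it does so without invoking the \v Cech comparison or any completeness/projectivity hypothesis. The one point you flag, naturality in $\sigma$ of the identifications $\omega_{\rho^\svee}*X_\sigma\simeq X_{\rho\cap\sigma}$, is a routine coherence check: the natural transformation $\id_{\Sigma^\op}\Rightarrow(\rho\cap-)$ of endofunctors of the poset $\Sigma^\op$ induces a map of $\Sigma^\op$-diagrams from $\sigma\mapsto\omega_{\rho^\svee}*X_\sigma$ to $\sigma\mapsto\omega_{\rho^\svee}*X_{\rho\cap\sigma}\simeq X_{\rho\cap\sigma}$, and idempotence shows it is an objectwise equivalence, which is all the cofinality step needs.
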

We move on to the main construction. The following proposition sketches our goal and the construction will be provided right after.
\begin{prop}
\label{construction of combinatorial to constructible comparison functor}
    There is a symmetric monoidal functor
    $$\Psi_\sigma:\Fun(\Theta(\sigma)^\op,\Sp){\longrightarrow}\Mod_{\omega_{\sigma^\svee}}\Shv(M_\RR;\Sp)$$
    where the left-hand side has the Day convolution tensor product and right-hand side has the convolution product of sheaves. Moreover, these functors are natural in $\sigma\in\Sigma^\op$ that they assemble into a natural transformation of diagrams in $\SMCat$ indexed by $\sigma\in\Sigma^\op$. Hence taking limit produces
        $$\lim_{\Sigma^\op}\Fun(\Theta(\sigma)^\op,\Sp)\overset{\lim\Psi_\sigma}{\longrightarrow}\lim_{\Sigma^\op}\Mod_{\omega_{\sigma^\svee}}\Shv(\MR;\Sp)\overset{R}{\longrightarrow}\Shv(\MR;\Sp).$$
    The first functor is symmetric monoidal. It is fully faithful when the fan is smooth, as shown in \cref{comparison functor is fully faithful when the fan is smooth}.
    The latter functor is the right adjoint functor $R$ in \cref{taking module category over the family of idempotent algebras in the sheaf category} which is lax symmetric monoidal and fully faithful. It is symmetric monoidal when the fan is smooth and projective, as shown in \cref{the idempotent algebras for a smooth projective fan glues to the unit}. In conclusion, when the fan $\Sigma$ is smooth and projective we have a symmetric monoidal fully faithful functor
    \[\Psi_\Sigma:\lim_{\Sigma^\op}\Fun(\Theta(\sigma)^\op,\Sp)\overset{\lim\Psi_\sigma}{\longrightarrow}\lim_{\Sigma^\op}\Mod_{\omega_{\sigma^\svee}}\Shv(\MR;\Sp)\longrightarrow\Shv(\MR;\Sp).\]
\end{prop}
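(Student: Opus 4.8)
The plan is to assemble $\Psi_\sigma$ out of the lax symmetric monoidal homology functor $\Gamma_{M_\RR}$ of \cref{construction:symmonshriek}, extend it along a Yoneda embedding, and then glue over $\Sigma^\op$. By \cref{alternative way to see symmetric monoidal structure on Theta}, $\Theta(\sigma)$ is the full symmetric monoidal subcategory of $\Mod_{\sigma^\svee}(\Closed(M_\RR))$ on the objects $m+\sigma^\svee$, with tensor unit $\sigma^\svee$. I would restrict $\Gamma_{M_\RR}\colon(\Closed(M_\RR),+)\to(\Shv(M_\RR;\Sp),*)$ to $\Theta(\sigma)$; it sends the unit to $\Gamma_{M_\RR}(\sigma^\svee)=\omega_{\sigma^\svee}$, which is an \emph{idempotent} commutative algebra by \cref{idempotent algebra in sheaf category indexed by the fan}, so $\Mod_{\omega_{\sigma^\svee}}\Shv(M_\RR;\Sp)$ is a full presentably symmetric monoidal subcategory of $\Shv(M_\RR;\Sp)$ in which the restricted functor naturally takes values (each $\omega_{m+\sigma^\svee}$ is $\omega_{\sigma^\svee}$-local since $\omega_{m+\sigma^\svee}*\omega_{\sigma^\svee}\simeq\omega_{m+\sigma^\svee}$). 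The only substantive point is that this functor $\Theta(\sigma)\to\Mod_{\omega_{\sigma^\svee}}\Shv(M_\RR;\Sp)$ is \emph{strong} symmetric monoidal: for $V=m+\sigma^\svee$ and $W=m'+\sigma^\svee$ I must show the lax structure map $\Gamma_{M_\RR}(V)*\Gamma_{M_\RR}(W)\to\Gamma_{M_\RR}(V+W)$ is an equivalence. I would do this by invoking \cref{comparison between homology sheaf of open and closed polygon} to replace $V$, $W$ and $V+W$ by their interiors (top-dimensional since cones in a fan are strongly convex), under which $\Gamma_{M_\RR}$ becomes $\omega_{V^\circ}\simeq\underline\SS_{V^\circ}[\dim M_\RR]$; then $V^\circ+W^\circ=(m+m')+\sigma^{\svee,\circ}=(V+W)^\circ$ since an open convex cone is closed under Minkowski sum, and the claim reduces to the explicit computation $\underline\SS_{V^\circ}*\underline\SS_{W^\circ}\simeq\underline\SS_{V^\circ+W^\circ}[-\dim M_\RR]$ of \cref{proposition with direct computation of convolution}, the shifts cancelling.

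Next I would extend along Yoneda. Since $\Mod_{\omega_{\sigma^\svee}}\Shv(M_\RR;\Sp)$ is presentably symmetric monoidal — convolution is cocontinuous in each variable by \cref{proposition with direct computation of convolution}, and module categories over commutative algebras inherit this — the universal property of Day convolution (\cref{reminders on Day convolution}) extends the functor of the previous step uniquely to a colimit-preserving symmetric monoidal functor
\[\Psi_\sigma\colon\Fun(\Theta(\sigma)^\op,\Sp)\longrightarrow\Mod_{\omega_{\sigma^\svee}}\Shv(M_\RR;\Sp),\]
namely the left Kan extension of $\Theta(\sigma)\to\Mod_{\omega_{\sigma^\svee}}\Shv(M_\RR;\Sp)$ along the Yoneda embedding, in exact parallel with \cref{definition of the combinatorial-quasicoherent comparison functor}.

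For naturality in $\sigma$, note that an inclusion $\sigma\subseteq\tau$ gives $\tau^\svee\subseteq\sigma^\svee$, so $\Theta(i)\colon m+\tau^\svee\mapsto m+\sigma^\svee$ is the restriction of the localization $\Mod_{\tau^\svee}(\Closed(M_\RR))\to\Mod_{\sigma^\svee}(\Closed(M_\RR))$, and $\Gamma_{M_\RR}$ intertwines this with base change along the induced algebra map $\omega_{\tau^\svee}\to\omega_{\sigma^\svee}$. As all the categories involved are posets and all the comparison functors are (lax) symmetric monoidal functors between posets, this compatibility is checked by direct inspection, and it is transported through the Day convolution extension by the functoriality recalled in \cref{reminders on Day convolution}. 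Hence $\{\Psi_\sigma\}$ is a natural transformation of $\Sigma^\op$-indexed diagrams in $\SMCat$; its limit $\lim_{\Sigma^\op}\Psi_\sigma$ is symmetric monoidal, and postcomposing with the lax symmetric monoidal, fully faithful functor $R$ of \cref{taking module category over the family of idempotent algebras in the sheaf category} yields $\Psi_\Sigma$. The remaining assertions are then imported wholesale: $\lim_{\Sigma^\op}\Psi_\sigma$ is fully faithful when $\Sigma$ is smooth by \cref{comparison functor is fully faithful when the fan is smooth}, and $R$ is an equivalence — hence in particular symmetric monoidal — when $\Sigma$ is smooth and projective by \cref{the idempotent algebras for a smooth projective fan glues to the unit}.

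The main obstacle is the strong monoidality in the first step, i.e.\ recognizing the lax structure maps of $\Gamma_{M_\RR}|_{\Theta(\sigma)}$ as equivalences; this is precisely where the geometry of polyhedra (through \cref{comparison between homology sheaf of open and closed polygon} and \cref{proposition with direct computation of convolution}) does the work. Everything else is bookkeeping: since $\Theta(\sigma)$ and $\Closed(M_\RR)$ are $1$-categories there is no higher monoidal coherence to verify beyond those structure maps, and the passage to limits over $\Sigma^\op$ is formal.
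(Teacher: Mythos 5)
Your pointwise construction of $\Psi_\sigma$ is essentially the paper's: restrict $\Gamma_{M_\RR}$ to $\Theta(\sigma)\subseteq\Closed(M_\RR)$, observe that its values are $\omega_{\sigma^\svee}$-modules, upgrade the lax structure maps to equivalences via \cref{comparison between homology sheaf of open and closed polygon} and the computation in \cref{proposition with direct computation of convolution}, and then left Kan extend along the Yoneda embedding using the universal property of Day convolution (\cref{reminders on Day convolution}); the concluding assertions are imported from \cref{comparison functor is fully faithful when the fan is smooth}, \cref{taking module category over the family of idempotent algebras in the sheaf category} and \cref{the idempotent algebras for a smooth projective fan glues to the unit}, exactly as in the paper.

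The gap is in the naturality in $\sigma$, which is precisely the part of the statement the paper spends effort on. Your justification --- ``all the categories involved are posets \dots\ checked by direct inspection'' --- does not hold: the sources $\Theta(\sigma)$ are posets, but the targets $\Mod_{\omega_{\sigma^\svee}}\Shv(M_\RR;\Sp)$ are large stable $\infty$-categories, so a natural transformation of $\Sigma^\op$-indexed diagrams in $\SMCat$ consists of homotopy-coherent data: an invertible $2$-cell filling each square comparing $\Theta(\tau)\to\Theta(\sigma)\to\Mod_{\omega_{\sigma^\svee}}\Shv(M_\RR;\Sp)$ with $\Theta(\tau)\to\Mod_{\omega_{\tau^\svee}}\Shv(M_\RR;\Sp)\to\Mod_{\omega_{\sigma^\svee}}\Shv(M_\RR;\Sp)$, together with compatibilities for composites of inclusions of cones; none of this can be produced ``by inspection'' square by square, and knowing that each individual square commutes up to some equivalence (which your intertwining argument does give) is strictly weaker than exhibiting a point of the relevant space of natural transformations. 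The paper circumvents this by a factorization you have dropped: it identifies $\Theta(\sigma)$, symmetric monoidally and naturally in $\sigma$, with a full sub-$1$-category of $\Mod_{\sigma^\svee}\Fun(\Closed^*(M_\RR)^\op,\Spc)$ (the Yoneda images of integral translates of $\sigma^\svee$), where naturality genuinely reduces to $1$-categorical checks because base change between these module categories restricts to the structure maps $\Theta(\sigma)\to\Theta(\tau)$; the second leg $\Mod_{\sigma^\svee}\Fun(\Closed^*(M_\RR)^\op,\Spc)\to\Mod_{\omega_{\sigma^\svee}}\Shv(M_\RR;\Sp)$, with all of its naturality, is then obtained formally by applying \cref{Taking module category} to the single symmetric monoidal colimit-preserving functor $\Fun(\Closed^*(M_\RR)^\op,\Spc)\to\Shv(M_\RR;\Sp)$ induced by $\Gamma_{M_\RR}$ and to the $\Sigma^\op$-diagram of idempotent algebras $\sigma\mapsto\sigma^\svee$, after which one Kan extends using the functoriality of Day convolution. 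Without this (or some comparable strictification device), the sentence in your proposal asserting that the $\Psi_\sigma$ assemble into a natural transformation is exactly the unproved content of the proposition.
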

We first construct $\Psi_\sigma$ pointwise. 
\begin{construction}
    Fix $\sigma\in\Sigma$, consider the composition of lax symmetric monoidal functors
    $$\Theta(\sigma)\longrightarrow\Closed(\MR)\overset{\Gamma_\MR}{\longrightarrow}\Shv(\MR;\Sp),$$
    where the first functor is the canonical inclusion (recall that $\Theta(\sigma)$ is {by definition} a full subcategory of $\Closed(\MR)$) and the second functor is $\Gamma_\MR$ constructed in \cref{defintion of a functor taking a space ot its sheaf of homology}. Now we observe that for each $\sigma$, the image of $\Theta(\sigma)$ lies in the full subcategory  $\Mod_{\omega_{\sigma^\svee}}\Shv(\MR;\Sp)$ of $\Shv(\MR;\Sp)$. It follows that we have a lax symmetric monoidal functor 
    $$\psi_\sigma:\Theta(\sigma)\rightarrow\Mod_{\omega_{\sigma^\svee}}\Shv(\MR;\Sp)$$
    which is readily symmetric monoidal by \cref{proposition with direct computation of convolution}.
    Now one can left Kan extend this to a symmetric monoidal functor
    $$\Psi_\sigma:\Fun(\Theta(\sigma)^\op,\Sp)\rightarrow\Mod_{\omega_{\sigma^\svee}}\Shv(\MR;\Sp)$$
    which is what we want. 
\end{construction}
Now we construct $\Psi_\sigma$ with functoriality along $\sigma$.
\begin{rem}[Functoriality of $\Psi_\sigma$ along $\sigma$]
    To provide the functoriality of symmetric monoidal functors 
    \[\Psi_\sigma:\Fun(\Theta(\sigma)^\op,\Sp)\rightarrow\Mod_{\omega_{\sigma^\svee}}\Shv(\MR;\Sp)\]
    along $\sigma\in\Sigma^\op$, we make the following constructions. Consider the subcategory $\Closed^*(\MR)\subseteq\Closed(\MR)$ spanned by the origin and top dimensional polyhedral subsets. This category inherits a symmetric monoidal structure and \cref{proposition with direct computation of convolution} implies that the restriction
    \[\Gamma_{\MR}:\Closed^*(\MR)\longrightarrow\Shv(\MR;\Sp)\]
    is symmetric monoidal\footnote{It is true that $\Gamma_\MR$ is symmetric monoidal on $\Closed(\MR)$, but it takes more effort to show. We restrict to $\Closed^*(\MR)$ as it suffices for our purpose here.}. Now we consider the presheaf category \footnote{Note that we have played the same trick of passing to presheaf category in \cref{definition of the combinatorial-quasicoherent comparison functor}.}
    \[\Fun(\Closed^*(\MR)^\op,\Spc)\]
    equipped with the Day convolution product. We have a family of idempotent  algebras in $\Fun(\Closed^*(\MR)^\op,\Spc)$ given by 
    \[\sigma^\svee\in\CAlg(\Fun(\Closed^*(\MR)^\op,\Spc))\]
    coming from \cref{taking module category over the family of idempotent algebras in the sheaf category}. We have by abuse of notation, identified $\sigma^\svee$ with its Yoneda image. Now $\Gamma_{\MR}$ induces a symmetric monoidal colimit-preserving functor
    \[\Fun(\Closed^*(\MR)^\op,\Spc)\longrightarrow\Shv(\MR;\Sp)\]
    so we can apply \cref{Taking module category} and obtain a diagram in $\SMCat$ indexed by $\sigma\in\Sigma^\op$:
    \[\Mod_{{\sigma^\svee}}\Fun(\Closed^*(\MR)^\op,\Spc)\longrightarrow\Mod_{\omega_{\sigma^\svee}}\Shv(\MR;\Sp).\]
    It remains to write down a natural transformation in $\SMCat$ of diagrams indexed by $\Sigma^\op$
    \[\psi_\sigma:\Theta(\sigma)\longrightarrow\Mod_{{\sigma^\svee}}\Fun(\Closed^*(\MR)^\op,\Spc).\]
    This reduces to $1$-categorical considerations: for example, one way to do this is to identify (as a symmetric monoidal category) $\Theta(\sigma)$ with the full subcategory of $\Mod_{{\sigma^\svee}}\Fun(\Closed^*(\MR)^\op,\Spc)$ spanned by Yoneda image of integral translations of $\sigma^\svee$ as in \cref{alternative way to see symmetric monoidal structure on Theta}. Given $\tau\subseteq\sigma$ in $\Sigma$, the symmetric monoidal functors given by base change
    \[\Mod_{{\sigma^\svee}}\Fun(\Closed^*(\MR)^\op,\Spc)\longrightarrow\Mod_{{\tau^\svee}}\Fun(\Closed^*(\MR)^\op,\Spc)\]
    restricts to structure maps
    \[\Theta(\sigma)\longrightarrow\Theta(\tau).\]
    Consequently we have a natural transformation between diagrams in $\SMCat$ indexed by $\Sigma$:
    \[\Theta(\sigma)\longrightarrow\Mod_{{\sigma^\svee}}\Fun(\Closed(\MR)^\op,\Spc)\longrightarrow\Mod_{\omega_{\sigma^\svee}}\Shv(\MR;\Sp).\]
    Now one can left Kan extend as in \cref{reminders on Day convolution} and obtain the symmetric monoidal functors naturally along $\Sigma^\op$
    \[\Psi_\sigma:\Fun(\Theta(\sigma)^\op,\Sp)\longrightarrow\Mod_{\omega_{\sigma^\svee}}\Shv(\MR;\Sp).\]
\end{rem}
For future use, we record here the interaction of $\Psi_\sigma$ with translation action by lattice $M$.
\begin{construction}[Compatibility with the lattice]
\label{comb-constructible functor is compatible with lattice}
    For convenience, we assume that the fan is smooth and projective. For each $\sigma$, recall that we have a symmetric monoidal inclusion $p_\sigma:M\rightarrow\Theta(\sigma):m\mapsto m+\sigma^\svee$ and one obtains its left Kan extension as a symmetric monoidal functor:
    $$p_{\sigma!}:\Fun(M,\Sp)\rightarrow\Fun(\Theta(\sigma)^\op,\Sp).$$ This functor is natural in $\sigma$ when we view $\Fun(M,\Sp)$ as a constant diagram indexed by $\sigma\in\Sigma^\op$. It follows that we have the following diagram in $\SMCat$ after taking limit:
\[\begin{tikzcd}
	{\lim_{\sigma}\Fun(\Theta(\sigma)^\op,\Sp)} & {\Shv(\MR;\Sp)} \\
	{\Fun(M,\Sp)}
	\arrow["{\Psi_\Sigma}", from=1-1, to=1-2]
	\arrow["{\lim_{\sigma}p_{\sigma!}}", from=2-1, to=1-1]
	\arrow["{\Psi_\Sigma\circ\lim_{\sigma}p_{\sigma!}}"', from=2-1, to=1-2]
\end{tikzcd}.\]
    It turns out that one can identify the diagonal functor with a more familiar one when working with a smooth projective fan $\Sigma$: (on the bottom we view $M$ as a discrete topological group, and $i_!$ is the !-pushforward along the inclusion of topological groups which is symmetric monoidal for sheaf categories with convolution products)
\[\begin{tikzcd}
	{\lim_{\sigma}\Fun(\Theta(\sigma)^\op,\Sp)} & {\Shv(\MR;\Sp)} \\
	{\Fun(M,\Sp)} & {\Shv(M;\Sp)}
	\arrow["{\Psi_\Sigma}", from=1-1, to=1-2]
	\arrow["{\lim_{\sigma}p_{\sigma!}}", from=2-1, to=1-1]
	\arrow["\simeq"', from=2-1, to=2-2]
	\arrow["{i_!}"', from=2-2, to=1-2]
\end{tikzcd}.\]
    As we don't need it for now, we defer its proof to \cref{equivariantization for large sheaf category}.

\end{construction}
We summarize the constructions so far by making the following definition.
\begin{defn}
\label{definition of the CCC functor}
Let $\Sigma$ be a smooth projective fan. Combining \cref{theorem of combinatorics compares to quasicoherent} and \cref{construction of combinatorial to constructible comparison functor}, we have the following functors
\[\QCoh([X_\Sigma/\TT]\overset\simeq\longleftarrow\lim_{\sigma\in\Sigma^\op}\Fun(\Theta(\sigma)^\op;\Sp)\longrightarrow\lim_{\sigma\in\Sigma^\op}\Mod_{\omega_{\sigma^\svee}}\Shv(\MR;\Sp)\longrightarrow\Shv(\MR;\Sp),\]
where the first two functors are symmetric monoidal functors supplied by $\lim\Phi_\sigma$ and $\lim\Psi_\sigma$. We take the inverse of the first functor and obtain the \stress{coherent-constructible correspondence} functor 
\[\kappa:\QCoh([X_\Sigma/\TT])\longrightarrow\Shv(\MR;\Sp),\]
which is symmetric monoidal and fully faithful when $\Sigma$ is smooth projective, in view of \cref{construction of combinatorial to constructible comparison functor}. Given \cref{global combinatorial-coherent functor is compatible with torus action} and \cref{comb-constructible functor is compatible with lattice}, we also have the following diagram in $\SMCat$:
\[\begin{tikzcd}
	{\QCoh(X_\Sigma/\TT)} & {\Shv(M_\RR;\Sp)} \\
	{\QCoh(B\TT)} & {\Shv(M;\Sp)} \\
	\arrow["\kappa"', from=1-1, to=1-2]
	\arrow["{\pi^*}", from=2-1, to=1-1]
	\arrow["\simeq"', from=2-1, to=2-2]
	\arrow["{i_!}", from=2-2, to=1-2]
\end{tikzcd}.\]
\end{defn}
\subsection{Polyhedral stratification}
The goal of this subsection is twofold: on the one hand, we show that the functors $\Psi_\sigma$ previously constructed are fully-faithful;
on the other hand, we pin down a first-order approximation of the characterization of the image of $\kappa$.
That is to say, we will not actually work with the whole (gigantic) category of sheaves, but only a subcategory: those constructible for some fixed stratification. Moreover, the stratification has an elementary description in terms of the fan data.
We start with a quick review on constructible sheaves following \cite{clausenJansen2023reductiveborelserre}.
\begin{defn}
    A poset $P$ satisfies the \stress{ascending chain condition} if every strictly increasing chain in $P$ stops after finitely many steps.
    A poset $P$ is \stress{locally finite} if each $P_{\geq q}\defeq\{p:p\geq q\}$ is finite.
\end{defn}
\begin{rem}
    Note that locally finite implies the ascending chain condition but not vice versa.
\end{rem}
\begin{defn}
    A \stress{stratified topological space} is a continuous map $\pi:X\rightarrow P$ where $X$ is a topological space and $P$ is a poset equipped with the Alexandroff topology
    \footnote{
    Recall that a subset $U \subseteq P$ is open in the Alexandroff topology if and only if for $p\in U$, $p\leq q$ implies $q\in U$. In other words, $U$ is a `cosieve': a subset that is upward closed for the partial order of $P$.}. We often write $(X,P)$ for a stratified topological space and omit the map $\pi$.
    For each $p\in P$, the preimage $\pi^{-1}(p)\subseteq X$ is called its \stress{$p$-stratum} $X_p$. The stratum $X_p$ is a closed subspace of $\openstar_p\defeq\pi^{-1}
    \{q:p\leq q\}\subseteq X$, the \stress{open star around $p$}.
\end{defn}
\begin{defn}
    Given two maps $f,g:(X,P)\rightarrow(Y,Q)$ between stratified topological spaces. A \stress{stratified homotopy} between $f$ and $g$ is a map $H:X\times[0,1]\rightarrow Y$ between stratified topological spaces that restricts to $f$ on $X\times{0}$ and $g$ on $X\times 1$. Here $X\times[0,1]$ inherits the stratification from $X$.
\end{defn}
\begin{defn}
    A map of stratified topological space $f:(X,P)\rightarrow(Y,Q)$ is a \stress{stratified homotopy equivalence} if there is a map $g:(Y,Q)\rightarrow(X,P)$ going in the other direction, such that both of $f\circ g$ and $g\circ f$ are stratified homotopic to identity. 
\end{defn}
\begin{defn}
    Fix a compactly generated category $\Cc$ (we will only care about $\Spc$ or $\Sp$) as coefficient and a stratified topological space $\pi:X\rightarrow P$.
    A sheaf on $X$ valued in $\Cc$ is \stress{$P$-constructible}
    \footnote{These are sometimes called quasi-constructible in the literature, where the word constructible is reserved for objects also satisfying a finiteness condition which we don't impose here.}
    if its restriction to each stratum $X_p$ is locally constant.
    We write $\Cons_P(X;\Cc)$ for the full subcategory of \stress{$P$-construcitible sheaves}.
\end{defn}
We want to take advantage of the exodromy equivalence to identify a family of compact generators for the category of constructible sheaves.
We start by importing the following theorem which realizes the exodromy equivalence for a class of particularly simple stratified topological spaces.

\begin{thm}\label{theorem:exodromysimple}
\cite[\href{https://arxiv.org/pdf/2108.01924.pdf\#theorem.3.4}{Theorem 3.4}]{clausenJansen2023reductiveborelserre}
Let $\pi:X\rightarrow P$ be a stratified topological space with $\pi$ surjective and $P$ satisfying the ascending chain condition. Suppose there is a collection $\Bc$ of open subsets of $X$ such that
\begin{enumerate}
    \item the representable sheaves $h_U$ for $U\in\Bc$ generate the topos $\Shv(X;\Spc)$.
    \item for all $U\in\Bc$, there is a $p\in P$ such that $U$ includes into $\openstar_p$ by a stratified homotopy equivalence.
\end{enumerate}
Then the pullback map $$\pi^*:\Fun(P,\Spc)\rightarrow\Shv(X;\Spc)$$
preserves all limits and colimits and is fully faithful with essential image $\Cons_P(X;\Spc)$.
\end{thm}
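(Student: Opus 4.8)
The plan is to realize $\pi^*$ as the inclusion of a well-behaved subcategory of $\Shv(X;\Spc)$, organising the bookkeeping around the adjunction $\pi^*\dashv\pi_*$ and one local computation. (This theorem is imported from the cited work; the sketch below is how I would run the argument.)

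\emph{Formal setup.} Since $\pi$ is the inverse image of a geometric morphism, $\pi^*$ preserves colimits and finite limits automatically, and being colimit-preserving between presentable categories it has a right adjoint $\pi_*$, with $(\pi_*\mathcal F)(p)\simeq\Gamma(\openstar_p,\mathcal F)$ because $\openstar_p=\pi^{-1}(P_{\geq p})$ is the smallest open of $P^{\mathrm{Alex}}$ whose preimage contains $X_p$. Under $\Fun(P,\Spc)\simeq\Shv(P^{\mathrm{Alex}};\Spc)$ the source is generated under colimits by the corepresentables $\mathbb{1}_{P_{\geq p}}$, each of which is atomic since it corepresents $\mathrm{ev}_p$, and one computes $\pi^*\mathbb{1}_{P_{\geq p}}\simeq\mathbb{1}_{\openstar_p}$, which is manifestly $P$-constructible (its restriction to $X_q$ is $\mathbb{1}$ for $q\geq p$ and $0$ otherwise). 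As restriction to a stratum preserves colimits and local systems on a stratum are closed under colimits, $\Cons_P(X;\Spc)$ is closed under colimits in $\Shv(X;\Spc)$, so $\pi^*$ factors through $\Cons_P(X;\Spc)$.

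\emph{The key local computation.} The crux is to prove that for every $P$-constructible $\mathcal F$, every $p\in P$ and every $x\in X_p$, the restriction map $\Gamma(\openstar_p,\mathcal F)\to\mathcal F_x$ is an equivalence. This is where hypotheses (1) and (2) do real work. By (1) the $U\in\mathcal B$ form a basis for the topology, so the $U\in\mathcal B$ containing $x$ are cofinal among neighbourhoods of $x$; for such $U$ small enough one sees, using the frontier condition and (2), that $U\subseteq\openstar_p$ and that the inclusion $U\hookrightarrow\openstar_p$ is a stratified homotopy equivalence. Stratified homotopy invariance of $P$-constructible sheaves then gives $\Gamma(\openstar_p,\mathcal F)\xrightarrow{\ \simeq\ }\Gamma(U,\mathcal F)$ compatibly, and passing to the colimit over these $U$ yields $\Gamma(\openstar_p,\mathcal F)\simeq\mathcal F_x$. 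The ascending chain condition on $P$ (with surjectivity of $\pi$) is what keeps the cofinality and inductive arguments here convergent. I expect this to be the main obstacle; the rest is formal once it is in hand.

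\emph{Conclusion.} Granting the local computation: for $\mathcal G\in\Fun(P,\Spc)$ the unit $\mathcal G\to\pi_*\pi^*\mathcal G$ is, pointwise at $p$, the map $\mathcal G(p)\to\Gamma(\openstar_p,\pi^*\mathcal G)\simeq(\pi^*\mathcal G)_x\simeq\mathcal G(p)$, hence an equivalence, so $\pi^*$ is fully faithful; and for $\mathcal F\in\Shv(X;\Spc)$ the counit $\pi^*\pi_*\mathcal F\to\mathcal F$ is an equivalence iff all its stalk maps $\Gamma(\openstar_{\pi(x)},\mathcal F)\to\mathcal F_x$ are (stalks being conservative here), i.e. precisely when $\mathcal F\in\Cons_P(X;\Spc)$ — one direction by the local computation, the other since anything in the image of $\pi^*$ is constructible. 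This pins down the essential image. Finally, $\pi^*$ preserves all limits once $\Cons_P(X;\Spc)$ is closed under limits in $\Shv(X;\Spc)$; this again is checked through the local models from $\mathcal B$, since over a cofinal system of stratified-conical opens the relevant limits can be computed stratum by stratum, where they reduce to limits of local systems. Assembling these statements gives the theorem.
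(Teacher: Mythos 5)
First, a point of comparison: the paper does not actually prove this theorem — it imports it verbatim from Clausen--Jansen (their Theorem 3.4), adding only the remark that the argument carries over to $\Sp$-coefficients. So your sketch has to be measured against the cited argument. Its architecture is the right one: $\pi^*\dashv\pi_*$ with $(\pi_*\Fc)(p)\simeq\Gamma(\openstar_p,\Fc)$, cofinality at $x\in X_p$ of basis elements adapted to $p$ (this step is fine: for $U\in\Bc$ with $x\in U\subseteq\openstar_p$, the star $\openstar_q$ furnished by hypothesis (2) is forced to have $q=p$, since a stratified homotopy equivalence matches up nonempty strata), and the unit/counit bookkeeping. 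But the two places where the hypotheses actually earn their keep are not supplied.

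The first gap is the one you flag yourself: stratified homotopy invariance of sections of constructible sheaves along the inclusions $U\subseteq\openstar_p$ is the substantive content of the theorem, and it is also where the ascending chain condition does real work (it feeds an induction over strata and the identification $\Fun(P,\Spc)\simeq\Shv(P^{\mathrm{Alex}};\Spc)$, which you assert without comment and which is not free for an arbitrary poset); saying the ACC ``keeps the arguments convergent'' is not an argument. The second gap is a step that would fail as written: you detect the counit being an equivalence on stalks, ``stalks being conservative here''. For sheaves of spaces on a general topological space, stalks are jointly conservative only on the hypercomplete part, and nothing in the hypotheses provides hypercompleteness. Hypothesis (1) exists precisely to replace this move: since the $h_U$, $U\in\Bc$, generate, the functors $\Gamma(U,-)\simeq\Map(h_U,-)$ are jointly conservative, so one checks the counit on sections over the $U\in\Bc$ (combining the homotopy-invariance lemma with the unit computation $G(p)\simeq(\pi^*G)(\openstar_p)$), never on stalks — that is how the cited proof avoids the issue. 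A related weakness affects your closing claim that $\Cons_P(X;\Spc)$ is closed under limits: restriction to a closed stratum does not preserve limits, so ``reduce to limits of local systems stratum by stratum'' does not go through; one needs a basis-level criterion for constructibility (equivalences of sections over suitable pairs $U\subseteq\openstar_p$ from $\Bc$), which is stable under limits because $\Gamma(U,-)$ preserves them. In short: correct skeleton, but the crux lemma, the role of the ACC, and the stalk-versus-generators issue all need to be filled in before this counts as a proof.
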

\begin{rem}
    The theorem in \cite{clausenJansen2023reductiveborelserre} was stated and proved for sheaves valued in $\Spc$. The proof works verbatim for $\Sp$ coefficient.
    It is also true for other compactly generated coefficient categories, which isn't needed for our exposition. 
\end{rem}
This gives, for a locally finite poset $P$ and stratification $X\rightarrow P$ as above, an explicit realization of the exodromy equivalence
$$\pi^*:\Fun(P,\Sp)\rightarrow\Shv(X;\Sp)$$
which is the left adjoint of $\Shv(X;\Sp)\rightarrow\Fun(P,\Sp)$ sending $\Fc$ to $[q\mapsto\Fc(\openstar_q)]$.
Tracing through the equivalences, one sees that for $q\in P$, the image of $q$ under stable Yoneda embedding (i.e. $\SS[\Map_P(q,-)]$) is taken to $i_{\openstar_q!}(\underline\SS)$ where $i_{\openstar_q}$ is the inclusion of $\openstar_q$ into $X$.
\begin{cor}
    Let $\pi:X\rightarrow P$ be as in \cref{theorem:exodromysimple}. The category $\Cons_P(X;\Sp)$ is {generated} by the compact objects $\{i_{\openstar_q!}(\underline\SS)\}_{q\in P}$ in the following sense: the smallest cocomplete stable subcategory 
    of $\Cons_P(X;\Sp)$ that contains these objects is itself.
\end{cor}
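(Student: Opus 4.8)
The plan is to reduce the statement to a standard fact about functor categories by means of the exodromy equivalence. First I would apply \cref{theorem:exodromysimple} in its $\Sp$-linear form (valid by the remark following it): it identifies $\Cons_P(X;\Sp)$ with $\Fun(P,\Sp)$ through $\pi^*$, and, as recorded in the paragraph preceding the statement, under this identification the object $i_{\openstar_q!}(\underline\SS)$ corresponds to the stable Yoneda image $G_q\defeq\SS[\Map_P(q,-)]$ of $q\in P$. Since $P$ is small, $\Fun(P,\Sp)$ is presentable, so it remains to show that $\Fun(P,\Sp)$ is the smallest cocomplete stable (equivalently, the smallest localizing) subcategory of itself containing all of the $G_q$.

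Next I would verify that $\{G_q\}_{q\in P}$ is a set of compact generators. Colimits in $\Fun(P,\Sp)$ are computed pointwise, so each evaluation functor $\mathrm{ev}_q\colon\Fun(P,\Sp)\to\Sp$ preserves all colimits; and by the ($\Sp$-linear) Yoneda lemma $\map(G_q,-)\simeq\mathrm{ev}_q(-)$, which gives at once that $G_q$ is compact (this also re-proves the compactness clause in the statement) and that a functor $F$ with $\mathrm{ev}_q(F)\simeq 0$ for every $q$ — i.e.\ $F$ right-orthogonal to every $G_q$ — is pointwise zero, hence zero.

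Finally I would invoke the standard principle that a presentable stable category is generated, as a localizing subcategory, by any set of compact generators. Concretely: let $\langle G_q\rangle\subseteq\Fun(P,\Sp)$ be the smallest cocomplete stable subcategory containing the $G_q$; being generated under colimits by a set of compact objects it is presentable, so the (colimit-preserving, fully faithful) inclusion $\iota\colon\langle G_q\rangle\hookrightarrow\Fun(P,\Sp)$ admits a right adjoint $\rho$. For any $F$, the cofiber $F''$ of the counit $\iota\rho F\to F$ is right-orthogonal to every $G_q$ (using fully-faithfulness of $\iota$ and $G_q\in\langle G_q\rangle$ to identify $\map(G_q,\iota\rho F)\xrightarrow{\sim}\map(G_q,F)$), hence $F''\simeq 0$ by the previous step, so $F\simeq\iota\rho F\in\langle G_q\rangle$. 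I do not anticipate a genuine obstacle: the only points requiring care are pinning down the $\Sp$-linear version of \cref{theorem:exodromysimple} together with the correspondence of generators (already handled in the surrounding text), and checking that ``smallest cocomplete stable subcategory'' is indeed presentable so that the adjoint functor theorem applies — which is exactly where compactness of the $G_q$ is used.
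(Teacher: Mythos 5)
Your argument is correct and is essentially the route the paper intends (the paper leaves the proof implicit): transport the question through the $\Sp$-linear exodromy equivalence, use the already-recorded identification of $i_{\openstar_q!}(\underline\SS)$ with the stable Yoneda images $\SS[\Map_P(q,-)]$, and conclude from the standard fact that these corepresent the colimit-preserving evaluation functors and hence compactly generate $\Fun(P,\Sp)$. Your spelled-out adjoint-functor-theorem step is just the standard justification of that last fact, so there is nothing to flag.
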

For future use we record a description of compact objects in functor categories here:
\begin{lem}
\label{characterization of compact in functor category on poset}
    Let $P$ be a locally finite poset. A functor $X\in\Fun(P,\Sp)$ is compact if and only if its value is none-zero on finitely many objects and each of its value is a finite spectrum. 
\end{lem}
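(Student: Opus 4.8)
The plan is to identify both conditions on $X\in\Fun(P,\Sp)$ with membership in the thick (stable, idempotent‑complete) subcategory generated by the corepresentable functors $\iota_{q!}\SS=\SS[\Map_P(q,-)]$ for $q\in P$, where $\iota_q\colon\{q\}\hookrightarrow P$ and $\iota_{q!}$ denotes left Kan extension. Since evaluation $\iota_q^*$ at $q$ preserves colimits, each $\iota_{q!}\SS$ is compact, and as $\map(\iota_{q!}\SS,X)\simeq X(q)$ these objects are jointly conservative; hence $\Fun(P,\Sp)$ is compactly generated and $\Fun(P,\Sp)^\omega$ is exactly this thick subcategory. Write $\mathcal A\subseteq\Fun(P,\Sp)$ for the full subcategory of functors with finite support whose values are finite spectra. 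For the inclusion $\Fun(P,\Sp)^\omega\subseteq\mathcal A$: one checks that $\mathcal A$ is thick (supports of finite (co)limits and of retracts only shrink, and finite (co)limits and retracts of finite spectra are finite), and $\iota_{q!}\SS$ lies in $\mathcal A$ because $P$ is locally finite, so its support $P_{\ge q}$ is finite and its values are $\SS$ or $0$; hence the thick subcategory generated by the $\iota_{q!}\SS$ is contained in $\mathcal A$.

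The reverse inclusion $\mathcal A\subseteq\Fun(P,\Sp)^\omega$ is the real content. For $q\in P$ and $E\in\Sp$ let $\mathrm{sk}_q(E)\in\Fun(P,\Sp)$ be the skyscraper functor with value $E$ at $q$ and $0$ elsewhere; this is exact in $E$. The key claim is that $\mathrm{sk}_q(\SS)$ is compact. With $j\colon P_{>q}\hookrightarrow P$ the inclusion of the cosieve $P_{>q}$ and $\underline\SS$ the constant functor at $\SS$, there is a cofiber sequence
\[
j_!\,\underline\SS\;\longrightarrow\;\iota_{q!}\SS\;\longrightarrow\;\mathrm{sk}_q(\SS)
\]
in $\Fun(P,\Sp)$: evaluated at $p\in P_{>q}$ the first map is the identity of $\SS$, at $q$ it is $0\to\SS$, and it is $0\to0$ otherwise, so the cofiber is $\mathrm{sk}_q(\SS)$. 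Now $\iota_{q!}\SS$ is a compact generator, and $j_!\underline\SS$ is compact because $j_!$ preserves compact objects (its right adjoint, restriction, is cocontinuous) and $\underline\SS\in\Fun(P_{>q},\Sp)$ is compact: by adjunction $\map(\underline\SS,-)\simeq\lim_{P_{>q}}(-)$, which is a \emph{finite} limit since $P_{>q}$ is finite (this is where local finiteness enters), hence commutes with filtered colimits. Therefore $\mathrm{sk}_q(\SS)$ is compact, and by exactness of $\mathrm{sk}_q(-)$ so is $\mathrm{sk}_q(E)$ for every finite spectrum $E$.

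To deduce that every $X\in\mathcal A$ is compact, enumerate $\mathrm{supp}(X)=\{s_1,\dots,s_n\}$ compatibly with the order, so that $s_i\le_P s_j$ implies $i\le j$. Put $Z_k\defeq\{p\in P:p\not\ge_P s_j\text{ for all }j>k\}$, a downward‑closed (hence closed) subset with $Z_k\cap\mathrm{supp}(X)=\{s_1,\dots,s_k\}$, so $Z_n=P$ and $Z_0$ is disjoint from $\mathrm{supp}(X)$, and $Z_{k-1}\subseteq Z_k$; set $X_k\defeq (i_{Z_k})_*(X|_{Z_k})$. As $i_{Z_k}$ is a closed inclusion, $(i_{Z_k})_*$ is extension by zero, so $X_k$ equals $X$ on $Z_k$ and vanishes off it, and the units of the adjunctions $i_{Z_{k-1}}^*\dashv(i_{Z_{k-1}})_*$ assemble the $X_k$ into a finite tower $X=X_n\to X_{n-1}\to\cdots\to X_0=0$. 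A stalkwise computation identifies $\mathrm{fib}(X_k\to X_{k-1})$ with $\mathrm{sk}_{s_k}(X(s_k))$: this fiber is supported on $Z_k\setminus Z_{k-1}$, which is contained in $P_{\ge s_k}$ and, by the compatible ordering, meets $\mathrm{supp}(X)$ only in $s_k$. Each such fiber is compact by the previous paragraph, so by induction on $k$ every $X_k$ is compact; in particular $X=X_n$ is compact, which finishes the proof.

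The only step that is not pure formalism is the compactness of the skyscrapers, and there the essential input is the identification $\map_{\Fun(P_{>q},\Sp)}(\underline\SS,-)\simeq\lim_{P_{>q}}$ together with the finiteness of $P_{>q}$ — this is precisely where the local‑finiteness hypothesis is consumed. The remaining ingredients (that $(i_Z)_*$ is extension by zero for downward‑closed $Z$, that the transition maps of the tower are identities on the relevant stalks, and the support bookkeeping for the fibers) are routine but must be written out carefully.
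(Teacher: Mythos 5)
Your proposal is correct, and while the ``only if'' direction coincides with the paper's argument (compact objects form the thick subcategory generated by the corepresentables $\SS[\Map_P(q,-)]$, which by local finiteness have finite support and finite values, and the class of such functors is thick), your ``if'' direction takes a genuinely different route. The paper handles it by restricting $F$ to the finite cosieve $\supp^\star(F)$ (the upward closure of the support), invoking the finite-poset case of Lurie's Proposition 2.2.6, and then left Kan extending back along the cosieve inclusion, which is extension by zero and preserves compacts. You instead prove compactness from scratch: a linear extension of the finite support yields a finite tower of extensions-by-zero whose successive fibers are skyscrapers $\mathrm{sk}_{s_k}(X(s_k))$, and each skyscraper on a finite spectrum is compact via the cofiber sequence $j_!\underline\SS \to \iota_{q!}\SS \to \mathrm{sk}_q(\SS)$ together with the observation that $\underline\SS$ is compact in $\Fun(P_{>q},\Sp)$ because $P_{>q}$ is finite, so $\map(\underline\SS,-)\simeq\lim_{P_{>q}}$ is a finite limit. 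Both arguments consume local finiteness at the same spot (finiteness of $P_{\ge q}$, resp. $P_{>q}$); the paper's is shorter because it outsources the finite case, while yours is self-contained, in effect reproves that finite case, and as a by-product exhibits an explicit finite skyscraper filtration of every compact object. The steps you flag as routine (that extension by zero along sieves/cosieves is computed pointwise, that a functor supported at a single point is a skyscraper, and that retracts of finite spectra are finite) are indeed routine and no more is elided than in the paper's own proof.
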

\begin{proof}
    This is the same as \cite[Proposition 2.2.6]{LurieRotation} which proves the case where $P$ is finite.
    If a functor $F$ is compact, then as in \cite[Proposition 2.2.5]{LurieRotation}, it is a retract of an object in the smallest stable subcategory of $\Fun(P,\Sp)$ containing the image of stable Yoneda embedding.
    By the assumption that $P$ is locally finite, each of the Yoneda functor $\SS[\Map(x,-)]$ is non-zero on finitely many objects and each of its value is a finite spectrum. Such condition cuts out a stable subcategory $\Dc$ of $\Fun(P,\Sp)$ which is closed under retract, so we know that $F$ is in $\Dc$. On the other hand, if $F$ is non-zero on finitely many objects and each of its value is a finite spectrum, we may take  the subposet $\supp^\star(F):=\{y\in P:\exists x\leq y,F(x)\neq 0\}$. By assumption, this is a finite poset and the restriction of $F$ to $\supp^\star(F)$ is thus compact. Now note that $F$ is the left Kan extension of its restriction to $\supp^\star(F)$ and that left Kan extension preserves compact objects.
\end{proof}
Now we specialize to the case of our interest:
\begin{defn}[FLTZ stratification] 

\label{definition of FLTZ stratification} (See also \cite[Definition 4.3]{treumann2010remarksnonequivariantcoherentconstructiblecorrespondence})
    Fix a pair $(N,\Sigma)$ of lattice and fan, and assume further that $\Sigma$ spans $N_\RR$ as an $\RR$-vector space. We define a stratification $\Sc_\Sigma$ on $M_\RR$ as follows. To start with, one has an affine hyperplane arrangement in $M_\RR$ given by 
    $$H_\Sigma\defeq\{m+\sigma^\perp:m\in M,\sigma\in\Sigma(1)\},$$ 
    where $\sigma^\perp\defeq\{m\in M:(m,n)=0\,\forall\, n\in \sigma\}$. One has the following induction procedure to specify strata of a stratification: first look at the complement
    $$V\defeq M_\RR\setminus\bigcup_{h\in H_\Sigma}h,$$
    where each of the connected component of $V$ should be considered as a single stratum. For each $h\in H_\Sigma$, intersecting $h'\in H_\Sigma$ with $h$ produces an affine hyperplane arrangement on $h$. Thus one can work inductively and define a poset of strata $\Sc_\Sigma$ of $M_\RR$ (note they are locally closed). The closure of each stratum is a union of strata and one specify a poset structure by closure-inclusion. The map sending each point in $M_\RR$ to the stratum it belongs to in $\Sc_\Sigma$ will be a continuous map and this gives a stratification on $M_\RR$. We refer to this stratification $\Sc_\Sigma$ as the \stress{FLTZ stratification} for $\Sigma$ and we will often omit mentioning $\Sigma$ when it is clear from the context.
\end{defn}
\begin{rem}
    Note that the FLTZ stratification only depends on the collection of $1$-cones in $\Sigma$.
\end{rem}
\begin{rem}
    The stratification $\Sc_\Sigma$ on $\MR$ is apparently sub-analytic in the sense of \cite[Definition 5.3.7]{haine2024exodromyconicality} so the exodromy equivalence applies directly. We choose to apply exodromy equivalence from \cite{clausenJansen2023reductiveborelserre} since it's more elementary.
\end{rem}
We wish to use exodromy equivalence \cref{theorem:exodromysimple} to understand $\Sc_\Sigma$-constructible sheaves. For that we need:
\begin{prop}
    The stratification $\Sc_\Sigma$ on $M_\RR$ meets the assumptions of \cref{theorem:exodromysimple} above.
\end{prop}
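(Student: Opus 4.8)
The plan is to exhibit the data required by \cref{theorem:exodromysimple}: surjectivity of the stratification map, the ascending chain condition on the poset $\Sc_\Sigma$, and a collection $\Bc$ of opens satisfying conditions (1) and (2) there. Surjectivity is automatic, since every stratum of $\Sc_\Sigma$ is non-empty by construction. For the ascending chain condition it is enough (by the remark after \cref{definition of FLTZ stratification}) to show that $\Sc_\Sigma$ is locally finite. This follows from local finiteness of the arrangement $H_\Sigma$: there are only finitely many hyperplane directions $\sigma^\perp$ with $\sigma\in\Sigma(1)$, and for each such direction the translates $m+\sigma^\perp$, $m\in M$, form a discrete family of parallel hyperplanes, so only finitely many of them meet any bounded set. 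Consequently only finitely many hyperplanes of $H_\Sigma$ contain the affine flat supporting a given stratum $X_q$; and since a stratum $q'\geq q$ is constrained to lie on the same side of every hyperplane \emph{not} containing that flat as $q$ does (and may only vary its sign along the finitely many hyperplanes that do contain it), there are only finitely many $q'\geq q$.

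For the collection $\Bc$ I would take all open balls $B=B_\epsilon(x)$ with $x\in\MR$ and $\epsilon>0$ small enough that $B$ meets only the hyperplanes of $H_\Sigma$ passing through $x$; such $\epsilon$ exists by local finiteness. These balls form a basis for the topology of $\MR$, so the representable sheaves $h_B$ generate $\Shv(\MR;\Spc)$, giving condition (1). For condition (2), let $p=\pi(x)$ be the stratum through $x$. Using that the strata of $\Sc_\Sigma$ are precisely the relatively open faces of the arrangement (equivalently, the sign-vector classes of $H_\Sigma$), one computes that $\openstar_p$ is the intersection of the open half-spaces, one for each hyperplane $H\in H_\Sigma$ \emph{not} through $x$, taken on whichever side of $H$ contains $x$ (the hyperplanes through $x$ being exactly those containing the flat of $X_p$, hence imposing no constraint). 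In particular $\openstar_p$ is a convex open set containing $x$, and the choice of $\epsilon$ guarantees $B\subseteq\openstar_p$.

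It then remains to check that the inclusion $B\hookrightarrow\openstar_p$ is a stratified homotopy equivalence. I would argue via the straight-line contraction toward $x$, that is $H\colon\openstar_p\times[0,1]\to\openstar_p$ with $H(y,t)=(1-t)x+ty$ (and $[0,1]$ carrying the trivial stratification, as in the definition of stratified homotopy). This is a map of stratified spaces: for $t>0$ it preserves strata — along a hyperplane through $x$ because $\mathrm{sgn}_H((1-t)x+ty)=\mathrm{sgn}_H(y)$, and along a hyperplane not through $x$ because $x$ and $y$ lie strictly on the same side and the whole segment between them does too — while at $t=0$ it lands in the stratum $p$, which is $\leq$ every stratum met by $\openstar_p$; continuity of the composite $\openstar_p\times[0,1]\to\Sc_\Sigma$ is then immediate from upward-closedness of opens in the Alexandroff topology. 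Hence $H$ exhibits $\{x\}\hookrightarrow\openstar_p$ as a stratified homotopy equivalence, and the same contraction restricted to $B$ exhibits $\{x\}\hookrightarrow B$ as one; by two-out-of-three, $B\hookrightarrow\openstar_p$ is a stratified homotopy equivalence, which is condition (2).

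The one genuinely substantive step is the second paragraph — the explicit identification of $\openstar_p$ and, in particular, the (elementary but slightly fiddly) verification that it is convex and that the radial contraction is honestly a morphism of stratified spaces; everything else is formal. (Alternatively one could note that $\Sc_\Sigma$ is sub-analytic, hence conical, and invoke the exodromy theorem of \cite{haine2024exodromyconicality} directly, but the hands-on argument keeps us within the elementary framework of \cite{clausenJansen2023reductiveborelserre}.)
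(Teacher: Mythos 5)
The verifications of surjectivity, of the ascending chain condition via local finiteness, and your explicit identification of $\openstar_p$ as the convex intersection of the open half-spaces cut out by the hyperplanes of $H_\Sigma$ not passing through $x$ are all correct (and in fact more detailed than what the paper records). The gap is in your verification of condition (2). The homotopy $H(y,t)=(1-t)x+ty$ is \emph{not} a stratified homotopy in the sense used here: the cylinder $\openstar_p\times[0,1]$ carries the stratification pulled back from $\openstar_p$, and a map of stratified spaces must lie over a single monotone map of posets, so it must send each stratum-times-$[0,1]$ into a single stratum of the target. Your $H$ preserves strata for $t>0$ but collapses every stratum into $X_p$ at $t=0$, so no choice of poset map makes it stratified. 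Worse, the intermediate claim it is meant to establish is false in general: $\{x\}\hookrightarrow\openstar_p$ is \emph{not} a stratified homotopy equivalence whenever the star meets more than one stratum. (Sanity check with $\MR=\RR$ stratified by the origin and its complement: the open star of the origin is all of $\RR$, whose constructible sheaf category is $\Fun(\bullet\rightrightarrows\bullet;\Sp)$, not $\Sp$; by stratified homotopy invariance the inclusion of the point cannot be a stratified homotopy equivalence. This is also why condition (2) of \cref{theorem:exodromysimple} demands that $U\hookrightarrow\openstar_p$ be a stratified homotopy equivalence rather than that both be contractible.) Consequently the two-out-of-three step has nothing to rest on.

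The repair is close to what you already computed, and is what the paper means by ``linear scaling towards $x$ respects the stratification'': never scale all the way to $0$. For $\lambda\in(0,1]$ the map $g_\lambda(y)=x+\lambda(y-x)$ preserves every stratum of $\openstar_p$ — this is exactly your sign computation for $t>0$: signs along hyperplanes through $x$ are multiplied by $\lambda>0$, and along hyperplanes not through $x$ both $x$ and $y$ lie strictly on the same side, hence so does the whole segment. Since the rays of $\Sigma$ span $N_\RR$, the strata (and hence $\openstar_p$) are bounded, so for $\lambda$ small enough $g_\lambda$ maps $\openstar_p$ into the ball $B$; it is then an honest stratified map $\openstar_p\to B$ over the identity of the poset, and the straight-line homotopies between $g_\lambda$ and the identities of $B$ and of $\openstar_p$ pass only through scalings with factors in $[\lambda,1]$, hence are stratum-preserving stratified homotopies. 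This gives condition (2) directly, without invoking contractibility of the star or two-out-of-three. (The paper's own proof proceeds in the same spirit, after first reducing to a small ball in which the arrangement is central.)
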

\begin{proof}
    We need to provide a basis of opens for $M_\RR$ with desired properties. Consider the standard basis
    $$\Bc\defeq\{D(x,r):\text{open ball of radius $r$ centered at }x\in M_\RR \}$$
    and a subset of it.
    $$\Bc(\Sc_\Sigma)\defeq\{D(x,r)\in\Bc:D(x,r) \text{ is stratified homotopy equivalent to the open star at $x$}\}$$
    By definition each $D(x,r)\in\Bc(\Sc_\Sigma)$ would go through point 2.
    It suffices to check point 1, that it is a basis (or at the very least, nonempty). We claim that: for each $x\in M_\RR$ there exists $r_x>0$ such that $r<r_x$ implies $D(x,r)\in\Bc(\Sc_\Sigma)$. This directly implies that $\Bc(\Sc_\Sigma)$ is a basis of opens for $M_\RR$.
    To prove the claim, a first observation is that for sufficiently small $r$, $D(x,r)$ with restricted stratification of $\Sc_\Sigma$ is (stratified) isomorphic to a real vector space with stratification given by a family of hyperplane arrangements.
    There is no other stratum coming into the picture than those passing through $x$. Fix such small $r_x$, then for all $r\leq r_x$, each $D(x,r)$ includes into each other as a stratified homotopy equivalence.
    It remains to prove that $D(x,r_x)$ is stratified homotopy equivalent to the open star at $x$.
    For this a straight-line linear homotopy should do the work.
    Note this works because the open star is convex and the linear scaling towards $x$ respects the stratification.
\end{proof}
\begin{rem}
    Note also that for a smooth projective fan $\Sigma$ the poset $P$ underlying the stratification $\Sc_\Sigma$ is locally finite, as each stratum is only in the closure of finitely many other strata, and each exit-path will enter a higher dimensional stratum, so must stop after finitely many steps.
\end{rem}
The reason to introduce $\Sc_\Sigma$ is the following:
\begin{prop}
    Fix a pair $(N,\Sigma)$ of lattice and fan.
    One might post-compose the functor \[\Psi_\sigma:\Fun(\Theta(\sigma)^\op,\Sp)\longrightarrow\Mod_{\omega_{\sigma^\svee}}\Shv(\MR;\Sp)\]  in \cref{construction of combinatorial to constructible comparison functor} with forgetful into $\Shv(\MR;\Sp)$, then its image all lands into the subcategory $\Cons_{\Sc_\Sigma}(M_\RR;\Sp)$ of sheaves constructible for the FLTZ stratification. As a consequence, the functor 
    \[\lim_{\Sigma^\op}\Fun(\Theta(\sigma)^\op,\Sp)\longrightarrow\Shv(\MR;\Sp)
    \]
    of \cref{construction of combinatorial to constructible comparison functor} also lands into $\Cons_{\Sc_\Sigma}(M_\RR;\Sp)$.
\end{prop}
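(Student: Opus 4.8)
The plan is to reduce the claim to a single computation: that for each $\sigma \in \Sigma$ and each $m \in M$, the sheaf $\Gamma_{M_\RR}(m + \sigma^\svee) = \omega_{m+\sigma^\svee}$ is constructible for the FLTZ stratification $\Sc_\Sigma$. Once this is known, the general statement follows formally. Indeed, $\Psi_\sigma$ is by construction the left Kan extension of the functor $\psi_\sigma : \Theta(\sigma) \to \Mod_{\omega_{\sigma^\svee}}\Shv(M_\RR;\Sp)$, which sends $m + \sigma^\svee$ to $\omega_{m+\sigma^\svee}$; composing with the (fully faithful) forgetful functor to $\Shv(M_\RR;\Sp)$, the essential image of $\Psi_\sigma$ is generated under colimits by the objects $\{\omega_{m+\sigma^\svee}\}_{m \in M}$. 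Since $\Cons_{\Sc_\Sigma}(M_\RR;\Sp) \subseteq \Shv(M_\RR;\Sp)$ is a full subcategory closed under all colimits (it is the essential image of the colimit-preserving, fully faithful functor $\pi^* : \Fun(P,\Sp) \to \Shv(M_\RR;\Sp)$ of \cref{theorem:exodromysimple}, where $P$ is the poset of $\Sc_\Sigma$), it suffices to check that the generators lie in it. The statement for $\lim_{\Sigma^\op}\Fun(\Theta(\sigma)^\op,\Sp)$ then follows because the functor to $\Shv(M_\RR;\Sp)$ there is obtained from the $\Psi_\sigma$ via the fully faithful right adjoint $R$ of \cref{taking module category over the family of idempotent algebras in the sheaf category}, which is computed by taking a limit in $\Shv(M_\RR;\Sp)$ of diagrams whose entries lie in $\Cons_{\Sc_\Sigma}(M_\RR;\Sp)$ — and constructible sheaves are closed under limits as well (again because $\pi^*$ preserves limits). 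Alternatively one simply notes that the composite $\lim_{\Sigma^\op}\Fun(\Theta(\sigma)^\op,\Sp) \to \Shv(M_\RR;\Sp)$ is colimit-preserving and its source is generated by objects mapping to finite colimits of the $\omega_{m+\sigma^\svee}$.

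So the heart of the matter is: $\omega_{m+\sigma^\svee}$ is $\Sc_\Sigma$-constructible. After translating by $-m$ (the stratification $\Sc_\Sigma$ is invariant under translation by $M$, since $H_\Sigma$ is), it is enough to treat $\omega_{\sigma^\svee}$ itself. By \cref{comparison between homology sheaf of open and closed polygon}, $\omega_{\sigma^\svee} \simeq \underline{\SS}_{\sigma^{\svee,\circ}}[-\dim M_\RR]$, the shifted extension-by-zero of the constant sheaf on the relative interior of the dual cone (when $\sigma$ is full-dimensional; in general on the interior of $\sigma^\svee$ inside its affine span, but since we only need the case where $\Sigma$ spans $N_\RR$ the relevant cones behave well — more carefully, one should observe $\sigma^\svee$ is a polyhedral set whose interior is a union of $\Sc_\Sigma$-strata). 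Constructibility of such a sheaf amounts to checking that the open set $\sigma^{\svee,\circ}$, and its complement, are unions of strata of $\Sc_\Sigma$. This is a direct combinatorial fact: the facets of $\sigma^\svee$ lie on hyperplanes of the form $\tau^\perp$ for $\tau \in \Sigma(1)$ a ray of $\sigma$ (this is the standard duality between faces of $\sigma$ and faces of $\sigma^\svee$), hence $\partial\sigma^\svee$ is contained in $\bigcup_{h \in H_\Sigma} h$; so on each stratum of $\Sc_\Sigma$ the sheaf $\underline{\SS}_{\sigma^{\svee,\circ}}$ restricts either to $\underline\SS$ (on strata inside $\sigma^{\svee,\circ}$) or to $0$ (on strata disjoint from it), in particular is locally constant.

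The main obstacle — really the only nontrivial point — is the combinatorial verification that $\partial \sigma^\svee \subseteq \bigcup_{h\in H_\Sigma} h$, i.e. that every facet of the dual cone $\sigma^\svee$ is supported on a hyperplane $\tau^\perp$ with $\tau$ a ray of $\sigma$, together with the bookkeeping needed when $\sigma$ is not full-dimensional (where $\sigma^\svee$ contains a linear subspace and one must argue on the affine span). This is where one uses that $\Sigma$ spans $N_\RR$, or equivalently restricts attention to the situation in which the combinatorics of the dual cones is governed entirely by the rays $\Sigma(1)$. I would dispatch it by the standard cone–dual-cone face correspondence (as in \cite[Section 1.2]{fulton}): facets of $\sigma^\svee$ correspond to rays of $\sigma$, each such facet lying in $\rho^\perp$ for the corresponding $\rho \in \Sigma(1)$, so that $\sigma^\svee = \bigcap_{\rho \in \Sigma(1),\, \rho \subseteq \sigma}\{m : \langle m, u_\rho\rangle \ge 0\}$ and its topological boundary is cut out by the vanishing of at least one $\langle -, u_\rho\rangle$ — precisely the locus $\bigcup_{\rho}\rho^\perp \subseteq \bigcup_{h \in H_\Sigma} h$. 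Everything else is formal closure properties of constructible sheaves under (co)limits.
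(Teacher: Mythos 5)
Your proposal is correct and follows essentially the same route as the paper's proof: reduce to the generators $\omega_{m+\sigma^\svee}$ via colimit-preservation and closure of $\Cons_{\Sc_\Sigma}(M_\RR;\Sp)$ under colimits, identify $\omega_{m+\sigma^\svee}$ with a shifted extension-by-zero via \cref{comparison between homology sheaf of open and closed polygon}, and observe that each stratum is either contained in or disjoint from the cone because its facets lie on (integral translates of) hyperplanes $\rho^\perp$ with $\rho\in\Sigma(1)$. Your extra care about non-full-dimensional $\sigma$ is harmless but unnecessary, since strong convexity of the cones in a fan guarantees $\sigma^\svee$ is top-dimensional, and your treatment of the limit functor via closure of constructible sheaves under limits matches what the paper leaves implicit.
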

\begin{proof}
    It suffices to note that each $U\in\Theta(\sigma)$ is given by a cone bound by the hyperplane arrangement $H_\Sigma$. Any stratum of the stratification would be either contained in it or be disjoint from it. Using \cref{comparison between homology sheaf of open and closed polygon} and proper base change, it follows that $\Gamma_\MR(U)$ is constructible for the FLTZ stratification. Now the image of $\Theta(\sigma)$ is colimit generated by these objects as a stable category, and constructible sheaf category is also closed under colimits, so we are done.
\end{proof}
We give a standard example to illustrate the ideas of the definitions so far.
\begin{example}
Take the fan spanned by $\{e_1,\dots,e_n\}\subset\ZZ^n=N$. To be more precise, $\Sigma=\{\mathrm{span}(S):S\subseteq\{e_1,\dots,e_n\}\}$. This is the fan corresponding to $\AA^n$ in toric geometry. It specifies the standard grid in $M_\RR\simeq \RR^n$ as the FLTZ stratification. The strata of $M_\RR\rightarrow\Sc_\Sigma$ are faces of the unit hypercubes whose vertices have integer coordinates. More precisely, each stratum is cut out by equalities $\{x_i=n_i:i\in I\}$ and inequalities $\{x_j\in(n_j,n_j+1):j\in J\}$ where $n_i$ and $n_j$ are integers and the pair $(I,J)$ is a decomposition of $\{1,\dots,n\}$. The open stars in this case are also very explicit: they are certain hyper-rectangles whose vertices have integer coordinates. Using \cref{proposition with direct computation of convolution} one can compute the convolution product of representable sheaves on these open stars and it turns out to be again $\Sc_\Sigma$-constructible. It follows that in this case $\Cons_{\Sc_\Sigma}(M_\RR;\Sp)$ is closed under convolution product.
\end{example}
\begin{war}
    The convolution product usually doesn't interact well with the FLTZ stratification $\Sc_\Sigma$. More precisely, for a fixed $\Sigma$, the convolution product of two $\Sc_\Sigma$-constructible sheaves needs not to stay $\Sc_\Sigma$-constructible. We will see later how to fix this.
\end{war}

\begin{cor}\label{comparison functor is fully faithful when the fan is smooth}
    For a pair $(N,\Sigma)$ with the fan $\Sigma$ being smooth, the functor $\Psi_\Sigma$ constructed in \cref{construction of combinatorial to constructible comparison functor} is fully faithful. Moreover, for each $\sigma\in\Sigma$, the functor $\Psi_\sigma$ is fully faithful.
\end{cor}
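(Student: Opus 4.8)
The plan is to reduce first to the case of a single cone, then to a concrete mapping‑spectrum computation between the ``representable'' objects.

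\textbf{Reduction to one cone.} By construction (\cref{construction of combinatorial to constructible comparison functor}) the functor $\Psi_\Sigma$ is the composite $R\circ\lim_{\Sigma^\op}\Psi_\sigma$, where $R$ is the right adjoint of \cref{taking module category over the family of idempotent algebras in the sheaf category}; that $R$ is fully faithful is part of the cited corollary. Since mapping spectra in a limit of categories are the corresponding limits of mapping spectra, a limit of fully faithful functors is fully faithful, so it suffices to prove that each $\Psi_\sigma$ is fully faithful when $\sigma$ is smooth — which also gives the second assertion of the corollary.

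\textbf{Setting up the one‑cone case.} Fix a smooth $\sigma$, write $C\defeq(\sigma^\svee)^\circ$ for the (full‑dimensional) open interior, and recall from \cref{comparison between homology sheaf of open and closed polygon} that $\psi_\sigma(m+\sigma^\svee)\simeq\underline\SS_{m+C}[n]$. The functor $\Psi_\sigma=\Lan_h\psi_\sigma$ is cocontinuous, with right adjoint $G\colon\Fc\mapsto\bigl[(m+\sigma^\svee)\mapsto\map(\underline\SS_{m+C}[n],\Fc)\simeq\Gamma(m+C;\Fc)[-n]\bigr]$, so $\Psi_\sigma$ is fully faithful exactly when the unit $\mathrm{id}\to G\Psi_\sigma$ is an equivalence; since equivalences of presheaves are detected objectwise this says that $Y(m+\sigma^\svee)\to\Gamma(m+C;\Psi_\sigma Y)[-n]$ is an equivalence for all $Y$ and all $m$. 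Using that $\Psi_\sigma$ lands in $\Cons_{\Sc_\Sigma}(\MR;\Sp)$ and the exodromy identification $\Cons_{\Sc_\Sigma}(\MR;\Sp)\simeq\Fun(P,\Sp)$ with $P$ locally finite (\cref{theorem:exodromysimple} and the remark following it), $\psi_\sigma(m+\sigma^\svee)$ becomes the functor on $P$ with value $\SS$ on the up‑set of strata contained in $m+C$ and $0$ elsewhere, while $\Gamma(m+C;-)$ becomes the limit over that up‑set. I would check the unit on the representables $h_{m+\sigma^\svee}$ via the computation below; it then holds on all compact objects (finite colimits and retracts of representables) by exactness, and for arbitrary $Y$, written as a filtered colimit of compacts $Y_\alpha$, one propagates the statement using that each $\Psi_\sigma Y_\alpha$ is a finite colimit and retract of the sheaves $\underline\SS_{m'+C}[n]$, on which $\Gamma(m+C;-)$ returns a perfect spectrum by the computation below, so that local finiteness of $P$ permits the filtered colimit to commute past the limit.

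\textbf{The computation (the hard part).} What remains, and is the concrete heart of the argument, is
\[
\map_{\Shv(\MR;\Sp)}\!\bigl(\underline\SS_{m'+C},\,\underline\SS_{m+C}\bigr)\ \simeq\ \begin{cases}\SS,&m'-m\in\sigma^\svee,\\ 0,&\text{otherwise,}\end{cases}
\]
which is exactly $\SS[\Map_{\Theta(\sigma)}(m'+\sigma^\svee,m+\sigma^\svee)]$ and, applied with $\Fc=\psi_\sigma(m+\sigma^\svee)$, also identifies $G\Psi_\sigma(h_{m+\sigma^\svee})$ with $h_{m+\sigma^\svee}$ compatibly with the unit. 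By $j_!\dashv j^*$ and proper base change this mapping spectrum is $\operatorname{fib}\bigl(C^*(m'+C;\SS)\to C^*((m'+C)\setminus(m+C);\SS)\bigr)$; as $m'+C$ is contractible this is $\SS$ when $(m'+C)\setminus(m+C)=\varnothing$ (equivalently $m'+C\subseteq m+C$) and it vanishes as soon as $(m'+C)\setminus(m+C)\hookrightarrow m'+C$ is an $\SS$‑homology equivalence. Here smoothness enters essentially: choosing a basis of $N$ identifying $\sigma=\operatorname{cone}(e_1,\dots,e_k)$, we get $\sigma^\svee=\RR_{\ge0}^{k}\times\RR^{n-k}$, each $m+C$ a product of open half‑lines times $\RR^{n-k}$, and then $m'+C\subseteq m+C$ iff $m'-m\in\sigma^\svee$, while if $m'-m\notin\sigma^\svee$ the set $(m'+C)\setminus(m+C)$ is either all of $m'+C$ (when the two cones are disjoint, so the fiber of $C^*(m'+C)\xrightarrow{\mathrm{id}}C^*(m'+C)$ vanishes) or a nonempty ``box minus a sub‑box'', which an explicit straight‑line deformation retraction onto a lower face shows to be contractible — so the fiber vanishes again. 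The reduction to orthants is precisely what makes these set‑theoretic differences visibly contractible; the propagation from representables to all objects in the previous paragraph is secondary but genuinely needs the exodromy picture, since the objects $\psi_\sigma(m+\sigma^\svee)$ are not themselves compact in $\Shv(\MR;\Sp)$.
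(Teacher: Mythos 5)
Your reduction to a single cone, the adjunction set-up, and the explicit computation of $\map(\underline\SS_{m'+C},\underline\SS_{m+C})$ (reducing by smoothness to a standard orthant and checking the box-minus-subbox regions are contractible) are all fine, and this is the same skeleton as the paper's argument: verify full faithfulness on the representable generators, then extend. The gap is in the extension step. To pass from compact $Y_\alpha$ to an arbitrary $Y=\colim_\alpha Y_\alpha$ you need the right adjoint $G$, i.e.\ the functors $\Gamma(m+C;-)[-n]$, to commute with the relevant filtered colimits, and your justification --- ``local finiteness of $P$ permits the filtered colimit to commute past the limit'' --- does not give this. Under exodromy, $\Gamma(m+C;-)$ is the limit over the cosieve of strata contained in the \emph{unbounded} cone $m+C$, an infinite poset with no finite initial subposet; local finiteness of $P$ (finiteness of each $P_{\geq q}$) is irrelevant to such a limit. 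Indeed $\Gamma(m+C;-)$ does not commute with filtered colimits on $\Cons_{\Sc_\Sigma}(\MR;\Sp)$: already for $\sigma=\RR_{\geq0}\subset\RR$, the infinite direct sum of skyscrapers at the lattice points of $(m,\infty)$ produces a product-versus-sum discrepancy. Knowing that $\Gamma(m+C;-)$ takes each $\Psi_\sigma Y_\alpha$ to a perfect spectrum is likewise not the kind of statement that lets a filtered colimit pass a limit.

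What is actually needed --- and what the paper supplies --- is that the objects $\omega_{m+\sigma^\svee}\simeq\underline\SS_{m+C}[n]$, while not compact in $\Shv(\MR;\Sp)$ nor in $\Cons_{\Sc_\Sigma}(\MR;\Sp)$, become compact in the localized target $\Mod_{\omega_{\sigma^\svee}}\Cons_{\Sc_\Sigma}(\MR;\Sp)$: up to shift, $\omega_{m+\sigma^\svee}\simeq\omega_{\sigma^\svee}*\underline\SS_{U}$ with $U$ a bounded open star (an integral hyper-rectangle in the standard case), so for any $\omega_{\sigma^\svee}$-module $\Fc$ one has $\map(\omega_{m+\sigma^\svee},\Fc)\simeq\map(\underline\SS_U,\Fc)$ up to shift, which is corepresented by a compact object of $\Cons_{\Sc_\Sigma}(\MR;\Sp)$ and hence commutes with filtered colimits of modules (the module subcategory over an idempotent algebra being closed under colimits). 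This is precisely why the paper restricts the target of $\Psi_\sigma$ to $\Mod_{\omega_{\sigma^\svee}}\Cons_{\Sc_\Sigma}(\MR;\Sp)$ and concludes via \cref{lemma:compactfullyfaithful}, using that the generators are sent to compact objects there. Your argument needs this compactness input (or an equivalent statement) in place of the appeal to local finiteness; with it, the rest of your proposal --- the computation on generators, closure under finite colimits and retracts, the limit over $\Sigma^\op$, and full faithfulness of $R$ from \cref{taking module category over the family of idempotent algebras in the sheaf category} --- goes through as in the paper.
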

\begin{proof}
    Fix such $\sigma$, by the assumption on the smoothness, one can perform a linear transform in $\mathrm{SL}(n,\ZZ)$ which takes  $\sigma$ to the cone $\{e_1,\dots,e_k\}$ in the standard fan $\{e_1,\dots,e_n\}\subset N=\ZZ^n$ as in the previous example. So without loss of generality, we will prove for this standard case the functor $\Psi_\sigma$ is fully faithful. Recall that $\Psi_\sigma$ is of the form 
    $$\Psi_\sigma:\Fun(\Theta(\sigma)^\op,\Sp){\longrightarrow}\Mod_{\omega_{\sigma^\svee}}\Shv(M_\RR;\Sp)$$
    and we note that it first of all factors through the full subcategory $\Cons_{\Sc_\Sigma}(M_\RR;\Sp)\cap \Mod_{\omega_{\sigma^\svee}}\Shv(M_\RR;\Sp)$ of FLTZ constructible sheaves \stress{inside} $\Mod_{\omega_{\sigma^\svee}}\Shv(M_\RR;\Sp)$ (for the standard fan $\Sigma$ spanned by $\{e_1,\dots,e_n\}$ as above). The domain category is a compactly generated presentable stable category, with a set of compact generators supplied by the stable Yoneda image of representables. By construction of the functor $\Psi_\sigma$, it is fully faithful on this set of compact generators. Let's try to give an explicit description of the intersection $\Cons_{\Sc_\Sigma}(M_\RR;\Sp)\cap \Mod_{\omega_{\sigma^\svee}}\Shv(M_\RR;\Sp)$. We make the following observations:
    \begin{enumerate}
        \item The image of $\Psi_\sigma(\sigma^{\svee})$ is an idempotent algebra for the constructible sheaf category $\Cons_{\Sc_\Sigma}(M_\RR;\Sp)$ equipped with convolution monoidal structure. As before, we denote $\omega_{\sigma^\svee}$ for this algebra and consider the category $\Mod_{\omega_{\sigma^\svee}}\Cons_{\Sc_\Sigma}(M_\RR;\Sp)$. This is a category compactly generated by the convolution of representable sheaves on open stars with $\omega_{\sigma^\svee}$. From the previous example we know explicitly these open stars are integral hyper-rectangles, and the convolution products are (shifts of) representable sheaves on $\sigma^{\svee,\circ}+m$ for $m\in M$. Note that these are precisely the image of $\sigma^{\svee,\circ}+m$ under $\Psi_\sigma$.
        \item It follows that wehave  \[\Mod_{\omega_{\sigma^\svee}}\Shv(M_\RR;\Sp)\cap\Cons_{\Sc_\Sigma}(M_\RR;\Sp)=\Mod_{{\omega_{\sigma^\svee}}}\Cons_{\Sc_\Sigma}(M_\RR;\Sp)\] and the functor $\Psi_\sigma$ lands in this full subcategory. Moreover $\Psi_\sigma$ takes a set of compact generators (representable presheaves on $\sigma^{\svee,\circ}+m$) to compact objects in the target $\Mod_{{\omega_{\sigma^\svee}}}\Cons_{\Sc_\Sigma}(M_\RR;\Sp)$, and is fully faithful on these compact generators.
    \end{enumerate}
    We apply the following \cref{lemma:compactfullyfaithful} and learn that $\Psi_\sigma$ is fully faithful. It follows that $\lim_{\Sigma^\op}\Psi_\sigma$ is also fully faithful. Now $\Psi_\Sigma$ is a composition of two fully faithful functors, and hence is itself fully faithful.
\end{proof}
\begin{lem}\label{lemma:compactfullyfaithful}
    Let $\Cc$ be a compactly generated presentable stable category, with a chosen set of compact generators $S$ (in other words, the smallest stable cocomplete full subcategory of $\Cc$ that contains $S$ is $\Cc$ itself). Given a cocontinuous functor $F:\Cc\longrightarrow\Dc$ with $\Dc$ a presentable stable category. Assume that $F$ is fully faithful on $S$, and it takes $S$ to compact objects in $\Dc$. Then $F$ is fully faithful on all of $\Cc$.
\end{lem}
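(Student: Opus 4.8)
The plan is to show that the full subcategory of $\Cc$ on which $F$ induces equivalences on mapping spectra is all of $\Cc$, by a two-step bootstrap that exploits the generation hypothesis on $S$ together with the compactness of $F(S)$.

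\textbf{First reduction (the source variable).} Fix $Y\in\Cc$ and let $\Cc_Y\subseteq\Cc$ be the full subcategory spanned by those $X$ for which the canonical map $\map_\Cc(X,Y)\to\map_\Dc(FX,FY)$ is an equivalence of spectra. Both $\map_\Cc(-,Y)$ and $\map_\Dc(F(-),FY)$, viewed as functors $\Cc^\op\to\Sp$, are exact and send colimits in $\Cc$ to limits in $\Sp$ — for the second this uses that $F$ preserves colimits. Consequently $\Cc_Y$ is a stable subcategory of $\Cc$ that is closed under all small colimits (a colimit of objects of $\Cc_Y$ is sent by each side to a limit of equivalences). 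Since $\Cc$ is by hypothesis the smallest stable cocomplete full subcategory containing $S$, to conclude $\Cc_Y=\Cc$ it suffices to prove $S\subseteq\Cc_Y$, i.e. that $\map_\Cc(s,Y)\to\map_\Dc(Fs,FY)$ is an equivalence for every $s\in S$ and every $Y$.

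\textbf{Second reduction (the target variable), where compactness enters.} Now fix $s\in S$ and let $\Cc^s\subseteq\Cc$ be the full subcategory of those $Y$ for which $\map_\Cc(s,Y)\to\map_\Dc(Fs,FY)$ is an equivalence. Here we use the covariant functors $\map_\Cc(s,-)$ and $\map_\Dc(Fs,F(-))$ from $\Cc$ to $\Sp$. The first preserves all small colimits: it is exact, hence preserves finite colimits, and $s$ being compact it preserves filtered colimits, so it preserves all colimits since $\Cc$ is stable. The second preserves all small colimits because $F$ preserves colimits and $Fs$ is compact in $\Dc$ by hypothesis (again combining exactness with preservation of filtered colimits in the stable category $\Dc$). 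Hence $\Cc^s$ is a stable subcategory of $\Cc$ closed under all small colimits, and it contains $S$ because $F$ is fully faithful on $S$ by assumption. Therefore $\Cc^s=\Cc$.

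\textbf{Conclusion and the main point.} Running the two reductions in sequence: $\Cc^s=\Cc$ for every $s\in S$ gives $S\subseteq\Cc_Y$ for every $Y\in\Cc$, whence $\Cc_Y=\Cc$ for every $Y$, which is exactly the assertion that $F$ is fully faithful. The only step where a more naive argument would fail is the colimit-closure of $\Cc^s$ in the second reduction: without the hypothesis that $F$ carries the generating set $S$ into the compact objects of $\Dc$, the functor $\map_\Dc(Fs,F(-))$ need not commute with filtered colimits, so $\Cc^s$ would not be closed under the colimits needed to recover all of $\Cc$ from $S$. Everything else is formal, relying only on stability, cocontinuity of $F$, and the fact that $S$ generates $\Cc$ under colimits.
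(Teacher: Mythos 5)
Your proof is correct. The paper actually states this lemma without supplying a proof, so there is nothing to diverge from: your two-step bootstrap (first reducing the source variable to $S$ using only cocontinuity of $F$, then handling the target variable using compactness of $s$ in $\Cc$ and of $F(s)$ in $\Dc$) is precisely the standard argument the paper implicitly relies on, and you correctly identify the one place where the hypothesis that $F$ sends $S$ to compact objects is indispensable, namely the colimit-closure of the subcategory $\Cc^s$ in the second step.
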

\subsection{Digression: Gluing of idempotents in sheaf category}
\label{subsection on idempotent descent}
This subsection is meant to answer the following question: can one give a description of $\Shv(\MR;\Sp)$ similar to the limit diagram provided by Zariski descent for $\QCoh(-)$? In order to answer this question, we first recall how descent works in a presentably symmetric monoidal category with idempotent algebras. The following material is taken from \cite[\href{https://drive.google.com/file/d/1xXzBAFnRwooCVrnLS4YyDlZKVQydqJNy/view?usp=sharing}{Lecture 8}]{ClausendeRham}.
\begin{defn}\HA{Definition}{4.8.2.1}
    Let $\Cc$ be a presentably symmetric monoidal category. The category of idempotent objects $\Cc^\idem\subset\Fun([1], \Cc)$ is the full subcategory of pairs $(A,f:1_\Cc\rightarrow A)$ such that $f\otimes A:A\rightarrow A\otimes A$ is an equivalence.
\end{defn}
We also recall the following facts:
\begin{enumerate}
    \item \HA{Proposition}{4.8.2.9} Take $\CAlg(\Cc)^\idem$ to be the full subcategory of $\CAlg(\Cc)$ spanned by $A\in\CAlg(\Cc)$ such that the unit map makes $A$ into an idempotent object of $\Cc$. The forgetful functor $\CAlg(\Cc)^\idem\rightarrow\Cc^\idem$ is an equivalence. In particular every idempotent object acquires uniquely a commutative algebra structure.
    \item \HA{Proposition}{4.8.2.4} Take $A\in\Cc^\idem$. The functor $\Cc\rightarrow\Mod_A(\Cc)$ is a localization. In particular the forgetful $\Mod_A(\Cc)\rightarrow\Cc$ is fully faithful, with image those $X\in\Cc$ such that $X\rightarrow X\otimes A$ is an equivalence.
    \item 
    \cite[Lemma 5 of Lecture 8]{ClausendeRham} The category $\Cc^\idem$ is a poset.
    \item \cite[Lemma 5 of Lecture 8]{ClausendeRham} As a poset $\Cc^\idem$ has all joins (unions) and finite meets (intersections). The join of $A$ and $B$ is computed as $A\otimes B$, and the join of an infinite family $\{A_i:i\in I\}$ is computed as the filtered colimit over the join of finite subsets (in the underlying category).  
    $$A\vee B=A\otimes B$$
    $$\vee_{i\in I} A_i = \colim_{J\subset I \text{, finite}} \bigotimes_{j\in J}A_j$$
    The meet of $A$ and $B$ is computed as fiber of $A\times B\rightarrow A\otimes B$ and the meet of a \emph{finite} family $\{A_i:i\in I\}$ is computed as the limit over the poset of nonempty subsets $J\subset I$ of the functor $J\mapsto\otimes_{j\in J}A_j$ (in the underlying category). Note that the limit diagram would be a cubical diagram as \HA{Proposition}{1.2.4.13}.
    $$A\wedge B= A \underset{A\otimes B}{\times} B$$
    $$\wedge _{i\in I} A_i = \lim_{J\subset I \text{, nonempty}}\bigotimes_{j\in J}A_j$$
    \item \cite[Theorem 4 of Lecture 8]{ClausendeRham} One can define a Grothendieck topology on $\Cc^{\idem,\op}$ as follows: a family of maps $\{f_i:A\rightarrow A_i\in\Cc^\idem\}$ is a cover if it contains a finite sub-family of maps $\{f_i:A\rightarrow A_i\in\Cc^\idem\}$ presenting $A$ as the meet for $\{A_i\}$ in $\Cc^\idem$.
\end{enumerate}
\begin{thm} The presheaf $\Mod_{(-)}(\Cc):\Cc^\idem\rightarrow\SMCat$ which takes $A$ to $\Mod_A(\Cc)$ is a sheaf for above topology.
\end{thm}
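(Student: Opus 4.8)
The plan is to verify the descent condition directly, after two reductions. By the description of the topology (item~(5) above) it is generated by the finite covers $\{A\to A_i\}_{i\in I}$ exhibiting $A$ as a meet $\bigwedge_{i\in I}A_i$, so it suffices to treat one such cover. Set $B_J\defeq\bigotimes_{j\in J}A_j$ for $\emptyset\neq J\subseteq I$; item~(4) presents the meet as the finite cubical limit $A\simeq\lim_{\emptyset\neq J\subseteq I}B_J$, and the \v{C}ech descent condition for $\{A\to A_i\}_{i\in I}$ reduces, in the poset $\Cc^{\idem,\op}$, to the assertion that the canonical functor $\Mod_A(\Cc)\to\lim_{\emptyset\neq J}\Mod_{B_J}(\Cc)$ is an equivalence. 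Since $\SMCat\to\Cat$ preserves limits and all functors in sight are symmetric monoidal ($-\otimes B_J$ is, $B_J$ being idempotent), it suffices to prove this on underlying categories. A further reduction lets us assume $A=\mathbb{1}_\Cc$: as $A\leq A_i$, all $B_J$ lie above $A$, the localization $\Cc_A\defeq\Mod_A(\Cc)$ is again presentably symmetric monoidal with $(\Cc_A)^{\idem}\simeq\{B\in\Cc^{\idem}:B\geq A\}$ and $\Mod_B(\Cc_A)=\Mod_B(\Cc)$ there (the tensor of $\Cc_A$ restricts from that of $\Cc$ on $A$-local objects, and $B$-local objects are automatically $A$-local), and because $\Cc_A\hookrightarrow\Cc$ preserves limits the family $\{A\to A_i\}$ is a cover of $\mathbb{1}_{\Cc_A}=A$ with the same comparison functor.

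So assume $\mathbb{1}_\Cc\simeq\lim_{\emptyset\neq J}B_J$; the goal is $\Cc\simeq\lim_{\emptyset\neq J}\Mod_{B_J}(\Cc)$, which I would get from an adjunction $F\dashv G$. Let $F$ send $X$ to $(X\otimes B_J)_J$: this is a well-defined point of the limit since for $J\subseteq J'$ one has $(X\otimes B_J)\otimes B_{J'}\simeq X\otimes B_{J'}$ (as $B_J\otimes B_{J'}\simeq B_{J'}$ is a join), so the smashing localizations $-\otimes B_J$ assemble functorially over the cube, and each $X\otimes B_J$ is $B_J$-local by item~(2). Let $G$ send $(Y_J)_J$ to $\lim_J Y_J$ computed in $\Cc$; using $B_J$-locality, $\Map_\Cc(X,GY)\simeq\lim_J\Map_\Cc(X,Y_J)\simeq\lim_J\Map_\Cc(X\otimes B_J,Y_J)$, which is the mapping space $FX\to(Y_J)_J$ in the limit, so $F\dashv G$.

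It remains to see the unit and counit are equivalences, and this is where the only non-formal input enters: that $X\otimes-$ carries the finite cubical limit $\lim_{\emptyset\neq J}B_J$ to a limit. Granting this, the unit is $GF(X)=\lim_{\emptyset\neq J}(X\otimes B_J)\simeq X\otimes\lim_{\emptyset\neq J}B_J=X\otimes\mathbb{1}_\Cc=X$, and the unit map is visibly the identity. For the counit, with $GY=\lim_{J'}Y_{J'}$ one has $(FGY)_J\simeq(\lim_{J'}Y_{J'})\otimes B_J\simeq\lim_{J'}(Y_{J'}\otimes B_J)\simeq\lim_{J'}Y_{J\cup J'}$, using that $Y_{J'}$ is $B_{J'}$-local so $Y_{J'}\otimes B_J\simeq Y_{J'}\otimes B_{J'\cup J}\simeq Y_{J'\cup J}$ via the structure maps of $(Y_J)$; since the reindexing $J'\mapsto J\cup J'$ is an initial functor onto the poset $\{J'':J''\supseteq J\}$ — its comma categories are posets of nonempty subsets and hence have terminal objects — this limit is the value at the initial object $J$, namely $Y_J$. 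Hence $F$ is an equivalence of symmetric monoidal categories, i.e. the sheaf condition holds.

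The highlighted commutation is the step I expect to be the crux. When $\Cc$ is stable it is immediate: a colimit-preserving functor between stable presentable categories is exact, hence preserves finite limits, and the cubical meet-diagram is finite. In the general presentably symmetric monoidal setting it has to be extracted from the recollement/fracture structure of idempotent algebras — concretely it is the base-change assertion that the defining pullback square(s) of a meet of idempotents remain pullbacks after applying $X\otimes-$, which is exactly what the cited lemmas on meets in $\Cc^{\idem}$ encode. Everything else is bookkeeping of the adjunction together with the combinatorics of the poset of nonempty subsets of $I$.
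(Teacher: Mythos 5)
Your route is the standard descent argument for idempotent (smashing) localizations, and since the paper's own ``proof'' is only a citation of Theorem 4 in Lecture 8 of the Clausen notes, what you have written is essentially the argument that the paper defers to: the reduction to a single finite cover, the reduction to $A=\mathbb{1}$ by replacing $\Cc$ with $\Mod_A(\Cc)$, the adjunction $F\dashv G$ with $F(X)=(X\otimes B_J)_J$ and $G((Y_J)_J)=\lim_J Y_J$, and the cofinality computation identifying $(FG Y)_J\simeq Y_J$ (the comma categories are posets of nonempty subsets of $J''$, which have a maximum) are all correct and are, as far as I can tell, the intended proof.

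The one genuine problem is at the step you yourself flag, and your proposed resolution of it does not work as stated. The facts (1)--(5) recalled before the theorem only say that the meet of idempotents is \emph{computed} by the finite cubical limit in $\Cc$; they contain no assertion that this limit is preserved by $X\otimes-$, so the sentence claiming the needed base change ``is exactly what the cited lemmas on meets in $\Cc^{\idem}$ encode'' is not a proof but a restatement of what is missing. Note also that you need the commutation twice, not once: for the unit you tensor $X$ against the cube of the $B_J$'s, but for the counit your displayed identification $(\lim_{J'}Y_{J'})\otimes B_J\simeq\lim_{J'}(Y_{J'}\otimes B_J)$ asks $-\otimes B_J$ to preserve the limit of an \emph{arbitrary} point $(Y_{J'})$ of the descent category, which is an independent instance of the same issue. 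When $\Cc$ is stable both instances are immediate, exactly as you say, because a colimit-preserving functor between stable presentable categories is exact and the cube is a finite limit; since every category to which the paper applies the theorem ($\Shv(M_\RR;\Sp)$ with convolution, presheaves of spectra, $\QCoh$) is stable, your argument does cover all the uses in the paper. But the theorem is stated for an arbitrary presentably symmetric monoidal $\Cc$, and in that generality the commutation is neither formal nor a consequence of (1)--(5); to close the gap you must either add a hypothesis guaranteeing that $X\otimes-$ preserves finite limits (stability being the relevant case here) or supply an actual argument that these particular limits are preserved. A smaller point in the same spirit: your first reduction, from arbitrary covers to a single finite family, silently uses that the finite meet-covers are stable under base change in $\Cc^{\idem,\op}$ (a frame-type distributivity), which is again part of what the cited item (5) asserts rather than something you have verified.
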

\begin{proof}
    This is the same as Theorem 4 in \cite[\href{https://drive.google.com/file/d/1xXzBAFnRwooCVrnLS4YyDlZKVQydqJNy/view?usp=sharing}{Lecture 8}]{ClausendeRham}.
\end{proof}
Now we run this machine in practice. The most important example for us is the following:
\begin{example}[Zariski descent in algebraic geometry]
    For a scheme $X$ and an open $U\subset X$, $*$-pushforward of the structure sheaf $i_*\Oc_U$ is an idempotent algebra in $\QCoh(X)$ (equipped with standard tensor product of quasi-coherent sheaves). The category of modules can be identified as 
    \[\Mod_{i_*\Oc_U}\QCoh(X)\simeq\QCoh(U).\]    
    If a finite family $\{U_i\}$ form a Zariski cover of $X$, one can show that the family $\{\mathbb{1}_{\QCoh(X)}\rightarrow i_*\Oc_U\}$ is a cover. Evaluating $\Mod_{(-)}(\QCoh(X))$ on this cover recovers Zariski descent for $\QCoh(-)$.
\end{example}
Our goal is to formulate a convolution-of-sheaf version of such phenomenon. We fix a \stress{smooth projective} fan $\Sigma$ on $N$ until the end of the subsection. Let's consider the family of idempotent algebras for the convolution product \[\omega_{\sigma^\svee}\in\Shv(\MR;\Sp)\] introduced in \cref{idempotent algebra in sheaf category indexed by the fan}. 
\begin{prop}
\label{the idempotent algebras for a smooth projective fan glues to the unit}
    Let $\Sigma$ be a smooth projective fan. Take the subset $\Sigma(n)\subset\Sigma$ of the top dimensional cones, then $\{\mathbb{1}_{\Shv(M_\RR;\Sp)}\rightarrow\omega_{\sigma^\svee}:\sigma\in\Sigma(n)\}$ is a cover of $\mathbb{1}_{\Shv(M_\RR;\Sp)}$. More explicitly, one has the following equivalences
 \[\mathbb{1}_{\Shv(M_\RR;\Sp)}\overset\simeq\longrightarrow\lim_{\sigma\in\Sigma^\op}\omega_{\sigma^\svee}\overset\simeq\longrightarrow \lim_{S\in\mathcal{P}_{\neq\emptyset}(\Sigma(n))}\bigotimes_{\tau\in S}\omega_{\tau^\svee},\]
    where the second map is an equivalence by the following observations.
\end{prop}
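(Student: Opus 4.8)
The statement unpacks into three assertions: (i) the comparison map $\mathbb{1}_{\Shv(\MR;\Sp)}\to\lim_{\sigma\in\Sigma^{\op}}\omega_{\sigma^\svee}$, assembled from the unit maps $\mathbb{1}\to\omega_{\sigma^\svee}$ of the idempotent algebras, is an equivalence; (ii) the second displayed map is an equivalence; and (iii) granting (i)--(ii), the family $\{\mathbb{1}\to\omega_{\sigma^\svee}:\sigma\in\Sigma(n)\}$ is a cover. Assertion (iii) is then formal: by fact~(4) recalled above the meet of the finite family $\{\omega_{\tau^\svee}\}_{\tau\in\Sigma(n)}$ in $\Shv(\MR;\Sp)^{\idem}$ is computed by the cubical limit $\lim_{S\in\mathcal{P}_{\neq\emptyset}(\Sigma(n))}\bigotimes_{\tau\in S}\omega_{\tau^\svee}$, and (i)--(ii) identify this with $\mathbb{1}$. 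So the content is in the two equivalences, which the plan is to prove by a cofinality argument and by a stalkwise computation respectively. (Recall $\mathbb{1}_{\Shv(\MR;\Sp)}=\underline\SS_{\{0\}}$, the skyscraper at the origin, from the construction of the convolution product.)

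For the \emph{second equivalence}: every cone in a fan is strongly convex, so each $\sigma^\svee$ is a full-dimensional closed polyhedral subset of $\MR$, and \cref{comparison between homology sheaf of open and closed polygon} gives $\omega_{\sigma^\svee}\simeq\underline\SS_{\sigma^{\svee,\circ}}[n]$ with $n=\dim\MR$. Using the standard duality $\sigma^\svee+\tau^\svee=(\sigma\cap\tau)^\svee$ for polyhedral cones (with $\sigma\cap\tau\in\Sigma$), the identity $\operatorname{relint}(P)+\operatorname{relint}(Q)=\operatorname{relint}(P+Q)$, and \cref{proposition with direct computation of convolution}, one obtains equivalences $\bigotimes_{\tau\in S}\omega_{\tau^\svee}\simeq\omega_{(\bigcap_{\tau\in S}\tau)^\svee}$ natural in $\emptyset\neq S\subseteq\Sigma(n)$. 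Thus the right-hand limit is $\lim_{S}F\circ u$, where $F\colon\Sigma^{\op}\to\Shv(\MR;\Sp)$ is $\sigma\mapsto\omega_{\sigma^\svee}$ and $u\colon\mathcal{P}_{\neq\emptyset}(\Sigma(n))\to\Sigma^{\op}$ sends $S$ to $\bigcap_{\tau\in S}\tau$. It then remains to check that $u$ is coinitial, so that limits along $u$ agree with limits over $\Sigma^{\op}$: for $\rho\in\Sigma$ the relevant comma category is the opposite of $\mathcal{P}_{\neq\emptyset}(\{\tau\in\Sigma(n):\rho\leq\tau\})$, and since $\Sigma$ is projective, hence complete, every cone is a face of a top-dimensional cone, so this set of top cones is nonempty and $\mathcal{P}_{\neq\emptyset}$ of it has a greatest element, hence weakly contractible nerve.

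For the \emph{first equivalence}: since we work on a finite-dimensional manifold, equivalences are detected on stalks, and as $\Sigma$ is a finite poset and $i_x^{*}$ is exact it commutes with $\lim_{\Sigma^{\op}}$, so it suffices to check that $(\underline\SS_{\{0\}})_x\to\lim_{\sigma\in\Sigma^{\op}}(\omega_{\sigma^\svee})_x$ is an equivalence for each $x\in\MR$. By the reduction above $(\omega_{\sigma^\svee})_x$ is $\SS[n]$ if $x\in\sigma^{\svee,\circ}$ and $0$ otherwise. The cones with $x\in\sigma^{\svee,\circ}$ form a down-set $\Sigma^{x}$ of $\Sigma$, namely $\{\sigma:\sigma\cap H_x=\{0\}\}$ with $H_x=\{v\in N_\RR:\langle x,v\rangle\leq0\}$; so $\Sigma^{0}=\{\{0\}\}$ while for $x\neq0$, $\Sigma^{x}$ is the set of cones contained in the open half-space $\{\langle x,\cdot\rangle>0\}$ together with $\{0\}$. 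Hence $\sigma\mapsto(\omega_{\sigma^\svee})_x$ is $j_{!}$ of the constant diagram $\SS[n]$, for $j$ the inclusion of the upward-closed subposet $\Sigma^{x}\hookrightarrow\Sigma^{\op}$, whose limit is $\operatorname{fib}(\lim_{\Sigma^{\op}}\underline\SS\to\lim_{(\Sigma\setminus\Sigma^{x})^{\op}}\underline\SS)[n]$, i.e.\ the $\SS$-cochains of the pair of order complexes $(\Sigma,\Sigma\setminus\Sigma^{x})$ shifted by $[n]$. Now $\Sigma$ has least element $\{0\}$, so its order complex is contractible and $\lim_{\Sigma^{\op}}\underline\SS\simeq\SS$. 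If $x\neq0$, $\Sigma\setminus\Sigma^{x}$ is the poset of nonzero cones meeting $H_x$, whose order complex is weakly contractible (a regular neighborhood of the closed hemisphere $S^{n-1}\cap H_x$), so the fiber vanishes, matching $(\underline\SS_{\{0\}})_x=0$. If $x=0$, $\Sigma\setminus\Sigma^{0}=\Sigma_{>0}$ is the poset of nonzero cones, whose order complex is the barycentric subdivision of the regular CW-decomposition of $S^{n-1}$ induced by the complete fan $\Sigma$, hence $\simeq S^{n-1}$; the map $\SS\simeq\lim_{\Sigma^{\op}}\underline\SS\to\lim_{\Sigma_{>0}^{\op}}\underline\SS$ is the unit $\SS\to C^{\bullet}(S^{n-1};\SS)$, whose fiber is $\widetilde{C}^{\bullet}(S^{n-1};\SS)[-1]\simeq\SS[-n]$, so the stalk is $\SS[-n][n]\simeq\SS=(\underline\SS_{\{0\}})_{0}$, as desired.

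The main obstacle is the two combinatorial-topology inputs in the stalk computation: that the order complex of the face poset of a complete fan is homotopy equivalent to $S^{n-1}$, and that the order complex of the poset of nonzero cones meeting a closed half-space through the origin is weakly contractible. Both rest on identifying a complete fan with a regular CW-decomposition of $S^{n-1}$ (and its order complex with the barycentric subdivision); the second is the more delicate one, since "cones meeting $H_x$" is not itself a CW-subcomplex, and is handled by the standard deformation retraction of a regular neighborhood onto the closed hemisphere it surrounds.
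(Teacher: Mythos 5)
Most of your outline tracks the paper's own argument: the cofinality of $S\mapsto\bigcap_{\tau\in S}\tau$ (your comma-category check is exactly the paper's adjunction $l\dashv r$), the identification $\bigotimes_{\tau\in S}\omega_{\tau^\svee}\simeq\omega_{(\bigcap_{\tau\in S}\tau)^\svee}$ via $\sigma^\svee+\tau^\svee=(\sigma\cap\tau)^\svee$, the stalkwise reduction (legitimate since $\Sigma$ is finite), the recollement fiber sequence on the poset, and the $x=0$ computation via $\lim_{\Sigma^\op_{>0}}\underline\SS\simeq\SS^{S^{n-1}}$ are all correct and essentially the paper's proof. The problem is the one step you yourself flag as delicate: for $x\neq 0$, the weak contractibility of the order complex of $Q_x=\{\sigma\neq 0:\sigma\cap H_x\neq\{0\}\}$. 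Your justification — that this nerve is ``a regular neighborhood of the closed hemisphere $S^{n-1}\cap H_x$'' and hence retracts onto it — is not an argument. The hemisphere is not a subcomplex of the fan-induced CW structure on $S^{n-1}$, and the collection of cells whose closures meet a closed subset is a regular neighborhood of that subset only under a fineness/fullness hypothesis (e.g.\ after a derived subdivision in which the subset becomes a full subcomplex); here the cells can be arbitrarily large compared to the equator, so regular-neighborhood theory does not apply off the shelf. Concretely, the nerve of the up-set $Q_x$ is homotopy equivalent to the complement in $S^{n-1}$ of the maximal subcomplex contained in the open hemisphere (equivalently, since the fan is smooth hence simplicial, to the full subcomplex spanned by the rays lying in the closed half-space $H_x$), and proving that \emph{this} is contractible is precisely the geometric content of the lemma — it is not a formal consequence of anything you cite.

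A further warning sign: your argument only uses completeness of $\Sigma$, whereas the paper's proof of exactly this contractibility statement uses projectivity in an essential way — it transports the poset to the subcomplex of faces of a moment polytope $P$ ``visible from infinity in direction $m$'' and contracts that by projecting to a hyperplane perpendicular to $m$ (adapting \cite[Proposition 3.7]{FLTZ}). So either you are implicitly claiming a strictly stronger statement (valid for all complete fans) than the one the paper proves, which would need a genuine proof, or the projectivity input is missing from your argument. To repair the proposal you should replace the regular-neighborhood assertion by an actual contractibility proof, e.g.\ the paper's moment-polytope/visibility argument, or a direct proof that the full subcomplex on the rays contained in $H_x$ is contractible (for which you would again expect to need a convex support function, i.e.\ projectivity).
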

\begin{rem}We make the following observations about the diagrams.
    \begin{itemize}
        \item Fix a smooth projective fan $\Sigma$. There exits an adjunction \[ l:\mathcal{P}_{\neq\emptyset}(\Sigma(n))\rightleftharpoons\Sigma^\op :r\] between the poset $\Sigma(n)$ of nonempty subsets of top dimensional cones and the opposite poset of all cones in the fan $\Sigma$. The map $l$ sends a subset $S\subseteq\Sigma(n)$ to the intersection \[l(S)\defeq\bigcap_{\sigma\in S}\sigma\in\Sigma^\op.\]
        The map $r$ sends a cone $\tau\in\Sigma^\op$ to all the top dimensional cones containing it \[r(\tau)\defeq\{\sigma\in\Sigma(n):\tau\subseteq\sigma\}.\]
        
        The adjunction reduces to the following observation: the intersection of cones in a subset $S$ contains $\tau$ if and only if $S$ is contained in $r(\tau)$ which is the subset of all the cones containing $\tau$. In particular, the map $l$ is a final functor, or a limit-equivalence. Moreover, note that the composition $l\circ r$ is the identity map on $\Sigma^\op$. 
        
        \item The composite of functors  
        \[\mathcal{P}_{\neq\emptyset}(\Sigma(n))\overset{l}{\longrightarrow}\Sigma^\op\longrightarrow\CAlg(\Shv(\MR;\Sp))^\idem\]
        \[S\mapsto l(S)\mapsto\omega_{l(S)^\svee}\]
        can be identified with the \v Cech diagram
        \[\mathcal{P}_{\neq\emptyset}(\Sigma(n))\longrightarrow\CAlg(\Shv(\MR;\Sp))^\idem\]
        \[S\mapsto\bigotimes_{\tau\in S}\omega_{\tau^\svee}.\]
        This follows from that $\omega_{\sigma^\svee}*\omega_{\tau^\svee}\simeq\omega_{(\sigma\cap\tau)^\svee}$, which is a consequence of the combinatorial fact $\sigma^\svee+\tau^\svee=(\sigma\cap\tau)^\svee$ and the computation of convolution (as in \cref{proposition with direct computation of convolution}). The combinatorial fact $\sigma^\svee+\tau^\svee=(\sigma\cap\tau)^\svee$  is a direct consequence of the separation lemma in \cite[(11) and (12) of Section 1.2]{fulton}.
    \end{itemize}
\end{rem}
\begin{proof}[Proof of \cref{the idempotent algebras for a smooth projective fan glues to the unit}]
    By finality, we can switch to the diagram indexed by $\Sigma^\op$. We need to show that
    $$\mathbb{1}_{\Shv(M_\RR;\Sp)}\rightarrow\lim_{\sigma\in\Sigma^\op}\omega_{\sigma^\svee}$$
    is an equivalence. Let's compute the stalk of the limit.
    At the origin, the stalk is
    \[\lim_{\sigma\in\Sigma^\op}(\omega_{\sigma^\svee})_0\simeq\lim_{\sigma\in\Sigma^\op}\SS_{\{0\}}[n]\in\Sp,\]
    where $\SS_{\{0\}}:\Sigma^\op \rightarrow \Sp$ is the presheaf on $\Sigma$ that takes value $\SS$ at the origin and zero otherwise.
    To evaluate the limit, note that there is a fiber sequence in
    $\Fun(\Sigma^\op,\Sp)$
    \[
    \SS_{\{0\}} \rightarrow \underline\SS\rightarrow\SS_{\Sigma^\op\setminus\{0\}},
    \]
    where $\underline\SS$ is the constant presheaf and $\SS_{\Sigma^\op\setminus\{0\}}$ is the right Kan extension of the constant presheaf on $\Sigma\setminus\{0\}$.
    Taking global sections, we get the fiber sequence
    \[
        \lim_{\sigma\in\Sigma^\op} \SS_{\{0\}} \rightarrow 
        \SS \rightarrow \lim_{\sigma\in \Sigma^\op}\SS_{\Sigma^\op\setminus\{0\}},
    \]
    where the last term can be further computed by
    \begin{align*}
    \lim_{\sigma\in \Sigma^\op}\SS_{\Sigma^\op\setminus\{0\}} &
    \simeq \lim_{\sigma \in \Sigma^\op \setminus \{0\}} \underline{\SS} \\
    & \simeq \SS \oplus \SS[-n + 1].
    \end{align*}
    Indeed, $\Sigma^\op\setminus\{0\}$ can be identified with the opposite of the exit path category of $S^{n-1}$ with the stratification induced by the fan,
    and taking global sections of the constant presheaf $\underline{\SS}$
    thus computes the cotensor
    \[
    \SS^{S^{n - 1}} \simeq \SS \oplus \SS[-n + 1].
    \]
    Under this identification, $\SS \rightarrow \SS \oplus \SS[- n + 1]$
    is the inclusion of the first factor.
    Consequently,
    \[
    \lim_{\sigma\in\Sigma^\op}\SS_{\{0\}} \simeq \SS[-n]
    \]
    and thus
    \[
    \lim_{\sigma\in\Sigma^\op}(\omega_{\sigma^\svee})_0 \simeq \lim_{\sigma\in\Sigma^\op}\SS_{\{0\}}[n] \simeq \SS
    \]
    as desired.
    
    Next we compute the stalk of the limit at $m\in M_\RR$ (which is away from the origin).
    Similarly, we look at the limit 
    $$\lim_{\sigma\in\Sigma^\op}\SS_{m,+}[n],$$
    where $\SS_{m,+}$ is the functor which evaluates on $\sigma$ to be $\SS$ if $m\in\sigma^{\svee,\circ}$ and $0$ otherwise.
    To be precise, it fits into a fiber sequence
    \[\SS_{m,+}\rightarrow\underline\SS\rightarrow\SS_{m,-}\] 
    in $\Fun(\Sigma^\op,\Sp)$.
    Here $\underline\SS$ is the constant functor and $\SS_{m,-}$ is right Kan extended from the constant presheaf on the sub-poset \[\Sigma_{m,-} \defeq\{\sigma:m\notin\sigma^{\svee,\circ}\}\subseteq\Sigma\] (one can check that the right Kan extension takes everything outside of $\Sigma^\op_{m,-}$ to $0$).
    We claim that the limit along $\Sigma^\op$ of the map
    \[\underline{\SS}\rightarrow\SS_{m,-}\]
    is an isomorphism $\SS\rightarrow\SS$.
    It suffices to show that the poset $\Sigma^\op_{m,-}$ is contractible.
    For that, we make the following combinatorial argument.

    We will adapt the proof of \cite[Proposition 3.7]{FLTZ} to our situation. We fix a moment polytope $P$ for the fan $\Sigma$. Consider the poset $F(P)$ of faces of $P$ under inclusion, then there is an (inclusion) order reversing bijection between $F(P)$ and $\Sigma$. For example, the codimension $0$ face of $P$ (which is $P$ itself) corresponds to the $0$ dimensional cone of the origin. Now we consider the following subposet: (informally, the subset of $F(P)$ that's visible from $\infty$ through the direction $m$) 
    \[F(P)_{m,-}\defeq\{C\in F(P):\forall c\in C\text{, the ray $c+\RR_{>0}\cdot m$ doesn't meet $P^\circ$}\}.\]
    We claim that the canonical bijection between $F(P)$ and $\Sigma^\op$ induces a bijection between $F(P)_{m,-}$ and $\Sigma^\op_{m,-}$.
    This follows readily from the definition: if $m\notin\sigma^{\svee,\circ}$, then $m$ is also not in $\tau^{\svee,\circ}$ for $\sigma\subseteq\tau$.
    Consider the corresponding face $C_\sigma$ in $P$, at each point $c\in C$, the angle spanned by $P$ is $\tau^\svee$ for some $\sigma\subseteq\tau$, which means that the ray $c+t\cdot m$ will not pass through $P^\circ$.
    Conversely, let $m\in\sigma^{\svee,\circ}$ for some $\sigma$ (so $\sigma\notin\Sigma^\op_{m,-}$) and $C_\sigma$ be the corresponding face in $P$,
    it follows that at a relative interior point $c$ of $C_\sigma$,
    the angle spanned by $P$ is precisely $\sigma^\svee$, and that $m\in\sigma^{\svee,\circ}$ means that the ray $c+t\cdot m$ will pass through $P^\circ$. 
    Now the topological space $P_{m,-}$ of union of faces in $F(P)_{m,-}$ is contractible because if one fixes a hyperplane $H$ perpendicular to $m$ and consider projection to $H$ along $m$, the image of $P_{m,-}$ is the same as $P$, which is convex.
    HOwever, the map from $P_{m,-}$ to its image is a homotopy equivalence.
    Hence we conclude that $P_{m,-}$ is contractible.
    Now $F(P)_{m,-}$ is the exit-path category for the stratification on $P_{m,-}$ by the faces, hence also contractible.
    We conclude that $\Sigma_{m,-}^\op$ is also contractible as desired.
\end{proof}


\begin{rem}
    For the fan corresponding to $\mathbb{P}^n$, one can give a slick proof by noting that the limit diagram for $\Sigma^\op$ is the same as the \v Cech diagram for the open cover of $\MR$ as a topological space by $\{\sigma^{\svee,\circ}\rightarrow M_\RR : \sigma\in\Sigma(1)\}$ and use \HA{Proposition}{1.2.4.13}. However, it is not true in general that the diagram as above is the \v Cech diagram for an open cover of $\MR$. Therefore, we opt for a different proof as above instead.
\end{rem}
\begin{cor}
    For a smooth projective fan $\Sigma$, the functor $\Psi_\Sigma$ assembled in \cref{construction of combinatorial to constructible comparison functor} is symmetric monoidal.
\end{cor}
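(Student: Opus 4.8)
The plan is to reduce the statement to \cref{the idempotent algebras for a smooth projective fan glues to the unit}, which already contains all of the geometric content. Recall from \cref{construction of combinatorial to constructible comparison functor} that $\Psi_\Sigma$ is the composite
\[
\lim_{\Sigma^\op}\Fun(\Theta(\sigma)^\op,\Sp)\overset{\lim_{\Sigma^\op}\Psi_\sigma}{\longrightarrow}\lim_{\Sigma^\op}\Mod_{\omega_{\sigma^\svee}}\Shv(\MR;\Sp)\overset{R}{\longrightarrow}\Shv(\MR;\Sp),
\]
where the first arrow is symmetric monoidal, being a limit in $\SMCat$ of the symmetric monoidal functors $\Psi_\sigma$. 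Hence it suffices to prove that the right adjoint $R$ of \cref{taking module category over the family of idempotent algebras in the sheaf category}, a priori only lax symmetric monoidal, is symmetric monoidal. It therefore suffices to show that the symmetric monoidal left adjoint $L$ is an equivalence: then $R$ is the inverse of a symmetric monoidal equivalence, hence symmetric monoidal.

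To see that $L$ is an equivalence I would first identify it with the canonical descent comparison functor. Using that the inclusion $l\colon\mathcal{P}_{\neq\emptyset}(\Sigma(n))\to\Sigma^\op$ is a limit-equivalence (the remark following \cref{the idempotent algebras for a smooth projective fan glues to the unit}), precomposition with $l$ yields
\[
\lim_{\Sigma^\op}\Mod_{\omega_{\sigma^\svee}}\Shv(\MR;\Sp)\simeq\lim_{S\in\mathcal{P}_{\neq\emptyset}(\Sigma(n))}\Mod_{\bigotimes_{\tau\in S}\omega_{\tau^\svee}}\Shv(\MR;\Sp),
\]
and the right-hand side is precisely the value on the \v Cech nerve of the family $\{\mathbb{1}_{\Shv(\MR;\Sp)}\to\omega_{\sigma^\svee}:\sigma\in\Sigma(n)\}$ of the presheaf $\Mod_{(-)}(\Shv(\MR;\Sp))\colon\Shv(\MR;\Sp)^\idem\to\SMCat$. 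Under this identification $L$ becomes the canonical functor out of $\Mod_{\mathbb{1}}(\Shv(\MR;\Sp))=\Shv(\MR;\Sp)$, since base change along $\mathbb{1}\to\omega_{\sigma^\svee}$ in each slot is tensoring with $\omega_{\sigma^\svee}$, which is exactly how $L$ was defined.

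Now I would invoke the two inputs: by \cref{the idempotent algebras for a smooth projective fan glues to the unit} the family $\{\mathbb{1}_{\Shv(\MR;\Sp)}\to\omega_{\sigma^\svee}:\sigma\in\Sigma(n)\}$ is a cover of $\mathbb{1}_{\Shv(\MR;\Sp)}$ in the idempotent Grothendieck topology, and by the descent theorem recalled in \cref{subsection on idempotent descent} the presheaf $\Mod_{(-)}(\Shv(\MR;\Sp))$ sends this cover to a limit diagram in $\SMCat$. Hence $L$ is an equivalence of symmetric monoidal categories, so $R\simeq L^{-1}$ is symmetric monoidal, and consequently $\Psi_\Sigma=R\circ\lim_{\Sigma^\op}\Psi_\sigma$ is a composite of symmetric monoidal functors. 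There is no genuine obstacle remaining: the only points requiring care are the bookkeeping in the middle step — that the comparison functor produced by the descent theorem is literally $L$ rather than some other adjoint, and the finality reduction from the diagram over all cones to the one over top-dimensional cones — and both are dealt with by the remark accompanying \cref{the idempotent algebras for a smooth projective fan glues to the unit}.
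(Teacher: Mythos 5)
Your proposal is correct and follows essentially the same route as the paper: the paper likewise notes that $\lim_{\Sigma^\op}\Psi_\sigma$ is symmetric monoidal and that $R$ is the right adjoint of $L$, which is an equivalence for a smooth projective fan by \cref{the idempotent algebras for a smooth projective fan glues to the unit}, whence $R$ (and so $\Psi_\Sigma$) is symmetric monoidal. You merely spell out in more detail the descent and finality bookkeeping that the paper leaves implicit in its citation of that proposition.
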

\begin{proof}
    Recall from \cref{construction of combinatorial to constructible comparison functor} that 
    $\Psi_\Sigma$ is a composition:
    \[\Psi_\Sigma:\lim_{\Sigma^\op}\Fun(\Theta(\sigma)^\op,\Sp)\overset{\lim\Psi_\sigma}{\longrightarrow}\lim_{\Sigma^\op}\Mod_{\omega_{\sigma^\svee}}\Shv(\MR;\Sp)\longrightarrow\Shv(\MR;\Sp).\]
    The first functor is always symmetric monoidal, and we are concerned with the second functor. Note that it is defined as a right adjoint to the functor
    \[\Shv(\MR;\Sp)\longrightarrow\lim_{\Sigma^\op}\Mod_{\omega_{\sigma^\svee}}\Shv(\MR;\Sp),\]
    which is an equivalence when the fan is smooth and projective, given \cref{the idempotent algebras for a smooth projective fan glues to the unit}. So we conclude that the second functor is also symmetric monoidal, and so is $\Psi_\Sigma$.
\end{proof}
\begin{rem}
\label{Remark on idempotent algebra and log perfectoid quasicoherent of vaintrob}
    More generally, the result of Dmitry Vaintrob in \cite{Vaintrob2} could be interpreted to suggest that the limit of the family of idempotent algebras in $\Shv(\MR;\Sp)$ as in \cref{the idempotent algebras for a smooth projective fan glues to the unit} should only depend on the support, but not a particular fan. This is closely related to his construction \cite{vaintrob2017categoricallogarithmichodgetheory} of log quasi-coherent category of toroidal compactifications.
    A direct adaptation of the construction of our comparison functor to Dmitry Vaintrob's setting will produce a symmetric monoidal equivalence between the category of `almost' quasi-coherent sheaves on smooth projective toric schemes and the categories of sheaves of spectra on real vector spaces \stress{without} constructibility constraints.
\end{rem}

\section{Singular support}
The aim of this section is to characterize $\im(\kappa)$ for a \stress{smooth projective fan} $\Sigma$ in terms of singular support. The idea of using singular support to describe the image of $\kappa$ was originally due to \cite{FLTZ}. Let $\Lambda_\Sigma$ be the conic Lagrangian subset of the cotangent bundle $T^*M_\RR$ given in \cref{definition of FLTZ skeleton}. We will define a full subcategory of $\Shv(M_\RR;\Sp)$ spanned by sheaves with singular support contained in $\Lambda_\Sigma$. It follows directly from the definition that the functor $\kappa$ factors through $\Shv_{\Lambda_\Sigma}(\MR;\Sp)$, so
$$\Shv_{\Lambda_\Sigma}(M_\RR;\Sp)\supseteq\im(\kappa).$$
Then we follow the idea of \cite{zhou2017twistedpolytopesheavescoherentconstructible} to show that the inclusion is an equality. The benefit of our approach is that along the way we will construct an explicit family of compact generators of $\Shv_{\Lambda_\Sigma}(M_\RR;\Sp)$.
\par We will first take a quick tour of the theory of singular support for polyhedral sheaves. This is particularly simple, since locally we are working with conic sheaves on a real vector space. Then we revisit the interplay between twisted polytopes and sheaves. Finally we invoke the non-characteristic deformation lemma from \cite{robalo2016lemmamicrolocalsheaftheory} to prove the result.

The reason why our proof is less straightforward as opposed to what appears in
\cite{FLTZ,zhou2017twistedpolytopesheavescoherentconstructible} is the following.
We find that there is a lack of a general theory of singular supports for sheaves of spectra, so that the arguments one can make in its classical counterpart \cite{kashiwara2002sheaves} would carry over without much modification (see, however, \cite{Jin_2024} for an exposition in this direction.)
We hope that this section serves as an invitation to homotopy theorists to revisit the notion of singular supports in greater generality and to investigate questions like \cref{question about compact generation of singular support}.
\subsection{Singular support for polyhedral sheaves}
Following \cite[Section 4]{FLTZ}, we define the notion of singular supports for \stress{polyhedrally} constructible sheaves on real vector spaces (and also tori). `Polyhedral' means that we fix a stratification $P$ on a real vector space $V$, specified (as in \cref{definition of FLTZ stratification}) by an affine hyperplane arrangement. We will consider sheaves which are constructible for such `polyhedral' stratification. Locally, these sheaves are modeled on conic sheaves $F$ on a real vector space $V$, which we will first study. All vector spaces appearing here will be \stress{finite dimensional}.
\begin{rem}
We will make use of results in  \cite{kashiwara2002sheaves}, but the reader should be warned that the book was written in the classical language of bounded derived category of sheaves. So it is not directly applicable in our situation. However, the results we make use of could be verified with the same proof from there. We will revisit these facts in future work.
\end{rem}
\begin{defn}
    Recall that the topological group $\RR_+=\{r\in \RR:r>0\}$ acts continuously on a real vector space $V$ via multiplication. We define the \stress{ category of conic sheaves} on $V$ to be the full subcategory of sheaves that are constant when restricted to each orbit, and write it as 
    $$\Shv^\mathrm{conic}(V;\Sp)\subseteq\Shv(V;\Sp).$$
\end{defn}
\begin{defn}[Fourier-Sato transform]
    Let $V$ be a  real vector space with dual $V^*$. The \stress{Fourier-Sato transform} is defined to be
    $$\mathcal{FS}:\conicshv(V;\Sp)\longrightarrow\conicshv(V^*;\Sp)$$
    $$F\mapsto p_!q^*F$$
    where $p:K\rightarrow V^*$ and $q:K\rightarrow V$ are projections from the kernel
    $$K\defeq\{(x,y)\in V\times V^*:\langle x,y \rangle \leq 0\}\subset V\times V^*.$$
    We define the \stress{singular support at the origin} of a conic sheaf $F$ to be the support (i.e. closure of the points where the stalk doesn't vanish) of $\mathcal{FS}(F)\subseteq V^*$, where we identify $V^*$ with the cotangent space of $V$ at the origin
    $$\musupp_0(F)\defeq\supp(\mathcal{FS}(F))\subset V^*\simeq T^*_0(V).$$
    
\end{defn}
\begin{prop}[{\cite[Theorem 3.7.9]{kashiwara2002sheaves}}]
    The Fourier-Sato transform is an equivalence of categories between conic sheaves on $V$ and conic sheaves on $V^*$ :
    $$\mathcal{FS}:\conicshv(V;\Sp)\overset{\simeq}{\longrightarrow}\conicshv(V^*;\Sp).$$
    
\end{prop}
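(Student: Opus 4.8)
The plan is to adapt the classical argument for \cite[Theorem 3.7.9]{kashiwara2002sheaves}, verifying that each step survives the passage from abelian-group coefficients to spectra. Write $n\defeq\dim V$. Since $\mathcal{FS}$, and likewise all the integral transforms appearing below, are assembled from $\pr^{*}$, $\otimes$ and $\pr_{!}$, they preserve all colimits; consequently it suffices to produce a cocontinuous inverse functor and to verify the defining identities on a set of generators. I would take the inverse to be an \emph{adjoint Fourier--Sato transform}
\[
\mathcal{FS}^{a}\colon\conicshv(V^{*};\Sp)\longrightarrow\conicshv(V;\Sp),\qquad G\longmapsto\big(p'_{!}\,{q'}^{*}G\big)[n],
\]
built from the complementary closed half-space $K^{a}\defeq\{(y,x)\in V^{*}\times V:\langle x,y\rangle\ge 0\}$, with $p'\colon K^{a}\to V$ and $q'\colon K^{a}\to V^{*}$ the projections. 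Exactly as for $\mathcal{FS}$, the set $K^{a}$ is stable under the independent scalings of the two factors, so $\mathcal{FS}^{a}$ does land in conic sheaves.

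The key formal input is the composition calculus for integral transforms. For $L\in\Shv(X\times Y;\Sp)$ put $\Phi_{L}(F)\defeq\pr_{2!}(\pr_{1}^{*}F\otimes L)$, so that $\mathcal{FS}=\Phi_{\underline{\SS}_{K}}$ with $K=\{(x,y):\langle x,y\rangle\le 0\}$ and $\mathcal{FS}^{a}=\Phi_{\underline{\SS}_{K^{a}}}[n]$. Then for $L_{1}\in\Shv(X\times Y;\Sp)$ and $L_{2}\in\Shv(Y\times Z;\Sp)$ one has $\Phi_{L_{2}}\circ\Phi_{L_{1}}\simeq\Phi_{L_{1}\circ L_{2}}$ with $L_{1}\circ L_{2}\defeq\pr_{13!}(\pr_{12}^{*}L_{1}\otimes\pr_{23}^{*}L_{2})\in\Shv(X\times Z;\Sp)$, the $\pr$'s being the projections off $X\times Y\times Z$; this is a formal consequence of base change and the projection formula in the six-functor formalism fixed in \cref{convention on sheaf theory}. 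Taking $X=Z=V$, $Y=V^{*}$, the composite $\mathcal{FS}^{a}\circ\mathcal{FS}$ is therefore $\Phi_{L}[n]$ with $L=\pr_{13!}\underline{\SS}_{W}$ and $W=\{(x,y,x'):\langle x,y\rangle\le 0\le\langle x',y\rangle\}$.

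It then remains to compute $L$ and to see that $\Phi_{L}$ is the shift $(-)[-n]$ on conic sheaves. By proper base change the stalk of $L$ at $(x,x')$ is $\Gamma_{c}(C_{x,x'};\underline{\SS})$ for the closed polyhedral cone $C_{x,x'}\defeq\{y\in V^{*}:\langle x,y\rangle\le 0\le\langle x',y\rangle\}$, and an elementary computation — the same as in \cite{kashiwara2002sheaves} — shows this vanishes unless $x'$ lies on the ray $\RR_{>0}x$ (or $x=x'=0$), in which case $C_{x,x'}$ is a linear subspace of $V^{*}$ of codimension $1$, resp.\ $0$, with $\Gamma_{c}\simeq\SS[-n+1]$, resp.\ $\SS[-n]$. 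Hence $L$ is supported on the diagonal ray cone $\{(x,\lambda x):\lambda>0\}\cup\{(0,0)\}$, and since a conic sheaf is constant along rays and $\Gamma_{c}(\RR_{>0};\underline{\SS})\simeq\SS[-1]$, a stalkwise check gives $\Phi_{L}(F)_{x'}\simeq F_{x'}[-n]$; one upgrades this to a natural equivalence as in \cite{kashiwara2002sheaves}, by identifying $L[n]$ with the integral kernel representing the identity endofunctor of $\conicshv(V;\Sp)$. Thus $\mathcal{FS}^{a}\circ\mathcal{FS}\simeq\id$, and the same computation with $V$ and $V^{*}$ interchanged yields $\mathcal{FS}\circ\mathcal{FS}^{a}\simeq\id$, so $\mathcal{FS}$ is an equivalence.

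The point to stress is that nothing in this argument is sensitive to the coefficients: the composition calculus for integral transforms is formal in any six-functor formalism, and the only analytic input is the vanishing $\Gamma_{c}(\text{closed Euclidean half-space};\underline{\SS})\simeq 0$, which holds over $\SS$ because the one-point compactification of a closed half-space is contractible, so $p_{!}\underline{\SS}\simeq 0$ for the projection $p$ to a point with any coefficients; the remaining fiberwise computations follow by Künneth. I expect the only genuinely delicate part to be the careful bookkeeping of shifts and orientation twists in the kernel $L$. As the excerpt already notes, a fully self-contained account of this and of the other facts imported from \cite{kashiwara2002sheaves} is left to future work.
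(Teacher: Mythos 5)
The paper does not actually prove this proposition: it is quoted from \cite[Theorem 3.7.9]{kashiwara2002sheaves}, with the blanket remark earlier in the section that the relevant results of Kashiwara--Schapira ``could be verified with the same proof from there'' and that details are deferred to future work. So your sketch, which adapts the classical kernel-calculus proof to spectra, is exactly the kind of argument the paper is implicitly appealing to, and most of it is sound: the identification $\mathcal{FS}=\Phi_{\underline\SS_K}$, the composition formula for integral kernels (formal from base change and the projection formula), the fiberwise computation of the composite kernel $L$ via $\Gamma_c$ of closed wedges/half-spaces (which indeed vanishes over $\SS$ by the smash/K\"unneth argument), and the resulting stalkwise identification $\Phi_L(F)_{x'}\simeq F_{x'}[-n]$ for conic $F$ all go through with spectral coefficients.

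The genuine gap is the step you wave at with ``one upgrades this to a natural equivalence as in [KS], by identifying $L[n]$ with the integral kernel representing the identity endofunctor of $\conicshv(V;\Sp)$.'' That identification is not correct as stated: the identity of the full sheaf category is represented by $\underline\SS_\Delta$, and $L[n]$ is \emph{not} equivalent to $\underline\SS_\Delta$ (its support is the ray cone $\{(x,\lambda x):\lambda>0\}\cup\{(0,0)\}$, not the diagonal); what is true is only that the two kernels induce equivalent functors after restriction to conic sheaves, which is precisely the statement to be proved and does not follow from a stalkwise match --- you still need a \emph{natural} comparison map $\Phi_{L[n]}|_{\mathrm{conic}}\to\mathrm{id}$ (or its inverse) before checking it on stalks. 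In Kashiwara--Schapira this map comes for free because their inverse transform $(-)^\vee$ is built from $\Gamma_P$ and $*$-pushforward and is literally right adjoint to $(-)^\wedge$, so the unit and counit supply the natural transformations; your $\mathcal{FS}^a=p'_!q'^*(-)[n]$ has no evident unit/counit against $\mathcal{FS}$. To close the gap you should either use the adjoint form of the inverse (i.e.\ $q_*p^!$, the actual right adjoint of $p_!q^*$, and then show it agrees with your shifted $!$-transform on conic sheaves), or construct the comparison map of kernels/functors by hand from the open--closed geometry of $K$, $K^a$ and the diagonal, as in the original proof. Also note, as a smaller point, that the assertion that $\mathcal{FS}$ and $\mathcal{FS}^a$ land in conic sheaves is itself a (mild) homotopy-invariance argument rather than a formal consequence of the scaling-invariance of the kernels, and that ``check an equivalence on a set of generators'' is only meaningful once the natural transformation is in place.
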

\begin{rem}[An alternative definition]
    One can also define a notion of singular supports using a Morse-type construction as in \cite[Definition 4.5]{robalo2016lemmamicrolocalsheaftheory}. It coincides with this definition, but we will not use it here.
\end{rem}
One particular feature of such definition we will use is that it interacts nicely with cones.
\begin{lem}[{\cite[Lemma 3.7.10]{kashiwara2002sheaves}}]
\label{fourier-sato of conic sheaf of an open cone}
    Let $V$ be a real vector space with $V^*$ its dual. Let $\tau\subseteq V$ be an open convex cone and $-\tau^\svee\subseteq V^*$ be the negative of its dual cone. Then
    $$\mathcal{FS}(\omega_\tau)=\underline\SS_{-\tau^\svee}.$$
    In particular, the singular support at the origin of $\omega_\tau$ is 
    $$\musupp(\omega_\tau)_0=-\tau^\svee.$$
\end{lem}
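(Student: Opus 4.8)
The plan is to carry out the argument of \cite[Lemma 3.7.10]{kashiwara2002sheaves} with coefficients in $\SS$, checking that the classical reasoning survives the passage to spectra; concretely, everything will reduce to computing compactly supported cohomology of convex subsets of a Euclidean space, where the change of coefficients is harmless. Throughout write $n=\dim V$ and $Z\defeq-\tau^\svee\subseteq V^*$ (a closed convex cone). First I would unwind the definition of the transform. Since $\tau$ is open in the orientable $n$-manifold $V$ we have $\omega_\tau\simeq\underline\SS_\tau[n]=j_{\tau!}\underline\SS[n]$; substituting into $\mathcal{FS}(\omega_\tau)=p_!q^*\omega_\tau$, using that $q^*$ commutes with extension-by-zero along the open immersion $\tau\hookrightarrow V$ (base change) and functoriality of $(-)_!$, gives
$$\mathcal{FS}(\omega_\tau)\ \simeq\ (p|_{K_\tau})_!\,\underline\SS\,[n],\qquad K_\tau\defeq\{(x,y)\in V\times V^*:x\in\tau,\ \langle x,y\rangle\le0\},$$
with $p|_{K_\tau}(x,y)=y$. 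Applying base change along $\{y\}\hookrightarrow V^*$ for the (not necessarily proper) map $p|_{K_\tau}$ — valid in the six-functor formalism of \cref{convention on sheaf theory} — identifies the stalks:
$$\mathcal{FS}(\omega_\tau)_y\ \simeq\ C^*_c(W_y;\SS)[n],\qquad W_y\defeq\tau\cap\{x:\langle x,y\rangle\le0\}.$$

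Next I would compute these stalks. If $y\in Z$ then $\tau\subseteq\{\langle\cdot,y\rangle\le0\}$, so $W_y=\tau$, a nonempty open convex subset of $\RR^n$, hence homeomorphic to $\RR^n$; thus $C^*_c(W_y;\SS)\simeq\SS[-n]$ and the stalk is $\SS$. If $y\notin Z$, I claim the stalk vanishes. If $W_y=\emptyset$ this is clear. Otherwise $\langle\cdot,y\rangle$ takes a positive value on $\tau$ and therefore also (by connectedness of $\tau$) the value $0$, so $W_y$ is a convex subset of $\RR^n$ whose interior is $\tau\cap\{\langle\cdot,y\rangle<0\}$ and whose remaining points $\tau\cap y^\perp$ form a nonempty relatively open subset of the supporting hyperplane $y^\perp$. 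Hence $W_y$ is a noncompact convex $n$-manifold-with-boundary, with connected boundary $\tau\cap y^\perp$, so — a point-set fact, provable by a radial homeomorphism based at an interior point — it is homeomorphic to the half-space $\RR^{n-1}\times[0,\infty)$, whose one-point compactification is the contractible disk $D^n$. Since compactly supported cochains compute the reduced cochains of the one-point compactification, $C^*_c(W_y;\SS)\simeq0$, hence $\mathcal{FS}(\omega_\tau)_y\simeq0$.

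It remains to upgrade the stalk computation to an identification of sheaves. Because the stalks of $\mathcal{FS}(\omega_\tau)$ vanish on the open set $V^*\setminus Z$ and equivalences of sheaves on a manifold are detected stalkwise, $\mathcal{FS}(\omega_\tau)\simeq i_{Z*}i_Z^*\mathcal{FS}(\omega_\tau)$. To evaluate $i_Z^*\mathcal{FS}(\omega_\tau)$ I would observe that for \emph{every} $y\in Z$ the fiber $W_y$ equals $\tau$, independently of $y$, so $(p|_{K_\tau})^{-1}(Z)\simeq\tau\times Z$ with $p|_{K_\tau}$ restricting to the projection; two more applications of base change then give
$$i_Z^*\,\mathcal{FS}(\omega_\tau)\ \simeq\ (\mathrm{pr}_Z)_!\,\underline\SS_{\tau\times Z}\,[n]\ \simeq\ \underline{\bigl(C^*_c(\tau;\SS)[n]\bigr)}_Z\ \simeq\ \underline\SS_Z,$$
the last step because $C^*_c(\tau;\SS)\simeq\SS[-n]$. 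Therefore $\mathcal{FS}(\omega_\tau)\simeq i_{Z*}\underline\SS_Z=\underline\SS_{-\tau^\svee}$, and the final assertion $\musupp_0(\omega_\tau)=\supp\mathcal{FS}(\omega_\tau)=-\tau^\svee$ is then immediate from the definition of $\musupp_0$.

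The step I expect to be the main obstacle is the vanishing $C^*_c(W_y;\SS)=0$ for $y\notin Z$: it rests on the point-set topology of convex sets — that a convex cone ``properly clipped'' by a closed half-space is, as a manifold-with-boundary, homeomorphic to a half-space — which is exactly the place where \cite{kashiwara2002sheaves} invokes the classical theory. The only genuinely new point is to confirm that replacing $\ZZ$ or $\CC$ coefficients by $\SS$ changes nothing, and it does not: every computation has been reduced to $C^*_c$ of a space homeomorphic to $\RR^k$, to $\RR^k\times[0,\infty)$, or to $\emptyset$, whose value is $\SS[-k]$, $0$, or $0$ respectively.
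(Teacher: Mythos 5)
Your overall route is the right one, and it is essentially the only thing on offer here: the paper does not prove this lemma at all — it cites \cite[Lemma 3.7.10]{kashiwara2002sheaves} together with the blanket remark that the Kashiwara--Schapira proofs carry over to spectral coefficients — so your plan of redoing the classical computation over $\SS$ is exactly what is implicitly intended. The reduction $\mathcal{FS}(\omega_\tau)\simeq(p|_{K_\tau})_!\underline\SS[n]$, the stalk identification $\mathcal{FS}(\omega_\tau)_y\simeq C^*_c(W_y;\SS)[n]$ via base change, the computation over $y\in Z$, and the final upgrade from stalks to sheaves (recollement plus the observation that $p^{-1}(Z)\simeq\tau\times Z$, so that $i_Z^*\mathcal{FS}(\omega_\tau)$ is the constant sheaf at $C^*_c(\tau;\SS)[n]\simeq\SS$) are all correct, and you are right that the passage from $\ZZ$ to $\SS$ costs nothing in these steps.

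There is, however, a genuine gap at the step you yourself flagged as the crux: the vanishing $C^*_c(W_y;\SS)=0$ for $y\notin -\tau^\svee$. You derive it from the assertion that a noncompact convex $n$-manifold-with-boundary with connected boundary is homeomorphic to $\RR^{n-1}\times[0,\infty)$, ``provable by a radial homeomorphism.'' That assertion is false. Take $C\subseteq\RR^3$ to be the open unit ball together with an open annulus $A=\{x\in S^2:|x_3|<\tfrac12\}$ on its boundary sphere: $C$ is convex (any chord with at least one endpoint in the open ball has its interior in the open ball, and the sphere is strictly convex), it is a noncompact $3$-manifold with boundary, its boundary $A$ is connected, yet $C$ is not homeomorphic to a closed half-space (a homeomorphism would restrict to one of boundaries, but $A\simeq S^1\times\RR\not\simeq\RR^2$); moreover even the cohomological conclusion fails for this $C$, since the fiber sequence $\Gamma_c(\mathrm{int}\,C)\to\Gamma_c(C)\to\Gamma_c(A)$ has outer terms $\SS[-3]$ and $\SS[-1]\oplus\SS[-2]$, which cannot cancel. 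So the lemma you invoke cannot be what makes the argument work: the needed statement really uses that the included boundary piece $B=\tau\cap y^\perp$ is itself \emph{convex} (a nonempty open convex subset of the hyperplane, hence $\simeq\RR^{n-1}$), not just that $W_y$ is convex with connected boundary.

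The conclusion of that step is nonetheless true, and can be repaired without heavy lifting. One clean fix avoids the global homeomorphism altogether: pick $b\in B$ and an open ball $Q\subseteq\tau$ around $b$, and set $W'=Q\cap\{\langle\cdot,y\rangle\le0\}$, an honest half-ball with $C^*_c(W';\SS)=0$. The open inclusion $W'\subseteq W_y$ induces a map of recollement fiber sequences
\[
\Gamma_c(Q\cap\{<0\})\to\Gamma_c(W')\to\Gamma_c(Q\cap y^\perp)
\quad\longrightarrow\quad
\Gamma_c(\tau\cap\{<0\})\to\Gamma_c(W_y)\to\Gamma_c(\tau\cap y^\perp),
\]
and the outer maps are equivalences because they are open inclusions of nonempty \emph{convex} open subsets of $\RR^n$ (resp.\ of the hyperplane $\simeq\RR^{n-1}$), for which the induced map on $\Gamma_c\simeq\SS[-n]$ (resp.\ $\SS[-n+1]$) is an equivalence; hence $\Gamma_c(W_y)\simeq\Gamma_c(W')=0$. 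Alternatively, you can salvage the homeomorphism $W_y\simeq\RR^{n-1}\times[0,\infty)$ by a radial argument from a point of $\tau\cap\{<0\}$, but the proof must exploit the convexity of $B$ (its radial projection is a tame, spherically convex open subset of the sphere) — exactly the hypothesis your quoted point-set fact omits, as the ball-plus-annulus example shows. With that step repaired, the rest of your write-up stands.
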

Now we globalize the previous definition:
\begin{defn}[Singular support] 
\label{definition of singular support}
Let $V$ be a vector space equipped with the stratification $P$ specified by an affine hyperplane arrangement as in \cref{definition of FLTZ stratification}. For a constructible sheaf $F\in\Cons_{P}(V;\Sp)$, one can specify a subset of the cotangent bundle of $V$,
$$\musupp(F)\subseteq T^*V\simeq V\times V^*$$
to be the (global) \stress{ singular support of $F$}. Its fiber at a point $v\in V$, denoted by $\musupp_v(F)$ is determined as follows: pick an open ball $U$ centered at $v$ that only meets the hyperplanes passing through $v$. Pick an exponential map from the tangent space:
$$\mathrm{exp}:V\overset{\simeq}{\longrightarrow}U$$
and it pulls $F$ back to a conic sheaf $\mathrm{exp}^*F\in\conicshv(V;\Sp)$. We define
$\musupp(F)_v\defeq\musupp_0(\mathrm{exp}^*F)\subseteq V^*$
and we identify canonically $V^*$ with $T^*_vV$.
\end{defn}
\begin{rem}[Singular support is well-defined]
    We remark that at each point $v$ the subset $\musupp_v(F)$ doesn't depend on the choice of the open ball $U$ nor the exponential map $\exp$. To compare different choices we end up with a transition map $$V\rightarrow V$$ which is given by multiplication of a continuous function valued in $\RR_+$ on $V$. Since all the orbits are contractible and the sheaf involved is conic, one can produce an equivalence between sheaves $\exp^*(F)$ under different choices. We don't spell out the details here.
\end{rem}
\begin{defn}[Sheaves with prescribed singular support] Following the notation as \cref{definition of singular support}. Let $\Lambda\subset T^*V\simeq V\times V^*$ be a subset. We define a full subcategory $\Shv_\Lambda(V;\Sp)$ of $\Cons_{P}(V;\Sp)$ to be
$$\Shv_\Lambda(V;\Sp)\defeq\{F:\musupp(F)\subseteq \Lambda\}.$$
This is the subcategory of \stress{$P$-constructible sheaves with singular support contained in $\Lambda$}. 
\end{defn}
\begin{war}
\label{dependence of singular support on stratification}
    Note that the notation didn't make explicit the dependence on $P$, but we  always fix such a stratification and work inside the full subcategory of $P$-constructible sheaves.
    This should not cause confusion as we will work with a single fixed stratification at a time.
    It is true that $\musupp(F)$ doesn't depend on the ambient stratification - and in fact one can define singular support of a sheaf without the help of constructibility and arrive at the same notion. But beware that, given $\Lambda$, the category of $P$-constructible sheaves with singular support contained in $\Lambda$ can vary as $P$ changes. It is also true that they will be the same as long as conormal variety of $P$ contains $\Lambda$. We will not prove these facts nor use them.
\end{war}
\begin{var}
    The definition makes sense also for a quotient of a vector space by a lattice $V/\Gamma$, in particular for tori $\RR^n/\ZZ^n$: fix a polyhedral stratification $P$ on $V/\Gamma$ and a constructible sheaf $F$ for $(V/\Gamma,P)$, one can define a subset $\musupp(F)\subseteq T^*V/\Gamma$, and thus talk about the subcategory of $P$-constructible sheaves with prescribed singular support. We will make use of this notion in the final section.
\end{var}
Then we make several quick observations with the definition.
\begin{rem}[Locality]
\label{the notion of singular support is local}
The definition is local in nature. This in particular implies that one can check if a constructible sheaf $F$ on $V/\Gamma$ has the prescribed singular support by pulling back and checking on $V$, since the projection map is a local homeomorphism preserving the linear structure.
\end{rem}
\begin{rem}[Closed under colimits]
\label{sheaves with singular support is closed under colimit}
Given a polyhedral stratification $P$ on $V$ and a subset $\Lambda$ in $T^*V$. The subcategory $\Shv_\Lambda(V;\Sp)$ is a stable subcategory closed under colimits in $\Cons_P(V;\Sp)$ and hence also in $\Shv(V;\Sp)$. This follows from that the $*$-pullback functor and the Fourier-Sato transform preserve colimits, and that support condition is closed under colimits.
\end{rem}
The most important example of the computation with global singular support is the following:
\begin{lem} 
\label{global singular support of a omega sheaf}
\cite[Proposition 5.1]{FLTZ} Let $\Sigma$ be a smooth projective fan and consider the category of $\mathcal{S}_\Sigma$-constructible sheaves. We can bound the singular support of the sheaf $\omega_{m+\sigma^{\svee}}\in\Cons_{\Sc_\Sigma}(\MR;\Sp)$ for $\sigma\in\Sigma$:
$$\musupp(\omega_{m+\sigma^{\svee}})\subseteq\bigsqcup_{\tau\subset\sigma}m+\tau^\perp\times -\tau\subset M_\RR\times N_\RR\simeq T^*M_\RR.$$
\end{lem}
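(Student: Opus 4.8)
The plan is to compute the singular support of $\omega_{m+\sigma^{\svee}}$ fibre by fibre, reducing each fibre to the Fourier--Sato computation for open cones in \cref{fourier-sato of conic sheaf of an open cone}.

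First I would reduce to the case $m=0$: translation by $m\in M$ is a homeomorphism of $M_\RR$ which preserves the arrangement $H_\Sigma$ and acts on $T^*M_\RR=M_\RR\times N_\RR$ trivially on the cotangent fibres, and it carries $\omega_{\sigma^{\svee}}$ to $\omega_{m+\sigma^{\svee}}$; hence $\musupp(\omega_{m+\sigma^{\svee}})$ is the $m$-translate of $\musupp(\omega_{\sigma^{\svee}})$. Next, since $\sigma$ is strongly convex the dual cone $\sigma^{\svee}$ is top-dimensional, so \cref{comparison between homology sheaf of open and closed polygon} identifies $\omega_{\sigma^{\svee}}\simeq\underline\SS_{\sigma^{\svee,\circ}}[n]$, where $n=\dim M_\RR$ and $\sigma^{\svee,\circ}$ is the interior; singular support is unchanged by the shift, so it suffices to bound $\musupp(\underline\SS_{\sigma^{\svee,\circ}})$.

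Now I would carry out the pointwise computation of \cref{definition of singular support}. Fix $v\in M_\RR$. If $v\notin\sigma^{\svee}$ the sheaf vanishes near $v$ and the fibre of the singular support is empty. If $v\in\sigma^{\svee}$, let $\tau$ be the unique face of $\sigma$ with $v$ in the relative interior of the dual face $\sigma^{\svee}\cap\tau^{\perp}$ (so $\tau=0$ exactly when $v$ is an interior point). The key geometric input is that, because $\sigma^{\svee}$ is a rational polyhedral cone, on a sufficiently small ball $U$ around $v$ the set $\sigma^{\svee}$ coincides with $v+T_v$, where $T_v\supseteq\tau^{\perp}+\sigma^{\svee}$ is the tangent cone of $\sigma^{\svee}$ at $v$: among the half-space inequalities defining $\sigma^{\svee}$, those strict at $v$ persist on $U$, while the remaining ones are invariant under scaling centred at $v$ and cut out $T_v$; the inclusion $T_v\supseteq\tau^{\perp}+\sigma^{\svee}$ holds because $v$ lies in the relative interior of $\sigma^{\svee}\cap\tau^{\perp}$ (which spans $\tau^{\perp}$) and $\sigma^{\svee}$ is a cone containing $v$. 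Hence in the affine chart centred at $v$ the sheaf $\underline\SS_{\sigma^{\svee,\circ}}$ becomes, up to restriction to a ball, the conic sheaf $\underline\SS_{T_v^{\circ}}$, whose singular support at the origin equals $-(T_v)^{\svee}$ by \cref{fourier-sato of conic sheaf of an open cone}. Therefore $\musupp_v(\underline\SS_{\sigma^{\svee,\circ}})=-(T_v)^{\svee}$.

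Finally I would assemble the answer using elementary cone duality. From $T_v\supseteq\tau^{\perp}+\sigma^{\svee}$ we get $(T_v)^{\svee}\subseteq(\tau^{\perp}+\sigma^{\svee})^{\svee}=(\tau^{\perp})^{\svee}\cap(\sigma^{\svee})^{\svee}=\operatorname{span}_\RR(\tau)\cap\sigma=\tau$, using $(\tau^{\perp})^{\svee}=\operatorname{span}_\RR(\tau)$, biduality $(\sigma^{\svee})^{\svee}=\sigma$, and the standard identity $\operatorname{span}_\RR(\tau)\cap\sigma=\tau$ for a face $\tau$ of $\sigma$ (see \cite[Section 1.2]{fulton}). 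Thus the fibre of $\musupp(\underline\SS_{\sigma^{\svee,\circ}})$ over $v$ is contained in $-\tau$, and since $v\in\tau^{\perp}$ with $\tau$ a face of $\sigma$, it lies in the fibre over $v$ of $\bigsqcup_{\tau\subseteq\sigma}\tau^{\perp}\times(-\tau)$. Taking the union over all $v$ and translating back by $m$ yields the assertion. The only step involving genuine work is the local tangent-cone identification in the third paragraph: one must verify that, after passing to the exponential chart, the sheaf is truly conic with open support $T_v^{\circ}$, so that \cref{fourier-sato of conic sheaf of an open cone} applies — this rests on the polyhedrality of $\sigma^{\svee}$ (faces not through $v$ stay bounded away) and the scale-invariance of the definition of singular support. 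By contrast, the smoothness and projectivity of $\Sigma$ enter only to ensure that $\omega_{m+\sigma^{\svee}}$ lies in the ambient category $\Cons_{\Sc_\Sigma}(M_\RR;\Sp)$ on which $\musupp$ is defined, and play no role in the computation itself.
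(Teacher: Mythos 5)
Your proposal is correct, and it is essentially the argument the paper has in mind: the paper defers to \cite[Proposition 5.1]{FLTZ} and remarks that the bound is a direct application of \cref{fourier-sato of conic sheaf of an open cone}, which is precisely the pointwise Fourier--Sato computation (after the translation reduction, the identification $\omega_{\sigma^\svee}\simeq\underline\SS_{\sigma^{\svee,\circ}}[n]$ via \cref{comparison between homology sheaf of open and closed polygon}, and the local tangent-cone identification) that you carry out. Your write-up just supplies the details that the paper omits by citation, and the cone-duality bookkeeping $(T_v)^\svee\subseteq\tau$ is correct.
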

We refer to the original treatment for the proof: it is a direct application of \cref{fourier-sato of conic sheaf of an open cone}. \par
One feature of the notion of singular support is that it supports Morse theory. In our context, the foundational \stress{non-characteristic deformation lemma} is supplied by \cite[Theorem 4.1]{robalo2016lemmamicrolocalsheaftheory}:
\begin{prop}
\label{non characteristic deformation}
     Let $M\in\LCH$ and $F\in \Shv^\mathrm{hyp}(M;\Sp)$ be hypercomplete. Let $\{U_s\}_{s\in\RR}$ be a family of open subsets of $M$. Assume:
     \begin{enumerate}
         \item For all $t\in\RR$, $U_t=\cup_{s<t}U_s$.
         \item For all pairs $s\leq t$, the set $\overline{U_t\setminus U_s}\cap \supp(F)$ is compact.
         \item Setting $Z_s\defeq \cap_{t>s}\overline{U_t\setminus U_s}$, we have for all pairs $s\leq t$ and all $x\in Z_s$:
         $$i^!(F)_x=0$$ where $i:X\setminus U_t\rightarrow X$ is the inclusion. Note that by the recollement sequence where $j:U_t\rightarrow X$ is the inclusion
         $$i_!i^!(F)\longrightarrow F\longrightarrow j_*j^*(F)$$
         this is the same as asking $F_x\rightarrow j_*j^*(F)_x$ be an isomorphism for each $x\in Z_s$.
     \end{enumerate}
     Then we have for all $t\in \RR$:
     $$F(\bigcup_s U_s)\overset{\simeq}{\longrightarrow}F(U_t).$$
\end{prop}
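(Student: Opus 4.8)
The plan is to invoke \cite[Theorem 4.1]{robalo2016lemmamicrolocalsheaftheory} directly: that paper establishes exactly this statement for sheaves valued in a presentable stable category on a locally compact Hausdorff space, and hypercompleteness is precisely the hypothesis under which its deformation argument runs (in our application everything lives on finite-dimensional manifolds, where hypercompleteness is automatic, so this is harmless). Strictly speaking there is nothing new to prove, but for orientation I would record the shape of the argument.

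First I would reduce to a statement about pairs of parameters: it suffices to prove that $F(U_t)\to F(U_s)$ is an equivalence whenever $s\le t$. Granting this, fix $t$, set $U_\infty\defeq\bigcup_s U_s$, and write $U_\infty=\bigcup_{n\in\mathbb{N}}U_{t+n}$ as an increasing union of opens. Since $\underline\SS_{U_\infty}\simeq\colim_n\underline\SS_{U_{t+n}}$ (both sides have the same stalks, and equivalences are detected on stalks), one gets $F(U_\infty)\simeq\lim_n F(U_{t+n})$; but all the transition maps $F(U_{t+n+1})\to F(U_{t+n})$ and the map $F(U_{t+1})\to F(U_t)$ are equivalences by the pairwise claim, so $F(U_\infty)\xrightarrow{\ \simeq\ }F(U_t)$, which is the conclusion.

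For the pairwise claim I would run a connectedness argument on the real line. Fix $s$ and let $A\subseteq[s,\infty)$ be the set of $t$ such that $F(U_{t'})\to F(U_{t''})$ is an equivalence for all $s\le t''\le t'\le t$. Then $s\in A$, and $A$ is closed: if $t_n\uparrow t_0$ with $t_n\in A$, hypothesis (1) gives $U_{t_0}=\bigcup_n U_{t_n}$, hence $F(U_{t_0})\simeq\lim_n F(U_{t_n})$, and the equivalences with $t'=t_0$ follow from those with $t'<t_0$ by passing to the limit. The substantive point is that $A$ is open: if $t_0\in A$ one must show $F(U_{t_0+\varepsilon})\to F(U_{t_0})$ is an equivalence for $\varepsilon>0$ small. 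By the recollement triangle its fibre is computed by $i^!F$ for $i\colon M\setminus U_{t_0}\hookrightarrow M$, supported inside $\overline{U_{t_0+\varepsilon}\setminus U_{t_0}}$, which meets $\supp(F)$ in a compact set by hypothesis (2); as $\varepsilon\to0^+$ this locus collapses onto $Z_{t_0}=\bigcap_{t>t_0}\overline{U_t\setminus U_{t_0}}$, where hypothesis (3) forces the relevant stalks of $i^!F$ to vanish, and a compactness argument propagates the vanishing to a neighbourhood, killing the fibre for small $\varepsilon$. This openness step is the main obstacle: it is the only place the microlocal hypothesis (3) is used, and making it precise requires a genuine analysis of the stalks of $i^!F$ along the levels of the deformation, carried out in \cite{robalo2016lemmamicrolocalsheaftheory} in the spirit of \cite[\S 2.7]{kashiwara2002sheaves}; I would not reproduce it here. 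Since then $A=[s,\infty)$ and $s$ was arbitrary, the pairwise claim, and hence the proposition, follows.
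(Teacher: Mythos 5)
Your proposal takes exactly the route the paper does: the paper offers no proof of this proposition at all, but simply imports it as \cite[Theorem 4.1]{robalo2016lemmamicrolocalsheaftheory}, which is precisely your main move. Your supplementary sketch (reduction to pairwise equivalences plus a closed/open connectedness argument on $\RR$, with the openness step deferred to the reference in the spirit of Kashiwara--Schapira) is a faithful outline of that cited proof, so there is nothing to correct.
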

\begin{rem}
    As we will be working with a finite dimensional real vector space, every sheaf is automatically hypercomplete. Beware that it is crucial that the coefficient category $\Sp$ is compactly generated presentable - otherwise one needs to change the definition of singular support. See \cite[Remark 4.24]{efimov2024ktheorylocalizinginvariantslarge}.
\end{rem}
Having prepared ourselves with enough abstract nonsense, here we present the crucial part of this subsection: we will provide a refinement of the category of $\Sc_\Sigma$-constructible sheaves such that the image of $\kappa$ lies in it:
\begin{defn}[FLTZ skeleton]
\label{definition of FLTZ skeleton}\footnote{The name `FLTZ skeleton' is borrowed from symplectic geometry.}
    Let $\Sigma$ be a smooth projective fan. We define a conic Lagrangian subset of $T^*M$ as follows:
    $$\Lambda_\Sigma\defeq\bigsqcup_{m\in M,\sigma\in\Sigma} m+\sigma^\svee\times-\sigma\subseteq M_\RR\times N_\RR\simeq T^*M_\RR.$$
    From now on we will focus on  the category $\Shv_{\Lambda_\Sigma}(M_\RR;\Sp)$ of \stress{$\Sc_\Sigma$-constructible sheaves with singular support in $\Lambda_\Sigma$}.
\end{defn}
\begin{lem}
The image of $\kappa$ lies in $\Shv_{\Lambda_\Sigma}(M_\RR;\Sp)$.
\end{lem}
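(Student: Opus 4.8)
The plan is to reduce the statement, by unwinding the construction of $\kappa$ from \cref{definition of the CCC functor}, to the singular-support estimate of \cref{global singular support of a omega sheaf} together with the stability properties of $\Shv_{\Lambda_\Sigma}(\MR;\Sp)$ recorded in \cref{sheaves with singular support is closed under colimit}.

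First I would unwind $\kappa$. By \cref{definition of the CCC functor}, $\kappa$ is the composite of $(\lim_{\Sigma^\op}\Phi_\sigma)^{-1}$ with $\Psi_\Sigma=R\circ(\lim_{\Sigma^\op}\Psi_\sigma)$, where $R$ is the functor from \cref{taking module category over the family of idempotent algebras in the sheaf category} that sends a compatible family $(\Gc_\sigma)_{\sigma\in\Sigma}$ in $\lim_{\Sigma^\op}\Mod_{\omega_{\sigma^\svee}}\Shv(\MR;\Sp)$ to the limit $\lim_{\sigma\in\Sigma^\op}\Gc_\sigma$ of the underlying sheaves in $\Shv(\MR;\Sp)$. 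Hence, for $\Fc\in\QCoh([X_\Sigma/\TT])$, writing $(\Fc_\sigma)_{\sigma\in\Sigma}$ for the compatible family corresponding to $(\lim_{\Sigma^\op}\Phi_\sigma)^{-1}(\Fc)$, one has $\kappa(\Fc)\simeq\lim_{\sigma\in\Sigma^\op}\Psi_\sigma(\Fc_\sigma)$ in $\Shv(\MR;\Sp)$. The point to exploit is that a fan has only finitely many cones, so $\Sigma^\op$ is a finite poset and this is a \emph{finite} limit.

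Next I would check that the image of each $\Psi_\sigma$, viewed inside $\Shv(\MR;\Sp)$ via the forgetful functor, is contained in $\Shv_{\Lambda_\Sigma}(\MR;\Sp)$. The functor $\Psi_\sigma$ is a left Kan extension along the (stable) Yoneda embedding, hence colimit-preserving, and $\Fun(\Theta(\sigma)^\op,\Sp)$ is generated under colimits by the representables on the objects $m+\sigma^\svee$ ($m\in M$) of $\Theta(\sigma)$; their images are the dualizing sheaves $\psi_\sigma(m+\sigma^\svee)=\omega_{m+\sigma^\svee}$. By \cref{global singular support of a omega sheaf} each such sheaf is $\Sc_\Sigma$-constructible with
\[\musupp(\omega_{m+\sigma^\svee})\subseteq\bigsqcup_{\tau}(m+\tau^\perp)\times(-\tau),\]
the union running over the faces $\tau$ of $\sigma$. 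Since $\tau^\perp\subseteq\tau^\svee$ and every face $\tau$ of $\sigma$ lies in $\Sigma$, the right-hand side is contained in $\Lambda_\Sigma=\bigsqcup_{m'\in M,\ \sigma'\in\Sigma}(m'+\sigma'^\svee)\times(-\sigma')$. Thus each $\omega_{m+\sigma^\svee}$ lies in $\Shv_{\Lambda_\Sigma}(\MR;\Sp)$, and since this subcategory is closed under colimits by \cref{sheaves with singular support is closed under colimit}, the entire image of $\Psi_\sigma$ lands in $\Shv_{\Lambda_\Sigma}(\MR;\Sp)$.

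Finally I would conclude: $\kappa(\Fc)$ is the finite limit $\lim_{\sigma\in\Sigma^\op}\Psi_\sigma(\Fc_\sigma)$ of objects of $\Shv_{\Lambda_\Sigma}(\MR;\Sp)$, and since $\Shv_{\Lambda_\Sigma}(\MR;\Sp)$ is a stable subcategory of $\Shv(\MR;\Sp)$ — in particular closed under finite limits — it follows that $\kappa(\Fc)$ again lies in it, giving $\im(\kappa)\subseteq\Shv_{\Lambda_\Sigma}(\MR;\Sp)$. The only non-formal ingredient is the singular-support bound \cref{global singular support of a omega sheaf} (which is \cite[Proposition 5.1]{FLTZ}, resting ultimately on the Fourier--Sato computation \cref{fourier-sato of conic sheaf of an open cone}); everything else is bookkeeping, using only that $\Sigma^\op$ is finite so that $R$ computes a finite limit, and that $\Shv_{\Lambda_\Sigma}$ is a colimit-closed stable subcategory.
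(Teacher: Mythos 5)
Your proof is correct, and it rests on exactly the same two non-formal ingredients as the paper's: the singular-support bound for $\omega_{m+\sigma^\svee}$ from \cref{global singular support of a omega sheaf} and the closure of $\Shv_{\Lambda_\Sigma}(\MR;\Sp)$ under colimits from \cref{sheaves with singular support is closed under colimit}. The route differs in the bookkeeping. The paper's proof is a two-line argument: it asserts that $\im(\kappa)$ is generated under colimits by the objects $\omega_{m+\sigma^\svee}$ and concludes immediately; that generation claim is only formally justified later, in point (3) of \cref{charaterization of image of kappa}, where it is deduced from the idempotent-descent presentation (every object is a finite limit of its convolutions with the $\omega_{\sigma^\svee}$). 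You instead bypass the generation statement for $\im(\kappa)$ altogether: you unwind $\kappa$ via \cref{definition of the CCC functor} as $R\circ\lim_{\Sigma^\op}\Psi_\sigma$, note that $R$ is computed as a pointwise limit over the finite poset $\Sigma^\op$ (as described in \cref{taking module category over the family of idempotent algebras in the sheaf category}), check that each $\Psi_\sigma$ lands in $\Shv_{\Lambda_\Sigma}$ because it is colimit-preserving and sends the representable generators to $\omega_{m+\sigma^\svee}$, and finish using that $\Shv_{\Lambda_\Sigma}$ is a stable subcategory, hence closed under finite limits. This is essentially the same descent argument the paper defers to \cref{charaterization of image of kappa}, carried out inline; what it buys you is a self-contained proof at this point of the text with no forward reference, at the cost of being longer. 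Your containment check $\bigsqcup_{\tau\subseteq\sigma}(m+\tau^\perp)\times(-\tau)\subseteq\Lambda_\Sigma$ (using $\tau^\perp\subseteq\tau^\svee$ and that faces of $\sigma$ lie in $\Sigma$) and your appeal to finiteness of $\Sigma$ are both fine.
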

\begin{proof}
    The category $\im(\kappa)$ is generated as a stable category under colimit  by the objects of the form $\omega_{m+\sigma^{\svee}}$, and each of them has singular support contained in $\Lambda_\Sigma$ by \cref{global singular support of a omega sheaf}. By \cref{sheaves with singular support is closed under colimit}, the category of sheaves with prescribed singular support is closed under colimits, so the proof is done.
\end{proof}
\subsection{Combinatorics of smooth projective fan}
\label{subsection:combinatorics of smooth projective fan}
One distinguishing feature of a \stress{smooth projective} fan $\Sigma$ in $N_\RR$ is that it can be presented as the dual fan of an integral polytope $P$. See \cite[Section 1.5]{fulton} for the construction. Such polytope $P$ has the following properties:
\begin{enumerate}
    \item The Minkowski sum of $P$ with any dual cone of $\sigma\in\Sigma$ is an integral translation of the dual cone of $\sigma$.
    \item Each of the dual cone $\sigma^\svee$ can be written as an increasing union of translations of polytopes of the form $n\cdot P$, where each $n\cdot P$ is an integral multiple of the polytope $P$.
\end{enumerate}
We will see that these properties imply that after fixing one such $P$, the objects $\{\omega_{m+n\cdot P}\}$ for varying $n$ and translation along $m\in M$ form an explicit collection of compact generators for $\im(\kappa)$. On the mirror side, this is reminiscent of the familiar fact from algebraic geometry: tensor powers of ample line bundles generate the category of quasi-coherent sheaves under colimits. \par We will explain how the association $P\mapsto\omega_P$ generalizes to a bigger collection of combinatorial objects, namely, \stress{twisted polytopes}.  To start with, we will make use of the following description of $\im(\kappa)$.
\begin{prop} 
\label{charaterization of image of kappa}
The category $\im(\kappa)$ enjoys the following properties and characterizations.
\begin{enumerate}
    \item The category $\im(\kappa)$ is closed under colimits and shifts in $\Shv(M_\RR;\Sp)$.
    \item The category $\im(\kappa)$ can be characterized explicitly as
    $$\{\Fc\in\Shv(M_\RR;\Sp):\Fc*\omega_{\sigma^\svee}\in\langle\omega_{m+\sigma^\svee}:m\in M\rangle\}.$$
    \item The category $\im(\kappa)$ is generated under colimits and shifts of the following collection of objects:
    $$\{\omega_{m+\sigma^\svee}:\sigma\in \Sigma,m\in M\}.$$
    \item The category $\im(\kappa)$ is closed under convolution products in $\Shv(M_\RR;\Sp)$.
\end{enumerate}
\end{prop}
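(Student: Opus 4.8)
The plan is to derive all four statements from the identification of $\im(\kappa)$ with the essential image of the composite $\Psi_\Sigma=R\circ\bigl(\lim_{\Sigma^\op}\Psi_\sigma\bigr)$, which by \cref{definition of the CCC functor} agrees with $\kappa$ up to the equivalence $\lim_{\Sigma^\op}\Phi_\sigma$. Write $\langle S\rangle$ for the smallest full subcategory of $\Shv(\MR;\Sp)$ closed under colimits and shifts that contains a set $S$ of objects. Since $\Sigma$ is smooth and projective, the functor $R$ of \cref{taking module category over the family of idempotent algebras in the sheaf category} is an equivalence by \cref{the idempotent algebras for a smooth projective fan glues to the unit}, with inverse the localization $L\colon\Fc\mapsto(\Fc*\omega_{\sigma^\svee})_{\sigma\in\Sigma^\op}$; in particular $\Fc\simeq R(L(\Fc))\simeq\lim_{\sigma\in\Sigma^\op}(\Fc*\omega_{\sigma^\svee})$ for every sheaf $\Fc$. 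As $\lim_{\Sigma^\op}\Phi_\sigma$ and $R$ are equivalences and each $\Psi_\sigma$ is a colimit-preserving left Kan extension --- colimits in a limit of presentable categories along colimit-preserving transition functors being computed componentwise --- the functor $\kappa$ is colimit-preserving and exact; being moreover fully faithful (\cref{comparison functor is fully faithful when the fan is smooth}), its essential image is closed under colimits and shifts, which is (1). Statement (4) is immediate from the symmetric monoidality of $\kappa$: $\kappa(\mathcal{E}_1)*\kappa(\mathcal{E}_2)\simeq\kappa(\mathcal{E}_1\otimes\mathcal{E}_2)\in\im(\kappa)$.

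For (2) I would first note that each $\Psi_\sigma\colon\Fun(\Theta(\sigma)^\op,\Sp)\to\Mod_{\omega_{\sigma^\svee}}\Shv(\MR;\Sp)$ is fully faithful (\cref{comparison functor is fully faithful when the fan is smooth}), colimit-preserving, and sends the representable $h_{m+\sigma^\svee}$ to $\Gamma_\MR(m+\sigma^\svee)=\omega_{m+\sigma^\svee}$; since the representables generate the source under colimits and shifts, the essential image of $\Psi_\sigma$ is exactly $\langle\omega_{m+\sigma^\svee}:m\in M\rangle$ (the closure being insensitive to whether it is taken inside $\Mod_{\omega_{\sigma^\svee}}\Shv(\MR;\Sp)$ or inside $\Shv(\MR;\Sp)$, as the forgetful functor along an idempotent algebra is fully faithful and colimit-preserving). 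Next, $\lim_{\Sigma^\op}\Psi_\sigma$ is a limit of fully faithful functors, hence fully faithful; and because the $\Psi_\sigma$ form a natural transformation of $\Sigma^\op$-diagrams (\cref{construction of combinatorial to constructible comparison functor}), the essential images $\im(\Psi_\sigma)$ assemble into a subdiagram, so an object of the limit lies in $\im\bigl(\lim_{\Sigma^\op}\Psi_\sigma\bigr)$ if and only if each of its components does. Transporting this description along the equivalence $L$, which carries $\Fc$ to $(\Fc*\omega_{\sigma^\svee})_\sigma$, yields: $\Fc\in\im(\kappa)$ iff $\Fc*\omega_{\sigma^\svee}\in\langle\omega_{m+\sigma^\svee}:m\in M\rangle$ for every $\sigma\in\Sigma$, which is (2).

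Granting (2), I would argue (3) as follows. For the inclusion $\langle\omega_{m+\sigma^\svee}:\sigma\in\Sigma,\,m\in M\rangle\subseteq\im(\kappa)$ it suffices, using (1), to verify $\omega_{m+\sigma_0^\svee}\in\im(\kappa)$ for each $\sigma_0$, i.e.\ that $\omega_{m+\sigma_0^\svee}*\omega_{\sigma^\svee}\in\langle\omega_{m'+\sigma^\svee}:m'\in M\rangle$ for all $\sigma$. By the convolution formula (\cref{proposition with direct computation of convolution}, together with \cref{comparison between homology sheaf of open and closed polygon}) and the combinatorial identity $\sigma_0^\svee+\sigma^\svee=(\sigma_0\cap\sigma)^\svee$ (the separation lemma), this object is $\omega_{m+\rho^\svee}$ with $\rho\defeq\sigma_0\cap\sigma$ a face of $\sigma$; since $\Sigma$ is smooth one may pick a basis $v_1,\dots,v_n$ of $N$ with $\sigma=\langle v_1,\dots,v_k\rangle$ and $\rho=\langle v_1,\dots,v_j\rangle$, and then $m+\rho^\svee$ is the filtered union of the integral translates $m-\sum_{i=j+1}^{k}a_iv_i^*+\sigma^\svee$ over $a_{j+1},\dots,a_k\in\ZZ_{\ge0}$, so $\omega_{m+\rho^\svee}\simeq\colim\omega_{m'+\sigma^\svee}$ lies in the required subcategory. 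For the reverse inclusion, any $\Fc\in\im(\kappa)$ satisfies $\Fc\simeq\lim_{\sigma\in\Sigma^\op}(\Fc*\omega_{\sigma^\svee})$; since a fan has only finitely many cones this is a finite limit, hence --- in a stable category --- a finite colimit of shifts of the terms $\Fc*\omega_{\sigma^\svee}$, each of which lies in $\langle\omega_{m+\sigma^\svee}:m\in M\rangle$ by (2), so $\Fc\in\langle\omega_{m+\sigma^\svee}:\sigma\in\Sigma,\,m\in M\rangle$.

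The one genuinely delicate point is (2): identifying $\im(\kappa)$ with the stated convolution condition rests essentially on $R$ being an equivalence --- that is, on the gluing result \cref{the idempotent algebras for a smooth projective fan glues to the unit} --- and on the formal fact that the essential image of a limit of fully faithful functors is detected componentwise. Everything else becomes routine once the convolution formula $\omega_{m+\sigma^\svee}*\omega_{\tau^\svee}\simeq\omega_{m+(\sigma\cap\tau)^\svee}$ and the finiteness of $\Sigma$ are in hand.
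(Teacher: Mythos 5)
Your proof is correct and follows essentially the same route as the paper's: (1) from $\kappa$ being a fully faithful, colimit-preserving functor obtained as a limit of a diagram in $\Prl$, (2) from the limit description of $\kappa$ via descent along the idempotent algebras $\omega_{\sigma^\svee}$, (3) from expressing a sheaf as a finite limit of its pieces $\Fc*\omega_{\sigma^\svee}$ and using stability, and (4) from symmetric monoidality. You merely make explicit some details the paper leaves implicit, namely the componentwise detection of the essential image of $\lim_{\Sigma^\op}\Psi_\sigma$ and the verification, via $\omega_{m+\sigma_0^\svee}*\omega_{\sigma^\svee}\simeq\omega_{m+(\sigma_0\cap\sigma)^\svee}$ and a filtered colimit of translates, that each generator $\omega_{m+\sigma^\svee}$ indeed lies in $\im(\kappa)$.
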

\begin{proof} The first point comes from the fact that $\kappa$ is a fully faithful, colimit-preserving functor from a presentable stable category, as $\kappa$ is constructed from taking limit of a diagram in $\Prl$. The second point follows directly from the limit description of $\kappa$. For the third point, using descent along idempotent algebras, every object $X\in\Shv(M_\RR;\Sp)$ is a finite limit of a diagram, whose terms are of the form $X*\omega_{\sigma^\svee}$. Each of them lies in the category spanned by $\omega_{m+\sigma^\svee}$ by point two, so $X$ also lies in the category spanned by $\omega_{m+\sigma^\svee}$ as desired. Finally, since $\kappa$ is symmetric monoidal, its image is closed under tensor products.
\end{proof}
With this knowledge at hand, let's try to write down some objects in the category $\im(\kappa)$.
\begin{prop}    
\label{existence of moment polytopes}
    Let $\Sigma$ be a smooth projective fan, there exist (in fact, many) polytopes $P$ in $M_\RR$ with integral vertices such that $\Sigma$ can be realized as the dual fan of $P$. Coversely $P$ might be called a \stress{moment polytope} of $\Sigma$ (actually, associated to some line bundle).  More precisely, $P$ has the following properties:
    \begin{itemize}
    \item The Minkowski sum of $P$ with any dual cone $\sigma^\svee$ of $\sigma\in\Sigma$ is an integral translation of the dual cone of $\sigma$. Concretely, this says that for each $\sigma\in \Sigma$, there exists some $m\in M$ such that 
    $$P+\sigma^\svee=m+\sigma^\svee.$$
    \item Each of the dual cone $\sigma^\svee$ can be written as an increasing union of integral translations of polytopes of the form $n\cdot P$, where each $n\cdot P$ is an integral multiple of the polytope $P$. Concretely this says that for each $\sigma\in \Sigma$, one can pick a collection of $m_i\in M$ and form a increasing union
    $$\bigcup_{i\geq 0}m_i+i\cdot P=\sigma^\svee.$$
\end{itemize}
\end{prop}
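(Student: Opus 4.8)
The plan is to realize $P$ as a moment polytope of $\Sigma$ and then verify the two bullets by elementary convex geometry, using the tangent-cone description of the fan dual to $P$.

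First I would recall that ``$\Sigma$ projective'' means, by definition (\cref{notation:fan}), that there is an integral polytope $P\subset M_\RR$ whose dual fan is $\Sigma$: for each $\sigma\in\Sigma$ there is a face $F_\sigma$ of $P$ so that, for $p$ in the relative interior of $F_\sigma$, the tangent cone $\mathrm{cone}(P-p)$ of $P$ at $p$ equals $\sigma^\svee$. One can take $P$ to be the lattice polytope of any ample line bundle on the classical toric variety of $\Sigma$, as in \cite[Section 1.5]{fulton}, which is also the source of the phrase ``associated to a line bundle''. I would then normalize once: after replacing $P$ by a large dilate $N\cdot P$ --- still integral, still with dual fan $\Sigma$ --- I may assume that \emph{every face of $P$ contains a lattice point in its relative interior}. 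A uniform choice works: if $F$ is a face of dimension $d\le n$ with affinely independent lattice vertices $v_0,\dots,v_d$, then $(n{+}1{-}d)v_0+v_1+\dots+v_d$ is a lattice point in the relative interior of $(n{+}1)F$, so $N=n+1$ suffices for all faces at once.

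With this in hand, for each $\sigma$ fix a lattice point $m_\sigma$ in the relative interior of $F_\sigma$. For the first bullet: since $P$ is convex and contains $m_\sigma$, we get $P\subseteq m_\sigma+\mathrm{cone}(P-m_\sigma)=m_\sigma+\sigma^\svee$; adding $\sigma^\svee$ and using $\sigma^\svee+\sigma^\svee=\sigma^\svee$ yields $P+\sigma^\svee\subseteq m_\sigma+\sigma^\svee$, while $m_\sigma\in P$ gives the reverse inclusion, so $P+\sigma^\svee=m_\sigma+\sigma^\svee$ with $m_\sigma\in M$. For the second bullet I would reuse the same $m_\sigma$: the set $P-m_\sigma$ is compact and convex, contains the origin, and has tangent cone $\sigma^\svee$ there, so the dilates $i\cdot(P-m_\sigma)=(i\cdot P)+(-i\,m_\sigma)$ form an increasing family (because $0\in P-m_\sigma$) whose union is $\sigma^\svee$. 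Setting $m_i\defeq -i\,m_\sigma\in M$ then gives $\bigcup_{i\ge0}(m_i+i\cdot P)=\sigma^\svee$.

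The only genuine subtlety --- and the step I would be careful about --- is keeping the translation vectors in the lattice: the relative interior of a face of an arbitrary moment polytope can be lattice-point-free, and this is precisely what the dilation normalization repairs. One also has to fix the orientation of the fan/polytope duality correctly: in the paper's convention $\sigma$ is dual to the inward angle of $P$ along $F_\sigma$, i.e.\ the cone of linear functionals \emph{minimized} on $F_\sigma$, so that ``$m_\sigma\in F_\sigma$'' forces $P\subseteq m_\sigma+\sigma^\svee$ rather than the opposite containment. Everything else is routine, and no input beyond \cref{notation:fan} and \cite[Section 1.5]{fulton} is needed.
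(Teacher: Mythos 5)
Your argument is correct, and it actually supplies details the paper deliberately omits: the paper's own "proof" just notes that existence of an integral polytope with dual fan $\Sigma$ is built into its definition of projectivity (citing Fulton, Section 1.5) and declares the two bullet points "direct combinatorics". Your route matches that outline but adds one genuinely new ingredient, the dilation normalization $P \mapsto (n+1)P$ guaranteeing a lattice point in the relative interior of every face; this is legitimate because the proposition is existential (and dilation preserves integrality and the normal fan), and it is the cleanest way to get the second bullet with the linear choice $m_i = -i\,m_\sigma$. Two small remarks for comparison. First, for the first bullet the dilation is not needed: any vertex $v$ of the face $F_\sigma$ is already integral, and since the tangent cone of $P$ at $v$ is $\tau^\svee$ for the maximal cone $\tau \supseteq \sigma$ at $v$, one still has $P \subseteq v + \tau^\svee \subseteq v + \sigma^\svee$, whence $P + \sigma^\svee = v + \sigma^\svee$. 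Second, for the second bullet something like your normalization (or, alternatively, a more flexible, non-linear choice of the translations $m_i$ staying close to $-i\,p$ for $p \in \mathrm{relint}(F_\sigma)$) genuinely is required, since a vertex only generates $\tau^\svee \subsetneq \sigma^\svee$; your observation that for a polytope the cone generated by $P - m_\sigma$ is finitely generated, hence closed, is also the right reason there is no closure issue in identifying $\bigcup_{i\ge 0} i(P - m_\sigma)$ with $\sigma^\svee$, and your reading of the paper's duality convention (functionals in $\sigma$ are minimized along $F_\sigma$, so the angle along $F_\sigma$ is $\sigma^\svee$) is the correct one.
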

\begin{proof}
    The existence of such a polytope is assumed by the definition of a projective fan (see also \cite[Section 1.5]{fulton}). Both claims about the polytopes are direct combinatorics and we omit the proof. 
\end{proof}

We will consider the object $\omega_P\in \Shv(M_\RR;\Sp)$. 

\begin{prop}
    For such a moment polytope $P$ as above:
    \begin{enumerate}
        \item The object $\omega_P$ lies in $\im(\kappa)$.
        \item The object $\omega_P$ is a compact object in $\Cons_{\Sc_\Sigma}(M_\RR;\Sp)$ and hence also compact in $\im(\kappa)$.
        \item The same is true for $\omega_{m+n\cdot P}$ for each $m\in M$ and $n\in \ZZ_{>0}$. Moreover, these objects compactly generate the category $\im(\kappa)$.
    \end{enumerate}
\end{prop}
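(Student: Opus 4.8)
The plan is to handle the three claims in order, leaning on the description of $\im(\kappa)$ from \cref{charaterization of image of kappa}, the explicit convolution computation \cref{proposition with direct computation of convolution}, the open-versus-closed comparison \cref{comparison between homology sheaf of open and closed polygon}, and the combinatorics of moment polytopes in \cref{existence of moment polytopes}. To see $\omega_P\in\im(\kappa)$, I would use the criterion \cref{charaterization of image of kappa}(2): it suffices to check that $\omega_P*\omega_{\sigma^\svee}\in\langle\omega_{m+\sigma^\svee}:m\in M\rangle$ for every $\sigma\in\Sigma$. Using \cref{comparison between homology sheaf of open and closed polygon} I would first replace $\omega_P$ and $\omega_{\sigma^\svee}$ by the shifted constant sheaves $\underline\SS_{P^\circ}[\dim M_\RR]$ and $\underline\SS_{\sigma^{\svee,\circ}}[\dim M_\RR]$ on their (polyhedral, open) interiors; then \cref{proposition with direct computation of convolution}(2) gives $\omega_P*\omega_{\sigma^\svee}\simeq\underline\SS_{P^\circ+\sigma^{\svee,\circ}}[\dim M_\RR]$. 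Since $P^\circ+\sigma^{\svee,\circ}=(P+\sigma^\svee)^\circ$ and $P+\sigma^\svee=m_\sigma+\sigma^\svee$ for a suitable $m_\sigma\in M$ by the first bullet of \cref{existence of moment polytopes}, this is $\underline\SS_{m_\sigma+\sigma^{\svee,\circ}}[\dim M_\RR]\simeq\omega_{m_\sigma+\sigma^\svee}$ (again by \cref{comparison between homology sheaf of open and closed polygon}), which is one of the asserted generators. This step is essentially formal.

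For the compactness of $\omega_P$: by the previous paragraph $\omega_P$ lies in $\Cons_{\Sc_\Sigma}(M_\RR;\Sp)$, and since $\Sigma$ is smooth projective the stratum poset $P_\Sigma$ of $\Sc_\Sigma$ is locally finite, so \cref{theorem:exodromysimple} identifies $\Cons_{\Sc_\Sigma}(M_\RR;\Sp)\simeq\Fun(P_\Sigma,\Sp)$, a sheaf $\Fc$ corresponding to $q\mapsto\Fc(\openstar_q)$. Under this equivalence $\omega_P\simeq\underline\SS_{P^\circ}[\dim M_\RR]$ goes to $q\mapsto\underline\SS_{P^\circ}(\openstar_q)[\dim M_\RR]\simeq\fib\bigl(\SS\to\underline\SS_{M_\RR}(\openstar_q\setminus P^\circ)\bigr)[\dim M_\RR]$, and I would argue that (i) this vanishes unless $\openstar_q\cap P^\circ\neq\emptyset$, which forces $q$ to meet the compact set $P$, so local finiteness leaves only finitely many such $q$; and (ii) each nonzero value is a finite spectrum, being a shift of the relative cohomology of a pair of polyhedral sets. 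Then \cref{characterization of compact in functor category on poset} gives compactness of $\omega_P$ in $\Cons_{\Sc_\Sigma}(M_\RR;\Sp)$, and compactness in the full, colimit-closed subcategory $\im(\kappa)$ (\cref{charaterization of image of kappa}(1)) follows formally. I expect point (ii) to be the main obstacle: there is no off-the-shelf finiteness theory for constructible sheaves of spectra, so one must either argue the piecewise-linear finiteness by hand, or sidestep it by passing through the constant sheaf $\underline\SS_P$, which corresponds under exodromy to the manifestly compact functor sending a stratum $q$ to $\SS$ if $q\subseteq P$ and to $0$ otherwise, and relating $\omega_P$ to it by Verdier duality.

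Finally, for the family $\{\omega_{m+n\cdot P}\}$: for $n\in\ZZ_{>0}$ the polytope $n\cdot P$ is again an integral moment polytope for $\Sigma$, and $m+n\cdot P$ is an integral translate of it that is a union of $\Sc_\Sigma$-strata, because each facet of $P$ lies on a hyperplane $\{\langle -,v_\sigma\rangle=a_\sigma\}$ with $v_\sigma$ a primitive generator of some $\sigma\in\Sigma(1)$ and $a_\sigma\in\ZZ$, so the facets of $m+n\cdot P$ lie on hyperplanes of $H_\Sigma$ (translation by $M$ preserves $H_\Sigma$, cf.\ \cref{definition of FLTZ stratification}). Hence the two preceding paragraphs apply verbatim with $m+n\cdot P$ in place of $P$, giving $\omega_{m+n\cdot P}\in\im(\kappa)$ and compact. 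For generation, \cref{charaterization of image of kappa}(3) says $\im(\kappa)$ is generated under colimits and shifts by $\{\omega_{m+\sigma^\svee}:\sigma\in\Sigma,\ m\in M\}$; writing $\sigma^\svee$ — equivalently $\sigma^{\svee,\circ}$ — as the increasing union $\bigcup_{i\ge1}(m_i+i\cdot P)$ of \cref{existence of moment polytopes}(second bullet), translating by $m$, and using that $j_!$ of a constant sheaf carries an increasing union of opens to the corresponding colimit, one obtains $\omega_{m+\sigma^\svee}\simeq\colim_i\omega_{(m+m_i)+i\cdot P}$ (via \cref{comparison between homology sheaf of open and closed polygon} once more). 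Thus every generator of $\im(\kappa)$ lies in the localizing subcategory generated by $\{\omega_{m'+n\cdot P}:m'\in M,\ n\in\ZZ_{>0}\}$, so this family generates $\im(\kappa)$ under colimits and shifts, and being compact it compactly generates $\im(\kappa)$.
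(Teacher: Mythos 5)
Your proposal is correct and follows essentially the same route as the paper's proof: point (1) via the image criterion of \cref{charaterization of image of kappa}(2) together with the Minkowski-sum identity $P+\sigma^\svee=m_\sigma+\sigma^\svee$ computed through \cref{proposition with direct computation of convolution} and \cref{comparison between homology sheaf of open and closed polygon}, point (2) via exodromy and the compactness criterion \cref{characterization of compact in functor category on poset} using boundedness of $P$, and point (3) via the increasing-union/filtered-colimit presentation $\omega_{\sigma^\svee}\simeq\colim\omega_{m_i+i\cdot P}$. You merely spell out details the paper leaves implicit (e.g.\ that $m+n\cdot P$ is a union of strata and the stalkwise colimit argument), so no further comparison is needed.
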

\begin{proof}
The first point comes from the characterization of $\im(\kappa)$ in point (2) of \cref{charaterization of image of kappa}. We can compute $$\omega_P*\omega_{\sigma^\svee}\simeq\omega_{P+\sigma^\svee}\simeq\omega_{m+\omega_{\sigma^\svee}}$$  using that  $P$ is a moment polytope. The second point comes from an application of the exodromy equivalence and the description of compact objects in the functor categories by \cref{characterization of compact in functor category on poset}, using that such polytope $P$ is assumed to be bounded. For the final point, one can write each $\sigma^\svee$ as an increasing union of polytopes of the form $m+n\cdot P$. It follows that there is a filtered colimit presentation
$$\colim_{m+n\cdot P\subseteq \sigma^\svee}\omega_{m+n\cdot P}\simeq \omega_{\sigma^\svee}.$$
Up to translation, this shows that every $\omega_{m+\sigma^\svee}$ can be written as a colimit of $\omega_{m+n\cdot P}$. Hence $\im(\kappa)$ is generated by $\omega_{m+n\cdot P}$ for varying $m\in M$ and $n>0$.
\end{proof}
\begin{rem}[Divisors and piecewise linear functions]
\label{divisors and piecewise linear functions}
    Here we give two more combinatorial ways to present the data of such polytope $P$. Firstly as `divisors': the polytope $P$ is the intersection of several half-spaces in $M_\RR$, indexed by the $1$-cones $\eta\in\Sigma(1)$. Let us fix the \stress{ primitive integral vectors} $v_\eta\in N$ for each $\eta\in\Sigma(1)$, then we can write
    $$P=\bigcap_{\eta\in\Sigma(1)}\{m\in M_\RR:\langle m,v_\eta\rangle\geq -n_\eta\in \ZZ\}.$$
    Hence we can recover the polytope $P$ from the collection of integers $\{n_\eta:\eta\in\Sigma(1)\}$. More generally by a \stress{divisor} we would mean such a sequence of integers $\{n_\eta:\eta\in\Sigma(1)\}$ and we write $D$ for a divisor. In case of a moment polytope $P$ we write $D_P$ for the associated divisor as above. Note that one can make sense of the addition of divisors as componentwise addition.\par
    Secondly as \stress{piecewise linear functions}: given a divisor $D_P=\{n_\eta:\eta\in\Sigma(1)\}$ coming from a moment polytope $P$, one may extend the assignment $\nu_\eta\mapsto -n_\eta$  $\RR$-linearly on each cone to obtain an $\RR$-valued function $f_P$ on $N_\RR$ (here we use that the fan is smooth and projective). For each top dimensional cone $\sigma$, there is a unique $m_\sigma\in M$ such that when restricted to $\sigma$
    $$\langle m_\sigma,-\rangle = f_P(-)_{|_{\sigma}}.$$
    Such $\{m_\sigma\}$ is precisely the collection of vertices of $P$, see \cite[Section 3.4]{fulton}. So one might recover the polytope $P$ from the data of $f_P$. This is part of the beautiful connection between line bundles, divisors and piecewise linear functions, as explained in Fulton's book.
\end{rem}
\begin{var}[Twisted polytopes]
\label{assign sheaf to twisted polytopes}
It is not true that every divisor $D=\{n_\eta\}$ or every integral piecewise linear function $f$ corresponds to a polytope. However, one can still write down an object in $\im(\kappa)$ starting from such data. Let us explain the idea here: fix a collection of integers $\{n_\eta\}$ as a divisor $D$. We may construct an  piecewise linear integral function $f$ on $\MR$ in the same way as above. This piecewise linear integral function $f$ determines and is determined by a collection of elements $\{m_\sigma\in M:\sigma\in\Sigma(n)\}$. We might consider the collection of closed subsets
$$\{m_\sigma+\sigma^\svee\subseteq M_\RR:\sigma\in\Sigma(n)\}.$$
The fact that $m_\sigma$ and $m_\tau$ agrees as functions on $\sigma\cap\tau$ (as they both agree with $f$) implies that 
$$m_\sigma+(\sigma\cap\tau)^\svee=m_\tau+(\sigma\cap\tau)^\svee.$$
In fact, the function $f$ (or the divisor $D$) determines an integral element 
$$m_\sigma\in M/\sigma^\perp$$
for each $\sigma\in \Sigma$. Thus the subset
$$m_\sigma+\sigma^\svee\subseteq M_\RR$$
is well defined. Note that, by definition, we have
$$(m_\sigma+\sigma^\svee) +\tau^\svee=m_\tau+\tau^\svee$$
for $\tau\subseteq\sigma$.
We claim that the collection of objects
$$\{\omega_{m_{\sigma}+\sigma^\svee}\in \Mod_{\omega_{\sigma^\svee}}:\sigma\in\Sigma\}$$
determines an object in $\im(\kappa)$ using descent along idempotent algebras, as described in \cref{subsection on idempotent descent}. In other words, we claim that there exists an object $\omega(D)\in \im(\kappa)$ such that, functorially in $\sigma$,
$$\omega(D)*\omega_{\sigma^\svee}\simeq\omega_{m_\sigma+\sigma^\svee}.$$
To do so, it requires to provide isomorphisms
$$\omega_{m_\sigma+\sigma^\svee}*\omega_{\tau^\svee}\overset{\simeq}{\longrightarrow}\omega_{m_\tau+\tau^\svee}$$
for $\tau\subset\sigma$, and the homotopies between compositions and so on. The existence of such isomorphisms should follow from the equality
$$(m_\sigma+\sigma^\svee) +\tau^\svee=m_\tau+\tau^\svee$$
above. Formally, to construct $\omega(D)$, one can apply the functor $\Gamma_{M_\RR}$ to the collection of subsets $\{m_\sigma+\sigma^\svee\}$ with inclusions between them. We leave the details of the proof to the readers.\par If the divisor $D$ corresponds to an actual polytope $P$, this construction  recovers $\omega_P$. We  call a divisor \stress{twisted polytope} as it needs not to come from a polytope and the assignment $D\mapsto\omega(D)$ generalizes $P\mapsto\omega_P$. 
\end{var}
\begin{rem}
\label{assign sheaf to divisor is additive}
    The passage from moment polytopes $P$ to  divisors $D_P$ is additive in the sense that it takes Minkowski sum of moment polytopes to componentwise addition of divisors. In a similar fashion, the passage from divisors $D$ to sheaves $\omega_D$ is additive: it takes componentwise addition of divisors to convolution product of sheaves
    $$\omega(D_1+D_2)\simeq\omega(D_1)*\omega(D_2).$$
 This can be checked after convolution with each $\omega_{\sigma^\svee}$: one has
    $$\omega_{m_1+\sigma^\svee}*\omega_{m_2+\sigma^\svee}\simeq\omega_{m_1+m_2+\sigma^\svee}.$$
    One can carefully prove that the assignment $D\mapsto\omega(D)$ is a symmetric monoidal functor, but we will not do so.
\end{rem}
\begin{rem}[Every divisor is dominated by an ample divisor]
\label{divisor+high power of moment polytope is moment polytope}
    Even though not every divisor $D$ comes from a polytope, it is true that  after adding a large multiple of a divisor $D_p$ coming from a polytope, the divisor $D+n\cdot D_P$ corresponds to a polytope. To see this, use the characterization of the divisors corresponding to a polytope as strictly convex functions, proved in \cite[Section 3.4]{fulton}. For algebraic geometers, this is similar to the fact that a line bundle will become ample after tensoring with a sufficiently ample (positive) line bundle. 
\end{rem}
\begin{var}[Sheaves and polytopes from $\RR$-coefficient divisors]
\label{real coefficient divisors}
    The assumption that a divisor $D=\{n_\eta\}$ is a collection of integers or a piecewise linear function $f_P$ is integral on each cone is not essential in the above discussion: one can write down objects in $\Shv(M_\RR;\Sp)$ from the data of an $\RR$-coefficient `divisor' $\{r_\eta\}$, or equivalently, a piecewise linear function $f$ on $N_\RR$. We leave the details to the reader as we will not use them in our exposition.
\end{var}
\subsection{Microlocal characterization of the image of \texorpdfstring{$\kappa$}{κ}}
\label{subsection: microlocal characterization of image}
In this section we prove the promised characterization of $\im(\kappa)$. Before presenting the details of the proof, we briefly explain the proof idea here. We are going to show that the right orthogonal of the image $\im(\kappa)$ in $\Shv_{\Lambda_\Sigma}(M_\RR;\Sp)$ is zero. This  follows from the following explicit construction: for each $x\in M_\RR$, we construct an object $\omega(D_x)\in\im(\kappa)$ in the image of $\kappa$, such that $\omega(D_X)$ corepresents the functor of taking stalk at $x$ in $\Shv_{\Lambda_\Sigma}(M_\RR;\Sp)$. Formally,
$$\map(\omega(D_x),\Fc)[n]\simeq\Fc_x$$
holds for all $\Fc\in \Shv_{\Lambda_\Sigma}(M_\RR;\Sp).$
It follows that an object in the right orthogonal of $\im(\kappa)$ will have vanishing stalks everywhere, so such object has to be zero. With the help of adjoint functor theorem, this proves that \[\im(\kappa)=\Shv_{\Lambda_\Sigma}(\MR;\Sp).\] To prove such a statement about $\omega(D_x)$, we will apply the non-characteristic deformation lemma, after convoluting with a large enough multiple of $\omega_P$, where $P$ is a moment polytope of $\Sigma$.\par
The use of convolution product in the proof introduces some complication - as we don't know a priori if the category $\Shv_{\Lambda_\Sigma}(M_\RR;\Sp)$ is closed under convolution product (we will prove it nonetheless is, a fortiori).
This is where our narrative diverges from \cite{zhou2017twistedpolytopesheavescoherentconstructible}: we get around this issue by introducing an intermediate category as in \cref{warning on not finishing the proof}. Note that in \cite{zhou2017twistedpolytopesheavescoherentconstructible} this complication was not explicitly addressed.
\begin{rem}
\label{question about compact generation of singular support}
As a consequence of the proof, we will show that the category 
$\Shv_{\Lambda_\Sigma}(M_\RR;\Sp)$ is compactly generated, and specify an explicit collection of generators.
Now, in general, for each conic Lagrangian $L$ in the cotangent bundle of a manifold $X$,
one can define (as in \cite{Jin_2024}) a category of sheaves (of spectra) with singular support lying inside $L$.
We are curious if said category is always compactly generated and if there is a natural procedure to pick out compact generators in that category.
Specifically, as the microlocal stalk functor is one profitable perspective offered by the microlocal analysis of sheaves,
we are curious if there is any natural way to write down corepresenting objects for microlocal stalk functor and compute the mapping spectra between them.
\end{rem}
We begin by defining the object $\omega(D_x)$ mentioned above.
\begin{defn} \cite[Definition 4.1]{zhou2017twistedpolytopesheavescoherentconstructible} For a point $x\in M_\RR$, we define the \stress{probing sheaf at $x$}
    $$\omega(D_x)\in\Shv(M_\RR;\Sp)$$
    to be the object associated to the divisor
    $$D_x=\{n_\eta(D_x)=\floor{-\langle x,\nu_\eta\rangle}+1:\eta\in\Sigma(1)\}$$
    via the construction of \cref{assign sheaf to twisted polytopes}. The integer $\floor{-\langle x,\nu_\eta\rangle}+1$ is the smallest integer strictly larger than $-\langle x,\nu_\eta\rangle$. Note that by \cref{charaterization of image of kappa}  we have $\omega(D_x)\in \im(\kappa)$.
\end{defn}
The naming comes from the following theorem, whose proof takes up the rest of the section:
\begin{thm}
\label{the theorem of Dx corepresents taking stalk on sheaf with singular support}
    For an arbitrary sheaf $\Fc\in\Shv_{\Lambda_\Sigma}(M_\RR;\Sp)$, there exists an isomorphism (which we will construct explicitly in the proof)
    $$\map(\omega(D_x),\Fc)[n]\overset{\simeq}{\longrightarrow}\Fc_x\in\Sp.$$
\end{thm}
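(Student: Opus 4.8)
The plan is to reduce the statement to the non-characteristic deformation lemma (\cref{non characteristic deformation}) applied to $\Fc$ convolved with a large multiple of a moment polytope. First I would fix a moment polytope $P$ for $\Sigma$ and observe that for a sheaf $\Fc\in\Shv_{\Lambda_\Sigma}(M_\RR;\Sp)$, computing $\map(\omega(D_x),\Fc)[n]$ can be rewritten using the internal hom adjunction for convolution: since $\omega(D_x)$ is built by gluing the pieces $\omega_{m_\sigma+\sigma^\svee}$ and each $\omega_{\sigma^\svee}$ is an idempotent algebra, we can work one cone at a time and reassemble via the idempotent descent of \cref{subsection on idempotent descent}. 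The key reduction is that $\omega(D_x)$ convolved with $\omega_{n\cdot P}$ (for $n$ large) becomes $\omega(D_x + n\cdot D_P)$, which by \cref{divisor+high power of moment polytope is moment polytope} is the dualizing sheaf of an honest polytope $Q_{x,n}$; and this polytope has the crucial geometric feature that $\floor{-\langle x,\nu_\eta\rangle}+1 > -\langle x,\nu_\eta\rangle$ means $x$ lies in the interior of $Q_{x,n}$ translated appropriately, but sits on the boundary in a controlled way as we let the defining inequalities vary.

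The heart of the argument is then to set up the deformation. I would choose a one-parameter family of open sets $\{U_s\}_{s\in\RR}$ in $M_\RR$ interpolating between a small neighborhood of $x$ (or the relevant polytope interior) and all of $M_\RR$ — concretely, scaling the polytope $Q_{x,n}$ outward from $x$, or equivalently sliding the hyperplanes $\{\langle m,\nu_\eta\rangle = -n_\eta\}$ in the $\nu_\eta$ direction. Conditions (1) and (2) of \cref{non characteristic deformation} are automatic from the polyhedral/convex geometry and the fact that $\supp(\Fc)$ meets each compact region in a closed set. Condition (3) is where the singular support hypothesis $\musupp(\Fc)\subseteq\Lambda_\Sigma$ gets used: at each boundary point $y\in Z_s$, the conormal direction in which we are deforming is $\nu_\eta$ for some $1$-cone $\eta$, and one checks that $(y,\nu_\eta)\notin\Lambda_\Sigma$ precisely because the deformation is arranged (via the $\floor{\cdot}+1$, i.e. strict inequalities) to avoid the rays $-\sigma$ appearing in $\Lambda_\Sigma$. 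This non-characteristic condition forces $i^!(\Fc)_y = 0$, i.e. $\Fc_y \xrightarrow{\simeq} (j_*j^*\Fc)_y$. Applying the lemma gives $\Fc(\bigcup_s U_s)\xrightarrow{\simeq}\Fc(U_t)$, and unwinding, $\map(\omega_{Q_{x,n}},\Fc)$ computes the sections over a shrinking neighborhood of $x$, hence the stalk (up to the shift $[n]$ coming from $\omega$ versus $\underline{\SS}$ on a top-dimensional open, cf. \cref{comparison between homology sheaf of open and closed polygon} and \cref{proposition with direct computation of convolution}).

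To remove the auxiliary $n$ and get the statement for $\omega(D_x)$ itself rather than $\omega(D_x + n\cdot D_P)$, I would use that convolution with $\omega_{n\cdot P}$ is, on $\im(\kappa)$, invertible up to the translation/limit structure — more precisely, that $\omega_P$ is a $\otimes$-invertible-like object after passing to an appropriate localization, or simply take a colimit over $n$ using the filtered presentations $\bigcup_i (m_i + i\cdot P) = \sigma^\svee$ from \cref{existence of moment polytopes}, which expresses $\omega(D_x)$ itself as a filtered colimit of the $\omega(D_x+n\cdot D_P)$ up to translation; since $\Fc$ is fixed and mapping spectra out of a filtered colimit is a limit, and the deformation statement is compatible across the tower, the isomorphism passes to the limit. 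I expect the main obstacle to be the careful bookkeeping in condition (3): one must verify, with the stratification $\Sc_\Sigma$ fixed, that every boundary point encountered during the deformation is genuinely non-characteristic for $\Lambda_\Sigma$, which requires matching the combinatorics of the divisor $D_x$ (the $\floor{\cdot}+1$ rounding) against the cones $\sigma$ and their duals appearing in $\Lambda_\Sigma$ — this is the step where the proof in \cite{zhou2017twistedpolytopesheavescoherentconstructible} is terse and where I would spend the most care, and it is also where the intermediate category of \cref{warning on not finishing the proof} is needed to make sense of the convolution $\Fc * \omega_{n\cdot P}$ before we know $\Shv_{\Lambda_\Sigma}(M_\RR;\Sp)$ is closed under convolution.
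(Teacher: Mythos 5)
Your deformation is aimed at the wrong pair of regions, and this breaks condition (3) of \cref{non characteristic deformation}. You propose to deform $\Fc$ itself from a small neighborhood of $x$ out to the big polytope $Q_{x,n}$ (or even to all of $M_\RR$). But such a family sweeps each facet level $\langle -,\nu_\eta\rangle=-n_{\eta,s}$ across many integer values, and at every boundary point lying on an integer translate $m+\eta^\perp$ the fiber of $\Lambda_\Sigma$ contains $-\nu_\eta$, which also lies in the outward conormal cone $-\sigma(y)$ of the facet; so the costalk-vanishing hypothesis fails there. The $\floor{\cdot}+1$ rounding in $D_x$ only protects a sweep of \emph{less than one integer step per facet}: this is exactly the content of \cref{no integer coefficient in the family} and \cref{estimation of singular support with the family}, and it is why the paper's family runs only between $x+(1+\epsilon)\cdot P$ and $P_x$, both translates of the full moment polytope, applied to the \emph{convolved} sheaf $\Gc*\omega_P$. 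The passage from ``sections over $x+(1+\epsilon)\cdot P^\circ$'' to the actual stalk at $x$ is then achieved not by deforming further but by un-convolving: invertibility of $\omega_P$ identifies $(\Gc*\omega_P)(x+(1+\epsilon)\cdot P^\circ)$ with $\Gc(x+\epsilon\cdot P^\circ)$, and one takes the colimit as $\epsilon\to 0$. (Also, at corners of the polytope the relevant obstruction is the whole cone $-\sigma(y)$, not a single covector $\nu_\eta$; this is why the paper verifies condition (3) by a Fourier--Sato computation rather than facet-by-facet.)

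Your mechanism for removing the auxiliary convolution does not work either. The claim that $\omega(D_x)$ is a filtered colimit of the $\omega(D_x+n\cdot D_P)$ up to translation is false: $\omega(D_x+n\cdot D_P)=\omega(D_x)*\omega_{n\cdot P}$, and no choice of translations makes $\omega_{n\cdot P}$ converge to the convolution unit $\underline\SS_{\{0\}}$, since its support grows with $n$ (the filtered presentations of \cref{existence of moment polytopes} produce $\omega_{\sigma^\svee}$ cone by cone, not the unit). The ``invertible after localization'' alternative is not needed -- $\omega_P$ is honestly convolution-invertible in $\Shv(\MR;\Sp)$ -- but using it forces you to control $\Fc*\omega_P$ (or $\Fc*\omega_P^{-1}$), which is precisely the unresolved point of \cref{warning on not finishing the proof} that you cite without resolving. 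The paper's resolution is a genuinely different piece of logic: it proves the corepresentability statement only on the intermediate category $\Cc=\{\Gc:\Gc*\omega_P\in\Shv_{\Lambda_\Sigma}(\MR;\Sp)\}$, runs the right-orthogonal argument there to conclude $\Cc=\im(\kappa)$, and then uses $(-)*\omega_P^{-1}:\Shv_{\Lambda_\Sigma}\to\Cc=\im(\kappa)$ together with closure of $\im(\kappa)$ under convolution (\cref{charaterization of image of kappa}) to get $\Shv_{\Lambda_\Sigma}=\im(\kappa)=\Cc$; only after this identification does the theorem, as stated for all of $\Shv_{\Lambda_\Sigma}(\MR;\Sp)$, follow. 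Without this bootstrap (or a direct proof that convolution with $\omega_P$ preserves $\Shv_{\Lambda_\Sigma}$), your argument establishes corepresentability only for sheaves already known to behave well under convolution with $\omega_P$, not for an arbitrary $\Fc\in\Shv_{\Lambda_\Sigma}(\MR;\Sp)$.
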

Given this, one can look at the inclusion $\im(\kappa)\rightarrow\Shv_{\Lambda_\Sigma}(M_\RR;\Sp)$: the right orthogonal of $\im(\kappa)$ vanishes because any object in there would have vanishing stalks everywhere. Applying the adjoint functor theorem to the inclusion functor, one obtains a right adjoint $\Shv_{\Lambda_\Sigma}(M_\RR;\Sp)\rightarrow\im(\kappa)$ such that the composition with inclusion is identity on $\Shv_{\Lambda_\Sigma}(M_\RR;\Sp)$ - which proves that the inclusion is essentially surjective. We have obtained the following corollary.
\begin{cor}
\label{Microlocal characterization of the image}
    There is an identification of full subcategories in $\Shv(M_\RR;\Sp)$:
    $$\im(\kappa)=\Shv_{\Lambda_\Sigma}(M_\RR;\Sp).$$
\end{cor}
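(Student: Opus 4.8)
The plan is to prove the two inclusions separately. The inclusion $\im(\kappa)\subseteq\Shv_{\Lambda_\Sigma}(M_\RR;\Sp)$ is already recorded above: $\im(\kappa)$ is generated under colimits and shifts by the sheaves $\omega_{m+\sigma^\svee}$ by point~(3) of \cref{charaterization of image of kappa}, each of these has singular support inside $\Lambda_\Sigma$ by \cref{global singular support of a omega sheaf}, and $\Shv_{\Lambda_\Sigma}(M_\RR;\Sp)$ is closed under colimits and shifts by \cref{sheaves with singular support is closed under colimit}. So the entire content is the reverse inclusion, which I would reformulate as the assertion that the fully faithful inclusion $\iota\colon\im(\kappa)\hookrightarrow\Shv_{\Lambda_\Sigma}(M_\RR;\Sp)$ is essentially surjective. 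Since $\im(\kappa)$ is presentable and stable (it is equivalent via $\kappa$ to $\QCoh([X_\Sigma/\TT])$) and its colimits are computed in $\Shv(M_\RR;\Sp)$ by point~(1) of \cref{charaterization of image of kappa}, while $\Shv_{\Lambda_\Sigma}(M_\RR;\Sp)$ is closed under those colimits, the inclusion $\iota$ preserves colimits between presentable categories, so the adjoint functor theorem gives a right adjoint $\iota^R$. A formal computation using full faithfulness of $\iota$ shows that for every $\Fc$ the cofiber of the counit $\iota\iota^R\Fc\to\Fc$ lies in the right orthogonal $\im(\kappa)^\perp\subseteq\Shv_{\Lambda_\Sigma}(M_\RR;\Sp)$; hence it suffices to prove $\im(\kappa)^\perp=0$, for then every counit is an equivalence and $\iota$ is essentially surjective.

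To prove $\im(\kappa)^\perp=0$, let $\Fc\in\Shv_{\Lambda_\Sigma}(M_\RR;\Sp)$ be right orthogonal to all of $\im(\kappa)$. For each $x\in M_\RR$ the probing sheaf $\omega(D_x)$ of \cref{assign sheaf to twisted polytopes} lies in $\im(\kappa)$ by point~(2) of \cref{charaterization of image of kappa}, so $\map(\omega(D_x),\Fc)\simeq 0$; by \cref{the theorem of Dx corepresents taking stalk on sheaf with singular support} this mapping spectrum is, after the shift $[n]$, the stalk $\Fc_x$, whence $\Fc_x\simeq 0$ for all $x$. Since $M_\RR$ is a finite-dimensional manifold, a sheaf of spectra on it is zero once all of its stalks vanish, so $\Fc\simeq 0$, as needed. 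This finishes the argument modulo \cref{the theorem of Dx corepresents taking stalk on sheaf with singular support}.

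The real work — and the step I expect to be the main obstacle — is \cref{the theorem of Dx corepresents taking stalk on sheaf with singular support} itself. First I would write down the comparison map $\map(\omega(D_x),\Fc)[n]\to\Fc_x$ directly from the description of $\omega(D_x)$ as $\Gamma_{M_\RR}$ of the twisted polytope $D_x$: since $\omega(D_x)$ is glued from the $\omega_{m_\sigma+\sigma^\svee}$, mapping out of it computes a limit of sections of $\Fc$ over the translated open dual cones $m_\sigma+\sigma^{\svee,\circ}$ cut out by $D_x$, and this limit maps canonically to the stalk at $x$. To see the map is an equivalence, the plan is to convolve with a large multiple $\omega_{n_0 P}$ associated to a moment polytope $P$ of $\Sigma$: by \cref{divisor+high power of moment polytope is moment polytope} the divisor $D_x+n_0 D_P$ is then an honest bounded polytope $Q$, so $\omega(D_x)*\omega_{n_0 P}\simeq\omega_Q$, and the claim becomes the concrete statement that the sections of the convolved sheaf over an expanding family of open polyhedra exhausting a neighbourhood of $x$ compute a single stalk. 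This is exactly what the non-characteristic deformation lemma \cref{non characteristic deformation} handles: one builds an increasing exhaustion $\{U_s\}_{s\in\RR}$ by open polyhedra sweeping towards $x$ with boundary walls chosen transverse to $\Lambda_\Sigma$ using the combinatorics of \cref{subsection:combinatorics of smooth projective fan}, checks the compactness hypothesis from boundedness of $Q$, and checks the non-characteristic hypothesis $i^!(\Fc*\omega_{n_0 P})_y=0$ at the wall points $y\in Z_s$ from the bound $\musupp(\Fc)\subseteq\Lambda_\Sigma$ via \cref{fourier-sato of conic sheaf of an open cone}. The genuine subtlety — the point where the exposition of \cite{zhou2017twistedpolytopesheavescoherentconstructible} has to be supplemented — is that $\Shv_{\Lambda_\Sigma}(M_\RR;\Sp)$ is not known a priori to be closed under convolution with $\omega_{n_0 P}$, so one cannot assert $\Fc*\omega_{n_0 P}\in\Shv_{\Lambda_\Sigma}$ before the theorem is proved; I would run the deformation argument inside a slightly larger intermediate category of $\Sc_\Sigma$-constructible sheaves whose singular support is controlled only along the walls relevant to the chosen exhaustion (as in \cref{warning on not finishing the proof}), deduce corepresentability there, and transport the conclusion back. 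Once \cref{the theorem of Dx corepresents taking stalk on sheaf with singular support} is established, the two preceding paragraphs complete the proof, and as a byproduct one learns that $\Shv_{\Lambda_\Sigma}(M_\RR;\Sp)=\im(\kappa)$ is compactly generated by the $\omega_{m+n\cdot P}$ and closed under convolution.
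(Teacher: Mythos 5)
Your top-level derivation of the corollary is the same as the paper's: both inclusions, with the forward one exactly as in \cref{charaterization of image of kappa}, \cref{global singular support of a omega sheaf} and \cref{sheaves with singular support is closed under colimit}, and the reverse one reduced to the vanishing of the right orthogonal of $\im(\kappa)$ inside $\Shv_{\Lambda_\Sigma}(M_\RR;\Sp)$ via the probing sheaves $\omega(D_x)$ and the adjoint functor theorem. The real issue is your sketch of \cref{the theorem of Dx corepresents taking stalk on sheaf with singular support}, specifically your proposed cure for the subtlety you correctly flag (that of \cref{warning on not finishing the proof}). Working ``inside a slightly larger intermediate category of $\Sc_\Sigma$-constructible sheaves whose singular support is controlled only along the walls relevant to the chosen exhaustion'' does not address the actual problem: for $\Fc\in\Shv_{\Lambda_\Sigma}(M_\RR;\Sp)$ there is no a priori bound on $\musupp(\Fc*\omega_{n_0 P})$ anywhere --- not even along those walls --- because the paper has (deliberately) no estimate for the singular support of a convolution, and the non-characteristic hypothesis of \cref{non characteristic deformation} must be verified for the \emph{convolved} sheaf, not for $\Fc$ itself.

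The paper's resolution is different and is the missing ingredient in your plan. It defines $\Cc\defeq\{\Gc\in\Shv(M_\RR;\Sp):\Gc*\omega_P\in\Shv_{\Lambda_\Sigma}(M_\RR;\Sp)\}$, i.e.\ the singular-support hypothesis is placed directly on the convolved sheaf, so the deformation argument applies verbatim and shows that $\omega(D_x)$ corepresents the stalk at $x$ on $\Cc$; the orthogonality argument, run inside $\Cc$ (which is cocomplete and contains $\im(\kappa)$), then gives $\Cc=\im(\kappa)$. The corollary is then obtained \emph{before} the theorem by one further step absent from your proposal: $\omega_P$ is convolution-invertible, so $(-)*\omega_P^{-1}$ carries $\Shv_{\Lambda_\Sigma}(M_\RR;\Sp)$ into $\Cc=\im(\kappa)$, and since $\im(\kappa)$ is closed under convolution (point (4) of \cref{charaterization of image of kappa}), one gets $\Fc=(\Fc*\omega_P^{-1})*\omega_P\in\im(\kappa)$. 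Only after this does \cref{the theorem of Dx corepresents taking stalk on sheaf with singular support} follow, a fortiori --- the ``flip of logic'' the paper warns about. So your step ``deduce corepresentability there, and transport the conclusion back'' is where the gap sits: as written the transport is unjustified, and it should be replaced (or completed) by precisely this $\omega_P^{-1}$-plus-closure-under-convolution argument.
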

\begin{notation}
From now on we fix a moment polytope $P$ for $\Sigma$, and we assume that the origin is contained in the interior of $P$. The polytope $P$ is given by the combinatorial data of a divisor (\cref{divisors and piecewise linear functions}) as a collection of integers $\{n_\eta(P):\eta\in\Sigma(1)\}$. In particular, this gives a presentation of $P$ as the intersection of half spaces
\[P=\bigcap_{\eta\in\Sigma(1)}\{m:\langle m,\nu_\eta\rangle\geq-n_\eta(P)\}\]
(recall that $\nu$ is a fixed primitive element of $\eta$). Because the origin is in the interior of $P$, we also know that  \stress{
\[n_\eta(P)>0\]}for each $\eta$. Moreover, we fix a fundamental domain $W\subset \MR$ of $M_\RR/ M$ as follows. Pick a basis $\{m_i\}$ for the lattice $M$ and take the half-closed hypercube
$$W\defeq\{\Sigma_i r_i m_i:m_i\in M;r_i\in[0,1)\}.$$
By replacing $P$ with some large multiple $n\cdot P$ , we assume for each $x\in W$, the divisor
$$D_x+D_P$$
also corresponds to a moment polytope, which we call $P_x$. One can achieve this by observing that there are only finitely many different divisors $D_x$ for $x\in W$. For each fixed $D_x$ we can dominate it with a large multiple of $P$ by \cref{divisor+high power of moment polytope is moment polytope}.
\end{notation}

\begin{rem}
\label{translation to other points out of fundamental domain}
We will prove that $\omega(D_x)$ corepresents taking stalks at $x\in W$. The same statement for every point $x \in \MR$ will then follow. Indeed, observe that for $m\in M$,
$$\omega(D_{x+m})\simeq\omega_m*\omega(D_x)$$
while convolution with $\omega_m$ is just $!$-pushforward along translation by $m$. So we can translate other points into the fundamental domain and obtain the above statement for other points. Another way to see this is that, such $P$ as above actually dominates $D_x$ for all points $x$ so the proof carries through.
\end{rem}
Now we consider a family of polytopes deforming $P_x$.
\begin{defn}[Non-characteristic deformation of the probing sheaf]
Fix $x\in W$ and a small positive real number $0<\epsilon\ll 1$ so that
\[-\langle x,\nu_\eta\rangle+\epsilon\cdot n_\eta <\floor{-\langle x,\nu_\eta\rangle}+1\] 
for all $\eta\in\Sigma(1)$.
Consider the following increasing family of polytopes indexed by $s\in [0,1]$:
$$P_{x,s}\defeq s\cdot {\color{purple}P_x}+ (1-s) \cdot {\color{blue}(x+(1+\epsilon)\cdot P)}. $$
It grows from (when $s=0$) {\color{blue}$x+(1+\epsilon)\cdot P$} to (when $s=1$) {\color{purple}$P_x$}. Note the definition of $P_{x,s}$ depends on $\epsilon$ implicitly.
\end{defn}
We will apply the non-characteristic deformation lemma to this family. To do so, we start with an observation about its interaction with $\Lambda_\Sigma$.
\begin{lem}
\label{no integer coefficient in the family}
    Write $P_{x,s}$ as the intersection of half-planes 
    $$P_{x,s}=\bigcap_{\eta\in\Sigma(1)}\{m\in M_\RR:\langle m,v_\eta\rangle\geq -n_{\eta,x,s}\in \RR\}.$$
    Then for $s\in[0,1)$, none of the real numbers $-n_{\eta,x,s}$ will be an integer. (In terms of \cref{real coefficient divisors}, these $\{n_{\eta,x,s}\}$ give the real coeffient divisors for $P_{x,s}$.)
\end{lem}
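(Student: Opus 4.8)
The plan is to compute the real coefficients $n_{\eta,x,s}$ explicitly through support functions and to observe that the defining inequality on $\epsilon$ pins each of them strictly between two consecutive integers for $s\in[0,1)$.

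First I would recall the support-function dictionary. For a polytope $Q$ whose normal fan is $\Sigma$, the coefficient appearing in its half-space presentation is $n_\eta(Q)=-\min_{m\in Q}\langle m,\nu_\eta\rangle$; this assignment is additive under Minkowski sum, positively homogeneous under scaling by a nonnegative real, and changes by $-\langle v,\nu_\eta\rangle$ under translation by $v\in M_\RR$. By hypothesis $P_x$ is a moment polytope of $\Sigma$, so its normal fan is $\Sigma$; and $x+(1+\epsilon)P$ is a translate-dilate of $P$, so its normal fan is also $\Sigma$. Since the normal fan of a Minkowski sum is the common refinement of the two normal fans, every convex combination $P_{x,s}=s\,P_x+(1-s)\bigl(x+(1+\epsilon)P\bigr)$ again has normal fan $\Sigma$; in particular its facet normal to $\nu_\eta$ never degenerates, so the displayed half-space presentation is genuinely indexed by $\Sigma(1)$ and $n_{\eta,x,s}=-\min_{m\in P_{x,s}}\langle m,\nu_\eta\rangle$. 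This is the one place that deserves a little care.

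Then I would substitute. Write $a\defeq-\langle x,\nu_\eta\rangle$, $k\defeq\floor{a}$ and $N\defeq n_\eta(P)>0$. The moment polytope $P_x$ attached to the divisor $D_x+D_P$ has $n_\eta(P_x)=n_\eta(D_x)+n_\eta(P)=k+1+N$, and $n_\eta\bigl(x+(1+\epsilon)P\bigr)=a+(1+\epsilon)N$, so the dictionary yields
\[
n_{\eta,x,s}=s\,(k+1+N)+(1-s)\bigl(a+(1+\epsilon)N\bigr)=N+\bigl(s\,(k+1)+(1-s)(a+\epsilon N)\bigr).
\]
It remains to check that $t_s\defeq s\,(k+1)+(1-s)(a+\epsilon N)$ is never an integer for $s\in[0,1)$. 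But $t_s$ is the convex combination of $k+1$ and $a+\epsilon N$ with weights $s$ and $1-s$; since $\epsilon N>0$ we have $a+\epsilon N>a\geq k$, while the standing inequality $-\langle x,\nu_\eta\rangle+\epsilon\,n_\eta<\floor{-\langle x,\nu_\eta\rangle}+1$ imposed in the definition of $P_{x,s}$ reads precisely $a+\epsilon N<k+1$. Hence $a+\epsilon N\in(k,k+1)$, and for $s\in[0,1)$ the combination $t_s$ lies in $[\,a+\epsilon N,\,k+1)\subseteq(k,k+1)$ (the value $k+1$ is attained only at $s=1$). Therefore $n_{\eta,x,s}=N+t_s\in(N+k,\,N+k+1)$ is not an integer, and a fortiori $-n_{\eta,x,s}\notin\ZZ$.

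Note that this computation is uniform whether or not $a=-\langle x,\nu_\eta\rangle$ happens to be an integer; in the integral case the relevant content of the standing inequality is just $\epsilon\,n_\eta<1$. I do not expect a genuine obstacle — the argument is essentially bookkeeping with support functions — the only subtlety being the verification that the normal fan of $P_{x,s}$ stays equal to $\Sigma$ for all $s\in[0,1]$, which guarantees that the coefficients $n_{\eta,x,s}$ are computed by the support function and that no facet collapses.
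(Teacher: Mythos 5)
Your proof is correct and follows essentially the same route as the paper: use linearity of the polytope-to-divisor passage in $s$, compute the two endpoint coefficients $n_{\eta,x,0}=n_\eta(P)-\langle x,\nu_\eta\rangle+\epsilon\,n_\eta(P)$ and $n_{\eta,x,1}=n_\eta(P)+\floor{-\langle x,\nu_\eta\rangle}+1$, and observe that the standing condition on $\epsilon$ forces the interpolated value to stay strictly between consecutive integers for $s\in[0,1)$. Your additional check that the normal fan of $P_{x,s}$ remains $\Sigma$ (so the coefficients really are given by the support function and no facet collapses) is a point the paper leaves implicit, and is a welcome extra precision.
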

\begin{proof}
    Since the passage from polytopes to divisors is linear, we might look at the two ends of the interpolation, and inspect the coefficients of the corresponding divisors
    $$n_{\eta,x,1}=n_{\eta}(P_x)=n_{\eta}(P)+\floor{-\langle x,v_\eta\rangle}+1,$$
    $$n_{\eta,x,0}=n_{\eta}(P)-\langle x,v_\eta\rangle+\epsilon \cdot n_\eta(P).$$
    As long as 
    \[-\langle x,\nu_\eta\rangle+\epsilon\cdot n_\eta <\floor{-\langle x,\nu_\eta\rangle}+1\] 
    for each $\eta\in\Sigma(1)$, there will be no integer between $n_{\eta,x,0}$ and $n_{\eta,x,1}$. The claim follows.
\end{proof}

\begin{lem}\cite[Lemma 3.13]{zhou2017twistedpolytopesheavescoherentconstructible}
\label{estimation of singular support with the family}
    Let $s\in[0,1)$. Let $y\in\partial P_{x,s}$ be on the boundary of the polytope $P_{x,s}$, then we have the following estimate on the fiber of $\Lambda_\Sigma$ at $y$:$$\Lambda_{\Sigma,y}\cap-\sigma(y)={0}\subseteq N_\RR\simeq T^*_y(M_\RR).$$
    Here $\sigma(y)\in\Sigma$ is the cone dual to the angle spanned by $P_{x,s}$ at $y$, formally determined as follows. The vectors $\{p-y:p\in P_{x,s}\}\subseteq M_\RR$ span a cone $\sigma(y)^\svee$ in $M_\RR$. The cone $\sigma(y)$ is defined to be the dual cone of $\sigma(y)^\svee$.
\end{lem}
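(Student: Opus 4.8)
The plan is to reduce the global statement to the local computation of \cref{global singular support of a omega sheaf}, using that singular support is a local and conic notion (\cref{the notion of singular support is local}, \cref{definition of singular support}). Fix $s\in[0,1)$ and $y\in\partial P_{x,s}$. By \cref{no integer coefficient in the family}, none of the defining real coefficients $-n_{\eta,x,s}$ is an integer, so $y$ does not lie on any hyperplane $m+\eta^\perp$ of the FLTZ arrangement $H_\Sigma$: the only constraints active at $y$ come from the (translated, non-integral) walls of $P_{x,s}$. Consequently, passing to the exponential chart at $y$, the sheaf $\omega_{P_{x,s}}$ looks locally like $\omega_{C}$ where $C$ is the open convex cone $\{p - y : p \in P_{x,s}\}^\circ = \sigma(y)^{\svee,\circ}$ opening up at $y$ (here I use that $P_{x,s}$ is a full-dimensional polytope and $y$ is a boundary point, so the local model is a cone). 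Then \cref{fourier-sato of conic sheaf of an open cone} gives $\musupp_y(\omega_{P_{x,s}}) = -\sigma(y)$.

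Next I would bound the fiber $\Lambda_{\Sigma,y}$. The FLTZ skeleton is $\Lambda_\Sigma = \bigsqcup_{m\in M,\ \tau\in\Sigma} (m+\tau^\svee)\times(-\tau)$, so a covector $(y,\xi)$ lies in $\Lambda_\Sigma$ only if there are $m\in M$ and $\tau\in\Sigma$ with $y\in m+\tau^\svee$ and $\xi\in-\tau$. Since $y$ avoids every wall $m+\eta^\perp$, $\eta\in\Sigma(1)$, the point $y$ lies in the \emph{relative interior} of $m+\tau^\svee$ only for $\tau$ with no $1$-cone face having $y$ on its wall; in fact $y\notin m+\eta^\perp$ for all $m,\eta$ forces that whenever $y\in m+\tau^\svee$ we must have $\tau=0$ unless $y$ lies in the interior of that translated cone. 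I would make this precise by the standard separation argument: $y\in m+\tau^\svee$ with $\tau\neq 0$ requires $y$ to be on the boundary face $m+\tau^\svee\cap(m+\eta^\perp)$ for some $\eta\subseteq\tau$ in $\Sigma(1)$ — impossible — \emph{unless} $y$ is in the open cone $m+\tau^{\svee,\circ}$; but then any refinement forces $\xi\in-\tau$ to be compatible only with $\tau$'s that are faces, and the point is that $\Lambda_{\Sigma,y}\subseteq \bigcup -\tau$ over those $\tau$ with $y\in m+\tau^\svee$. Intersecting with $-\sigma(y)$: I claim the only common ray is $0$. Indeed if $-v\in\Lambda_{\Sigma,y}\cap(-\sigma(y))$ with $v\neq 0$, then $v\in\tau\cap\sigma(y)$ for some $\tau$ with $y\in m+\tau^\svee$, and $v\in\tau$ together with $y\in m+\tau^\svee$ gives $\langle v, y\rangle \ge \langle v, m\rangle$; combined with $v\in\sigma(y)$, which by definition of $\sigma(y)$ means $\langle v, p-y\rangle\ge 0$ for all $p\in P_{x,s}$, and the non-integrality of the walls of $P_{x,s}$ at $y$, one derives a contradiction with $v\in N$ being an integral vector lying in a cone whose dual wall passes through a non-lattice point of the arrangement. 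This is exactly the mechanism of \cite[Lemma 3.13]{zhou2017twistedpolytopesheavescoherentconstructible}, and I would follow that argument, supplying the separation-lemma details from \cite[Section 1.2]{fulton}.

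The main obstacle is the bookkeeping in the second paragraph: correctly identifying, for the specific boundary point $y$ of the non-integrally-translated polytope $P_{x,s}$, exactly which strata $(m+\tau^\svee)\times(-\tau)$ of $\Lambda_\Sigma$ can possibly contain covectors over $y$, and then checking that none of their cone-directions $-\tau$ meets $-\sigma(y)$ except at the origin. The key leverage is \cref{no integer coefficient in the family}: it guarantees $y$ sits in an open chamber of $H_\Sigma$ transverse to every wall, so $\sigma(y)$ — determined by the \emph{polytope} $P_{x,s}$ at $y$ — is "misaligned" with all the integral data defining $\Lambda_\Sigma$, and a dimension/integrality count then forces the intersection to be trivial. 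Once this is in place, the conclusion $\Lambda_{\Sigma,y}\cap(-\sigma(y))=\{0\}$ is immediate, and it is precisely the non-characteristic condition needed to feed the family $\{P_{x,s}\}$ into \cref{non characteristic deformation} in the next step of the image characterization.
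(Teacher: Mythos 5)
Your argument has a genuine gap, and it sits exactly where the content of the lemma lies. You claim that because none of the coefficients $-n_{\eta,x,s}$ is an integer, the boundary point $y$ lies on no hyperplane $m+\eta^\perp$ of $H_\Sigma$ (later repeated as ``$y$ sits in an open chamber of $H_\Sigma$''). This is false: non-integrality of the $n_{\eta,x,s}$ only controls the pairings $\langle y,\nu_\rho\rangle$ for those $1$-cones $\rho$ whose normal attains its minimum over $P_{x,s}$ at $y$, i.e.\ $\rho\subseteq\sigma(y)$. For other $1$-cones nothing prevents $\langle y,\nu_\eta\rangle$ from being an integer: for a square with non-integral wall positions, a point on the left wall can have integral second coordinate and hence lie on a horizontal hyperplane of the arrangement. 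Consequently your reduction ``only the walls of $P_{x,s}$ are active at $y$'' and the final ``dimension/integrality count'' do not go through; moreover the covector $v$ you want to rule out is an arbitrary element of $N_\RR$, not a lattice vector, so ``contradiction with $v\in N$ being an integral vector'' is not available. A second issue: you use the fiber of $\bigsqcup_{m,\tau}(m+\tau^\svee)\times(-\tau)$ as literally displayed; with that reading, for a complete fan one can find for every $y$ and every $\tau$ some $m\in M$ with $y\in m+\tau^\svee$, so the fiber $\Lambda_{\Sigma,y}$ would be all of $N_\RR$ and the statement would fail. The argument must use the $m+\tau^\perp$ form of the skeleton (as the paper's proof and \cite{zhou2017twistedpolytopesheavescoherentconstructible} do), which is precisely what yields an integrality constraint at $y$; from $y\in m+\tau^\svee$ you only get the inequality $\langle v,y\rangle\geq\langle v,m\rangle$, which is too weak.

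The correct mechanism, which your sketch gestures at but never executes, is purely pointwise and needs no microsupport computation of $\omega_{P_{x,s}}$ (your first paragraph is beside the point: the lemma is a statement about the set $\Lambda_\Sigma$ alone). Suppose $0\neq u\in\Lambda_{\Sigma,y}\cap(-\sigma(y))$. Then there are $m\in M$ and $\tau\in\Sigma$ with $y\in m+\tau^\perp$ and $u\in-\tau$, so $-u\in\sigma(y)\cap\tau$; this common face is a nonzero cone of $\Sigma$ and hence contains a $1$-cone $\rho\in\Sigma(1)$. From $y\in m+\tau^\perp$ and $\rho\subseteq\tau$ one gets $\langle y,\nu_\rho\rangle=\langle m,\nu_\rho\rangle\in\ZZ$; from $\rho\subseteq\sigma(y)$ one gets $\langle\nu_\rho,p-y\rangle\geq 0$ for all $p\in P_{x,s}$, so $\nu_\rho$ attains its minimum over $P_{x,s}$ at $y$ and $\langle y,\nu_\rho\rangle=-n_{\rho,x,s}$, which is not an integer by \cref{no integer coefficient in the family}. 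This contradiction uses non-integrality only for the $1$-cones contained in $\sigma(y)$ — exactly what \cref{no integer coefficient in the family} provides — and not the open-chamber condition your write-up relies on.
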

\begin{proof}
    For each $s\in[0,1)$, $P_{x,s}$ is also a moment polytope (but with non-integral vertices, as it is a convex linear combination of moment polytopes), so $\sigma(y)\in \Sigma$. Suppose, for sake of contradiction, that there exists a non-zero cotangent vector
    \[0\neq u\in -\sigma(y)\cap \Lambda_{\Sigma,y},\]
    one can find some $\tau\in \Sigma$ and $m\in M$ such that 
    \[(y,u)\in m+\tau^\perp\times-\tau\]
    and thus $u\in -\sigma(y)\cap-\tau$. This implies $\sigma(y)\cap\tau\neq\{0\}$. Note that $\tau$ cannot be the origin, so it must contain some $1$-cone $\rho$. It follows that $\langle y,\nu_\rho\rangle=\langle m,\nu_\rho\rangle$ is an integer. On the other hand, $\rho\subseteq\sigma(y)$ is equivalent to \[\langle\nu_\rho,p-y\rangle\geq 0\] for any $p\in P_{x,s}$. This implies that $\nu_\rho$ attains its minimum at $y$ on $P_{x,s}$, which means $-n_{\rho,x,s}=\langle y,\nu_\rho\rangle$ is an integer. This contradicts \cref{no integer coefficient in the family}.
\end{proof}
With this we study the family of open polytopes given by the interiors $P^\circ_{x,s}$ for $s\in[0,1)$.
\begin{lem}
  Consider a sheaf $F\in \Shv_{\Lambda_\Sigma}(M_\RR;\Sp)$ and the family of open polytopes given by the \stress{interior} $P^\circ_{x,s}$ for $s\in(-1,1)\simeq\RR$, where we extend the original family over $[0,1)$ by constant to the left: $P_{x,s}\defeq P_{x,0}$ for $s<0$. Then the assumptions of the non-characteristic deformation lemma \cref{non characteristic deformation} are satisfied for the sheaf $F$ and the family of open subsets $U_s=P^\circ_{x,s}$. 
\end{lem}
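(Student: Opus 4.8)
The plan is to check the three hypotheses of \cref{non characteristic deformation} for $F$ and the family $U_s=P^\circ_{x,s}$, after fixing once and for all an increasing homeomorphism $\RR\simeq(-1,1)$ (harmless, since all three conditions refer only to the order on the parameter). The single observation driving the whole verification is that, since the passage from a polytope to its divisor is affine, the coefficients $n_{\eta,x,s}$ in the presentation $P_{x,s}=\bigcap_{\eta\in\Sigma(1)}\{\langle m,\nu_\eta\rangle\ge -n_{\eta,x,s}\}$ depend affinely on $s\in[0,1]$, are constant for $s\le 0$, and are \emph{strictly} increasing on $[0,1]$ because $n_{\eta,x,0}<n_{\eta,x,1}$ --- which is exactly the purpose of the choice of $\epsilon$. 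Hence the open polytopes $P^\circ_{x,s}=\bigcap_\eta\{\langle m,\nu_\eta\rangle> -n_{\eta,x,s}\}$ form a continuous, strictly increasing family. Condition (1), $P^\circ_{x,t}=\bigcup_{s<t}P^\circ_{x,s}$, then follows routinely: a point satisfying the finitely many strict inequalities for the value $t$ still satisfies them for some $s<t$ by continuity, and the reverse inclusion is monotonicity. Condition (2) is immediate since $\overline{U_t\setminus U_s}\subseteq\overline{U_t}=P_{x,t}$ is a closed subset of the compact polytope $P_{x,t}$, so its intersection with the closed set $\supp(F)$ is compact.

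For condition (3) I would first compute $Z_s=\bigcap_{t>s}\overline{U_t\setminus U_s}$. When $s<0$ the family is constant near $s$, so $U_t\setminus U_s=\emptyset$ for $t$ close to $s$ and $Z_s=\emptyset$, leaving nothing to check. When $s\in[0,1)$ I claim $Z_s=\partial P_{x,s}$. On the one hand $Z_s\subseteq\bigcap_{t>s}\overline{U_t}=\bigcap_{t>s}P_{x,t}=P_{x,s}$ (taking suprema over the defining half-spaces, using strict monotonicity) and $Z_s\subseteq V\setminus U_s$ because each $U_t\setminus U_s$ lies in the closed set $V\setminus U_s$; together these give $Z_s\subseteq P_{x,s}\setminus U_s=\partial P_{x,s}$. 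On the other hand, for $y\in\partial P_{x,s}$ and any $t>s$, strict monotonicity yields $y\in P_{x,s}\subseteq P^\circ_{x,t}=U_t$, and since $U_t$ is open while $y$ lies on the frontier of $U_s$, every neighbourhood of $y$ meets $U_t\setminus U_s$, so $y\in Z_s$. With $Z_s$ identified, the case $t>s$ of condition (3) is automatic: then $Z_s\subseteq U_t$, and for $y\in U_t$ the map $F_y\to(j_{U_t*}j_{U_t}^*F)_y$ is an equivalence because $j_{U_t}$ is an open immersion. The only real content is the case $t=s$, namely that for every $y\in\partial P_{x,s}$ the natural map $F_y\to(j_{U_s*}j_{U_s}^*F)_y$ is an equivalence.

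To handle that, I would localize at $y$. Picking an open ball $B\ni y$ that meets only the hyperplanes of $\Sc_\Sigma$ through $y$, one has $U_s\cap B=(y+\sigma(y)^{\svee,\circ})\cap B$, where $\sigma(y)\in\Sigma$ is the cone dual to the angle spanned by $P_{x,s}$ at $y$ as in \cref{estimation of singular support with the family}. Working in the exponential chart of \cref{definition of singular support}, the claim becomes the conic statement that $G_0\to(j_{W*}j_W^*G)_0$ is an equivalence, where $G$ is a conic sheaf on $V=M_\RR$ with $\musupp_0(G)=\musupp_y(F)$ and $W=\sigma(y)^{\svee,\circ}$ is an open convex cone with dual cone $W^\svee=\sigma(y)$. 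Since $F\in\Shv_{\Lambda_\Sigma}(M_\RR;\Sp)$ and \cref{estimation of singular support with the family} gives $\Lambda_{\Sigma,y}\cap(-\sigma(y))=\{0\}$, we obtain $\musupp_0(G)\cap(-W^\svee)=\{0\}$. It then remains to invoke the microlocal non-characteristic criterion for conic sheaves: if a conic sheaf $G$ on a finite-dimensional real vector space $V$ satisfies $\musupp_0(G)\cap(-W^\svee)=\{0\}$ for an open convex cone $W$, then $G_0\xrightarrow{\ \sim\ }(j_{W*}j_W^*G)_0$. I would prove this by the Fourier--Sato transform: for conic $G$ one has $G_0\simeq\Gamma(V;G)$ and $(j_{W*}j_W^*G)_0\simeq\Gamma(W;G)$, so the statement reduces to $\mathrm{R}\Gamma_{V\setminus W}(V;G)=0$; by hypothesis $\mathcal{FS}(G)$ has vanishing stalks on $(-W^\svee)\setminus\{0\}$, and using the computation of $\mathcal{FS}$ on constant sheaves over cones (\cref{fourier-sato of conic sheaf of an open cone}) one rewrites the cofiber of $\Gamma(V;G)\to\Gamma(W;G)$ in terms of $\mathcal{FS}(G)$ over $-W^\svee$, where it vanishes.

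I expect this conic criterion to be the main obstacle --- not because it is conceptually deep (it is a standard fact of microlocal sheaf theory) but because its references in \cite{kashiwara2002sheaves} are written for bounded derived categories and must be re-established $\Sp$-linearly, precisely the kind of foundational point the paper flags when warning that \cite{kashiwara2002sheaves} is used in its classical form. Everything else in the verification --- the two easy conditions, the description of $Z_s$, and the case $t>s$ --- is elementary convex geometry carried out directly with the half-space presentations of the $P_{x,s}$.
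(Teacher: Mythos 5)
Your proposal is correct and follows essentially the same route as the paper: verify conditions (1) and (2) by convexity and compactness, identify $Z_s$ as $\partial P_{x,s}$ (empty for $s<0$), reduce to the $t=s$ boundary case, localize to a conic sheaf via an exponential chart, and conclude by the Fourier--Sato transform together with the support estimate of \cref{estimation of singular support with the family}. Your ``conic non-characteristic criterion'' is exactly the computation the paper carries out explicitly (identifying the cofiber with an extension by zero from $-\sigma(y)\setminus\{0\}$, where $\mathcal{FS}$ of the sheaf has no support), and your extra care about the trivial $t>s$ case and the $\Sp$-linear status of the Kashiwara--Schapira inputs matches caveats the paper itself flags.
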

\begin{proof}
The point 1 and 2 in the assumptions of \cref{non characteristic deformation} follow directly from the definition of $P^\circ_{x,s}$. Unwinding the final point, we see that $Z_s$ is empty for $s\in(-1,0)$ and $Z_s=\partial P_{x,s}$ for $s\in[0,1)$. Following the notations from \cref{non characteristic deformation}, we write 
\[i:\MR\setminus U_s\longrightarrow\MR \]
for the inclusion maps. The goal is to show $(i^! F)_y=0$ for every $y\in Z_s$. Applying the recollement sequence for the open-closed decomposition $\MR=U_s\cup\MR\setminus U_s$, this is equivalent to showing that the canonical map
$$F_y\rightarrow j_*j^*(F)_y$$
is an isomorphism for $y\in\partial P_{x,s}$ and $s\in[0,1)$, where $j:U_s=P^\circ_{x,s}\rightarrow\MR$ is the inclusion map.
Since the stalk at $y$ only depends on the sheaf locally, we might choose a small enough open ball $U$ centered at $y$ and pullback $F$ along an exponential map 
\[\mathrm{exp}:\MR\overset{\simeq}\longrightarrow U\]
as in \cref{definition of singular support}. This reduces the question to the situation of a sheaf $\Fc$ on a vector space $M_\RR$ constructible for a stratification by linear subspace (hence in particular, conic). The sheaf $\Fc$ has the same singular support at origin as $F$ at $y$. In this case we are asking if the comparison map of the stalks at origin is an isomorphism:
$$\Fc_0\rightarrow j_*j^*(\Fc)_0$$
where $j:\sigma^{\svee,\circ}(y)\rightarrow M_\RR$ is the inclusion of an open cone $\sigma^{\svee,\circ}(y)$ determined as in \cref{estimation of singular support with the family} (whose dual is named $\sigma(y)\subseteq N_\RR$). By stratified homotopy invariance \cite[Corollary 3.3]{clausenJansen2023reductiveborelserre} (or \cite[Corollary 3.7.3]{kashiwara2002sheaves}), we may identify this map with the restriction map
$$\Fc(M_\RR)\rightarrow\Fc(\sigma^{\svee,\circ}(y)).$$
Now we can apply the Fourier-Sato transform: the map becomes
$$\map(\mathcal{FS}(\underline\SS_{M_\RR}),\mathcal{FS}(\Fc))\longrightarrow\map(\mathcal{FS}(\underline\SS_{\sigma^{\svee,\circ}(y)}),\mathcal{FS}(\Fc)).$$
To show that it is an isomorphism, it suffices to show 
$$\map(\cofib(\mathcal{FS}(\underline\SS_{\sigma^{\svee,\circ}(y)})\rightarrow\mathcal{FS}(\underline\SS_{M_\RR})),\mathcal{FS}(\Fc))=0.$$
By \cref{fourier-sato of conic sheaf of an open cone}, we know that 
$$\mathcal{FS}(\underline\SS_{M_\RR})\simeq\underline{\SS}_{\{0\}}[-n]\in \Shv(N_\RR;\Sp)$$
$$\mathcal{FS}(\underline\SS_{\sigma^{\svee,\circ}(y)})\simeq\underline\SS_{-\sigma(y)}[-n]\in \Shv(N_\RR;\Sp),$$
and the map between them is induced by inclusion. It follows that we can identify the cofiber as
\[\cofib(\mathcal{FS}(\underline\SS_{\sigma^{\svee,\circ}(y)})\rightarrow\mathcal{FS}(\underline\SS_{M_\RR}))\simeq \cofib(\underline\SS_{-\sigma(y)}\longrightarrow\underline{\SS}_{\{0\}})[-n]\simeq h_!\underline\SS[1-n].\]
The map $h:-\sigma(y)\setminus \{0\}\rightarrow N_\RR$ is the inclusion.
Now the assumption on singular support $\musupp(\Fc)_0=\musupp(F)_y\subseteq\Lambda_{\Sigma,y}$ implies 
$$\supp(\mathcal{FS}(\Fc))\subseteq \Lambda_{\Sigma,y}\subseteq N_\RR.$$ Moreover, from \cref{estimation of singular support with the family} we learn that  $\supp(\mathcal{FS}(\Fc))\cap -\sigma(y)\subseteq\{0\}$. This implies that the map $h$ above factors through the open complement of support of $\mathcal{FS}(\Fc)$, thus we must have
$$\map(\cofib(\mathcal{FS}(\underline\SS_{\sigma^{\svee,\circ}(y)})\rightarrow\mathcal{FS}(\underline\SS_{M_\RR})),\mathcal{FS}(\Fc))=\map(h_!\underline\SS[1-n],\mathcal{FS}(\Fc))=0.$$
This concludes the proof.
\end{proof}
\begin{cor}
\label{consequence of noncharacteristic deformation}
    For $F\in\Shv_{\Lambda_\Sigma}(M_\RR;\Sp)$ and $\epsilon$ sufficiently small, the restriction map 
    $$F(P_x^\circ)\longrightarrow F(x+(1+\epsilon)\cdot P^\circ)$$
    is an isomorphism.
\end{cor}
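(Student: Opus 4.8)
The plan is to feed the family of open polytopes $U_s = P^\circ_{x,s}$, $s \in (-1,1)$, directly into the non-characteristic deformation lemma \cref{non characteristic deformation}: its hypotheses for a sheaf $F \in \Shv_{\Lambda_\Sigma}(M_\RR;\Sp)$ were verified in the lemma immediately above. The conclusion of \cref{non characteristic deformation} then yields an equivalence $F\big(\bigcup_s U_s\big) \overset{\simeq}{\longrightarrow} F(U_t)$ for every $t \in (-1,1)$. Taking any $t \leq 0$ identifies the target with $F(x+(1+\epsilon)\cdot P^\circ)$, since by construction $P_{x,s}$ is constant equal to $x+(1+\epsilon)\cdot P$ for $s \leq 0$. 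So all that remains is to identify the source $\bigcup_{s\in(-1,1)} P^\circ_{x,s}$ with $P_x^\circ$.

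To do this I would first record that the family $P_{x,s}$ is increasing in $s$. Writing each $P_{x,s}$ through its real-coefficient divisor $\{n_{\eta,x,s}\}$ as in \cref{real coefficient divisors}, these coefficients interpolate linearly between $n_{\eta,x,0}$ and $n_{\eta,x,1}=n_\eta(P_x)$, and for $\epsilon$ sufficiently small one has $n_{\eta,x,0} < n_{\eta,x,1}$ (the same elementary comparison of divisor coefficients already used in \cref{no integer coefficient in the family}), so $s\mapsto n_{\eta,x,s}$ is nondecreasing; hence the defining half-spaces, and with them the polytopes $P_{x,s}$ and their interiors, grow with $s$. Since $P_{x,s}\subseteq P_x$ for all $s$, the union of the interiors is contained in $P_x^\circ$. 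For the reverse inclusion, given $m \in P_x^\circ$, i.e.\ $\langle m,\nu_\eta\rangle > -n_\eta(P_x)$ \emph{strictly} for each of the finitely many $\eta\in\Sigma(1)$, continuity of $s\mapsto n_{\eta,x,s}$ at $s=1$ furnishes some $s<1$ with $\langle m,\nu_\eta\rangle > -n_{\eta,x,s}$ for all $\eta$ at once, so $m\in P^\circ_{x,s}$. Thus $\bigcup_{s<1}P^\circ_{x,s} = P_x^\circ$, and combining with the previous paragraph gives $F(P_x^\circ)\overset{\simeq}{\longrightarrow} F(x+(1+\epsilon)\cdot P^\circ)$, which is the assertion (using that $x+(1+\epsilon)\cdot P^\circ = (x+(1+\epsilon)\cdot P)^\circ$).

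The hard part is essentially this last identification of the increasing union with the \emph{open} polytope $P_x^\circ$ rather than with its closure or some intermediate region; it hinges on the strictness of the facet inequalities cutting out the interior, on the finiteness of $\Sigma(1)$, and on the monotonicity of the defining coefficients $n_{\eta,x,s}$. Once that is in hand, the corollary is a direct consequence of the preceding lemma and \cref{non characteristic deformation}, with no further computation required.
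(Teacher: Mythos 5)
Your proposal is correct and follows exactly the route the paper intends: the corollary is an immediate consequence of the preceding lemma together with \cref{non characteristic deformation}, taking $t\le 0$ for the target. The only content you add — the verification via the linearly interpolating divisor coefficients that $\bigcup_{s<1}P^\circ_{x,s}=P_x^\circ$ — is precisely the step the paper leaves implicit, and your argument for it (strict facet inequalities, finiteness of $\Sigma(1)$, monotonicity from the choice of $\epsilon$) is sound.
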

We will use this to prove that $\omega(D_x)$ corepresents taking stalk at $x$. To do so, we first prove a statement slightly different from \cref{the theorem of Dx corepresents taking stalk on sheaf with singular support}.
\begin{prop}
Let $\Gc\in\Shv(M_\RR;\Sp)$. If
$$\Gc*\omega_P\in\Shv_{\Lambda_\Sigma}(M_\RR;\Sp),$$
then for sufficiently small $\epsilon$ and $x\in W$, we have
$$\Gc(x+\epsilon\cdot P^\circ)\overset{\simeq}{\longrightarrow}\map(\omega(D_x),\Gc)[n].$$
Taking colimit along $\epsilon\rightarrow 0$ provides an isomorphism
$$\Gc_x\overset{\simeq}{\longrightarrow}\map(\omega(D_x),\Gc)[n]$$
for $x\in W$. The same is true for all $x\in M_\RR$.
    
\end{prop}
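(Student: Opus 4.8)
Fix the moment polytope $P_x$ with divisor $D_x+D_P$ from the Notation. The key input is the observation that $\omega_P=\omega(D_P)$ is a $\otimes$\textit{-invertible object} of $(\Shv(\MR;\Sp),*)$. Indeed, by additivity of $D\mapsto\omega(D)$ (\cref{assign sheaf to divisor is additive}) and $\omega(0)\simeq\mathbb{1}_{\Shv(\MR;\Sp)}$ — the latter from idempotent descent (\cref{the idempotent algebras for a smooth projective fan glues to the unit}), since $\omega(0)*\omega_{\sigma^\svee}\simeq\omega_{\sigma^\svee}$ for all $\sigma$ — we get $\omega_P*\omega(-D_P)\simeq\omega(D_P-D_P)\simeq\mathbb{1}$, so $-*\omega_P$ is an autoequivalence of $\Shv(\MR;\Sp)$ with inverse $-*\omega(-D_P)$ (this uses only results established before the present subsection, so there is no circularity; incidentally it also forces $\Gc\simeq(\Gc*\omega_P)*\omega(-D_P)\in\im(\kappa)$, though we will not need this). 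Applying $-*\omega_P$ to the first variable of a mapping spectrum and using $\omega(D_x)*\omega_P\simeq\omega(D_x+D_P)\simeq\omega_{P_x}$ together with $\omega_{P_x}\simeq\underline{\SS}_{P_x^\circ}[n]$ (\cref{comparison between homology sheaf of open and closed polygon}), one gets
\[\map(\omega(D_x),\Gc)\;\simeq\;\map(\omega_{P_x},\Gc*\omega_P)\;\simeq\;(\Gc*\omega_P)(P_x^\circ)[-n].\]

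Since $\Gc*\omega_P\in\Shv_{\Lambda_\Sigma}(\MR;\Sp)$ by hypothesis, \cref{consequence of noncharacteristic deformation} makes the restriction $(\Gc*\omega_P)(P_x^\circ)\to(\Gc*\omega_P)(x+(1+\epsilon)P^\circ)$ an equivalence for small $\epsilon$. Finally one identifies $(\Gc*\omega_P)(x+(1+\epsilon)P^\circ)$ with $\Gc(x+\epsilon P^\circ)$ by using invertibility a second time: $(\Gc*\omega_P)(x+(1+\epsilon)P^\circ)\simeq\map(\omega_{x+(1+\epsilon)P}*\omega(-D_P),\Gc)[n]$, and $\omega_{x+(1+\epsilon)P}*\omega(-D_P)\simeq\omega_{x+\epsilon P}$ because $\omega_{x+\epsilon P}*\omega_P\simeq\underline{\SS}_{(x+\epsilon P^\circ)+P^\circ}[n]\simeq\underline{\SS}_{x+(1+\epsilon)P^\circ}[n]\simeq\omega_{x+(1+\epsilon)P}$ (by \cref{proposition with direct computation of convolution} and the elementary Minkowski identity $\epsilon P^\circ+P^\circ=(1+\epsilon)P^\circ$); hence $(\Gc*\omega_P)(x+(1+\epsilon)P^\circ)\simeq\map(\omega_{x+\epsilon P},\Gc)[n]\simeq\Gc(x+\epsilon P^\circ)$ via $\omega_{x+\epsilon P}\simeq\underline{\SS}_{x+\epsilon P^\circ}[n]$. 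Composing, $\map(\omega(D_x),\Gc)[n]\simeq\Gc(x+\epsilon P^\circ)$. Since $\{x+\epsilon P^\circ\}_{\epsilon>0}$ is a cofinal system of neighborhoods of $x$ (the origin lies in $P^\circ$) and the left side is $\epsilon$-independent, the colimit over $\epsilon\to0$ gives $\map(\omega(D_x),\Gc)[n]\simeq\Gc_x$; the case of arbitrary $x\in\MR$ follows from $x\in W$ by translating along $M$, using $\omega(D_{x+m})\simeq\omega_m*\omega(D_x)$ (\cref{translation to other points out of fundamental domain}), invertibility of the skyscraper $\omega_m$, and the $M$-invariance of $\Shv_{\Lambda_\Sigma}$ — so that replacing $\Gc$ by $\Gc*\omega_{-m}$ preserves the hypothesis.

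\textbf{Main obstacle.} Every identification above is either formal (invertibility of $\omega_P$), a computation of global sections of a shifted constant sheaf, elementary convex geometry, or a citation of \cref{consequence of noncharacteristic deformation}; the genuine work is \emph{bookkeeping}. Namely, the statement asks for the specific comparison map $\Gc(x+\epsilon P^\circ)\to\map(\omega(D_x),\Gc)[n]$, which should be induced by a canonical map $\omega_{x+\epsilon P}\to\omega(D_x)$ assembled from the limit presentation $\omega(D_x)\simeq\lim_{\sigma\in\Sigma^\op}\omega_{m_\sigma+\sigma^\svee}$ and the inclusions $x+\epsilon P^\circ\subseteq m_\sigma+\sigma^{\svee,\circ}$ (which hold since $x$ lies in the interior of each $m_\sigma+\sigma^\svee$). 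One must check that applying the autoequivalence $-*\omega_P$ turns this map into the restriction map $(\Gc*\omega_P)(P_x^\circ)\to(\Gc*\omega_P)(x+(1+\epsilon)P^\circ)$, and check the analogous compatibility needed to carry the identifications through the colimit in $\epsilon$. This is a routine but fiddly unwinding of the gluing and additivity isomorphisms, of the same flavor as several $1$-categorical verifications elsewhere in the paper; I would do it by a stalkwise argument (the map $\omega_{x+\epsilon P}\to\omega(D_x)$ is the identity on stalks near $x$).
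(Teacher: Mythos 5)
Your proof is correct and takes essentially the same route as the paper's: convolution-invertibility of $\omega_P$ (used twice) to trade $\map(\omega(D_x),\Gc)$ and $\Gc(x+\epsilon\cdot P^\circ)$ for sections of $\Gc*\omega_P$ on $P_x^\circ$ and $x+(1+\epsilon)\cdot P^\circ$ respectively, then \cref{consequence of noncharacteristic deformation} to compare those sections, followed by the colimit over $\epsilon$ and translation by $M$; your explicit check that $\omega_P^{-1}\simeq\omega(-D_P)$ via additivity and $\omega(0)\simeq\mathbb{1}$ is a detail the paper leaves implicit. One caveat: the parenthetical claim that invertibility ``forces'' $\Gc\simeq(\Gc*\omega_P)*\omega(-D_P)\in\im(\kappa)$ is not justified at this stage --- it would require knowing that convolution with $\omega_P^{-1}$ carries $\Shv_{\Lambda_\Sigma}(M_\RR;\Sp)$ into $\im(\kappa)$, which is precisely the subtlety of \cref{warning on not finishing the proof} and is only settled after this proposition --- but since you flag it as unused, the proof is unaffected.
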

\begin{proof}
    Given that $\omega_P$ is a convolution invertible object, we have
    $$\Gc(x+\epsilon\cdot P^\circ)\simeq\map(\underline\SS_{x+\epsilon\cdot P^\circ},\Gc)\overset{\simeq}{\longrightarrow}\map(\underline\SS_{x+\epsilon\cdot P^\circ}*\omega_P,\Gc*\omega_P)\simeq (\Gc*\omega_P)(x+(1+\epsilon)\cdot P^\circ).$$
    Now by the assumption that $\Gc*\omega_P$ lies in $\Shv_{\Lambda_\Sigma}(M_\RR;\Sp)$, we can apply \cref{consequence of noncharacteristic deformation} and learn that the restriction map 
    $$(\Gc*\omega_P)(x+(1+\epsilon)\cdot P^\circ)\overset\simeq\longleftarrow (\Gc*\omega_P)(P_x^\circ)$$
    is an isomorphism. Finally again using $\omega_P$ is convolution invertible, we have
    (recall that $P_x$ is associated to the divisor $D_x+D_P$)
    $$\map(\omega(D_x),\Gc)\overset\simeq\longrightarrow\map(\omega(D_x)*\omega_P,\Gc*\omega_P)\simeq\map(\omega_{P_x},\Gc*\omega_P)\simeq (\Gc*\omega_P(P_X^\circ))[-n].$$
    Putting the above equivalences together we arrive at
    $$\Gc(x+\epsilon\cdot P^\circ)\simeq\map(\omega(D_x),\Gc)[n].$$
     This isomorphism is compatible with the restriction maps along shrinking $\epsilon$, and hence we get
    $$\Gc_x\simeq\map(\omega(D_x),\Gc)[n]$$
    as promised, for $x\in W$.
    As explained in \cref{translation to other points out of fundamental domain}, the same result holds for any $x\in M_\RR$.
\end{proof}
\begin{war}
\label{warning on not finishing the proof}
    This does not conclude the proof of \cref{the theorem of Dx corepresents taking stalk on sheaf with singular support}: the missing point is that we don't know if $(-)*\omega_P$ preserves the subcategory $$\Shv_{\Lambda_\Sigma}(M_\RR;\Sp)\subseteq\Shv(M_\RR;\Sp).$$
\end{war}
To circumvent this problem, we consider the following subcategory of $\Shv(M_\RR;\Sp)$:
$$\Cc\defeq\{\Gc\in\Shv(M_\RR;\Sp):\Gc*\omega_P\in\Shv_{\Lambda_\Sigma}(M_\RR;\Sp)\}.$$
A quick observation is that, since $\im(\kappa)$ is contained in $\Shv_{\Lambda_\Sigma}(M_\RR;\Sp)$ and closed under convolutions, we have $\im(\kappa)\subseteq\Cc$. The above argument effectively shows the following.
\begin{prop}
    The functor of taking stalk at $x$ is corepresented by $\omega(D_x)$ (up to a shift) in $\Cc$.
\end{prop}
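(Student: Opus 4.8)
The plan is to recognize that this proposition is the preceding one transported verbatim onto $\Cc$, supplemented by a routine naturality check. Since $\Cc$ is by definition the full subcategory of $\Shv(\MR;\Sp)$ consisting of those $\Gc$ with $\Gc*\omega_P\in\Shv_{\Lambda_\Sigma}(\MR;\Sp)$, every object of $\Cc$ satisfies the hypothesis of the previous proposition. Hence for each fixed $x\in\MR$ and each $\Gc\in\Cc$ one obtains an equivalence of spectra $\map(\omega(D_x),\Gc)[n]\overset{\simeq}{\longrightarrow}\Gc_x$. This makes sense as a statement about corepresentability: $\omega(D_x)$ lies in $\im(\kappa)$, which is contained in $\Shv_{\Lambda_\Sigma}(\MR;\Sp)$ and closed under convolution, so $\omega(D_x)*\omega_P\in\Shv_{\Lambda_\Sigma}(\MR;\Sp)$ and therefore $\omega(D_x)\in\Cc$.

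What remains is to promote this objectwise equivalence to an equivalence of functors $\Cc\to\Sp$. For this I would retrace the chain of maps built in the proof of the previous proposition and observe that each link is natural in $\Gc$: (i) smashing with the convolution-invertible object $\omega_P$ identifies $\Gc(x+\epsilon\cdot P^\circ)$ with $(\Gc*\omega_P)(x+(1+\epsilon)\cdot P^\circ)$, and, applied again, identifies $\map(\omega(D_x),\Gc)$ with $\map(\omega_{P_x},\Gc*\omega_P)\simeq(\Gc*\omega_P)(P_x^\circ)[-n]$, using $\omega(D_x)*\omega_P\simeq\omega_{P_x}$; (ii) the non-characteristic deformation lemma \cref{non characteristic deformation} supplies the restriction equivalence $(\Gc*\omega_P)(P_x^\circ)\overset{\simeq}{\longrightarrow}(\Gc*\omega_P)(x+(1+\epsilon)\cdot P^\circ)$, valid because $\Gc*\omega_P\in\Shv_{\Lambda_\Sigma}(\MR;\Sp)$ whenever $\Gc\in\Cc$; (iii) the filtered colimit over $\epsilon\to 0$ computes the stalk $\Gc_x$. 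Each of (i) and (iii) is natural because $-*\omega_P$ is an exact endofunctor and sections over a fixed open together with filtered colimits are natural operations; (ii) is natural because restriction along an open inclusion is. Composing these gives the desired natural equivalence $\map(\omega(D_x),-)[n]\simeq(-)_x$ of functors on $\Cc$, and passing to the remaining points of $\MR$ by translation, i.e. convolution with $\omega_m$, exactly as in the previous proposition, completes the statement.

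The only subtlety worth flagging — and what I expect to be the one point requiring care rather than a genuine obstacle — is in step (ii): one must check that the interpolating family $\{P_{x,s}\}$ and the threshold $\epsilon$ depend only on $x$ and on the fixed moment polytope $P$, not on $\Gc$, so that the non-characteristic deformation equivalence is a map of functors on all of $\Cc$ simultaneously rather than merely a pointwise isomorphism. Since that family was defined purely combinatorially from $x$ and $P$, this is immediate. This proposition is precisely the substitute, sufficient for the argument, for the statement of \cref{the theorem of Dx corepresents taking stalk on sheaf with singular support} that one cannot yet establish directly, since at this point $-*\omega_P$ is not known to preserve $\Shv_{\Lambda_\Sigma}(\MR;\Sp)$.
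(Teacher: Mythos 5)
Your proposal is correct and follows the paper's own route: the paper proves this proposition simply by observing that the preceding argument (valid for any $\Gc$ with $\Gc*\omega_P\in\Shv_{\Lambda_\Sigma}(M_\RR;\Sp)$) applies verbatim to all objects of $\Cc$, together with the same quick check that $\omega(D_x)\in\im(\kappa)\subseteq\Cc$. Your extra remarks on naturality and on the deformation data depending only on $x$ and $P$ are sound but amount to making explicit what the paper leaves implicit.
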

A second observation we will need is that the category $\Cc$ is closed under colimits and limits in $\Shv(M_\RR;\Sp)$, and in particular presentable (but we actually only need cocompleteness for our argument).
\begin{prop}
    The inclusion $\im(\kappa)\subseteq \Cc$ is an equality.
\end{prop}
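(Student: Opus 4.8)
The plan is to prove the remaining inclusion $\Cc\subseteq\im(\kappa)$ by exhibiting $\im(\kappa)$ as a localizing subcategory of $\Cc$ whose right orthogonal complement vanishes. First I would record the formal input: $\im(\kappa)$ is a presentable stable category (it is identified by $\kappa$ with $\QCoh([X_\Sigma/\TT])$, and is a limit of a diagram in $\Prl$), and by point (1) of \cref{charaterization of image of kappa} it is closed under all colimits inside $\Shv(M_\RR;\Sp)$; since $\Cc$ is likewise closed under colimits in $\Shv(M_\RR;\Sp)$, colimits of diagrams valued in $\im(\kappa)$ are computed the same way in $\im(\kappa)$, in $\Cc$, and in $\Shv(M_\RR;\Sp)$. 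Hence the inclusion $\iota\colon\im(\kappa)\hookrightarrow\Cc$ is a colimit-preserving functor between presentable stable categories, so by the adjoint functor theorem it admits a right adjoint $\rho\colon\Cc\to\im(\kappa)$; in other words $\im(\kappa)$ is coreflective in $\Cc$, with counit $\iota\rho\to\id_\Cc$. (Only cocompleteness of $\Cc$ is really used here, via the set of compact generators of $\im(\kappa)$, but presentability of $\Cc$ makes the argument cleanest.)

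Next I would show that the right orthogonal $\im(\kappa)^{\perp}\defeq\{\Gc\in\Cc:\map(X,\Gc)\simeq 0\text{ for all }X\in\im(\kappa)\}$ is zero. Given such a $\Gc$, apply the defining property to the objects $\omega(D_x)$ for $x\in M_\RR$: each $\omega(D_x)$ lies in $\im(\kappa)$ by \cref{charaterization of image of kappa}, so $\map(\omega(D_x),\Gc)\simeq 0$. By the preceding proposition — corepresentability of the stalk functor at $x$ by $\omega(D_x)$ inside $\Cc$, which holds for every $x\in M_\RR$ — we get $\Gc_x\simeq\map(\omega(D_x),\Gc)[n]\simeq 0$ for all $x$. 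Since $M_\RR$ is a finite-dimensional manifold, equivalences of sheaves (in particular, vanishing) are detected on stalks, so $\Gc\simeq 0$.

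Finally, for arbitrary $\Gc\in\Cc$ I would consider the counit $c\colon\iota\rho\Gc\to\Gc$ and its cofiber $\Kc$, computed in $\Shv(M_\RR;\Sp)$; since $\iota\rho\Gc\in\im(\kappa)\subseteq\Cc$ and $\Cc$ is closed under finite colimits in $\Shv(M_\RR;\Sp)$, we have $\Kc\in\Cc$. For every $X\in\im(\kappa)$ the map $\map(\iota X,\iota\rho\Gc)\to\map(\iota X,\Gc)$ induced by $c$ is an equivalence (combine full faithfulness of $\iota$ with the adjunction $\iota\dashv\rho$), so $\map(X,\Kc)\simeq 0$, i.e. $\Kc\in\im(\kappa)^{\perp}$. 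By the previous step $\Kc\simeq 0$, hence $c$ is an equivalence and $\Gc\simeq\iota\rho\Gc\in\im(\kappa)$. Together with the inclusion $\im(\kappa)\subseteq\Cc$ already recorded, this gives the asserted equality. The argument is essentially formal given the prior results; the only points needing care are the bookkeeping ones — that $\im(\kappa)$ is a presentable subcategory of $\Cc$ closed under $\Cc$-colimits, so that the adjoint functor theorem applies, and that the cofiber $\Kc$ remains in $\Cc$, so that the stalk-corepresentability of the preceding proposition can be invoked for $\Kc$ — and there is no further geometric input.
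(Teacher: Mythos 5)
Your proof is correct and follows essentially the same route as the paper, which simply refers back to the argument after \cref{the theorem of Dx corepresents taking stalk on sheaf with singular support}: the inclusion preserves colimits so the adjoint functor theorem gives a right adjoint, the right orthogonal in $\Cc$ vanishes because the probing sheaves $\omega(D_x)\in\im(\kappa)$ corepresent stalks in $\Cc$ and stalks detect vanishing, hence the counit is an equivalence. Your write-up just makes the cofiber-of-the-counit bookkeeping explicit, which the paper leaves implicit.
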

\begin{proof}
Same as the argument immediately following \cref{the theorem of Dx corepresents taking stalk on sheaf with singular support}.
\end{proof}
The final observation we will use is that, since $\omega_P$ is a convolution-invertible object in $\Shv(M_\RR;\Sp)$, we have a functor
$$(-)*\omega_P^{-1}:\Shv_{\Lambda_\Sigma}(M_\RR;\Sp)\rightarrow\Cc.$$
Applying the above proposition, we learn that for each $\Fc\in\Shv(M_\RR;\Sp)$,
$$\Fc*\omega_P^{-1}\in\Cc=\im(\kappa).$$
However, now that $\im(\kappa)$ is closed under convolution, one learns that 
$$\Fc=\Fc*\omega_P^{-1}*\omega_P\in \im(\kappa).$$
Consequently, we have
$$\im(\kappa)=\Shv_{\Lambda_\Sigma}(M_\RR;\Sp),$$
and \cref{the theorem of Dx corepresents taking stalk on sheaf with singular support} follows easily (beware of the flip of logic here).
\section{Epilogue}
In the final section, we exploit the results developed thus far to derive some tangible ramifications.
Firstly, we apply the folklore method of de-equivariantization to obtain the `non-equivariant' version 
of the equivalence.
Next, as a concrete consequence, 
we provide a proof of Beilinson's equivalence for flat $\mathbb{P}^n$  
over $\SS$. More generally, we introduce a definition of the toric construction in an abstract 
setting and explain how the equivalence fits into this framework.
As an example, we demonstrate how this method recovers a family version of the 
equivalence as in \cite{hu2023coherentconstructiblecorrespondencetoricfibrations}. \par
Throughout the section we always work with a \stress{smooth projective} fan.
\subsection{De-equivariantization}
One of the most basic notions in the theory of stacks is that of quotient stacks.
Given a group object $G$ acting on $X\in\Stk$, one can form the quotient stack $[X/G]\in\Stk$. The fundamental insight is that it encodes all the $G$-equivariant information about $X$.
In this regard, $\QCoh([X/G])$ is just the category of objects in $\QCoh(X)$ together with a $G$-action, i.e., the category of $G$-modules in $\QCoh(X)$.
Therefore, $\QCoh([X/G])$ is completely determined by $\QCoh(X)$, along with the action of $G$ on $\QCoh(X)$.

This process of determining $F([X/G])$ from $F(X)$, together with the information of a $G$-action on $F(X)$,
is colloquially referred to as \stress{equivariantization},
where $F$ is a sheaf, with $F = \QCoh(-)$ in the previous example.

A less-exploited point of view, dubbed \stress{de-equivariantization},
allows us to sometimes go in the other direction.
Indeed, observe that there is a pullback diagram in $\Stk$
\[\begin{tikzcd}
	X & {[X/G]} \\
	\ast & BG
	\arrow[from=1-1, to=1-2]
	\arrow[from=1-1, to=2-1]
	\arrow[from=1-2, to=2-2]
	\arrow[from=2-1, to=2-2]
\end{tikzcd}.\]
With both $[X/G]$ and $BG$ being perfect,
$F = \QCoh(-)$ takes this pullback square to a pushout square
in $\CAlg(\Prl)$ and we have
\[
\QCoh(X) \simeq \QCoh([X/G]) \otimes_{\QCoh(\mathrm{BG})} \QCoh(\ast),
\]
where the relative tensor product is taken in $\Prl$ (see \cite[Proposition 4.6]{BFN2010}\SAG{Corollary}{9.4.2.3}).
Now we apply this method to the case that is interesting to us.
First, we need some preparations. 
\begin{lem}
    For each $\sigma\in\Sigma$, the stack $[X_\sigma/\TT]$ is a perfect stack in the sense of \SAG{Definition}{9.4.4.1}. Similarly, the stack $B\TT$ is also a perfect stack.
\end{lem}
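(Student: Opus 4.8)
The plan is to prove the assertion for $B\TT$ first and then deduce it for $[X_\sigma/\TT]$ by a base-change argument. Both stacks are geometric (cf.\ the conventions in the introduction, via \SAG{Definition}{9.3.0.1}), so they are quasi-compact with affine diagonal; by \SAG{Definition}{9.4.4.1} it therefore remains, in each case, to check that $\QCoh(-)$ is compactly generated and that its compact objects are exactly the perfect (equivalently, dualizable) ones.

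For $B\TT$ we use the symmetric monoidal equivalence $\Phi_M\colon\Fun(M,\Sp)\overset{\simeq}{\longrightarrow}\QCoh(B\TT)$ of \cref{theorem of quasicoherent sheaves of torus compare to graded spectra}, where $M$ is the lattice regarded as a \emph{discrete} category and the source carries the Day convolution product. Since $M$ is discrete, $\Fun(M,\Sp)\simeq\prod_{m\in M}\Sp$, in which colimits are computed coordinatewise, so the functors $\delta_m$ ($m\in M$) taking the value $\SS$ at $m$ and $0$ elsewhere generate under colimits; under Day convolution $\delta_m\otimes\delta_{m'}\simeq\delta_{m+m'}$ and $\delta_0\simeq\mathbb{1}$, so each $\delta_m$ is $\otimes$-invertible, in particular dualizable. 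By \cref{characterization of compact in functor category on poset}, applied to $M$ with its (trivially locally finite) discrete order, the compact objects of $\Fun(M,\Sp)$ are precisely the functors supported on finitely many $m$ with finite-spectrum values; every such object is a finite coproduct $\bigoplus_{m\in S}X_m\otimes\delta_m$ of dualizable objects, hence dualizable. Conversely every dualizable object is compact, because the unit $\delta_0\simeq\mathbb{1}$ is compact and $\operatorname{Map}(X,-)\simeq\operatorname{Map}(\mathbb{1},X^\vee\otimes-)$ then preserves colimits. Thus $\QCoh(B\TT)$ is compactly generated with compact objects $=$ perfect objects, so $B\TT$ is a perfect stack.

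For $[X_\sigma/\TT]$, observe that the projection $\pi_\sigma\colon[X_\sigma/\TT]\to B\TT$ is an affine morphism: its base change along the faithfully flat cover $\ast\to B\TT$ is $X_\sigma=\Spet(\Oc_\sigma)$, which is affine. Since perfect stacks are stable under affine morphisms — the source of an affine morphism with perfect target is again perfect, by the closure properties of \SAG{Section}{9.4.4} (cf.\ also \cite{BFN2010}) — the previous paragraph shows that $[X_\sigma/\TT]$ is perfect.

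The step I expect to be the main obstacle is the identification of the compact objects of $\QCoh(B\TT)$ with the dualizable ones; this is exactly where the special nature of a flat torus enters (for comparison, the analogous statement fails for $B\GG_a$ in positive characteristic), and it is what makes the base-change reduction for $[X_\sigma/\TT]$ legitimate even though $X_\sigma$ is an arbitrary affine scheme. Everything else is bookkeeping with covers together with the closure properties of perfect stacks recorded in \SAG{Section}{9.4.4}.
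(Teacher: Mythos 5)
Your argument is essentially correct, but it takes a genuinely different route from the paper. The paper verifies the conditions of \SAG{Definition}{9.4.4.1} directly for $[X_\sigma/\TT]$: quasi-geometricity via the action diagram (as in \SAG{Corollary}{9.3.1.4} and \cite[Remark 2.1]{Moulinos}), and then compact generation together with compactness of the structure sheaf by transporting everything through the combinatorial equivalence $\QCoh([X_\sigma/\TT])\simeq\Fun(\Theta(\sigma)^\op,\Sp)$ of \cref{theorem of combinatorics compares to quasicoherent}, where the unit is a representable presheaf; the case of $B\TT$ is then ``similar.'' You instead settle $B\TT$ first via $\Phi_M$ (your analysis of $\Fun(M,\Sp)$ with Day convolution is fine, and note it proves more than the definition as used in the paper requires: compact $=$ dualizable, rather than just compact generation plus compactness of the unit), and then transfer to $[X_\sigma/\TT]$ along the affine projection $\pi_\sigma\colon[X_\sigma/\TT]\to B\TT$. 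The paper's route buys self-containedness — it only reuses equivalences already established — while yours buys a reusable permanence principle and meshes well with the later de-equivariantization, where the affineness of $\pi_\sigma$ is again the operative fact.

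The one step you should not leave to a citation is precisely that permanence principle: ``affine over perfect is perfect'' is not actually established anywhere in your write-up, and it is doubtful that it appears verbatim where you point (\SAG{Definition}{9.4.4.1} and its surroundings record the definition; \cite{BFN2010} proves closure under fiber products and quotients by affine groups in characteristic zero, not this statement). It is true, and the proof is short: since $\pi_\sigma$ is affine, $\QCoh([X_\sigma/\TT])\simeq\Mod_{\pi_{\sigma*}\Oc}(\QCoh(B\TT))$, the free modules on compact (dualizable) generators of $\QCoh(B\TT)$ are compact generators, the unit $\pi_{\sigma*}\Oc$ is compact because the forgetful functor is colimit-preserving and conservative, and dualizability passes along the base-change functor — so spell this out (and likewise quasi-geometricity of the source of an affine morphism, which you currently outsource to the conventions, whereas the paper at least indicates the verification via the action diagram). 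With those two points filled in, your proof is complete.
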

\begin{proof}
We only present the proof for $[X_\sigma/\TT]$, the other case could be proved with similar arguments. We need to check three things:
\begin{itemize}
    \item The stack $[X_\sigma/\TT]$ is a quasi-geometric stack. It is in fact geometric. Given \SAG{Corollary}{9.3.1.4}, this follows (in the same way as \cite[Remark 2.1]{Moulinos}) from the fact that $[X_\sigma/\TT]$ is a colimit of the action diagram of $\TT$ acting on $X_\sigma$, where the degree $0$ term $X_\sigma$ is affine and the map $d_0:X_\sigma\times\TT\rightarrow X_\sigma$ is representable, affine and faithfully flat. 
    \item The structure sheaf $\Oc$ is a compact object in $\QCoh([X_\sigma/\TT])$. Via \cref{theorem of combinatorics compares to quasicoherent}, the structure sheaf is sent to a representable presheaf, which is certainly a compact object.
    \item The category $\QCoh([X_\sigma/\TT])$ is generated by compact objects. Via \cref{theorem of combinatorics compares to quasicoherent} this reduces to the fact that the spectral presheaf category is compactly generated.
\end{itemize}
\end{proof}
\begin{thm}[De-equivariantization for $\QCoh$]
\label{de-equivariantization for quasicoherent}
    De-equivariantization applies to the following stacks:
    \begin{itemize}
        \item For each $\sigma\in\Sigma$, we have a symmetric monoidal equivalence
        \[\QCoh([X_\sigma/\TT])\otimes_{\QCoh(B\TT)}\QCoh(*)\overset{\simeq}\longrightarrow\QCoh(X_\sigma).\]
        \item We have a symmetric monoidal equivalence
        \[\QCoh([X_\Sigma/\TT])\otimes_{\QCoh(B\TT)}\QCoh(*)\overset{\simeq}\longrightarrow\QCoh(X_\Sigma).\]
    \end{itemize}
\end{thm}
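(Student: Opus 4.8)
\emph{Proof sketch.} The plan is to bootstrap from the affine-local statement and Zariski descent, the point being to recognize $[X_\Sigma/\TT]$ as a perfect stack so that \cite[Proposition 4.6]{BFN2010} (equivalently \SAG{Corollary}{9.4.2.3}) applies verbatim. For a fixed cone $\sigma$ the affine case is immediate: the stacks $[X_\sigma/\TT]$ and $B\TT$ are perfect by the preceding lemma and $*$ is trivially perfect, and since the \v Cech nerve of $X_\sigma\to[X_\sigma/\TT]$ is the action diagram of $\TT$, pulling back the universal $\TT$-bundle $*\to B\TT$ along $[X_\sigma/\TT]\to B\TT$ recovers $X_\sigma\to[X_\sigma/\TT]$; that is, $X_\sigma\simeq[X_\sigma/\TT]\times_{B\TT}*$. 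As $\QCoh$ is lax symmetric monoidal and carries a pullback square of perfect stacks to a pushout square in $\CAlg(\Prl)$, this yields a symmetric monoidal equivalence $\QCoh([X_\sigma/\TT])\otimes_{\QCoh(B\TT)}\QCoh(*)\simeq\QCoh(X_\sigma)$. Moreover the whole construction is functorial in $\sigma\in\Sigma^\op$, because the squares $X_\sigma\to[X_\sigma/\TT]\to B\TT\leftarrow*$ are natural in $\sigma$ with $B\TT$ and $*$ constant; this proves the first bullet.

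For the global statement I would check directly that $[X_\Sigma/\TT]$ satisfies the three conditions of the preceding lemma. It is geometric, hence quasi-geometric. Its structure sheaf is compact: $R\Gamma([X_\Sigma/\TT],-)\simeq R\Gamma(X_\Sigma,-)^{h\TT}$, where $R\Gamma(X_\Sigma,-)$ is computed by a finite \v Cech complex over the affine cover $\{X_\sigma:\sigma\in\Sigma(n)\}$ and hence preserves colimits, and $(-)^{h\TT}\simeq R\Gamma(B\TT,-)$ preserves colimits since $B\TT$ is perfect. Finally $\QCoh([X_\Sigma/\TT])$ is compactly generated: by \cref{Microlocal characterization of the image} the functor $\kappa$ identifies it with $\Shv_{\Lambda_\Sigma}(M_\RR;\Sp)$, which is generated under colimits by the compact objects $\omega_{m+n\cdot P}$ (for $P$ a fixed moment polytope of $\Sigma$). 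With $[X_\Sigma/\TT]$, $B\TT$ and $*$ all perfect and $X_\Sigma\simeq[X_\Sigma/\TT]\times_{B\TT}*$ by the action square exactly as above, $\QCoh$ sends this pullback to the pushout $\QCoh([X_\Sigma/\TT])\otimes_{\QCoh(B\TT)}\QCoh(*)$ in $\CAlg(\Prl)$, giving the second bullet.

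The only genuinely nontrivial input is the compact generation of $\QCoh([X_\Sigma/\TT])$, which the coherent--constructible correspondence already hands us; everything else is formal. I should mention an alternative, purely descent-theoretic route, and explain why I would avoid it: using $X_\Sigma=\colim_\sigma X_\sigma$ and $[X_\Sigma/\TT]=\colim_\sigma[X_\sigma/\TT]$ one gets $\QCoh(X_\Sigma)\simeq\lim_{\Sigma^\op}\QCoh(X_\sigma)\simeq\lim_{\Sigma^\op}\bigl(\QCoh([X_\sigma/\TT])\otimes_{\QCoh(B\TT)}\QCoh(*)\bigr)$ from the first bullet and its naturality, while $\QCoh([X_\Sigma/\TT])\simeq\lim_{\Sigma^\op}\QCoh([X_\sigma/\TT])$; one would then need $-\otimes_{\QCoh(B\TT)}\QCoh(*)$ to commute with this finite limit. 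That is the real obstacle of the alternative approach, since relative tensor products need not commute with limits; making it precise would require showing that $\QCoh(*)$ is a dualizable $\QCoh(B\TT)$-module category, equivalently that base change along $*\to B\TT$ admits a right adjoint. The perfect-stack route sidesteps this entirely, so that is the one I would carry out.
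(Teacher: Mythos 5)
Your first bullet is exactly the paper's argument (perfectness of $[X_\sigma/\TT]$ and $B\TT$ plus \SAG{Corollary}{9.4.2.3} applied to the pullback square), so there is nothing to discuss there. For the second bullet, however, you have inverted the paper's choices: the ``purely descent-theoretic route'' you describe and then dismiss is precisely the proof the paper gives. It writes $\QCoh([X_\Sigma/\TT])\simeq\lim_{\Sigma^\op}\QCoh([X_\sigma/\TT])$, commutes $-\otimes_{\QCoh(B\TT)}\QCoh(*)$ past this limit, and then uses the first bullet (naturally in $\sigma$) together with Zariski descent for $\QCoh(X_\Sigma)$. The ``real obstacle'' you identify -- that one needs $\QCoh(*)$ to be a dualizable $\QCoh(B\TT)$-module -- is not an obstacle at all: it is exactly what \SAG{Corollary}{9.4.2.2} supplies, since $B\TT$ is perfect and $*\to B\TT$ is affine, and this is the citation the paper makes. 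So the route you reject needs nothing beyond what is already on the table.

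Conversely, the route you do carry out is the one the paper explicitly declines (see the remark following the theorem), and your sketch has a gap at precisely the point that made the paper avoid it: you assert that $[X_\Sigma/\TT]$ is geometric, hence quasi-geometric, with no argument. The paper's lemma only treats $[X_\sigma/\TT]$, and its proof via \SAG{Corollary}{9.3.1.4} uses that the degree-zero term of the action diagram is \emph{affine}; for the global quotient the atlas $X_\Sigma$ is not affine, so one must argue separately (affine diagonal of $X_\Sigma$, $\TT$-invariance of the affine opens $X_\sigma$, the atlas $\sqcup_\sigma X_\sigma$), i.e.\ adapt \cite[Proposition 3.21]{BFN2010} to the spectral setting -- genuinely extra work, not a one-liner. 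Your compactness-of-$\Oc$ argument is also loose as stated: writing $R\Gamma([X_\Sigma/\TT],-)\simeq R\Gamma(X_\Sigma,-)^{h\TT}$ and invoking colimit-preservation of each ingredient does not suffice, because an infinite totalization of colimit-preserving functors need not preserve colimits; what you actually need is that $\pi_*:\QCoh([X_\Sigma/\TT])\to\QCoh(B\TT)$ preserves colimits (finite Zariski descent along the cover by the $[X_\sigma/\TT]$, each affine over $B\TT$), after which compactness of $\Oc_{B\TT}$ finishes the job. Your compact-generation step via $\kappa$ and \cref{Microlocal characterization of the image} is fine and not circular, and is in fact a nice replacement for the characteristic-zero generation argument of \cite{BFN2010}. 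But as written, either fill in the quasi-geometricity of $[X_\Sigma/\TT]$ and tighten the compactness argument, or switch to the paper's limit argument, which requires no new verification.
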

\begin{proof}
    The first point is a direct application of \SAG{Corollary}{9.4.2.3}, given that both $[X_\sigma/\TT]$ and $B\TT$ are perfect stacks. For the second point, note that by the colimit presentation of $[X_\Sigma/\TT]$ one has
    \[\QCoh([X_\Sigma/\TT])\simeq\lim_{\Sigma^\op}\QCoh([X_\sigma/\TT]).\]
    Hence the relative tensor product gives
\begin{align*}
\QCoh([X_\Sigma/\TT])\otimes_{\QCoh(B\TT)}\QCoh(*)
& \simeq (\lim_{\Sigma^\op}\QCoh([X_\sigma/\TT]))\otimes_{\QCoh(B\TT)}\QCoh(*)\\
& \simeq \lim_{\Sigma^\op}(\QCoh([X_\sigma/\TT])\otimes_{\QCoh(B\TT)}\QCoh(*))\\
& \simeq\lim_{\Sigma^\op}\QCoh(X_\sigma) \\
& \simeq \QCoh(X_\Sigma),
\end{align*}
where the relative tensor product commutes with limits since $\QCoh(*)$ is dualizable over $\QCoh(B\TT)$ by \SAG{Corollary}{9.4.2.2}.
\end{proof}
\begin{rem}
    It seems plausible to directly show that $[X_\Sigma/\TT]$ is a perfect stack by adapting the proof of \cite[Proposition 3.21]{BFN2010} in the spectral setting. We opt for the above proof because it is straightforward from what we have done so far.
\end{rem}
We move on to the mirror side. Note that for the category of sheaves on real vector spaces, the de-equivariantization is reflected as the equivariantization, as we explain now.
We need the following fact whose proof will be provided later.
\begin{lem}\label{D-shriek-is-symmetric-monoidal}
    The lax symmetric monoidal functor given in \cref{construction:convolution}  \[D_!(-):\LCH\longrightarrow\Cat\] lifts to a symmetric monoidal functor (by abuse of notation, we  give it the same name)
    \[D_!(-):\LCH\longrightarrow\Prlst.\]
\end{lem}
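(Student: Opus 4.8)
The plan is to separate the formal content from the one genuinely nontrivial input, a Künneth formula for sheaves of spectra.

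First, recall from \cref{construction:convolution} that $D_!(-)=\Dc\circ\mathrm{Reg}$, where $\mathrm{Reg}\colon\LCH\to\Corr(\LCH,\mathrm{all})$ is symmetric monoidal and $\Dc$ is the six-functor formalism; on a morphism $f$ one has $D_!(f)=f_!$, which admits the right adjoint $f^!$ and is therefore cocontinuous, and on an object $D_!(X)=\Shv(X;\Sp)$ is presentable and stable. Hence $D_!$ factors through the subcategory $\Prlst\subseteq\Cat$. The lax symmetric monoidal structure of $D_!$ as a functor to $(\Cat,\times)$ has structure maps the external product functors $\boxtimes\colon\Shv(X;\Sp)\times\Shv(Y;\Sp)\to\Shv(X\times Y;\Sp)$, $(\Fc,\Gc)\mapsto\pr_X^*\Fc\otimes\pr_Y^*\Gc$, together with the unit object $\SS\in\Shv(*;\Sp)$. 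Since $\pr_X^*$, $\pr_Y^*$ and $\otimes$ are all separately cocontinuous, so is each $\boxtimes$, and the same holds for all the coherence data of the lax structure; by the universal property of the Lurie tensor product (equivalently, via the symmetric monoidal Grothendieck construction applied to the unstraightening of $D_!$), this data refines to a lax symmetric monoidal structure on the lift $D_!\colon\LCH\to(\Prlst,\otimes)$. It now suffices to prove that the resulting comparison maps, the unit map $\Sp\to\Shv(*;\Sp)$ and
\[
\Shv(X;\Sp)\otimes_{\Sp}\Shv(Y;\Sp)\longrightarrow\Shv(X\times Y;\Sp),
\]
are equivalences; the first is immediate, and the second is the cocontinuous extension of $\boxtimes$.

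The content is therefore the Künneth equivalence: for $X,Y\in\LCH$ — and it is enough for us that they be finite-dimensional manifolds — the external product induces an equivalence $\Shv(X;\Sp)\otimes_{\Sp}\Shv(Y;\Sp)\overset{\simeq}{\longrightarrow}\Shv(X\times Y;\Sp)$. I would obtain this from two standard ingredients, both available for such spaces: (i) for a locally compact Hausdorff $Y$ and a \emph{dualizable} presentable category $\Cc$ one has $\Shv(Y;\Cc)\simeq\Shv(Y;\Sp)\otimes\Cc$ — here dualizability rather than compact generation is what we need, since $\Shv(X;\Sp)$ is generally not compactly generated but \emph{is} dualizable by \cite{efimov2024ktheorylocalizinginvariantslarge} — and (ii) the iterated-sheaf identification $\Shv(X\times Y;\Sp)\simeq\Shv(X;\Shv(Y;\Sp))$. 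Taking $\Cc=\Shv(Y;\Sp)$ in (i) gives $\Shv(X;\Shv(Y;\Sp))\simeq\Shv(X;\Sp)\otimes\Shv(Y;\Sp)$, and composing with (ii) yields the desired equivalence; one then checks that this chain of equivalences is implemented by $\boxtimes$, which is a diagram chase using base change and the projection formula. Alternatively, one may cite the Künneth statement directly from the treatment of the topological six-functor formalism in \cite{volpe2023sixoperation}.

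The main obstacle is precisely this Künneth input: having it with \emph{dualizable} (not merely compactly generated) coefficients, so that it applies with $\Cc=\Shv(X;\Sp)$, and verifying that the equivalence produced this way agrees with the external-product comparison map coming from the lax structure. Everything else — the factorization through $\Prlst$, the passage of the lax structure from $(\Cat,\times)$ to $(\Prlst,\otimes)$, and the triviality of the unit comparison — is routine bookkeeping, and the presentability and stability one wants downstream (on the mirror side of the de-equivariantization) then come for free once the equivalence is in hand.
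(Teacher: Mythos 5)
Your proposal is correct and takes essentially the same route as the paper: the paper likewise notes that each $\Shv(X;\Sp)$ is presentable stable, each $f_!$ is colimit-preserving, and the external product is colimit-preserving in each variable, so the lax structure lifts to $\Prlst$ via \HA{Remark}{4.8.1.9}, and strong monoidality then follows from the K\"unneth formula cited from \cite{volpe2023sixoperation}. Your sketched re-derivation of K\"unneth via dualizability and iterated sheaf categories is an optional extra; the paper simply cites it directly.
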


\begin{thm}[Equivariantization for $\Shv$]
\label{equivariantization for large sheaf category}
    There is a  commutative square of symmetric monoidal categories
\[\begin{tikzcd}
	{\Fun(M,\Sp)} & {\Shv(M_\RR;\Sp)} \\
	{\Shv(M;\Sp)} & {\Shv(M_\RR;\Sp)}
	\arrow[hook, from=1-1, to=1-2]
	\arrow["\simeq", from=1-1, to=2-1]
	\arrow["{=}", from=1-2, to=2-2]
	\arrow["{i_!}"', hook, from=2-1, to=2-2]
\end{tikzcd}. \]
    The symmetric monoidal functor
    \[\Fun(M,\Sp)\longrightarrow\Shv(M_\RR;\Sp)\]
    is from \cref{comb-constructible functor is compatible with lattice}. The map $i:M\rightarrow M_\RR$ is the inclusion of the topological groups, hence $!$-pushforward along $i$ induces a fully faithful symmetric monoidal functor
    \[i_!:\Shv(M;\Sp)\longrightarrow\Shv(\MR;\Sp)\]
    where both categories are equipped with the convolution monoidal structure.
    Moreover, $\Shv(M_\RR/M;\Sp)$ can be identified with the relative tensor product:
    \[
    \Shv(M_\RR/M;\Sp) \simeq \Shv(M_\RR;\Sp)\otimes_{\Fun(M,\Sp)}\Sp  \in\CAlg(\Prlst).
    \]
\end{thm}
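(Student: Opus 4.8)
The plan is to handle the commuting square and the relative–tensor–product formula separately, both reducing to the symmetric monoidality of $D_!$ recorded in \cref{D-shriek-is-symmetric-monoidal}.

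\emph{The left and bottom edges.} Since $M$ is a discrete topological space there is an equivalence $\Shv(M;\Sp)\simeq\Fun(M,\Sp)$, and I claim it is symmetric monoidal for the convolution product on the left and the Day convolution on the right. Indeed $M$ is a commutative monoid object in $\LCH$, so by \cref{D-shriek-is-symmetric-monoidal} the symmetric monoidal functor $D_!$ carries $M\in\CAlg(\LCH)$ to the commutative algebra $\Shv(M;\Sp)\in\CAlg(\Prlst)$ whose multiplication is $+_!$ along $+\colon M\times M\to M$; on a discrete monoid $+_!$ is summation over the (discrete) fibres of addition, which is precisely the formula for Day convolution over $M$. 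This is the identification already invoked in \cref{mainthmB}. Next, $i\colon M\hookrightarrow M_\RR$ is a homomorphism of commutative monoids in $\LCH$, so by functoriality of the convolution structure the functor $i_!$ is symmetric monoidal for the convolution products; and since $M\cong\ZZ^n$ is closed in $M_\RR$ (hence $i$ is proper) we have $i_!\simeq i_*$, and the counit $i^*i_*\to\id$ of a closed embedding is an equivalence, so $i_!$ is fully faithful.

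\emph{The square commutes.} It remains to identify the composite $\Fun(M,\Sp)\xrightarrow{\simeq}\Shv(M;\Sp)\xrightarrow{i_!}\Shv(M_\RR;\Sp)$ with the symmetric monoidal, colimit-preserving functor $\Psi_\Sigma\circ\lim_\sigma p_{\sigma!}$ of \cref{comb-constructible functor is compatible with lattice} (this will also establish the fully faithfulness of the top edge). By the universal property of Day convolution (\cref{reminders on Day convolution}), $\Fun(M,\Sp)$ is the free presentably symmetric monoidal stable category on the symmetric monoidal category $M$, so it suffices to compare the two composites after restriction along $M\to\Fun(M,\Sp)$. Under $\Fun(M,\Sp)\simeq\Shv(M;\Sp)$ the generator $m\in M$ goes to the skyscraper $\underline\SS_{\{m\}}$, and $i_!\underline\SS_{\{m\}}=\underline\SS_{\{m\}}\in\Shv(M_\RR;\Sp)$. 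For the other composite, $p_\sigma(m)=m+\sigma^\svee$ and $\Psi_\sigma$ sends its representable to $\omega_{m+\sigma^\svee}$; applying the equivalence $R$ of \cref{taking module category over the family of idempotent algebras in the sheaf category} (the limit of the forgetful functors, an equivalence here by \cref{the idempotent algebras for a smooth projective fan glues to the unit}) gives $\lim_{\sigma}\omega_{m+\sigma^\svee}$. Since $\omega_{m+\sigma^\svee}\simeq\underline\SS_{\{m\}}*\omega_{\sigma^\svee}$ and convolving with the skyscraper $\underline\SS_{\{m\}}$ is translation by $m$ — an equivalence, hence it commutes with the limit — this equals $\underline\SS_{\{m\}}*\lim_\sigma\omega_{\sigma^\svee}=\underline\SS_{\{m\}}*\mathbb 1=\underline\SS_{\{m\}}$ by \cref{the idempotent algebras for a smooth projective fan glues to the unit}. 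The two composites therefore agree on generators, compatibly with the monoidal structures, hence agree; this gives the square in $\CAlg(\Prlst)$.

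\emph{The relative tensor product.} Consider the quotient map $q\colon M_\RR\to M_\RR/M$. The $M$-action on $M_\RR$ is free and properly discontinuous, so $q$ is a covering map with $M_\RR\times_{M_\RR/M}M_\RR\simeq M_\RR\times M$; hence its \v{C}ech nerve in $\LCH$ is the action simplicial object $[n]\mapsto M_\RR\times M^{\times n}$. Since $q$ admits local sections, descent for $\Shv(-;\Sp)$ along open covers (equivalently, the standard fact that $M$-equivariant sheaves for a free action compute sheaves on the quotient) gives, via $*$-pullback, an equivalence
\[\Shv(M_\RR/M;\Sp)\simeq\lim_{[n]\in\Delta}\Shv(M_\RR\times M^{\times n};\Sp)\]
which is a limit in $\Prlst$ whose transition functors are the $*$-pullbacks along face maps; each such is étale and so has a further left adjoint ($!$-pushforward). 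By the standard passage between limits and colimits of presentable categories along adjoint functors, this is equivalently the colimit over $\Delta^\op$ of the same categories with $!$-pushforward transition functors:
\[\Shv(M_\RR/M;\Sp)\simeq\colim_{[n]\in\Delta^\op}\Shv(M_\RR\times M^{\times n};\Sp).\]
Applying \cref{D-shriek-is-symmetric-monoidal} once more, $D_!$ sends products in $\LCH$ to tensor products in $\Prlst$, giving $\Shv(M_\RR\times M^{\times n};\Sp)\simeq\Shv(M_\RR;\Sp)\otimes\Shv(M;\Sp)^{\otimes n}$; one checks the face maps become the convolution-module action of $\Shv(M;\Sp)$ on $\Shv(M_\RR;\Sp)$ (from $!$-pushforward along the action $M_\RR\times M\to M_\RR$), the multiplication of $\Shv(M;\Sp)$, and the augmentation $\Shv(M;\Sp)=\Fun(M,\Sp)\xrightarrow{\colim}\Sp$ (from $!$-pushforward $M\to\ast$). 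This is exactly the two-sided bar construction $B_\bullet(\Shv(M_\RR;\Sp),\Shv(M;\Sp),\Sp)$, whose realization computes the relative tensor product in $\CAlg(\Prlst)$ (all categories are stable and all functors exact), so $\Shv(M_\RR/M;\Sp)\simeq\Shv(M_\RR;\Sp)\otimes_{\Fun(M,\Sp)}\Sp$.

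\emph{Expected main obstacle.} The delicate point is the third paragraph: verifying descent for $\Shv(-;\Sp)$ along the covering map $q$ in the chosen six-functor formalism on $\LCH$, and — crucially — matching the resulting limit-over-$\Delta$ diagram (with $*$-pullbacks) against the colimit-over-$\Delta^\op$ bar construction (with $!$-pushforwards) so that the two \v{C}ech face maps become exactly ``act'' and ``project'', i.e. so that the bar construction is for the \emph{correct} module structure (the convolution-module structure of $\Shv(M_\RR;\Sp)$ over $\Fun(M,\Sp)$, with $\Sp$ a module via $\colim$). Carrying all of this out coherently in $\CAlg(\Prlst)$ rather than merely in $\Prlst$ is what leans on the deferred \cref{D-shriek-is-symmetric-monoidal}.
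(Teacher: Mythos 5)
Your proposal is correct and follows essentially the same route as the paper: the square is handled by factoring through the fully faithful $i_!$ via the skyscraper computation $\lim_\sigma\omega_{m+\sigma^\svee}\simeq\underline\SS_{\{m\}}$ (which the paper deduces from \cref{the idempotent algebras for a smooth projective fan glues to the unit} exactly as you do), and the relative tensor product is identified with $\Shv(M_\RR/M;\Sp)$ by matching the bar construction with the \v{C}ech nerve of $M_\RR\to M_\RR/M$, using the K\"unneth formula, the limit/colimit passage along adjoints, and \'etale descent. The only difference is organizational — the paper runs the comparison from the algebraic side (Kan-extending $D_!$ to presheaves on $\LCH$ to get the $\CAlg(\Prlst)$-coherence) toward \'etale descent, while you start from descent along the covering and end at the bar construction — which is the same argument read in the opposite direction.
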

\begin{proof}
    Recall from \cref{comb-constructible functor is compatible with lattice} that the functor 
    \[\Fun(M,\Sp)\longrightarrow\Shv(M_\RR;\Sp)\]
    is defined as a composition
    \[\Fun(M,\Sp)\longrightarrow\lim_{\sigma}\Fun(\Theta(\sigma),\Sp)\longrightarrow\Shv(M_\RR;\Sp).\]
    By \cref{the idempotent algebras for a smooth projective fan glues to the unit}, we know that it takes $m\in M$ to the skyscraper $\underline\SS_{\{m\}}\in \Shv(M_\RR;\Sp)$. Note  that it  preserves colimits, so its image is contained in the image of the fully faithful functor $i_!$. Hence we get a symmetric        monoidal factorization
    \[\Fun(M,\Sp)\longrightarrow\Shv(M;\Sp)\overset{i_!}\longrightarrow\Shv(\MR;\Sp),\]
    and the first functor is readily checked to be an equivalence.
    Next we study the relative tensor product
    \[
    \Shv(M_\RR;\Sp)\otimes_{\Fun(M,\Sp)}\Sp.
    \]
    Note that we may replace $\Fun(M,\Sp)$ by $\Shv(M;\Sp)$ and $\Sp$ by $\Shv(*;\Sp)$. Hence we might as well study the relative tensor product
    \[\Shv(M_\RR;\Sp)\otimes_{\Shv(M;\Sp)}\Shv(*;\Sp),\]
    formed along the symmetric monoidal functors
    \[i_!:\Shv(M;\Sp)\longrightarrow\Shv(M_\RR;\Sp)\]
    and
    \[p_!:\Shv(M;\Sp)\longrightarrow\Shv(*;\Sp).\]
    From \cref{D-shriek-is-symmetric-monoidal}, we have a symmetric monoidal functor
    \[D_!(-):\LCH\longrightarrow\Prlst\]
    and one can left Kan extend it to a symmetric monoidal colimit-preserving functor on the category of presheaves \footnote{Alternatively, one can left Kan extend to the category of \'etale sheaves and simplify some of the arguments below.} on $\LCH$ (ignoring size issues)
    \[D_!(-):\Fun(\LCH^\op,\Spc)\longrightarrow\Prlst.\]
    By \cref{Taking module category}, we know that $D_!(-)$ is compatible with forming relative tensor products. In particular, we get an identification 
    \[D_!(\MR)\otimes_{D_!(M)}D_!(*)\overset\simeq\longrightarrow D_!(h_\MR\times_{h_M}h_*)\in\CAlg(\Prlst)\]
    where the underlying object of the right-hand side is computed as the colimit of $D_!(-)$ applied to the Bar complex of the relative tensor product
    \[h_\MR\times_{h_M}h_*\in\Fun(\LCH^\op,\Spc).\]
    It remains to identify this colimit with $\Shv(\MR/M;\Sp)$ in $\CAlg(\Prl)$. We have a map
    \[h_\MR\times_{h_M}h_*\longrightarrow h_{\MR/M}\in\CAlg(\Fun(\LCH^\op,\Spc))\]
    and we claim it becomes an equivalence once we apply $D_!(-)$. This essentially follows from \'etale descent for the functor $\Shv(-)$. We supply a detailed explanation as follows: given the map, it suffices to show that after applying $D_!(-)$ one gets an equivalence of categories. We will identify the Bar complex computing the relative tensor product\[h_\MR\times_{h_M}h_*\in\Fun(\LCH^\op,\Spc)\]
    with the Yoneda image of \v Cech nerve of the covering map
    \[\MR\longrightarrow \MR/M\in\LCH.\]
    Note that we are comparing two simplicial diagrams sitting inside in a sub-1-category in $\Fun(\LCH^\op,\Spc)$. It is direct to check that these two diagrams agree. It follows that after applying $D_!(-)$ we have an identification of simplicial diagram of categories
    \[[n\mapsto\Shv(M;\Sp)^{\otimes n}\otimes\Shv(\MR;\Sp)]\simeq[n\mapsto\Shv( M^{\times n}\times\MR)],\]
    where the colimit of the left-hand side by definition computes $D_!(h_\MR\times_{h_M}h_*)$. Now the question is reduced to checking that
\[\begin{tikzcd}
	\cdots & {\Shv(M\times M\times M_\RR;\Sp)} & {\Shv(M\times M_\RR;\Sp)} & {\Shv(M_\RR;\Sp)} & {\Shv(M_\RR/M;\Sp)}
	\arrow[shift left=3, from=1-1, to=1-2]
	\arrow[shift right=3, from=1-1, to=1-2]
	\arrow[shift right, from=1-1, to=1-2]
	\arrow[shift left, from=1-1, to=1-2]
	\arrow[shift right=2, from=1-2, to=1-3]
	\arrow[shift left=2, from=1-2, to=1-3]
	\arrow[from=1-2, to=1-3]
	\arrow[shift left, from=1-3, to=1-4]
	\arrow[shift right, from=1-3, to=1-4]
	\arrow[from=1-4, to=1-5]
\end{tikzcd}\]
    is a colimit diagram in $\Prl$, where all the arrows are given by $!$-pushforward. Equivalently, we can take right adjoints everywhere and check that the outcome is a limit diagram in $\Cat$. Note that all the non-degenerate maps are \'etale so $!$-pullback is canonically identified with $*$-pullback. Using that taking $\Shv(-)$ with $*$-pullback has \'etale descent, it follows that
    \[D_!(h_\MR\times_{h_M}h_*)\overset{\simeq}{\longrightarrow} D_!(h_{\MR/M})\in\CAlg(\Prlst).\]
    Thus we have a symmetric monoidal equivalence
    \[\Shv(M_\RR;\Sp)\otimes_{\Fun(M,\Sp)}\Sp\simeq\Shv(M_\RR/M;\Sp) \in\CAlg(\Prlst).\qedhere\]
\end{proof}

\begin{rem} More informally and concretely, we can interpret the above argument as follows. To compute the tensor product
    \[\Shv(M_\RR;\Sp)\otimes_{\Shv(M;\Sp)}\Shv(*;\Sp), \]
    one can look at the colimit of the simplicial diagram in $\Prl$
\[\begin{tikzcd}
	\cdots & {\Shv(M;\Sp)\otimes\Shv(M;\Sp)\otimes\Shv(M_\RR;\Sp)} & {\Shv(M;\Sp)\otimes\Shv(M_\RR;\Sp)} & {\Shv(M_\RR;\Sp)}
	\arrow[shift left=3, from=1-1, to=1-2]
	\arrow[shift right=3, from=1-1, to=1-2]
	\arrow[shift right, from=1-1, to=1-2]
	\arrow[shift left, from=1-1, to=1-2]
	\arrow[shift right=2, from=1-2, to=1-3]
	\arrow[shift left=2, from=1-2, to=1-3]
	\arrow[from=1-2, to=1-3]
	\arrow[shift left, from=1-3, to=1-4]
	\arrow[shift right, from=1-3, to=1-4]
\end{tikzcd}\]
    given by the Bar complex calculating  the relative tensor product. By the K\"unneth formula \cite[Proposition 2.30]{volpe2023sixoperation}, one might identify each term  with 
\[\begin{tikzcd}
	\cdots & {\Shv(M\times M\times M_\RR;\Sp)} & {\Shv(M\times M_\RR;\Sp)} & {\Shv(M_\RR;\Sp)}
	\arrow[shift left=3, from=1-1, to=1-2]
	\arrow[shift right=3, from=1-1, to=1-2]
	\arrow[shift right, from=1-1, to=1-2]
	\arrow[shift left, from=1-1, to=1-2]
	\arrow[shift right=2, from=1-2, to=1-3]
	\arrow[shift left=2, from=1-2, to=1-3]
	\arrow[from=1-2, to=1-3]
	\arrow[shift left, from=1-3, to=1-4]
	\arrow[shift right, from=1-3, to=1-4]
\end{tikzcd},\]
where all the functors are now given by $!$-pushforward. In other words, this diagram is the outcome of applying $D(-)_!$ to the diagram of \v Cech nerve of the map
\[\MR\longrightarrow\MR/M\in\LCH.\]
Now one can take right adjoints and compute the limit of the following diagram in $\Cat$
\[\begin{tikzcd}
	\cdots & {\Shv(M\times M\times M_\RR;\Sp)} & {\Shv(M\times M_\RR;\Sp)} & {\Shv(M_\RR;\Sp)}
	\arrow[shift right=3, from=1-2, to=1-1]
	\arrow[shift left=3, from=1-2, to=1-1]
	\arrow[shift left, from=1-2, to=1-1]
	\arrow[shift right, from=1-2, to=1-1]
	\arrow[shift left=2, from=1-3, to=1-2]
	\arrow[shift right=2, from=1-3, to=1-2]
	\arrow[from=1-3, to=1-2]
	\arrow[shift right, from=1-4, to=1-3]
	\arrow[shift left, from=1-4, to=1-3]
\end{tikzcd},\]
where all the functors are now $!$-pullback. Since all the maps are \'etale, the $!$-pullback are canonically identified with $*$-pullback. Thus the diagram is identified with the outcome of taking $\Shv(-)$ and $*$-pullback of the \v Cech nerve of the covering map $M_\RR\rightarrow M_\RR/M$. So we might conclude that the limit
\[\lim_\Delta\Shv(M^{\times n}\times M_\RR;\Sp)\simeq\Shv(M_\RR/M;\Sp),\]
by \'etale descent of taking $\Shv(-)$ and $*$-pullback.
\end{rem}

\begin{proof}[Proof of \cref{D-shriek-is-symmetric-monoidal}]
    It follows from \HA{Remark}{4.8.1.9} and 
    \begin{itemize}
        \item On objects each $X$ is taken to a stable presentable category $\Shv(X;\Sp)$.
        \item On morphisms each $f$ is taken to a colimit-preserving functor $f_!$
        \item The box tensor product on $\Shv(X;\Sp)$ is colimit-preserving in each variable.
        \item The K\"unneth formula holds \cite[Proposition 2.30]{volpe2023sixoperation}.
    \end{itemize}
\end{proof}

Finally, we can apply equivariantization to the category of sheaves with prescribed singular support:
\begin{rec}
    The condition of being constructible and having prescribed singular support is preserved and can be checked after pullback along an \'etale cover map. This follows from the local nature of the definition \cref{the notion of singular support is local}. See \cite[Lemma 3.7]{Jansen_2024} for a related result that one can check local constancy and constructibility \'etale locally.
\end{rec}
\begin{cor}[Equivariantization for $\Shv_\Lambda$]
\label{equivariantization for microlocal sheaf category}
    There is a symmetric monoidal equivalence 
    \[\Shv_{\Lambda_\Sigma}(M_\RR;\Sp)\otimes_{\Fun(M,\Sp)}\Sp\simeq\Shv_{\overline\Lambda_\Sigma}(M_\RR/M;\Sp)\]
    where the right-hand side is the subcategory of sheaves of spectra on $M_\RR/M$ characterized by the following two conditions:
    \begin{itemize}
        \item It is constructible for the stratification $\overline\Sc_\Sigma\defeq\pi(\Sc_\Sigma)$ inherited from the projection map $\pi$. 
        \item It has singular support lying in $\overline\Lambda_\Sigma\defeq d\pi(\Lambda_\Sigma)\subset T^*M_\RR/M$ inherited form the projection map $\pi$.
    \end{itemize}
    The convolution symmetric monoidal structure on $\Shv(\MR/M;\Sp)$ restricts to a symmetric monoidal structure on $\Shv_{\overline\Lambda_\Sigma}(M_\RR/M;\Sp)$.
\end{cor}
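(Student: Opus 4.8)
The plan is to \emph{restrict} the \v{C}ech-descent argument proving \cref{equivariantization for large sheaf category} to the full subcategories cut out by the singular-support condition. As a first step I would record that the left-hand relative tensor product is a legitimate object of $\CAlg(\Prlst)$: by \cref{Microlocal characterization of the image} one has $\Shv_{\Lambda_\Sigma}(M_\RR;\Sp)=\im(\kappa)$, which by \cref{charaterization of image of kappa} is a colimit-closed symmetric monoidal subcategory of $(\Shv(M_\RR;\Sp),\ast)$, and it receives the symmetric monoidal functor of \cref{comb-constructible functor is compatible with lattice}; hence $\Shv_{\Lambda_\Sigma}(M_\RR;\Sp)$ is a commutative algebra in $\Mod_{\Fun(M,\Sp)}(\Prlst)$ and $\Shv_{\Lambda_\Sigma}(M_\RR;\Sp)\otimes_{\Fun(M,\Sp)}\Sp$ exists as a presentably symmetric monoidal category.

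Next I would compute this relative tensor product via the bar resolution, reusing the bookkeeping of \cref{equivariantization for large sheaf category}. Using $\Fun(M,\Sp)^{\otimes n}\simeq\Fun(M^{\times n},\Sp)$ in $\Prlst$ and the fact that tensoring a presentable category with $\Fun(S,\Sp)$ over a set $S$ is the same as taking $S$-indexed functors, the $n$-th bar term is $\Fun(M^{\times n},\Shv_{\Lambda_\Sigma}(M_\RR;\Sp))$. Since $M^{\times n}$ is discrete, this is exactly the full subcategory $\Shv_{0\times\Lambda_\Sigma}(M^{\times n}\times M_\RR;\Sp)$ of sheaves constructible for the product of the discrete stratification with $\Sc_\Sigma$ and with singular support in $0_{M^{\times n}}\times\Lambda_\Sigma$ (over the $0$-manifold $M^{\times n}$ the whole cotangent bundle is the zero section, so the only condition is on the slices over points). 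The face maps are, just as there, the $!$-pushforwards obtained by applying $D_!(-)$ to the \v{C}ech nerve of $\pi:M_\RR\to M_\RR/M$; I would check they preserve these constrained bar terms: the structure maps are étale (translations, projections, and foldings of the discrete factors), homeomorphisms act as equivalences, and for the projection-type maps $!$-pushforward is a direct sum over fibers, which stays constrained by \cref{sheaves with singular support is closed under colimit}. Taking right adjoints ($f^!\simeq f^*$ for $f$ étale) then turns the colimit in $\Prlst$ into $\lim_{[n]\in\Delta}\Shv_{0\times\Lambda_\Sigma}(M^{\times n}\times M_\RR;\Sp)$ with $\ast$-pullback transitions.

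To finish the computation I would combine this with ordinary étale descent, $\Shv(M_\RR/M;\Sp)\simeq\lim_{[n]\in\Delta}\Shv(M^{\times n}\times M_\RR;\Sp)$, and the recollection immediately preceding the corollary (together with \cref{the notion of singular support is local}): a sheaf on $M_\RR/M$ lies in $\Shv_{\overline\Lambda_\Sigma}(M_\RR/M;\Sp)$ precisely when its pullback to the degree-$0$ term $M_\RR$ lies in $\Shv_{\Lambda_\Sigma}(M_\RR;\Sp)$, and this condition is preserved by all transition functors of the \v{C}ech diagram. Hence the étale-descent equivalence restricts to $\Shv_{\overline\Lambda_\Sigma}(M_\RR/M;\Sp)\simeq\lim_{[n]\in\Delta}\Shv_{0\times\Lambda_\Sigma}(M^{\times n}\times M_\RR;\Sp)$, and chaining with the previous line gives the equivalence of categories. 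For the symmetric monoidal refinement, all these identifications are instances of the symmetric monoidal functor $D_!(-)$ of \cref{D-shriek-is-symmetric-monoidal} being compatible with relative tensor products (\cref{Taking module category}), exactly as in the proof of \cref{equivariantization for large sheaf category}, so the equivalence lives in $\CAlg(\Prlst)$; moreover it is compatible with the unconstrained equivalence $\Shv(M_\RR;\Sp)\otimes_{\Fun(M,\Sp)}\Sp\simeq\Shv(M_\RR/M;\Sp)$, under which the left-hand monoidal structure is convolution. Therefore the transported structure on $\Shv_{\overline\Lambda_\Sigma}(M_\RR/M;\Sp)$ is the restriction of convolution on $\Shv(M_\RR/M;\Sp)$, and in particular $\Shv_{\overline\Lambda_\Sigma}(M_\RR/M;\Sp)$ is closed under convolution.

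The hard part will be the two places where ``constrained'' subcategories are carried along the descent: showing that the structure maps of the \v{C}ech nerve of $\pi$ and their $\ast$-pullback right adjoints preserve the constructibility-plus-singular-support condition, and that the étale-descent equivalence genuinely restricts to the constrained subcategories (so that the limit of the constrained pieces is the constrained piece of the limit). Both hinge on singular support being a local invariant testable after étale pullback, which is exactly the content of the recollection preceding the corollary; once this is in hand, combining with \cref{de-equivariantization for quasicoherent} additionally identifies the base change of $\kappa$ with a fully faithful symmetric monoidal functor $\overline\kappa\colon\QCoh(X_\Sigma)\to\Shv_{\overline\Lambda_\Sigma}(M_\RR/M;\Sp)$, which is what feeds into \cref{nonequivariant main theorem}.
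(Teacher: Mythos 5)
Your proposal is correct and takes essentially the same route as the paper: compute the relative tensor product by its bar complex, identify the bar terms with the constrained sheaf categories on $M^{\times n}\times M_\RR$, pass to right adjoints (which are $*$-pullbacks along \'etale maps and hence preserve constructibility and singular support), and use the \'etale-local nature of singular support to identify the resulting limit with $\Shv_{\overline\Lambda_\Sigma}(M_\RR/M;\Sp)$, with the symmetric monoidal structure inherited from the unconstrained equivalence. The only cosmetic difference is that you identify the bar terms directly as $\Fun(M^{\times n},\Shv_{\Lambda_\Sigma}(M_\RR;\Sp))$ over the discrete sets $M^{\times n}$, whereas the paper compares the constrained bar complex with the unconstrained one and invokes that tensoring with a dualizable category preserves fully faithful functors.
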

\begin{proof}
    Functoriality of the relative tensor product provides a functor
    \[\Shv_{\Lambda_\Sigma}(M_\RR;\Sp)\otimes_{\Fun(M,\Sp)}\Sp\longrightarrow\Shv(M_\RR;\Sp)\otimes_{\Fun(M,\Sp)}\Sp\simeq\Shv(M_\RR/M;\Sp)\in\CAlg(\Prl).\]
    We will show that this is fully faithful 
    and describe its image in terms of singular support. To do so, we consider the following map between the Bar complexes.
\[\begin{tikzcd}
	\cdots & {\Shv_{\Lambda_\Sigma}(M_\RR;\Sp)\otimes\Fun(M,\Sp)} & {\Shv_{\Lambda_\Sigma}(M_\RR;\Sp)} \\
	\cdots & {\Shv(M_\RR;\Sp)\otimes\Fun(M,\Sp)} & {\Shv(M_\RR;\Sp)}
	\arrow[shift left=2, from=1-1, to=1-2]
	\arrow[shift right=2, from=1-1, to=1-2]
	\arrow[from=1-1, to=1-2]
	\arrow[shift left, from=1-2, to=1-3]
	\arrow[shift right, from=1-2, to=1-3]
	\arrow[from=1-2, to=2-2]
	\arrow[from=1-3, to=2-3]
	\arrow[from=2-1, to=2-2]
	\arrow[shift right=2, from=2-1, to=2-2]
	\arrow[shift left=2, from=2-1, to=2-2]
	\arrow[shift left, from=2-2, to=2-3]
	\arrow[shift right, from=2-2, to=2-3]
\end{tikzcd}.\]
After taking colimit, it recovers the above functor. All the vertical functors are fully faithful (since tensoring with a dualizable category preserves fully faithful functors \cite[Theorem 2.2]{efimov2024ktheorylocalizinginvariantslarge}). We may identify each term in the top row with its image along the vertical functors:
\[\Shv_{\Lambda_\Sigma}(M_\RR;\Sp)\otimes\Fun(M,\Sp)^{\otimes n}\simeq \Shv_{\Lambda_\Sigma}(M_\RR\times M^{\times n};\Sp)\subseteq\Shv(\MR\times M^{\times n};\Sp)\]
where we have implicitly used K\"unneth formula for the bottom row. The right-hand side is the category of sheaves $\Fc$ on $M_\RR \times M^{\times n}$ such that on each component $M_\RR$, $\Fc$ is constructible for $\Sc_\Sigma$ and has singular support contained in $\Lambda_\Sigma$. Now we observe the following: the right adjoint of each functor in the bottom row is the $*$-pullback along an \'etale map. In particular it preserves the condition of constructibility and singular support. Thus taking right adjoints of the bottom row restricts to taking right adjoints of the top row:
\[\begin{tikzcd}
	\cdots & {\Shv_{\Lambda_\Sigma}(M_\RR\times M;\Sp)} & {\Shv_{\Lambda_\Sigma}(M_\RR;\Sp)} & {\Shv_{\Lambda_\Sigma}(M_\RR;\Sp)\otimes_{\Fun(M,\Sp)}\Sp} \\
	\cdots & {\Shv(M_\RR\times M;\Sp)} & {\Shv(M_\RR;\Sp)} & {\Shv(M_\RR/M;\Sp)}
	\arrow[shift right=2, from=1-2, to=1-1]
	\arrow[shift left=2, from=1-2, to=1-1]
	\arrow[from=1-2, to=1-1]
	\arrow[from=1-2, to=2-2]
	\arrow[shift right, from=1-3, to=1-2]
	\arrow[shift left, from=1-3, to=1-2]
	\arrow[from=1-3, to=2-3]
	\arrow[from=1-4, to=1-3]
	\arrow[from=1-4, to=2-4]
	\arrow[from=2-2, to=2-1]
	\arrow[shift right=2, from=2-2, to=2-1]
	\arrow[shift left=2, from=2-2, to=2-1]
	\arrow[shift right, from=2-3, to=2-2]
	\arrow[shift left, from=2-3, to=2-2]
	\arrow[from=2-4, to=2-3]
\end{tikzcd}\]
Note that both rows are now limit diagrams in $\Cat$. We thus learn that there is a fully faithful functor
\[\Shv_{\Lambda_\Sigma}(M_\RR;\Sp)\otimes_{\Fun(M,\Sp)}\Sp\hookrightarrow\Shv(M_\RR/M;\Sp).\]
As a full subcategory, $\Shv_{\Lambda_\Sigma}(M_\RR;\Sp)\otimes_{\Fun(M,\Sp)}\Sp$ is spanned by the objects which are sent to the full subcategory
\[\Shv_{\Lambda_\Sigma}(M_\RR;\Sp)\hookrightarrow\Shv(M_\RR;\Sp)\]
through $*$-pullback along the projection 
\[\MR\longrightarrow\MR/M.\]
By the observation we have made in the very beginning that we can check constructibility and singular support locally, this is precisely the category of sheaves on $M_\RR/ M$ which are constructible for $\overline{S}_\Sigma$ and has prescribed singular support contained in $\overline{\Lambda}_\Sigma$. The proof is now done.
\end{proof}
\begin{cor}
\label{nonequivariant version of the correspondence}
    There is a symmetric monoidal equivalence
    \[\overline\kappa:\QCoh(X_\Sigma)\overset\simeq\longrightarrow\Shv_{\overline\Lambda_\Sigma}(M_\RR/M;\Sp)\]
    where the right-hand side is the category appearing in \cref{equivariantization for microlocal sheaf category}.
\end{cor}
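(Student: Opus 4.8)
The plan is to deduce $\overline\kappa$ from the equivariant correspondence $\kappa$ of \cref{equivariantmainthm} by base change along the symmetric monoidal functor $\QCoh(B\TT)\to\QCoh(*)$, combining the two one-sided de-equivariantization statements already established. First I would invoke \cref{mainthmB}: under the identifications $\QCoh(B\TT)\simeq\Fun(M,\Sp)\simeq\Shv(M;\Sp)$ coming from \cref{theorem of quasicoherent sheaves of torus compare to graded spectra} and \cref{equivariantization for large sheaf category}, the commutative square of symmetric monoidal categories there exhibits $\kappa$ not merely as symmetric monoidal but as a morphism in $\CAlg(\Prlst)$ under $\Fun(M,\Sp)$, i.e. as a $\Fun(M,\Sp)$-linear symmetric monoidal functor. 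Since $\kappa$ is moreover an equivalence onto its image $\Shv_{\Lambda_\Sigma}(M_\RR;\Sp)$ (\cref{equivariantmainthm}), we may regard it as a $\Fun(M,\Sp)$-linear symmetric monoidal equivalence $\QCoh([X_\Sigma/\TT])\overset{\simeq}{\longrightarrow}\Shv_{\Lambda_\Sigma}(M_\RR;\Sp)$.

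Next I would apply the relative tensor product $(-)\otimes_{\Fun(M,\Sp)}\Sp$, formed along $\pi^*$ on the coherent side and along the augmentation $\Fun(M,\Sp)\to\Sp$ on the constructible side. Functoriality of relative tensor products on module categories carries the equivalence above to a symmetric monoidal equivalence
\[
\QCoh([X_\Sigma/\TT])\otimes_{\QCoh(B\TT)}\QCoh(*)\;\overset{\simeq}{\longrightarrow}\;\Shv_{\Lambda_\Sigma}(M_\RR;\Sp)\otimes_{\Fun(M,\Sp)}\Sp .
\]
Now the left-hand side is identified with $\QCoh(X_\Sigma)$ by \cref{de-equivariantization for quasicoherent}, and the right-hand side is identified with $\Shv_{\overline\Lambda_\Sigma}(M_\RR/M;\Sp)$ by \cref{equivariantization for microlocal sheaf category}; composing these three symmetric monoidal equivalences produces $\overline\kappa$. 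That $\overline\kappa$ is again an equivalence is automatic from this argument; alternatively, $\Sp$ is dualizable over $\Fun(M,\Sp)$ (\SAG{Corollary}{9.4.2.2}), so tensoring with it preserves fully faithfulness, and essential surjectivity onto the stated subcategory is built into \cref{equivariantization for microlocal sheaf category}.

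The main obstacle is organizational rather than mathematical: one must check that the $\Fun(M,\Sp)$-module structure on each side of $\kappa$ agrees with the module structure implicit in the respective de-equivariantization statement. On the coherent side this is the action via $\pi^*$ for $\pi\colon[X_\Sigma/\TT]\to B\TT$, which is exactly the one used in \cref{de-equivariantization for quasicoherent}; on the constructible side one needs that the action of $\Fun(M,\Sp)\simeq\Shv(M;\Sp)$ on $\Shv_{\Lambda_\Sigma}(M_\RR;\Sp)$ is convolution with $i_!(-)$ along $i\colon M\hookrightarrow M_\RR$, and that $\Shv_{\Lambda_\Sigma}(M_\RR;\Sp)$ is closed under this convolution so the relative tensor product can be formed internally --- both of which are supplied by \cref{mainthmB} together with \cref{Microlocal characterization of the image} (which shows $\Shv_{\Lambda_\Sigma}=\im(\kappa)$ is closed under convolution) and \cref{equivariantization for microlocal sheaf category}. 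Once these compatibilities are recorded, the corollary follows immediately.
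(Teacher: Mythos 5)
Your proposal is correct and follows essentially the same route as the paper: the paper likewise reads off from the commutative square of \cref{mainthmB} that the two relative tensor products over $\QCoh(B\TT)\simeq\Fun(M,\Sp)$ agree, and then concludes by \cref{de-equivariantization for quasicoherent} and \cref{equivariantization for microlocal sheaf category}. The compatibility checks you flag (the $\Fun(M,\Sp)$-module structures being $\pi^*$ on the coherent side and convolution with $i_!(-)$ on the constructible side) are exactly what \cref{mainthmB} supplies, so your write-up just makes explicit what the paper's two-line proof leaves implicit.
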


\begin{proof}
    It follows from the commutative diagram of \cref{mainthmB} that the relative tensor products are identified in $\CAlg(\Pr^L)$:
    \[\QCoh([X_\Sigma/\TT)\otimes_{\QCoh(B\TT)}\Sp\simeq\Shv_{\Lambda_\Sigma}(M_\RR;\Sp)\otimes_{\Fun(M,\Sp)}\Sp\]
    Now the result follows from \cref{de-equivariantization for quasicoherent} and \cref{equivariantization for microlocal sheaf category}.
\end{proof}
\subsection{Beilinson's theorem on projective spaces}
As a concrete example of the abstract nonsense we have developed, we now give an explanation of Beilinson's linear algebraic description of quasi-coherent sheaves of $\mathbb{P}^1_\SS$, the flat projective line over $\SS$. Recall that the toric data corresponding to the projective line is given by the lattice $N=\ZZ$ and the fan $\{\{0\},\RR_{\geq 0},\RR_{\leq 0}\}$ inside $\RR^1$. 
\begin{example}
\label{beilinson}
There are equivalences of categories:
\[\QCoh(\mathbb{P}^1_\SS)\simeq \Cons_{\overline\Sc_\Sigma}(S^1;\Sp)\simeq \Fun(\bullet\rightrightarrows\bullet;\Sp)\]
where the stratification $\overline\Sc_\Sigma$ has two strata: the origin and its complement. The first equivalence is given by $\overline{\kappa}$ and the second is given by exodromy \cite{haine2024exodromyconicality}.
\end{example}
\begin{proof}The first functor is $\overline\kappa$ supplied by \cref{nonequivariant version of the correspondence}. More precisely, it embeds $\QCoh(\mathbb{P}^1_\SS)$ as a full subcategory in $\Cons_{\overline\Sc_\Sigma}(S^1;\Sp)$. However, one checks readily that the condition on singular support is vacuous. Away from the origin, every $\overline \Sc_\Sigma$ constructible sheaf becomes locally constant, hence the singular support is always contained in the zero section. At the origin, the singular support asks for the support of some sheaf on $\RR^1$ to have support contained in $\RR^1$, which is again no restriction. We thus conclude that the first functor is an equivalence. The second functor is an direct application of exodromy equivalence from \cite{haine2024exodromyconicality}. Note that the exit path category of $(S^1,\overline\Sc_\Sigma)$ is precisely the quiver $\bullet\rightrightarrows\bullet$.
\end{proof}
\begin{rem}
    It is possible to obtain the similar result for $\mathbb{P}^n_\SS$ which states that the category $\QCoh(\mathbb{P}^n_\SS)$ is compactly generated by a collection of objects $\Oc(1),\cdots,\Oc(n+1)$, and they form an exceptional collection. This, however is more involved since the condition on singular support puts an actual constraint so one needs further arguments beyond applying exodromy equivalence. We only present a sketch of the proof idea here. Pick some equivariant lifts $\Tilde{\Oc}(i)\in\QCoh([\mathbb{P}^n_\SS/\TT])$. The image of these $\Tilde{\Oc}(i)$ under $\kappa$ are dualizing sheaves on some explicit moment polytopes in $\RR^n$ as in \cref{subsection:combinatorics of smooth projective fan}. To show that they generate, one can run the argument in \cref{subsection: microlocal characterization of image} to see that these images $\kappa(\Tilde{\Oc}(i))$ corepresents taking stalks at each points in a fundamental domain of $\RR^n/\ZZ^n$, so by adjunction $\overline{\kappa}(\Oc(i))$ also corepresents taking stalks at each point on $\RR^n/\ZZ^n$. This proves that they generate, and the mapping spectra can be directly computed by looking at the intersections of these moment polytopes, which we omit. This computation also recovers the presentation of $\QCoh(\mathbb{P}^n_\SS)$ as the category of presheaves of spectra on an explicit quiver with relations defined by Beilinson.
\end{rem}
\begin{rem}
     This suggests we might dream of the exodromy for constructible sheaves with prescribed singular supports: can one read off Beilinson's quiver directly from the singular support $\overline{\Lambda}_{\Sigma_n}$ where $\Sigma_n$ is the fan for $\mathbb{P}^n$? In general, we might ask how to describe the category of constructible sheaves with prescribed singular supports in terms of presheaf categories: this is not always possible, and the question of when it is possible remains largely open.
     However, see below for the example of $\mathbb{P}^2$ where it is indeed possible.
\end{rem}

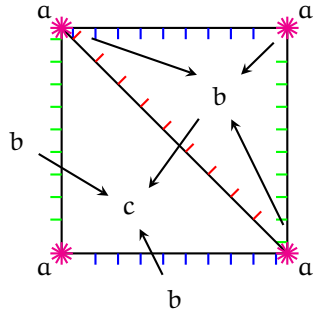
\begin{figure}[h]
    \centering
\begin{tikzpicture}[scale=3,>=stealth,thick]


\draw[
  decoration={
    markings,
    mark=between positions 0.05 and 0.95 step 0.1
      with { \draw[blue] (0,0) -- (0,0.15); }
  },
  postaction=decorate
] (1,0) -- (0,0);

\draw[
  decoration={
    markings,
    mark=between positions 0.05 and 0.95 step 0.1
      with { \draw[green] (0,0) -- (0,0.15); }
  },
  postaction=decorate
] (1,0) -- (1,1);

\draw[
  decoration={
    markings,
    mark=between positions 0.05 and 0.95 step 0.1
      with { \draw[blue] (0,0) -- (0,-0.15); }
  },
  postaction=decorate
] (0,1) -- (1,1);

\draw[
  decoration={
    markings,
    mark=between positions 0.05 and 0.95 step 0.1
      with { \draw[green] (0,0) -- (0,-0.15); }
  },
  postaction=decorate
] (0,1) -- (0,0);

\draw[
  decoration={
    markings,
    mark=between positions 0.05 and 0.95 step 0.1
      with { \draw[red] (0,0) -- (0,0.15); }
  },
  postaction=decorate
] (0,1) -- (1,0);


\node (aSW) at (0,0) [below left]  {$a$};
\node (aSE) at (1,0) [below right] {$a$};
\node (aNE) at (1,1) [above right] {$a$};
\node (aNW) at (0,1) [above left]  {$a$};

\node (cLeft) at (-0.2,0.5) {$b$};

\node (cBot) at (0.5,-0.2) {$b$};

\node (bReg) at (0.3,0.2) {$c$};

\node (cReg) at (0.7,0.7) {$b$};




\draw[->,shorten >=2pt,shorten <=12pt]
  (aNW.south east) -- (cReg.north west);

\draw[->,shorten >=2pt,shorten <=8pt]
  (aNE) -- (cReg);

\draw[->,shorten >=2pt,shorten <=2pt]
    (cLeft) -- (bReg);

\draw[->,shorten >=2pt,shorten <=2pt]
  (cReg.south west) -- (bReg.north east);

\draw[->,shorten >=2pt,shorten <=2pt]
    (cBot) -- (bReg);

\draw[->,shorten >=2pt,shorten <=12pt]
  (aSE) -- (cReg);

\foreach \ang in {0,30,...,330} {
  \draw[thick,magenta] (0,0) -- ++(\ang:0.05);
}

\foreach \ang in {0,30,...,330} {
  \draw[thick,magenta] (1,0) -- ++(\ang:0.05);
}

\foreach \ang in {0,30,...,330} {
  \draw[thick,magenta] (1,1) -- ++(\ang:0.05);
}

\foreach \ang in {0,30,...,330} {
  \draw[thick,magenta] (0,1) -- ++(\ang:0.05);
}
\end{tikzpicture}
    \caption{An illustration of a sheaf in $\Shv_{\overline\Lambda_{\mathbb{P}^2}}(\RR^2/\ZZ^2)$, drawn in a fundamental domain of $\RR^2/\ZZ^2$.
    The short directional strokes—drawn along the edges and diagonal, fanning out at the corners—schematically represent $\overline\Lambda_{\mathbb{P}^2}$ in each cotangent fiber.
    Three distinguished stalks and ways that they are allowed to exit are drawn.}
    \label{fig:P2}
\end{figure}
\begin{rem}
    When the fan $\Sigma$ is \stress{zonotopal} and \stress{unimodular}\footnote{Unfortunately these assumptions are quite restrictive.} (see \cite[Definition 4.2]{treumann2010remarksnonequivariantcoherentconstructiblecorrespondence}), the conic Lagrangian $\Lambda_\Sigma$ is identified with the conormal variety of the stratification $\Sc_\Sigma$ (similarly for $\overline{\Lambda}_\Sigma$) \cite[Theorem 4.4]{treumann2010remarksnonequivariantcoherentconstructiblecorrespondence}. From this one can argue that the singular support condition is automatically satisfied for all $\Sc_\Sigma$- (resp. $\overline\Sc_\Sigma$-) constructible sheaves.
    Thus \cref{nonequivariant version of the correspondence} identifies $\QCoh(X_\Sigma)$ with the category of constructible sheaves \[\QCoh(X_\Sigma)\simeq\Cons_{\overline{\Sc}_\Sigma}(\MR/M;\Sp)\] where exodromy \cite{haine2024exodromyconicality} applies.
    However, it is not clear to us how to write down the exit path category explicitly from the combinatorics of the fan.
\end{rem}
\subsection{Relative toric bundle}
The proof of \cref{nonequivariant version of the correspondence} depends on the base change  functor
\[(-)\otimes_{\QCoh(B\TT)}\QCoh([X_\Sigma/\TT]):\CAlg(\Prl)_{\QCoh(B\TT)/}\longrightarrow\CAlg(\Prl)\]
applied to the symmetric monoidal functor
\[\QCoh(B\TT)\simeq\Fun(M,\Sp)\overset{\colim}\longrightarrow\Sp\in\CAlg(\Prl).\]
There is no reason to stop at this case, and we can make the formal definition:
\begin{defn}[Relative toric construction]
\label{relative toric construction}
Fix a lattice $N$ and fan $\Sigma$. Given a symmetric monoidal functor 
\[f:M\longrightarrow \Cc\]
where $\Cc\in\CAlg(\Prl_\mathrm{st})$, it induces a map
\[F:\QCoh(B\TT)\simeq\Fun(M;\Sp)\longrightarrow\Cc\in\CAlg(\Prl)\]
and we define
\[\Mod_{X_{\Sigma,f}}\Cc\defeq\Cc\otimes_{\QCoh(B\TT)}\QCoh([X_\Sigma/\TT])\in\CAlg(\Prl)\]
to be the \stress{relative toric bundle over $\Cc$ associated with $\Sigma$ and $f$}.
\end{defn}

From the definition it follows that both $\QCoh([X_\Sigma/\TT])$ and $\QCoh(X_\Sigma)$ are examples of the relative toric construction.
\begin{rem}
\label{twisted sheaf}
Tautologically, we have
\[
    \Mod_{X_{\Sigma,f}} \Cc \simeq \Shv_{\Lambda_\Sigma}(M_\RR;\Sp) \otimes_{\Fun(M, \Sp)} \Cc.
\]
We believe the right hand side admits a sheaf theoretic interpretation.
In particular, it should by descent describe the category
of (twisted) sheaves on the torus $M_\RR/M$ valued in a local system
of categories specified by the delooping
\[
Bf: BM \simeq \Pi_\oo(M_\RR/M) \rightarrow \CAlg(\Prl).
\]
A general theory of twisted sheaves that allows us to make such descriptions will be pursued in future work.
\end{rem}
\begin{example}
\label{toric fibration}
In \cite{hu2023coherentconstructiblecorrespondencetoricfibrations}, the second named author with Pyongwon Suh considered the data of a classical scheme $S$ and $n$ line bundles $\{L_n\in\mathrm{Pic}(S)\}$ on $S$. Such collection of line bundles defines a symmetric monoidal functor
\[f:\ZZ^n\longrightarrow\QCoh(S)\]
and the relative toric bunlde over $\QCoh(S)$ associated with $\Sigma$ and $f$ could be identified with the category of quasi-coherent sheaves on an $S$-scheme $\mathcal{X}_{\Sigma,f}$:
\[\Mod_{X_{\Sigma,f}}\QCoh(S)\simeq\QCoh(\mathcal{X}_{\Sigma,f}).\]
The relative toric scheme (or so-called toric fibration) $\mathcal{X}_{\Sigma,f}$ is constructed affine locally on $S$, as a toric scheme with respect to the torus associated with $\oplus L_i$ over $S$. Equivalently, it can be identified with the base change of $[X_\Sigma/\TT]\rightarrow B\TT$ along the map $S\rightarrow B\TT$ classifying these line bundles $\{L_i\}$. On the mirror side, the base change can be interpreted as sheaves on the torus $\RR^n/\ZZ^n$ with twisted-coefficient (see \cref{twisted sheaf}) - roughly the stalk of the coefficient category is $\QCoh(S)$ and the monodromy is given by tensoring with $L_i$.
\end{example}
\begin{rem}
    Such $f:\ZZ^n\longrightarrow\Cc$ classifies $n$ strict Picard elements in $\Cc$ that also strictly commute with each other. Beware that such datum is rare in the wild, see \cite{carmeli2022strictpicardspectrumcommutative}. 
\end{rem}
\section{Appendix}
\subsection{Modules over grouplike monoid}
We find the following lemma straightforward, but can not locate a proof in the literature.
\begin{lem}
\label{Grouplike action gives groupoid object}
Let $T$ be an $\oo$-category admitting finite limits, $G \in \Mon(T)$, and $X$ a $G$-module. If $G$ is grouplike, then $(X//G)_\bullet$ is a groupoid object.
\end{lem}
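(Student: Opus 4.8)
The statement is that if $G$ is a grouplike monoid in an $\infty$-category $T$ with finite limits and $X$ is a $G$-module, then the action diagram $(X//G)_\bullet\colon \Delta^\op\to T$ is a groupoid object. Recall that $(X//G)_\bullet$ has $n$-simplices $X\times G^{\times n}$, with face maps given by the action $a\colon X\times G\to X$ on the last factor, multiplication $m$ of $G$ on adjacent factors, and the projection forgetting the last $G$-factor; degeneracies insert units. By \HTT{Proposition}{6.1.2.6} (the Segal/groupoid criterion), it suffices to verify that for every partition $[n]=S\cup S'$ with $S\cap S'=\{s\}$ a single element, the square
\[\begin{tikzcd}
(X//G)_n & (X//G)_S \\
(X//G)_{S'} & (X//G)_{\{s\}}
\arrow[from=1-1, to=1-2]
\arrow[from=1-1, to=2-1]
\arrow[from=1-2, to=2-2]
\arrow[from=2-1, to=2-2]
\end{tikzcd}\]
is a pullback; equivalently, by \HTT{Proposition}{6.1.2.6}(4'), it suffices to check that the maps $(X//G)_n \to (X//G)_{\{0,1\}}\times_{(X//G)_{\{1\}}}\cdots\times_{(X//G)_{\{n-1,n\}}}(X//G)_{\{n-1,n\}}$ (the Segal maps) are equivalences, \emph{together with} the condition detecting invertibility, which is where grouplikeness enters. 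The cleanest route: first establish that $(X//G)_\bullet$ is a \emph{Segal} object (the Segal maps are equivalences), which holds for any monoid action and is formal from associativity; then use grouplikeness to upgrade Segal to groupoid.

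The first step is to reduce to the well-understood case $X=G$ with the translation action, i.e.\ the bar construction $B_\bullet G = (G//G)_\bullet$, or rather to handle the general case directly by the same argument. For the Segal condition: the Segal map $(X//G)_n\to X\times G\times_G\cdots$ — more precisely the map to the iterated fiber product of the edges — is an equivalence because both sides are computed by iterated products and the relevant squares are pullbacks essentially by definition (the $d_0$ face $X\times G\to X$ and the projection $X\times G\to X$ exhibit $X\times G^{\times n}$ as the fiber product of $n$ copies of $X\times G$ over $X$ along alternating $d_0$'s and projections). This is purely formal and uses only that $T$ has finite limits. Next, the \emph{groupoid} condition requires in addition that the square
\[\begin{tikzcd}
(X//G)_2 & (X//G)_1 \\
(X//G)_1 & (X//G)_0
\arrow["{d_2}", from=1-1, to=1-2]
\arrow["{d_0}"', from=1-1, to=2-1]
\arrow["{d_0}", from=1-2, to=2-2]
\arrow["{d_1}"', from=2-1, to=2-2]
\end{tikzcd}\]
(corresponding to the partition $\{0,1\}\cup\{1,2\}$ of $[2]$ is a pullback — unwinding, this square is
\[\begin{tikzcd}
X\times G\times G & X\times G \\
X\times G & X
\arrow["{\mathrm{id}\times m}", from=1-1, to=1-2]
\arrow["{a\times\mathrm{id}}"', from=1-1, to=2-1]
\arrow["a", from=1-2, to=2-2]
\arrow["{\mathrm{pr}_X}"', from=2-1, to=2-2]
\end{tikzcd}\]
(up to reindexing), and whether this is a pullback is equivalent to the shear map $X\times G\to X\times X$, $(x,g)\mapsto (x, a(x,g))=(x, xg)$, being an equivalence. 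This shear map being an equivalence is precisely a reformulation of $G$ being grouplike: when $X=G$ it is the classical shear $(g,h)\mapsto(g,gh)$, and its invertibility is one of the standard equivalent definitions of a grouplike monoid (\HA{Definition}{5.2.6.2} and the surrounding discussion in \HTT{Section}{6.1.2}); for a general $G$-module $X$ one constructs the inverse shear $(x,y)\mapsto (x, a(x, \iota(y)))$ using the inversion-type data extracted from grouplikeness, or — more robustly — one observes that $X\times G\to X\times X$ is the base change along $X\to *$... (this doesn't quite work since $X$ isn't $BG$); instead one uses that grouplike means the shear $G\times G\to G\times G$ is an equivalence and tensors/pulls back the action structure appropriately. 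I would phrase the inverse explicitly: grouplike gives an inversion $\iota\colon G\to G$ with $m\circ(\mathrm{id},\iota)\simeq m\circ(\iota,\mathrm{id})\simeq \mathrm{unit}$ coherently, and then $(x,y)\mapsto(x,a(y,\iota(\cdot)))$...

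The \textbf{main obstacle} is making the last step precise at the level of $\infty$-categorical coherence: "grouplike" is a condition ($\pi_0$ of an $E_1$-space is a group, or the shear map is an equivalence), not extra structure, so one cannot literally write down an inversion natural transformation without invoking that the space of such is contractible. The clean fix is \emph{not} to build $\iota$ by hand but to argue structurally: reduce the pullback-square check for general $X$ to the pullback-square check for $X=G$ by noting every square in question for the module $X$ is obtained by a finite-limit-preserving operation from the corresponding square for $(G//G)_\bullet$; concretely, the $G$-module $X$ together with its action is classified by a map, and all the face maps factor through those of $B_\bullet G$ after a base change along $X\to BG$ — but since we only have finite limits, not a classifying object, the honest argument is: verify the single square
\[\begin{tikzcd}
X\times G\times G & X\times G \\
X\times G & X
\arrow["{\mathrm{id}\times m}", from=1-1, to=1-2]
\arrow["{a\times\mathrm{id}}"', from=1-1, to=2-1]
\arrow["a", from=1-2, to=2-2]
\arrow["{\mathrm{pr}}"', from=2-1, to=2-2]
\end{tikzcd}\]
is a pullback, by exhibiting the comparison map to the pullback $X\times_X(X\times G)\simeq X\times G$ and checking it is the shear $(x,g)\mapsto (xg, g)$, then invoking that the shear for a grouplike $G$ acting on \emph{itself} is invertible and that the shear for $X$ is pulled back from it along the (existent, by finite limits) map $X\times G\xrightarrow{a\times\mathrm{id}} X\times G$... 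I expect the referee-safe writeup to simply cite \HTT{Proposition}{6.1.2.6} together with \HA{Remark}{5.2.6.3} or the characterization of grouplike $E_1$-monoids via invertibility of the shear map, reduce to that single square, and check by hand that the square is the shear square. That reduction plus that one hand computation is the whole proof; the coherence worry evaporates once one commits to the "shear map is an equivalence" formulation of grouplikeness rather than the "has inverses" formulation.
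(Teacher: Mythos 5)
Your overall route is legitimately different from the paper's and could be made to work: you verify the groupoid criterion of \HTT{Proposition}{6.1.2.6} directly (Segal squares formally, plus one extra square where grouplikeness enters), whereas the paper instead shows that the projection $(X//G)_\bullet \to (\ast//G)_\bullet$ is a Cartesian natural transformation onto the groupoid object $B_\bullet G$ (\HA{Remark}{5.2.6.5}), by an induction over $\Delta_{\leq n}$ that reduces everything to $\delta_0$, $\delta_n$ and $\sigma_k$. Your approach would avoid that induction entirely, and in fact, done correctly, it avoids the module-shear issue altogether. But as written the execution has a genuine gap at the decisive step.

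The square you single out as "the groupoid condition in addition" is labeled by the partition $\{0,1\}\cup\{1,2\}$ of $[2]$ --- that is one of the Segal squares you have already declared formal; the genuinely new condition comes from a partition whose two pieces share an \emph{endpoint}, e.g.\ $\{0,1\}\cup\{0,2\}$. Moreover your unwound square does not commute with the maps you wrote (top $\mathrm{id}\times m$, left $a\times\mathrm{id}$, right $a$, bottom $\mathrm{pr}_X$ gives $xg_1g_2$ versus $xg_1$), and your reformulation --- that pullbackness is equivalent to $X\times G\to X\times X$, $(x,g)\mapsto(x,xg)$, being an equivalence --- is false: for $X=\ast$ with the trivial action, $(\ast//G)_\bullet=B_\bullet G$ is a groupoid object for any grouplike $G$, yet $G\to\ast$ is not an equivalence. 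The correct extra square, for the partition $\{0,1\}\cup\{0,2\}$, has both legs to $U_{\{0\}}\simeq X$ given by projections, and its comparison map to the pullback is $\mathrm{id}_X\times(\text{shear of }G)\colon X\times G\times G\to X\times G\times G$; so it reduces to grouplikeness of $G$ alone, and no inversion on the module ever has to be produced --- the coherence worry occupying the second half of your proposal is a red herring. (Even where a module shear does appear, as for $(a,\mathrm{pr})\colon X\times G\to X\times G$ in the paper's $\delta_n$ case, being an equivalence is a property detected in the homotopy category, so a homotopy inverse assembled from a non-coherent inverse in $hT$ suffices, which is all the paper invokes.) Since your argument never actually verifies the decisive pullback and explicitly trails off at that point, it is not yet a complete proof.
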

\begin{proof}
Unwinding the definitions,
there is a canonical map
\[
p:(X//G)_\bullet \rightarrow (*//G)_\bullet,
\]
where the latter can be identified with
the underlying simplicial object of $G$,
hence a groupoid object \HA{Remark}{5.2.6.5}.
Therefore it suffices to show that this
map is a Cartesian natural transformation (see \HTT{Definition}{6.1.3.1}.)

In other words, we want to show that for every $\alpha:[m]\to [n]$, the diagram
\[\begin{tikzcd}
	{X \times G^n} & {(X//G)_n} & {(X//G)_m} & {X \times G^m} \\
	{G^n} & {(*//G)_n} & {(*//G)_m} & {G^m}
	\arrow["\simeq", from=1-1, to=1-2]
	\arrow["\alpha", from=1-2, to=1-3]
	\arrow[from=1-2, to=2-2]
	\arrow["\simeq", tail reversed, no head, from=1-3, to=1-4]
	\arrow[from=1-3, to=2-3]
	\arrow["\simeq", from=2-1, to=2-2]
	\arrow["\alpha", from=2-2, to=2-3]
	\arrow["\simeq", tail reversed, no head, from=2-3, to=2-4]
\end{tikzcd}\]
is a pullback, i.e., $p(\alpha):p([n]) \rightarrow p([m]) \in \Fun([1],T)$ is a Cartesian morphism.

We proceed by induction and show that $p|_{\Delta^\op_{\leq n}}$ is a Cartesian transformation for each $n$.
For the base case $n = 0$, there is nothing to prove.
For $n \geq 1$, note that every map in $\Delta_{\leq n}$ can be factored into
a sequence of maps in which each is either in $\Delta_{\leq n - 1}$ or one of the following:
the injective maps $\delta_k:[n - 1] \rightarrow [n]$ and the surjective maps $\sigma_k:[n] \rightarrow [n - 1]$.
Therefore it suffices to show that $p(\delta_k)$ and $p(\sigma_k)$ are Cartesian morphisms.

For $p(\delta_k)$, we claim that it suffices to prove $p(\delta_0)$ and $p(\delta_n)$ are Cartesian:
indeed, for $0 < k < n$, consider the decomposition $[0, k] \cup [k, n] = [n]$ and the diagram
\[\begin{tikzcd}[cramped]
	{p([n])} & {p([0,k])} \\
	{p([k,n])} & {p(\{k\})} \\
	& {p(\{\cdots<k-1<k+1 <\cdots\})} & {p([0,k-1])} \\
	& {p([k + 1,n])}
	\arrow["\shortmid"{marking}, from=1-1, to=1-2]
	\arrow["\shortmid"{marking}, from=1-1, to=2-1]
	\arrow[squiggly, from=1-2, to=2-2]
	\arrow[squiggly, from=2-1, to=2-2]
	\arrow[curve={height=-6pt}, dashed, from=1-1, to=3-2]
	\arrow[squiggly, from=3-2, to=4-2]
	\arrow[squiggly, from=3-2, to=3-3]
	\arrow[curve={height=18pt}, squiggly, from=2-1, to=4-2]
	\arrow[curve={height=-6pt}, squiggly, from=1-2, to=3-3]
\end{tikzcd}.\]
By induction hypothesis, all the squiggly arrows are Cartesian.
By the 2-out-of-3 property of Cartesian morphisms,
to show the dashed arrow is Cartesian (and hence every arrow is Cartesian),
it suffices to show either of the barred arrows is Cartesian.
However $[0,k]\into [n]$ factors as a map in $\Delta_{\leq n - 1}$ followed by $\delta_n$.

Using the identifications \[
(X//G)_n \simeq X \times G^n,
\]
and
\[
\prod_i {([i < i + 1]\into [n])^*}:(*//G)_n \simeq G^n,
\]
$p(\delta_0)$ is equivalent to
\[\begin{tikzcd}[cramped]
	{X\times G^n} & {G^n} \\
	{X\times G^{n-1}} & {G^{n-1}}
	\arrow[from=1-1, to=1-2]
	\arrow[from=1-1, to=2-1]
	\arrow[from=2-1, to=2-2]
	\arrow[from=1-2, to=2-2]
\end{tikzcd},\]
where all the maps are projection, hence Cartesian.

Similarly, $p(\delta_{n})$ is equivalent to
the product of
\[\begin{tikzcd}[cramped]
	{X\times G} & G \\
	X & {*}
	\arrow[from=1-1, to=1-2]
	\arrow["a"', from=1-1, to=2-1]
	\arrow[from=2-1, to=2-2]
	\arrow[from=1-2, to=2-2]
\end{tikzcd}\]
with $G^{n-1}$.
Therefore it suffices to show the map $X\times G \xrightarrow{(a, \pr)} X \times G$ is an equivalence, which is indeed true as it admits a homotopy inverse given by shearing.

To see $p(\sigma_k)$ is Cartesian,
simply note that both its source and target are (induced by) diagonal maps.
\end{proof}

\subsection{Functoriality of forming module categories}
The following is a direct consequence of \HA{Theorem}{4.8.5.16}. The reader might compare it to \HA{Remark}{4.8.5.19}.
\begin{prop}
\label{Taking module category}
    Let $\Cc$ and $\Dc$ be symmetric monoidal categories  admitting all geometric realizations. Let $F:\Cc\rightarrow\Dc$ be a symmetric monoidal functor. Assume that:
    \begin{enumerate}
        \item Tensor products in $\Cc$ and $\Dc$ commute with geometric realizations.
        \item The functor $F$ commutes with geometric realizations.
    \end{enumerate}
    Then there is a diagram
\[\begin{tikzcd}
	{\CAlg(\Cc)} && {\CAlg(\Cat)}
	\arrow[""{name=0, anchor=center, inner sep=0}, "{\Mod_{(-)}(\Cc)}", curve={height=-12pt}, from=1-1, to=1-3]
	\arrow[""{name=1, anchor=center, inner sep=0}, "{\Mod_{F(-)}(\Dc)}"', curve={height=12pt}, from=1-1, to=1-3]
	\arrow[shorten <=3pt, shorten >=3pt, Rightarrow, from=0, to=1]
\end{tikzcd}.\]
    When evaluated at $A\rightarrow B\in\CAlg(\Cc)$, the diagram reads
\[\begin{tikzcd}
	{\Mod_A(\Cc)} & {\Mod_B(\Cc)} \\
	{\Mod_{F(A)}(\Dc)} & {\Mod_{F(B)}(\Dc)}
	\arrow[from=1-1, to=1-2]
	\arrow[from=2-1, to=2-2]
	\arrow[from=1-1, to=2-1]
	\arrow[from=1-2, to=2-2]
\end{tikzcd}.\]
\end{prop}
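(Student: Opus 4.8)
The plan is to invoke \HA{Theorem}{4.8.5.16} almost directly, and to check that the hypotheses there are met in our situation. Recall that that theorem produces, for a suitably nice symmetric monoidal functor, a functor out of $\CAlg$ valued in $\CAlg(\Cat)$ sending an algebra to its module category, together with the base-change functoriality recorded by the $2$-cell. So the first step is to recall precisely the input of \HA{Theorem}{4.8.5.16}: one needs $\Cc^\otimes$ and $\Dc^\otimes$ to be symmetric monoidal categories such that the underlying categories admit geometric realizations of simplicial objects and the tensor product preserves them separately in each variable (so that $\Mod_A(\Cc)$ inherits a symmetric monoidal structure via the relative tensor product, as in \HA{Theorem}{4.5.2.1} and \HA{Section}{4.8.5}), and $F$ a symmetric monoidal functor preserving geometric realizations. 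These are exactly hypotheses (1) and (2) of the proposition.

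Next I would spell out what \HA{Theorem}{4.8.5.16} gives: a lax symmetric monoidal (in fact symmetric monoidal, since the relevant tensor products are computed as geometric realizations which $F$ preserves) transformation between the two functors $\CAlg(\Cc)\to\CAlg(\Cat)$, namely $\Mod_{(-)}(\Cc)$ and $\Mod_{F(-)}(\Dc)$, where the latter is the composite of $F$ on algebras with $\Mod_{(-)}(\Dc)$. Concretely one takes the functor $\theta$ of \HA{Theorem}{4.8.5.16} (or the construction of \HA{Notation}{4.8.5.10}) applied to $F$; the fact that $F_A:\Mod_A(\Cc)\to\Mod_{F(A)}(\Dc)$ is symmetric monoidal is \HA{Theorem}{4.8.5.16} together with the observation in \HA{Remark}{4.8.5.17}. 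The naturality square in $A\to B$ is the image of the morphism $A\to B$ under this transformation; that it has the stated form — extension of scalars along $A\to B$ on the top, $F$ on the verticals, extension of scalars along $F(A)\to F(B)$ on the bottom — is immediate from unwinding the definition of the functor $\Mod_{(-)}$ on morphisms.

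The only mild subtlety, and the step I expect to require the most care, is matching conventions: \HA{Theorem}{4.8.5.16} is phrased in terms of a functor to $\Cat$ (or $\widehat{\Cat}_\infty$) equipped with its Cartesian symmetric monoidal structure, and one must check that the output lands in $\CAlg(\Cat)$ with respect to the correct monoidal structure, i.e. that $\Mod_A(\Cc)$ is being remembered together with its symmetric monoidal structure rather than merely as a plain category. This is handled by \HA{Theorem}{4.5.3.1} and the discussion in \HA{Section}{4.8.5} — $\Mod(-)$ refines to a functor valued in symmetric monoidal categories precisely under hypothesis (1) — so there is nothing to prove beyond citing it, but it is the point where one should be careful to reference the symmetric-monoidal-enhanced version rather than the bare module-category functor. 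Once this is in place, the proposition is just the combination of \HA{Theorem}{4.8.5.16} with these enhancements, and no further argument is needed; this matches the sketch already given in the excerpt (``See \cref{Taking module category}'' for the earlier \Cref{prop} with target $\SMCat$, which is the same statement with $\CAlg(\Cat)$ replaced by the equivalent $\SMCat$).
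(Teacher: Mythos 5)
You have correctly located the paper's key input, \HA{Theorem}{4.8.5.16}, and hypotheses (1)--(2) are indeed exactly what is needed to regard $F$ as a morphism in $\CAlg(\text{Mon}_\text{Assoc}^{\mathcal{K}}(\Cat))$ for $\mathcal{K}=\{\Delta^\op\}$. But the claim that the theorem applies ``almost directly'' overstates what it says, and the part you wave through (``the naturality square \dots is immediate from unwinding the definition of $\Mod_{(-)}$ on morphisms'') is precisely where the paper's proof does its work. Two steps are missing. First, \HA{Theorem}{4.8.5.16} is phrased for associative algebras and right-module categories, packaged as symmetric monoidal coCartesian fibrations over $\text{Mon}_\text{Assoc}^{\mathcal{K}}(\Cat)$; to reach the statement about $\CAlg(\Cc)$ and symmetric monoidal module categories one must straighten $\Theta^\otimes$ to a natural transformation $\overline{\Theta}\colon \Alg(-)\Rightarrow\Mod_{(-)}(\Cat(\mathcal{K}))$ of lax symmetric monoidal functors and then apply $\CAlg(-)$ to it. Second, and more substantially: the functoriality that $\overline{\Theta}$ provides in the ``change of ambient monoidal category'' direction is through base change of module categories, so evaluating it at $F$ yields a commutative square whose right-hand vertical is $(-)\otimes_\Cc\Dc\colon\CAlg(\Mod_\Cc(\Cat(\mathcal{K})))\rightarrow\CAlg(\Mod_\Dc(\Cat(\mathcal{K})))$, \emph{not} the functors $\Mod_A(\Cc)\rightarrow\Mod_{F(A)}(\Dc)$ appearing in the proposition. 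The paper then converts this square into the asserted $2$-cell by a mate construction: $(-)\otimes_\Cc\Dc$ is a symmetric monoidal left adjoint (\HA{Lemma}{4.8.4.2}), and the unit of the induced adjunction on commutative algebra objects, pasted with the square and followed by the forgetful functor to $\CAlg(\Cat)$, produces the natural transformation whose component at $A$ is the symmetric monoidal functor $\Mod_A(\Cc)\rightarrow\Mod_{F(A)}(\Dc)$ induced by $F$.

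So your plan identifies the right theorem and the right hypotheses, but as written it asserts that \HA{Theorem}{4.8.5.16} already outputs a transformation between the two functors $\CAlg(\Cc)\rightarrow\CAlg(\Cat)$, which it does not; without the straightening-plus-$\CAlg$ passage and the adjunction/mate step along $(-)\otimes_\Cc\Dc\dashv\mathrm{fgt}$ (or some equivalent device) you only have comparisons after base change along $F$, not the coherent $2$-cell in $\CAlg(\Cat)$ that the proposition claims and that the rest of the paper uses. Supplying that bridge is the actual content of the proof.
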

\begin{proof}
    We pick up notations in \HA{Theorem}{4.8.5.16} and fix $\mathcal{K}$ to be just $\{\Delta^\op\}$ (in particular the following items refer to items there). The symmetric monoidal coCartesian fibrations in (1) and the functor $\Theta^\otimes$ in (3) straighten (symmetric monoidally) to lax symmetric monoidal functors and natural transformations \footnote{Note that $\Theta^\otimes$ preserves coCartesian edges over $\text{Mon}_\text{Assoc}^\mathcal{K}(\Cat)^\otimes$. This follows from the following two facts: from (4) we know it is a symmetric monoidal functor hence preserves coCartesian lift from $\mathrm{Fin}_*$ and from \HA{Proposition}{4.8.5.1} we know the underlying functor $\Theta$ preserves coCartesian edges over $\text{Mon}_\text{Assoc}^\mathcal{K}(\Cat)$.}
\[\begin{tikzcd}
	{\text{Mon}_\text{Assoc}^\mathcal{K}(\Cat)} &&& \Cat
	\arrow[""{name=0, anchor=center, inner sep=0}, "{\Alg(-)}", curve={height=-18pt}, from=1-1, to=1-4]
	\arrow[""{name=1, anchor=center, inner sep=0}, "{\Mod_{(-)}(\Cat(\mathcal{K}))}"', curve={height=18pt}, from=1-1, to=1-4]
	\arrow["\overline{\Theta}", shorten <=5pt, shorten >=5pt, Rightarrow, from=0, to=1]
\end{tikzcd}.\]
One applies further $\CAlg$ on both sides and obtain 
\[\begin{tikzcd}
	{\CAlg(\text{Mon}_\text{Assoc}^\mathcal{K}(\Cat))} &&& {\CAlg(\Cat)}
	\arrow[""{name=0, anchor=center, inner sep=0}, "{\Alg(-)}", curve={height=-18pt}, from=1-1, to=1-4]
	\arrow[""{name=1, anchor=center, inner sep=0}, "{\Mod_{(-)}(\Cat(\mathcal{K}))}"', curve={height=18pt}, from=1-1, to=1-4]
	\arrow["\overline{\Theta}", shorten <=5pt, shorten >=5pt, Rightarrow, from=0, to=1]
\end{tikzcd}.\]
The assumption on $F:\Cc\rightarrow\Dc$ ensures that it lifts to a map in $\CAlg(\text{Mon}_\text{Assoc}^\mathcal{K}(\Cat))$. We evaluate the above natural transformation $\overline{\Theta}$ on $F$ and obtain a commuting diagram in $\CAlg(\Cat)$
\[\begin{tikzcd}
	{\Alg(\Cc)} & {\Mod_\Cc(\Cat(\mathcal{K}))} \\
	{\Alg(\Dc)} & {\Mod_{\Dc}(\Cat(\mathcal{K}))}
	\arrow["{\Mod_{(-)}(\Cc)}", from=1-1, to=1-2]
	\arrow["F", from=1-1, to=2-1]
	\arrow["{(-)\otimes_\Cc\Dc}", from=1-2, to=2-2]
	\arrow["{\Mod_{(-)}(\Dc)}", from=2-1, to=2-2]
\end{tikzcd}.\]
Applying $\CAlg$ again to the diagram gives the commutative square in $\Cat$
\[\begin{tikzcd}
	{\CAlg(\Cc)} & {\CAlg(\Mod_\Cc(\Cat(\mathcal{K})))} \\
	{\CAlg(\Dc)} & {\CAlg(\Mod_{\Dc}(\Cat(\mathcal{K})))}
	\arrow["{\Mod_{(-)}(\Cc)}", from=1-1, to=1-2]
	\arrow["{\Mod_{(-)}(\Dc)}", from=2-1, to=2-2]
	\arrow["F", from=1-1, to=2-1]
	\arrow["{(-)\otimes_\Cc\Dc}", from=1-2, to=2-2]
\end{tikzcd}.\]
Note that by \HA{Lemma}{4.8.4.2}, the functor $(-)\otimes_\Cc\Dc$ is a symmetric monoidal left adjoint
\[(-)\otimes_\Cc\Dc:\Mod_\Cc(\Cat(\mathcal{K}))\longrightarrow\Mod_\Dc(\Cat(\mathcal{K})).\]
It follows that there is an adjunction $(-)\otimes_\Cc\Dc\dashv\text{fgt}$ between $\CAlg(\Mod_\Cc(\Cat(\mathcal{K})))$ and $\CAlg(\Mod_{\Dc}(\Cat(\mathcal{K})))$. Combining this adjunction with the commutative square, we obtain a natural transformation
\[\begin{tikzcd}
	{\CAlg(\Cc)} && {\CAlg(\Mod_\Cc(\Cat(\mathcal{K})))}
	\arrow[""{name=0, anchor=center, inner sep=0}, "{\Mod_{(-)}(\Cc)}", curve={height=-12pt}, from=1-1, to=1-3]
	\arrow[""{name=1, anchor=center, inner sep=0}, "{\Mod_{F(-)}(\Dc)}"', curve={height=12pt}, from=1-1, to=1-3]
	\arrow[shorten <=3pt, shorten >=3pt, Rightarrow, from=0, to=1]
\end{tikzcd}.\]
Post-composing with the forgetful (see \HA{Corollary}{4.8.1.4}) to $\CAlg(\Cat)$ gives what we claimed.
\end{proof}
\subsection{Reminders on Day convolutions}
\label{reminders on Day convolution}
\begin{rem}\label{rem:day}
    Given a small symmetric monoidal category $(\Cc,\otimes)$, there is a symmetric monoidal structure on the spectral presheaf category $\Fun(\Cc^\op,\Sp)$ called `Day convolution'. The stable Yoneda embedding $\yoneda$\footnote{We abuse notation by writing $h$ for both the unstable and the stable Yoneda embedding, when there is no danger of confusion.} has a structure of symmetric monoidal functor and has the following universal property.\par
For any presentably symmetric monoidal stable category $\Dc$ with a symmetric monoidal functor $F:\Cc\rightarrow\Dc$, there exists a unique symmetric monoidal, colimit-preserving lift to $\Fun(\Cc^\op,\Sp)$:
\[\begin{tikzcd}
	\Cc &&&& {\Fun(\Cc^\op,\Sp)} \\
	&& \Dc & {}
	\arrow["h", from=1-1, to=1-5]
	\arrow["F"', from=1-1, to=2-3]
	\arrow["{\exists!}", dashed, from=1-5, to=2-3]
\end{tikzcd}.\]
To be precise, one learns from \HA{Proposition}{4.8.1.10} that for each small symmetric monoidal category $(\Cc,\otimes)$, the presheaf category $\Fun(\Cc^\op,\Spc)$ has the structure of a presentably symmetric monoidal category, and the (unstable) Yoneda functor
$$\yoneda:\Cc\longrightarrow\Fun(\Cc^\op,\Spc)$$
has a structure of symmetric monoidal functor. Moreover, the restriction map
$$\Fun^{\lax\otimes,L}(\Psh{\Cc}{\Spc},\Dc)\overset{h^*}{\longrightarrow}\Fun^{\lax\otimes}(\Cc,\Dc)$$
is an equivalence for any presentably symmetric monoidal category $\Dc$. The restriction of  above functor to the full subcategory of symmetric monoidal functors
$$\Fun^{\otimes,L}(\Psh{\Cc}{\Spc},\Dc)\overset{h^*}{\longrightarrow}\Fun^\otimes(\Cc,\Dc)$$
is also an equivalence. Using the symmetric monoidal adjunction
\[\begin{tikzcd}
	\Prl & {} & \Prlst
	\arrow["{\text{forgetful}}", curve={height=-12pt}, from=1-3, to=1-1]
	\arrow["{-\otimes\Sp}", curve={height=-12pt}, from=1-1, to=1-3]
\end{tikzcd}\]
one learns that the stable analogues 
\[
h^* \colon \Fun^{\lax\otimes,L}(\Psh{\Cc}{\Sp},\Dc)\xlongrightarrow{\simeq}\Fun^{\lax\otimes}(\Cc,\Dc)
\]
\[
h^* \colon \Fun^{\otimes,L}(\Psh{\Cc}{\Sp},\Dc)\xlongrightarrow{\simeq}\Fun^\otimes(\Cc,\Dc)
\]
also hold for any presentably symmetric monoidal stable category $\Dc$.
\end{rem}
\begin{rem}[Day convolution as a partial adjunction] The equivalence above could be understood as a partial adjunction between forgetful and forming category of presheaves:
\[\begin{tikzcd}
	{\CAlg(\bigcat)} & {} & {\CAlg(\Prlst)} \\
	{\CAlg(\smallcat)}
	\arrow["{\text{forgetful}}"', from=1-3, to=1-1]
	\arrow["i"', from=2-1, to=1-1]
	\arrow["{\Psh{-}{\Sp}}"', from=2-1, to=1-3]
\end{tikzcd}.\]
See, for example, \cite[\href{https://florianadler.github.io/AlgebraBonn/KTheory.pdf\#dummy.1.32}{1.32}]{FabianFerdinand} on how to extract the adjoints functorially. In particular, the equivalences 
\[\Fun^{\lax\otimes,L}(\Psh{\Cc}{\Sp},\Dc)^{\simeq} \overset{h^*}{\longrightarrow}\Fun^{\lax\otimes}(\Cc,\Dc)^{\simeq}\]
\[\Fun^{\otimes,L}(\Psh{\Cc}{\Sp},\Dc)^{\simeq} \overset{h^*}{\longrightarrow}  \Fun^\otimes(\Cc,\Dc)^{\simeq}\]
are functorial in $\Cc$ and $\Dc$\footnote{This essentially boils down to the universal property of Day convolution as stated above. It is however convenient for us to phrase it in terms of partial adjunction.}.
This implies that the construction
\begin{align*}
    \CAlg(\smallcat) & \rightarrow \CAlg(\Prlst) \\
    \Cc & \mapsto \Fun(\Cc^\op, \Sp)^{\textbf{Day-}\otimes}
\end{align*}
exhibits $\Fun(\Cc^\op, \Sp)^{\textbf{Day-}\otimes}$ as the
symmetric monoidal stable cocompletion of $\Cc$.
\end{rem}
\BiblatexSplitbibDefernumbersWarningOff
\printbibliography[keyword=alpha]
\printbibliography[notkeyword=alpha, heading=none]

@misc{kerodon,
  author       = {Jacob Lurie},
  title        = {Kerodon},
  howpublished = {\url{https://kerodon.net}},
  year         = {2023},
  keywords     = {alpha},
  shorthand    = {Kerodon},
}

@article{abouzaid2024foundationfloerhomotopytheory,
      title={Foundation of Floer homotopy theory I: Flow categories}, 
      author={Mohammed Abouzaid and Andrew J. Blumberg},
      year={2024},
      journal={arXiv e-prints},
      eprint={2404.03193},
      archivePrefix={arXiv},
      primaryClass={math.SG},
      url={https://arxiv.org/abs/2404.03193}, 
}

@article{nadler2016wrappedmicrolocalsheavespairs,
      title={Wrapped microlocal sheaves on pairs of pants}, 
      author={David Nadler},
      year={2016},
      journal={arXiv e-prints},
      eprint={1604.00114},
      archivePrefix={arXiv},
      primaryClass={math.SG},
      url={https://arxiv.org/abs/1604.00114}, 
}

@article{Dyckerhoff_2021,
   title={The symplectic geometry of higher Auslander algebras: Symmetric products of disks},
   volume={9},
   ISSN={2050-5094},
   url={http://dx.doi.org/10.1017/fms.2021.2},
   DOI={10.1017/fms.2021.2},
   journal={Forum of Mathematics, Sigma},
   publisher={Cambridge University Press (CUP)},
   author={Dyckerhoff, Tobias and Jasso, Gustavo and Lekili, Yankι},
   year={2021} }

@article{Jin_2024,
   title={Brane structures in microlocal sheaf theory},
   volume={17},
   ISSN={1753-8424},
   url={http://dx.doi.org/10.1112/topo.12325},
   DOI={10.1112/topo.12325},
   number={1},
   journal={Journal of Topology},
   publisher={Wiley},
   author={Jin, Xin and Treumann, David},
   year={2024},
   month=mar }

@article{lurie2018associativealgebrasbrokenlines,
journal={arXiv e-prints},
      title={Associative algebras and broken lines}, 
      author={Jacob Lurie and Hiro Lee Tanaka},
      year={2018},
      eprint={1805.09587},
      archivePrefix={arXiv},
      primaryClass={math.AT},
      url={https://arxiv.org/abs/1805.09587}, 
}

@article{porcelli2024bordismflowmodulesexact,
      title={Bordism of flow modules and exact Lagrangians}, 
      author={Noah Porcelli and Ivan Smith},
      year={2024},
      journal={arXiv e-prints},
      eprint={2401.11766},
      archivePrefix={arXiv},
      primaryClass={math.SG},
      url={https://arxiv.org/abs/2401.11766}, 
}

@book{HA,
  title        = {Higher algebra},
  author       = {Lurie, Jacob},
  year         = {2017},
  keywords     = {alpha},
  shorthand    = {HA},
}

@book{SAG,
  title        = {Spectral algebraic geometry},
  author       = {Lurie, Jacob},
  journal      = {preprint},
  year         = {2018},
  keywords     = {alpha},
  shorthand    = {SAG},
}

@book{kashiwara2002sheaves,
  title={Sheaves on Manifolds: With a Short History. {\guillemotleft}Les d{\'e}buts de la th{\'e}orie des faisceaux{\guillemotright}. By Christian Houzel},
  author={Kashiwara, M. and Schapira, P.},
  isbn={9783540518617},
  lccn={90042430},
  series={Grundlehren der mathematischen Wissenschaften},
  year={2002},
  publisher={Springer Berlin Heidelberg}
}

@misc{LurieElliptic1,
title={Elliptic Cohomology I},
author={Jacob Lurie},
url={https://www.math.ias.edu/~lurie/papers/Elliptic-I.pdf}
}

@misc{LurieEllipticSurvey,
title={Survey article on elliptic cohomology},
author={Jacob Lurie},
url={https://www.math.ias.edu/~lurie/papers/survey.pdf}
}

@article{Morelli1993TheKT,
  title={The K Theory of a Toric Variety},
  author={Roberto Morelli},
  journal={Advances in Mathematics},
  year={1993},
  volume={100},
  pages={154-182},
}

@article{hu2023coherentconstructiblecorrespondencetoricfibrations,
      journal={arXiv e-prints},
      title={Coherent-Constructible Correspondence for Toric Fibrations}, 
      author={Yuxuan Hu and Pyongwon Suh},
      year={2023},
      eprint={2304.00832},
      archivePrefix={arXiv},
      primaryClass={math.AG},
      url={https://arxiv.org/abs/2304.00832}, 
}

@article{haine2024exodromyconicality,
journal={arXiv e-prints},
      title={Exodromy beyond conicality}, 
      author={Peter J. Haine and Mauro Porta and Jean-Baptiste Teyssier},
      year={2024},
      eprint={2401.12825},
      archivePrefix={arXiv},
      primaryClass={math.AT},
      url={https://arxiv.org/abs/2401.12825}, 
}

@article{robalo2016lemmamicrolocalsheaftheory,
  title={A lemma for microlocal sheaf theory in the $\infty$-categorical setting},
  author={Robalo, Marco and Schapira, Pierre},
  journal={Publications of the Research Institute for Mathematical Sciences},
  volume={54},
  number={2},
  pages={379--391},
  year={2018}
}

@article{efimov2024ktheorylocalizinginvariantslarge,
journal={arXiv e-prints},
      title={K-theory and localizing invariants of large categories}, 
      author={Alexander I. Efimov},
      year={2024},
      eprint={2405.12169},
      archivePrefix={arXiv},
      primaryClass={math.KT},
      url={https://arxiv.org/abs/2405.12169},
}

@book{fulton,
 ISBN = {9780691000497},
 abstract = {Toric varieties are algebraic varieties arising from elementary geometric and combinatorial objects such as convex polytopes in Euclidean space with vertices on lattice points. Since many algebraic geometry notions such as singularities, birational maps, cycles, homology, intersection theory, and Riemann-Roch translate into simple facts about polytopes, toric varieties provide a marvelous source of examples in algebraic geometry. In the other direction, general facts from algebraic geometry have implications for such polytopes, such as to the problem of the number of lattice points they contain. In spite of the fact that toric varieties are very special in the spectrum of all algebraic varieties, they provide a remarkably useful testing ground for general theories.The aim of this mini-course is to develop the foundations of the study of toric varieties, with examples, and describe some of these relations and applications. The text concludes with Stanley's theorem characterizing the numbers of simplicies in each dimension in a convex simplicial polytope. Although some general theorems are quoted without proof, the concrete interpretations via simplicial geometry should make the text accessible to beginners in algebraic geometry.},
 author = {William Fulton},
 publisher = {Princeton University Press},
 title = {Introduction to Toric Varieties. (AM-131)},
 year = {1993}
}

@article{FLTZ,
	doi = {10.1007/s00222-011-0315-x},
  
  
	year = 2011,
  
	publisher = {Springer Science and Business Media {LLC}
},
  
	volume = {186},
  
	number = {1},
  
	pages = {79--114},
  
	author = {Bohan Fang and Chiu-Chu Melissa Liu and David Treumann and Eric Zaslow},
  
	title = {A categorification of Morelli's theorem},
  
	journal = {Inventiones mathematicae}
}

@misc{Vaintrob1,
title = {On coherent-constructible correspondences and incomplete topologies},
author = {Dmitry Vaintrob},
url = {https://math.berkeley.edu/~vaintrob/cosheaves.pdf}
}

@misc{Vaintrob2,
title = {Coherent-constructible correspondences and log-perfectoid mirror symmetry for the torus},
author = {Dmitry Vaintrob},
url = {https://math.berkeley.edu/~vaintrob/toric.pdf}
}

@misc{LurieRotation,
title = {Rotation invariance in algebraic K-theory},
author = {Jacob Lurie},
url = {https://www.math.ias.edu/~lurie/papers/Waldhaus.pdf}}

@misc{baiburklund,
title = {Stuff about sheaves on real vector spaces},
author = {Qingyuan Bai and Robert Burklund},
url = {https://qingybai.github.io/pdf/Picard_group_of_real_line.pdf}
}

@article{Kuwagaki_2020,
   title={The nonequivariant coherent-constructible correspondence for toric stacks},
   volume={169},
   ISSN={0012-7094},
   url={http://dx.doi.org/10.1215/00127094-2020-0011},
   DOI={10.1215/00127094-2020-0011},
   number={11},
   journal={Duke Mathematical Journal},
   publisher={Duke University Press},
   author={Kuwagaki, Tatsuki},
   year={2020},
   month=aug }

@article{shende2021toricmirrorsymmetryrevisited,
     author = {Vivek Shende},
     title = {Toric mirror symmetry revisited},
     journal = {Comptes Rendus. Math\'ematique},
     pages = {751--759},
     publisher = {Acad\'emie des sciences, Paris},
     volume = {360},
     year = {2022},
     doi = {10.5802/crmath.304},
     language = {en},
}

@article{carmeli2022strictpicardspectrumcommutative,
  title={On the strict Picard spectrum of commutative ring spectra},
  author={Carmeli, Shachar},
  journal={Compositio Mathematica},
  volume={159},
  number={9},
  pages={1872--1897},
  year={2023}
}

@article{tanaka2019cyclicstructuresbrokencycles,
journal={arXiv e-prints},
      title={Cyclic structures and broken cycles}, 
      author={Hiro Lee Tanaka},
      year={2019},
      eprint={1907.03301},
      archivePrefix={arXiv},
      primaryClass={math.AT},
      url={https://arxiv.org/abs/1907.03301}, 
}

@article{ganatra2023microlocalmorsetheorywrapped,
author = {Sheel Ganatra and John Pardon and Vivek Shende},
title = {{Microlocal Morse theory of wrapped Fukaya categories}},
volume = {199},
journal = {Annals of Mathematics},
number = {3},
publisher = {Department of Mathematics of Princeton University},
pages = {943 -- 1042},
keywords = {Fukaya category, microlocal sheaf, mirror symmetry, Stein manifold, Weinstein manifold},
year = {2024},
doi = {10.4007/annals.2024.199.3.1},
URL = {https://doi.org/10.4007/annals.2024.199.3.1}
}

@misc{GL,
  title        = {Weil's Conjcture for Function Fields I},
  author       = {Gatisgory, Dennis and Lurie, Jacob},
  journal      = {preprint},
  year         = {2018}
}

@misc{ClausendeRham,
title = {Algebraic de Rham cohomology},
author = {Dustin Clausen},
url = {https://sites.google.com/view/algebraicderham/home}}

@article{Moulinos,
	doi = {10.1112/blms.12512},
  
  
	year = 2021,
  
	publisher = {Wiley},
  
	volume = {53},
  
	number = {5},
  
	pages = {1486--1499},
  
	author = {Tasos Moulinos},
  
	title = {The geometry of filtrations},
  
	journal = {Bulletin of the London Mathematical Society}
}

@article{Hesselholt-Pstragowski,
  title={Dirac geometry I: Commutative algebra},
  author={Hesselholt, Lars and Pstragowski, Piotr},
  journal={Peking Mathematical Journal},
  pages={1--76},
  year={2023},
  publisher={Springer}
}

@article{volpe2023sixoperation,
journal={arXiv e-prints},
      title={The six operations in topology}, 
      author={Marco Volpe},
      year={2023},
      eprint={2110.10212},
      archivePrefix={arXiv},
      primaryClass={math.AT}
}

@article{clausenJansen2023reductiveborelserre,
  title={The reductive Borel–Serre compactification as a model for unstable algebraic K-theory},
  author={Dustin Clausen and Mikala Orsnes Jansen},
  journal={Selecta Mathematica},
  year={2021},
  volume={30},
  pages={1-93}
}

@misc{FabianFerdinand,
        title={Lecture Notes for Algebraic and Hermitian K-Theory},
        author={Fabian Hebestreit and Ferdinand Wagner},
        url={https://florianadler.github.io/AlgebraBonn/KTheory.pdf}
}

@article{ramzi2022monoidal,
journal={arXiv e-prints},
      title={A monoidal Grothendieck construction for $\infty$-categories}, 
      author={Maxime Ramzi},
      year={2022},
      eprint={2209.12569},
      archivePrefix={arXiv},
      primaryClass={math.CT}
}

@article{hinich2015rectification,
  title={Rectification of algebras and modules},
  author={Vladimir Hinich},
  journal={Documenta Mathematica},
  year={2013}
}

@article{BFN2010,
  title={Integral transforms and Drinfeld centers in derived algebraic geometry},
  author={Ben-Zvi, David and Francis, John and Nadler, David},
  journal={Journal of the American Mathematical Society},
  volume={23},
  number={4},
  pages={909--966},
  year={2010}
}

@article{zhou2017twistedpolytopesheavescoherentconstructible,
  title={Twisted polytope sheaves and coherent–constructible correspondence for toric varieties},
  author={Peng Zhou},
  journal={Selecta Mathematica},
  year={2017},
  volume={25},
  pages={1-23}
}

@article{vaintrob2017categoricallogarithmichodgetheory,
      title={Categorical Logarithmic Hodge Theory, I}, 
      author={Dmitry Vaintrob},
      year={2017},
      journal={arxiv e-prints},
      eprint={1712.00045},
      archivePrefix={arXiv},
      primaryClass={math.AG},
      url={https://arxiv.org/abs/1712.00045}, 
}

@article{Jansen_2024,
   title={Stratified homotopy theory of topological ∞-stacks: A toolbox},
   volume={228},
   ISSN={0022-4049},
   url={http://dx.doi.org/10.1016/j.jpaa.2024.107710},
   DOI={10.1016/j.jpaa.2024.107710},
   number={11},
   journal={Journal of Pure and Applied Algebra},
   publisher={Elsevier BV},
   author={Jansen, Mikala Ørsnes},
   year={2024},
   month=nov, pages={107710} }

@article{treumann2010remarksnonequivariantcoherentconstructiblecorrespondence,
      title={Remarks on the nonequivariant coherent-constructible correspondence for toric varieties}, 
      author={David Treumann},
      year={2010},
    journal={arxiv e-prints},
      eprint={1006.5756},
      archivePrefix={arXiv},
      primaryClass={math.AG},
      url={https://arxiv.org/abs/1006.5756}, 
}

@article{SS2016,
  title={The non-equivariant coherent-constructible correspondence and a conjecture of King},
  author={Scherotzke, Sarah and Sibilla, Nicol{\`o}},
  journal={Selecta Mathematica},
  volume={22},
  number={1},
  pages={389--416},
  year={2016},
  publisher={Springer}
}

@inproceedings{bondal2006,
  title={Derived categories of toric varieties},
  author={Bondal, Alexey},
  booktitle={Convex and Algebraic geometry, Oberwolfach conference reports, EMS Publishing House},
  volume={3},
  pages={284--286},
  year={2006}
}

@article{hovey2002morita,
  title={Morita theory for Hopf algebroids and presheaves of groupoids},
  author={Hovey, Mark},
  journal={American journal of mathematics},
  volume={124},
  number={6},
  pages={1289--1318},
  year={2002},
  publisher={Johns Hopkins University Press}
}
\end{document}